\documentclass[reqno]{amsart}
\usepackage[T1]{fontenc}
\usepackage{lmodern}
\usepackage{amssymb,latexsym,amsmath,amsthm,enumerate,amsbsy}
\usepackage[mathscr]{eucal}
\usepackage{framed,color,graphicx}
\usepackage{array,enumitem,microtype,needspace}
\usepackage[hidelinks,hypertexnames=false]{hyperref}
\usepackage{mathrsfs}
\usepackage{cite}
\usepackage[all]{xy}
\usepackage{tikz}
\usetikzlibrary{shadows, shadings, calc}

\makeatletter
\@namedef{subjclassname@2020}{\textup{2020} Mathematics Subject Classification}
\makeatother

\numberwithin{equation}{section}

\usetikzlibrary{positioning,decorations.pathreplacing,patterns,decorations.pathmorphing}
\tikzset{%
element/.style={draw, shape=circle, fill=white, inner sep=1.4pt}
}

\DeclareSymbolFont{bbold}{U}{bbold}{m}{n}
\DeclareSymbolFontAlphabet{\mathbbold}{bbold}

\theoremstyle{plain}
\newtheorem{theorem}{Theorem}[section]
\newtheorem{lemma}[theorem]{Lemma}
\newtheorem{corollary}[theorem]{Corollary}
\newtheorem{proposition}[theorem]{Proposition}

\newtheorem{problem}[theorem]{Problem}

\theoremstyle{definition}

\newtheorem{remark}[theorem]{Remark}

\newcommand{\A}{\mathcal A_2^1}
\newcommand{\B}{\mathcal B_2^1}
\newcommand{\Abar}{\overline{\mathcal A_2^1}}

\renewcommand{\ge}{\geqslant}
\renewcommand{\le}{\leqslant}

\newcommand{\Mod}{\mathsf{Mod}}
\newcommand{\V}{\mathsf V}
\newcommand{\Id}{\operatorname{Id}}
\newcommand{\con}{\operatorname{con}}
\newcommand{\Finf}{\mathcal F_{\infty}}
\newcommand{\eps}{\varepsilon}

\newcommand{\ba}{\mathbf{a}}
\newcommand{\bb}{\mathbf{b}}

\newcommand{\bd}{\mathbf{d}}

\newcommand{\bp}{\mathbf{p}}
\newcommand{\bq}{\mathbf{q}}
\newcommand{\br}{\mathbf{r}}
\newcommand{\bs}{\mathbf{s}}
\newcommand{\bt}{\mathbf{t}}
\newcommand{\bu}{\mathbf{u}}
\newcommand{\bv}{\mathbf{v}}
\newcommand{\bw}{\mathbf{w}}
\newcommand{\bx}{\mathbf{x}}
\newcommand{\by}{\mathbf{y}}

\begin{document}
\title[Inherently nonfinitely based ai-semirings]
{Inherently nonfinitely based additively idempotent semirings}

\author{Miaomiao Ren}
\address{School of Mathematics, Northwest University, Xi'an, 710127, Shaanxi, P.R. China}
\email{miaomiaoren@yeah.net}

\author{Mengya Yue}
\address{School of Mathematics, Northwest University, Xi'an, 710127, Shaanxi, P.R. China}
\email{myayue@yeah.net}

\author{Yilin Zhou}
\address{School of Mathematics, Northwest University, Xi'an, 710127, Shaanxi, P.R. China}
\email{zhou990619@163.com}

\subjclass[2020]{16Y60, 03C05, 08B15, 08B05}
\keywords{additively idempotent semiring, finite basis problem,
inherently nonfinitely based, Zimin word, flat semiring}

\hypersetup{
  pdftitle={Inherently nonfinitely based additively idempotent semirings},
  pdfauthor={Miaomiao Ren, Mengya Yue, and Yilin Zhou},
  pdfsubject={Finite basis problems for additively idempotent semirings},
  pdfkeywords={additively idempotent semiring, finite basis problem, inherent nonfinite basability, Zimin word}
}

\begin{abstract}
We establish a sufficient condition for an additively idempotent
semiring to be inherently nonfinitely based: if its generated variety
is locally finite and every Zimin word is minimal in the additive
order, then it is contained in no finitely based locally finite
variety.  We characterize Zimin minimality by the membership of a
countable flat factor semiring $\Finf$ in the generated variety and
prove that $\V(\Finf)$ is a minimal inherently nonfinitely based
variety.  We also obtain general restrictions on the additive order of
finite inherently nonfinitely based ai-semirings.  As applications,
the six-element ai-semirings $\A$ and $\B$ are shown to be inherently
nonfinitely based.  Finally, we prove that
the six-element ai-semiring $\Abar$ is not inherently
nonfinitely based and is finitely based.  Its multiplicative reduct
is inherently nonfinitely based; thus, to the best of our knowledge,
$\Abar$ is the first finitely based ai-semiring with an inherently
nonfinitely based multiplicative reduct.
\end{abstract}

\maketitle

\section{Introduction and preliminaries}
\label{sec:introduction}

An \emph{additively idempotent semiring}, or an \emph{ai-semiring}, is
an algebra $(S,+,\cdot)$ such that $(S,+)$ is a commutative idempotent
semigroup, $(S,\cdot)$ is a semigroup, and multiplication distributes
over addition on both sides:
\[
  x(y+z)\approx xy+xz,
  \qquad
  (x+y)z\approx xz+yz.
\]
We work throughout in the signature consisting of the two binary
operations $+$ and $\cdot$, without designated constants.  Thus a
symbol $0$ occurring in a particular construction denotes an element
of the algebra, not a nullary operation.  General background on
semirings and semilattice-ordered semigroups can be found
in~\cite{Glazek2001,Golan1992,KurilPolak2005}.

The \emph{finite basis problem} asks whether the identities of a given
algebra admit a finite basis.  An algebra is \emph{locally finite} if
each of its finitely generated subalgebras is finite, and a variety is
locally finite if all of its members are locally finite.  A locally
finite variety is \emph{inherently nonfinitely based} if it is
contained in no finitely based locally finite variety of the same
signature.  An algebra is called inherently nonfinitely based if the
variety it generates has this property.  The notion originated in
semigroup theory; classical results of Perkins, Sapir, and Sapir and
Volkov form much of the relevant background
\cite{Perkins1969,Sapir1987a,SapirVolkov1988}.  For semirings, finite
nonfinitely based examples and broad classes of inherently
nonfinitely based algebras have been obtained in
\cite{Dolinka2007,Dolinka2009,JacksonRenZhao2022,
RenJacksonZhaoLei2023,Volkov2021,WuRenZhao2024,
DGV2025EndFiniteSemilattices}.

Every ai-semiring carries its natural additive order
\[
  a\leq_S b \quad\Longleftrightarrow\quad a+b=b.
\]
This order is compatible with multiplication, and $a+b$ is the join of
$a$ and $b$.  We therefore describe finite ai-semirings by their
additive Hasse diagrams whenever this is more convenient than giving
their additive tables.

The Zimin words are defined recursively by
\[
  Z_1=x_1,
  \qquad
  Z_{m+1}=Z_mx_{m+1}Z_m
  \quad(m\geq1).
\]
For nonempty words $\bu$ and $\bv$, write
\[
  S\models \bu\preceq \bv
  \quad\Longleftrightarrow\quad
  S\models \bu+\bv\approx \bv.
\]
We say that $\bv$ is \emph{minimal for $S$} if
$S\models \bu\preceq \bv$ implies that $\bu=\bv$ literally.  Minimality is
stronger than the isoterm property: it excludes not only nontrivial
identities involving $\bv$, but also nontrivial multiplicative words
lying below $\bv$ in the additive order.

The main part of the paper develops a general sufficient condition
based on minimal Zimin words.  For an algebra $S$, let $\V(S)$ denote
the variety generated by $S$, and let $\Id_{\leq n}(S)$ be the set of
identities of $S$ involving at most $n$ distinct variables in their
two sides together.  Our principal result is the following.

\begin{theorem}\label{thm:main}
Let $S$ be an ai-semiring for which every Zimin word is minimal.  For
each $n\geq1$, there exists an infinite ai-semiring $T_n$, generated by
at most $(6n+8)^2$ elements, such that
\[
  T_n\models\Id_{\leq n}(S).
\]
The algebra $T_n$ depends only on $n$, not on $S$.  If $\V(S)$ is
locally finite, then $S$ is inherently nonfinitely based.  In
particular, the final conclusion holds whenever $S$ is finite.
\end{theorem}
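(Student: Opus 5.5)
We follow the Sapir-style strategy for semigroups with isoterm Zimin words, adapted to the additive order. The idea is that an identity in at most $n$ variables can only see a bounded part of a long Zimin word, so it cannot detect that a suitable infinite algebra fails to be locally finite.

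\textbf{The algebra $T_n$.} Fix $n$ and choose a length parameter $k$ depending only on $n$, of order $6n+8$. Let $W$ be the set of all nonempty subwords of a suitable infinite word. Take either the $\omega$-Zimin word $Z_\infty=\lim Z_m$, or a uniformly recurrent word built from $Z_k$-blocks over an alphabet of size about $6n+8$. Define $T_n$ as the flat ai-semiring on $W\cup\{0\}$: sums of distinct elements equal $0$, $a+a=a$, and $\bu\cdot\bv=\bu\bv$ if $\bu\bv\in W$, with all other products equal to $0$. This is the Rees-quotient flat construction, the same one that underlies $\Finf$.

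\textbf{Infinite and finitely generated.} The generators are the letters, or short factors, needed to produce arbitrarily long factors. Choosing the infinite word so that all factors arise from products of $(6n+8)^2$ basic pieces (for instance letter pairs, which gives the square) makes $T_n$ infinite yet generated by at most $(6n+8)^2$ elements. Since $T_n$ depends only on $n$, this settles the independence claim.

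\textbf{$T_n\models \Id_{\le n}(S)$.} This is the main step and the main obstacle. An ai-semiring identity may be written as $\bu_1+\dots+\bu_p\approx\bv_1+\dots+\bv_q$, and it holds if and only if each $\bu_i\preceq$ the right side and each $\bv_j\preceq$ the left side. So it suffices to show: if $S\models \bu\preceq \bv_1+\dots+\bv_q$ in at most $n$ variables, then $T_n$ satisfies it.

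Fix an assignment into $T_n$ under which $\bu$ evaluates to a nonzero factor $\bw$; the case where $\bu$ evaluates to $0$ needs a separate check that some $\bv_j$ also evaluates to $0$, or that the sum collapses. Then $\bw$ is a factor of a Zimin-type word. By a Zimin-pattern lemma (any long factor of the $\omega$-Zimin word contains, after renaming, an image of $Z_m$ for large $m$), we obtain a substitution $\theta$ of letters by words such that $\theta$ applied to the evaluated values recovers $\bw$ as a word.

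Now use the hypothesis in the reverse direction. Every Zimin word $Z_m$ with $m$ large relative to $n$ is minimal for $S$. Substitute into $S\models \bu\preceq\sum\bv_j$ the words over $x_1,\dots,x_m$ which, read through the factorization of $\bw$, realise $\bu$ as a factor of $Z_m$. One needs that $\bu$ itself becomes $Z_m$ after multiplying by suitable outer context, since $\preceq$ is compatible with multiplication. Minimality then gives $Z_m=\text{some }\bv_j$ under this substitution, literally. Translating back, $\bv_j$ evaluates to the same $\bw$ in $T_n$, so the sum on the right evaluates to $\bw$, or to $0$, absorbing nothing wrong.

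The delicate bookkeeping is the $0$ case and the need for $m\le$ about $6n+8$ letters. I expect the constant $6n+8$ to arise from the combinatorics of covering an $n$-variable word by Zimin blocks.

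\textbf{Inherent nonfinite basedness.} Suppose $\V(S)\subseteq\mathcal W$ with $\mathcal W$ locally finite and finitely based. Then $\mathcal W$ has a basis $\Sigma$ in some $n$ variables. Every identity in $\Sigma$ holds in $S$, hence lies in $\Id_{\le n}(S)$, hence holds in $T_n$. So $T_n\in\mathcal W$, but $T_n$ is finitely generated and infinite, a contradiction. Finite algebras generate locally finite varieties, which gives the last sentence.
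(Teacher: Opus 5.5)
There is a genuine gap, and it lies in the step you flag as the main obstacle.

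\textbf{The choice of $T_n$.} Neither of your candidates is shown to work. The flat semiring of $Z_\infty$ is $\Finf$ itself. It is not finitely generated, because a letter $x_i$ is neither a product nor a sum of other elements. You also cannot pass to a finitely generated subalgebra, since $\V(\Finf)$ is locally finite (Proposition~\ref{prop:local}).

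For a finitely generated infinite flat algebra, the assigned values are words over a fixed finite alphabet, not literal factors of Zimin words. Nothing in your sketch produces a substitution into Zimin factors through which the assignment factors. The ``Zimin-pattern lemma'' you cite (long words contain Zimin images) is the unavoidability statement, and it points the wrong way.

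The missing ingredient is Sapir's combinatorial theorem for the specific morphic language $L_k$ (Proposition~\ref{prop:sapir}): a semigroup identity in at most $k$ variables that fails in $V_k^0$ entails a nontrivial identity $Z_{k+1}\approx\bq$. The paper links this to the additive hypothesis through the guarded identity of Lemma~\ref{lem:guard}. Suppose $S\models\bs\preceq\bt$ in at most $k-1$ variables. Then $\bt z\bt\approx\bs z\bt$ entails no nontrivial Zimin identity, because any elementary replacement inside a Zimin word has $\theta(\bt)$ as a factor, and that factor is minimal by Lemma~\ref{lem:normal}. Hence the identity holds in $V_k^0$. The return property $w\,d\,w\in L_k$ (Lemma~\ref{lem:return}) and right cancellation then give $\nu(\bs)=\nu(\bt)$ whenever $\nu(\bt)\neq 0$. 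The bound $(6n+8)^2$ is simply the number $r^2$ of letters $a_{ij}$ for $k=n+1$, not a covering count.

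\textbf{The direction of minimality.} Your use of minimality runs the wrong way. Minimality of $Z_m$ constrains only words lying below $Z_m$, so nothing follows about the $\bv_j$ from $S\models Z_m\preceq\sum_j\theta(\bv_j)$.

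In a flat semiring, where $0$ is the top element, the substantive case is the reverse one. The upper side has a nonzero value $w$, so every $\bv_j$ has value $w$. You pull these back injectively to a single word $\bt$ (using the prefix code of Lemma~\ref{lem:code}), obtain $S\models\theta(\bu)\preceq\bt$, and must show that $\bu$ also has value $w$.

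The case you defer, where $\bu$ evaluates to $0$, is exactly the one that must be excluded, since $\bu$ may contain variables that are absent from the upper side and assigned $0$. The paper handles this in Theorem~\ref{thm:bounded} by substituting fresh variables $z_x$ for zero-valued variables while keeping at most $n=k-1$ variables, so Lemma~\ref{lem:transfer} still applies.

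Your final paragraph, which deduces inherent nonfinite basability from the existence of the $T_n$, is correct and matches the paper's argument.
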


The algebras $T_n$ are flat extensions of factor semigroups arising
from Sapir's construction and used by Auinger, Dolinka, and
Volkov~\cite{ADV2012}.  The proof transfers all identities with a
bounded number of variables from $S$ to a fixed infinite finitely
generated flat semiring.  The argument applies to arbitrary semiring
identities, including nonregular identities in which a variable may
occur on only one side.

There is also a single countable flat semiring that records the
minimality of all Zimin words.  Let $Z_\infty$ be the right-infinite
word whose prefixes are the Zimin words, let $L_\infty$ be the set of
its nonempty factors, and let $\Finf$ be the associated flat factor
semiring.

\begin{theorem}\label{thm:characterization}
For every ai-semiring $S$, the following conditions are equivalent:
\begin{enumerate}[label=\textup{(\roman*)}]
\item every Zimin word is minimal for $S$;
\item $\Finf\in\V(S)$.
\end{enumerate}
Moreover, $\V(\Finf)$ is locally finite and inherently nonfinitely
based.  If $\B$ denotes the six-element Brandt ai-semiring, then
\[
  \B\notin\V(\Finf).
\]
\end{theorem}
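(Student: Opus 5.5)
We prove the equivalence directly from the definition of the flat factor semiring $\Finf$, namely $L_\infty\cup\{0\}$ with $u\cdot v=uv$ if $uv\in L_\infty$ and $0$ otherwise, $a+a=a$, and $a+b=0$ for $a\neq b$. The properties of $\V(\Finf)$ then follow from Theorem~\ref{thm:main}.

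\emph{Step 1: (ii)$\Rightarrow$(i).} Suppose $\Finf\in\V(S)$ and $S\models \bu\preceq Z_m$, that is, $S\models \bu+Z_m\approx Z_m$. Then this identity also holds in $\Finf$.
\begin{itemize}
\item If $\bu$ contains a variable $y\notin\{x_1,\dots,x_m\}$, evaluate $x_i\mapsto x_i$ and $y\mapsto 0$. The right side becomes $Z_m\neq 0$, while the left side becomes $0+Z_m=0$, a contradiction.
\item Otherwise the evaluation $x_i\mapsto x_i$ sends $\bu$ to the word $\bu$ if $\bu\in L_\infty$, and to $0$ if not. The identity forces the value of $\bu+Z_m$ to be $Z_m\neq 0$. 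In a flat semiring this is possible only if $\bu$ evaluates to $Z_m$, so $\bu=Z_m$ literally.
\end{itemize}

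\emph{Step 2: (i)$\Rightarrow$(ii).} First we note a fact about factors. Every $a\in L_\infty$ is minimal for $S$. Indeed, write $Z_m=\bx a\by$ for some $m$, with possibly empty $\bx,\by$. If $S\models \bt\preceq a$, multiplying gives $S\models \bx\bt\by\preceq Z_m$, so $\bx\bt\by=Z_m$ and hence $\bt=a$.

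Now let $S\models \bp\approx\bq$, where $\bp,\bq$ are finite sums of words, and let $\varphi$ be an evaluation into $\Finf$. By symmetry it suffices to show that $\varphi(\bp)=a\neq0$ implies $\varphi(\bq)=a$. Since $\varphi(\bp)=a$, every word $\bs$ of $\bp$ has $\varphi(\bs)=a$. Define a substitution $\psi$ into words by $\psi(x)=\varphi(x)$ when $\varphi(x)\neq0$, and $\psi(x)=z$ for a fresh letter $z$ otherwise.
\begin{itemize}
\item Nonzero flat products are literal concatenations, so $\psi(\bs)=a$ literally for every word $\bs$ of $\bp$. Hence $\psi(\bp)=a$ modulo idempotence.
\item Applying $\psi$ to the identity gives $S\models\psi(\bq)\approx a$. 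Therefore $S\models \psi(\bt)\preceq a$ for each word $\bt$ of $\bq$.
\item By minimality of $a$, we get $\psi(\bt)=a$ literally. Consequently $\bt$ contains no variable sent to $z$, and all its partial products are factors of $a$, hence lie in $L_\infty$. So $\varphi(\bt)=a$ for every $\bt$, and $\varphi(\bq)=a$.
\end{itemize}
This case analysis handles nonregular identities as well, because variables occurring on one side only are covered by the fresh letter $z$.

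\emph{Step 3: properties of $\V(\Finf)$.} By Step 1 applied to $S=\Finf$, every Zimin word is minimal for $\Finf$.
\begin{itemize}
\item For local finiteness, exhibit one finite ai-semiring $S_0$ for which all Zimin words are minimal, for instance $\A$, using the analysis of $\A$ developed later in the paper. Step 2 then gives $\V(\Finf)\subseteq\V(S_0)$, which is locally finite.
\item Theorem~\ref{thm:main}, applied to $\Finf$, then shows that $\V(\Finf)$ is inherently nonfinitely based.
\end{itemize}
This step is the main obstacle. One must check Zimin minimality directly in some concrete finite ai-semiring without circularity; in particular, one cannot use $\B$ itself, because of the final claim. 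If this check proves awkward, I would instead try to show directly that the $k$-generated free algebra of $\V(\Finf)$ is finite. The idea is that $\Finf$ satisfies every identity $\bw\approx\bw+y$ with $\bw$ long enough relative to $k$ to be avoidable.

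\emph{Step 4: $\B\notin\V(\Finf)$.} The letter sequence of $Z_\infty$ is the ruler sequence, which is square-free. Assuming this, no word of the form $\bw\bw$ lies in $L_\infty$, so $x^2$ evaluates to $0$, the top element of $\Finf$. Hence $\Finf\models x^2+y\approx x^2$. This identity fails in $\B$: take $x=e_{11}$, an idempotent, and $y=e_{12}$, which is incomparable with $e_{11}$ in the additive order.
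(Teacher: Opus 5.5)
\paragraph{Assessment.} Steps~1 and~2 are correct and match the paper's proof of Proposition~\ref{prop:membership}; sending extra variables to $0$ in Step~1 handles them cleanly. The deduction of inherent nonfinite basability from Theorem~\ref{thm:main} is also the paper's. Step~4 is correct with a different separating identity. Every square in $\Finf$ is $0$ and $0$ is the additive top, so $\Finf\models x^2+y\approx x^2$. In $\B$ the choice $x=cd$, $y=c$ gives $x^2+y=0\neq cd$. The paper uses $x^2\approx y^2$ instead.

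\paragraph{The gap: local finiteness.} Your proposal never actually proves that $\V(\Finf)$ is locally finite. Your main route is logically sound: exhibit a finite $S_0$ with all Zimin words minimal, and Step~2 gives $\V(\Finf)\subseteq\V(S_0)$. But you never supply such an $S_0$. You defer to the treatment of $\A$ in Corollary~\ref{cor:BA-infb}, and that argument obtains minimality for $\A$ from minimality for $\B$ (a result imported from the literature) together with $\B\in\V(\A)$.

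Your reason for ruling out $\B$ is also wrong. The facts $\Finf\in\V(\B)$ and $\B\notin\V(\Finf)$ are compatible; together they just say $\V(\Finf)\subsetneq\V(\B)$. So this route at best makes an elementary fact depend on a nontrivial outside computation, and as written the step remains open.

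\paragraph{The missing idea.} It is Lemma~\ref{lem:largest}: in every nonempty factor of $Z_\infty$, the letter of largest index occurs exactly once. Suppose a word $\bw$ in $n$ variables has a nonzero value in $\Finf$. The variable $y$ whose image contains that letter then occurs exactly once, so $\bw=\bu y\bv$. Here $\bu$ and $\bv$, when nonempty, avoid $y$ and so use at most $n-1$ variables. Their values are factors of a nonzero value, hence nonzero. Induction gives $|\bw|\le 2^n-1$. Consequently every word of length at least $2^n$ in $n$ variables is identically $0$. Normalizing terms as sums of words then bounds the $n$-generated free algebra as in Proposition~\ref{prop:local}.

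Your fallback via avoidability can be completed with Zimin's theorem, but it is a heavier tool for the same bound. A word with a nonzero value is encountered by some $Z_m$, hence is unavoidable, hence is encountered by $Z_n$, so its length is at most $2^n-1$.

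Note also that Lemma~\ref{lem:largest} is exactly what proves the square-freeness of $Z_\infty$ that you only assume in Step~4. One elementary fact therefore closes both holes.
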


The conclusion about $\V(\Finf)$ can be sharpened.  We prove that it
is a minimal inherently nonfinitely based variety: every proper
subvariety is contained in a finitely based, locally finite variety.
The containing variety can be defined by the ai-semiring identities
together with four multiplicative identities.

We also derive restrictions on the additive order of a finite
inherently nonfinitely based ai-semiring.  Its additive order can be
neither a chain nor a poset of height at most two; consequently, its
width is at least two and its height is at least three.  If it has a
multiplicative identity, that element is neither the greatest nor the
least element of the additive order, and every multiplicative subgroup
is an antichain.

The two six-element ai-semirings $\A$ and $\Abar$ should
not be confused.  They have the same multiplicative reduct $A_2^1$
but different additive orders.  We prove that $\A$ is inherently
nonfinitely based, whereas $\Abar$ is not.  Together with
the Brandt ai-semiring $\B$, these examples show that the additive
structure can decisively affect inherent nonfinite basability even
when the multiplication is fixed.  The result for
$\Abar$ answers a question of Dolinka.  To the best of our
knowledge, it is the first counterexample to the converse of the
general implication from inherent nonfinite basability of an
ai-semiring to that of its multiplicative reduct, and it is the
smallest possible counterexample.  Since $\Abar$ is finitely based,
it also provides, to the best of our knowledge, the first finitely
based ai-semiring whose multiplicative reduct is inherently
nonfinitely based.

We then determine the ordinary finite basis status of $\Abar$.
Noninherent nonfinite basability alone does not settle this question.
Our finite basis contains at most 109 identities. Its completeness
proof starts with an arbitrary valid absorption $\bu\approx \bu+\bq$.
After deleting irrelevant summands, we first derive
$\bq\leq \bu+\bq^2$ and then absorb $\bq^2$ into $\bu$. A content-contraction
lemma reduces the coordinate identities to seventeen fixed laws.
The main argument is given in Section~\ref{sec:finite-basis};
the auxiliary finite families and their derivations are collected
in the appendix.

The paper is organized into four sections.  The present section gives
the introduction and the preliminary facts about flat factor
semirings.  Section~\ref{sec:infb} proves the general inherent
nonfinite basis criterion, characterizes Zimin minimality through
$\Finf$, proves that $\V(\Finf)$ is a minimal inherently nonfinitely
based variety, establishes additive-order restrictions for finite
INFB ai-semirings, and applies the criterion to $\A$ and $\B$.
Section~\ref{sec:not-infb} proves that $\Abar$ is not
inherently nonfinitely based.  Section~\ref{sec:finite-basis} then
solves its ordinary finite basis problem. The appendix contains the
finite identity basis and the auxiliary proofs used in that section.

\paragraph{Preliminaries and flat factor semirings.}\label{sec:flat}

We first fix the notation used in the proof. We refer
to~\cite{BurrisSankappanavar1981} for the standard facts from universal
algebra. Let $X$ be a countably infinite alphabet, let $X^+$ be the free
semigroup of nonempty words over $X$, and put
$X^*=X^+\cup\{\varepsilon\}$. A factor always means a contiguous factor.
Word substitutions are nonerasing; the empty word is used only as a
context. We use ordinary italic letters for individual variables and
algebra elements, and bold letters for generic words, word contexts,
polynomials, and formal terms. Named auxiliary expressions and operators
retain their indicated notation.

A language $L\subseteq A^+$ is \emph{factorial} if it contains every nonempty factor of each of its words. On $L\cup\{0\}$ define
\begin{equation}\label{eq:flat}
p\cdot q=
\begin{cases}
pq,&p,q\in L\text{ and }pq\in L,\\
0,&\text{otherwise},
\end{cases}
\qquad
p+q=
\begin{cases}
p,&p=q,\\
0,&p\ne q.
\end{cases}
\end{equation}
Denote the resulting algebra by $F(L)$. In particular, $0$ absorbs both operations.

\begin{lemma}\label{lem:flat}
The algebra $F(L)$ is an ai-semiring. A product of nonzero elements is their literal concatenation if that concatenation belongs to $L$, and is zero otherwise. A finite nonempty sum is nonzero if and only if all its summands have the same nonzero value.
\end{lemma}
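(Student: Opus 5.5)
The plan is to check the axioms directly, after first describing the two operations concretely. The addition on $L\cup\{0\}$ is the flat semilattice with $0$ on top: $p+p=p$, distinct elements add to $0$, and $0$ absorbs. So $(F(L),+)$ is a commutative idempotent semigroup. Associativity can be seen by noting that $p+q+r$, in any bracketing, equals $p$ when $p=q=r\neq0$ and equals $0$ otherwise. Induction on the number of summands then gives the last claim of the lemma: a finite nonempty sum is nonzero exactly when all summands are equal and nonzero.

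For multiplication, set $p\cdot q=pq$ when $p,q,pq\in L$, and $0$ otherwise. I would prove associativity together with the product description by showing that, for nonzero $p_1,\dots,p_k$, every bracketing of $p_1\cdots p_k$ equals the concatenation $p_1p_2\cdots p_k$ if that word lies in $L$, and $0$ otherwise.

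Factoriality is the key input. If the full concatenation lies in $L$, then every intermediate partial product is a factor of it, so it is also in $L$ and the computation never falls to $0$. Conversely, if some step produces $0$, then either a partial concatenation is outside $L$, so the whole word is outside $L$ by factoriality, or a factor was already $0$. Either way the result is $0$, which agrees with the description since the whole concatenation is not in $L$. Since $0$ absorbs, zero factors are handled at once. This gives both associativity and the second claim.

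For distributivity I would check $x(y+z)=xy+xz$ by cases; the right-hand law is symmetric.
\begin{itemize}
\item If $x=0$, both sides are $0$.
\item If $y=z$, both sides are $xy$, by idempotence.
\item If $y\neq z$, the left side is $x\cdot0=0$.
\end{itemize}
In the last case the right side must also be $0$. If $xy$ and $xz$ were both nonzero and equal, then $xy=xz$ as words, and cancellation in the free semigroup gives $y=z$, a contradiction. If one of them is $0$, the sum is $0$ anyway.

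The only point needing care is this case of distributivity, which uses left and right cancellation in $A^+$. The associativity argument is the more substantive one, but factoriality settles it directly.
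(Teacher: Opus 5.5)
Your proof is correct and follows essentially the same route as the paper: factoriality yields associativity and the description of products, flat addition is a semilattice whose sum description follows by induction, and distributivity reduces to the case $y\ne z$, which is settled by left (respectively right) cancellation of literal concatenations. The only difference is that you treat arbitrary bracketings of $p_1\cdots p_k$ directly. The paper instead checks the two bracketings of a triple product and then inducts.
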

\begin{proof}
Factoriality shows that both bracketings of a triple product give its concatenation when that concatenation belongs to $L$, and give zero otherwise. This proves associativity and, by induction, the assertion about products. Flat addition is a semilattice operation: $0$ is its greatest element and all other elements are pairwise incomparable.

For left distributivity, only the case $p\ne q$ needs consideration. If $sp=sq\ne0$, then $s,p,q$ are words and left cancellation of their literal concatenations gives $p=q$, a contradiction. Thus $sp+sq=0=s(p+q)$. Right distributivity follows by right cancellation. The assertion about sums follows immediately by induction.
\end{proof}

\begin{lemma}\label{lem:normal}
Every term in an ai-semiring is equivalent, using the ai-semiring axioms, to a finite nonempty sum of nonempty words. Every nonempty factor of a minimal word is minimal.
\end{lemma}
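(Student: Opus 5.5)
The statement has two parts, and I would prove them separately. The first is a normal-form assertion for terms. The second is a hereditary property of minimality under taking factors.

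For the first part, I would argue by structural induction on terms over the signature $\{+,\cdot\}$. A variable is already a one-term sum of a one-letter word. If $\bt=\bt_1+\bt_2$ and each $\bt_i$ is equivalent to a finite nonempty sum of nonempty words, then concatenating the two sums gives one for $\bt$. This uses associativity and commutativity of $+$. If $\bt=\bt_1\bt_2$ with $\bt_1\approx\sum_i\bu_i$ and $\bt_2\approx\sum_j\bv_j$, then two-sided distributivity, applied inductively to the number of summands, gives
\[
\bt\approx\sum_{i,j}\bu_i\bv_j .
\]
Each $\bu_i\bv_j$ is a nonempty word by associativity of multiplication. Idempotence of $+$ additionally lets us remove repeated summands, though we do not need it. No step here is difficult.

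For the second part, let $\bv$ be minimal for $S$ and let $\bw$ be a nonempty factor of $\bv$, so $\bv=\bp\bw\bq$ with $\bp,\bq\in X^*$. Suppose $S\models\bu\preceq\bw$, that is, $S\models\bu+\bw\approx\bw$, and we want to show $\bu=\bw$ literally. The plan is to multiply this identity on the left by $\bp$ and on the right by $\bq$, when these are nonempty. This is legitimate because identities are compatible with multiplication. Distributivity then gives
\[
S\models \bp\bu\bq+\bp\bw\bq\approx\bp\bw\bq,
\]
so $S\models\bp\bu\bq\preceq\bv$. Equivalently, we can appeal to compatibility of the additive order with multiplication. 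Minimality of $\bv$ now yields $\bp\bu\bq=\bp\bw\bq$ as words. Cancellation in the free semigroup $X^*$ then gives $\bu=\bw$.

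The only point that needs care is the case where $\bp$ or $\bq$ is empty. In that case we simply omit that multiplication; this matches the paper's convention that the empty word is used only as a context. Otherwise the argument is routine, and I do not expect any serious obstacle.
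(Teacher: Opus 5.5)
Your proposal is correct and follows essentially the same approach as the paper's proof. For the first part you distribute products over sums; for the second you multiply $S\models\bu\preceq\bw$ by the possibly empty contexts, apply minimality of the ambient word, and cancel in the free monoid, with your structural induction only spelling out the first part in more detail.
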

\begin{proof}
The first assertion follows by distributing products over sums and using associativity. For the second, write $\mathbf{W}=\boldsymbol{\alpha} \bt\boldsymbol{\beta}$, where $\mathbf{W}$ is minimal and $\bt$ is nonempty. If $S\models \bs\preceq \bt$, distributivity and monotonicity under multiplication give
\[
S\models\boldsymbol{\alpha} \bs\boldsymbol{\beta}\preceq\boldsymbol{\alpha} \bt\boldsymbol{\beta}=\mathbf{W}.
\]
Multiplication is omitted on an empty context side. Minimality gives the literal equality $\boldsymbol{\alpha} \bs\boldsymbol{\beta}=\boldsymbol{\alpha} \bt\boldsymbol{\beta}$. Cancellation in the free monoid gives $\bs=\bt$.
\end{proof}

We also need the following elementary coding fact.

\begin{lemma}\label{lem:code}
Let $C$ be a finite nonempty subset of a factorial language $L$. There is a finite nonempty prefix code $D\subseteq L$ such that $|D|\leq|C|$ and $C\subseteq D^+$. If $D=\{\bd_1,\ldots,\bd_\ell\}$, the morphism
\[
f:\{y_1,\ldots,y_\ell\}^+\longrightarrow A^+,
\qquad f(y_i)=\bd_i,
\]
is injective.
\end{lemma}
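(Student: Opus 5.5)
We need to construct a prefix code $D\subseteq L$ with $|D|\le|C|$ and $C\subseteq D^+$. The natural approach is induction on the total length $\sum_{\bc\in C}|\bc|$ (or on $|C|$ together with this length), repeatedly repairing violations of the prefix property.

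\emph{Base step.} If $C$ is already a prefix code, take $D=C$.

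\emph{Reduction step.} Otherwise there exist distinct $\bc,\bc'\in C$ with $\bc'=\bc\bw$ for some nonempty word $\bw$. Since $L$ is factorial, $\bw\in L$. Put
\[
C'=(C\setminus\{\bc'\})\cup\{\bw\}.
\]
Then $C'\subseteq L$, $|C'|\le|C|$, and the total length strictly drops by $|\bc|>0$. Moreover $\bc'=\bc\bw\in C'^+$, so $C\subseteq C'^+$. By induction there is a prefix code $D\subseteq L$ with $|D|\le|C'|\le|C|$ and $C'\subseteq D^+$. Since $D^+$ is closed under concatenation, $C\subseteq C'^+\subseteq D^+$. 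The induction terminates because the total length is a positive integer that strictly decreases.

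\emph{Injectivity.} Let $f$ be the morphism determined by $D=\{\bd_1,\dots,\bd_\ell\}$. Suppose $f(y_{i_1}\cdots y_{i_r})=f(y_{j_1}\cdots y_{j_s})$. Then $\bd_{i_1}$ and $\bd_{j_1}$ are both prefixes of the same word, so one is a prefix of the other. The prefix-code property forces $\bd_{i_1}=\bd_{j_1}$, and since the $\bd_i$ are listed distinctly, $i_1=j_1$. Cancelling this common prefix and inducting on $r+s$ gives equal sequences. In the terminal case, one side would be empty while the other is nonempty, which is impossible because all $\bd_i$ are nonempty.

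The only point needing care is the reduction step. One must check that the new element $\bw$ is nonempty and lies in $L$, which uses factoriality. One must also handle the possibility that $\bw$ already belongs to $C$; in that case $|C'|<|C|$, which is harmless.
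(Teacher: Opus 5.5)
Your proof is correct and is essentially the paper's argument: you use the same replacement of a longer word $\bu\bv$ by its suffix $\bv$ (which lies in $L$ by factoriality), the same total-length termination measure, and the same cancellation of prefix-comparable first codewords to get injectivity. One small slip: when $\bw$ already lies in $C$, the total length drops by $|\mathbf{c}'|$ rather than $|\mathbf{c}|$, but it still strictly decreases, and that is all your argument needs.
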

\begin{proof}
Start with the set $C$. Whenever two distinct current members are $\bu$ and $\bu\bv$ with $\bv\ne\eps$, replace $\bu\bv$ by $\bv$. The current set remains nonempty and its cardinality does not increase. Each former generator is a product of new generators, so $C\subseteq D^+$ is preserved. All new words are factors of old words and remain in $L$. The sum of the lengths of the current members strictly decreases. The procedure therefore terminates at a prefix code.

For injectivity, compare two equal concatenations of codewords. Their first codewords are prefix-comparable and hence equal. Cancel them and repeat. Nonemptiness of codewords ensures that both concatenations end at the same step, proving equality of the original words over the $y_i$.
\end{proof}

\section{Inherently nonfinitely based ai-semirings}\label{sec:infb}

We now prove the inherent nonfinite basis results stated in the
introduction.  The first step is to encode Zimin minimality in a
single flat factor semiring.

\paragraph{The Zimin factor semiring.}\label{sec:ziminfactor}

Define $\Finf=F(L_\infty)$ using \eqref{eq:flat}. Its multiplicative reduct is the Zimin factor semigroup used by Sapir \cite{Sapir1987a}. It is countably infinite, since it contains all the distinct one-letter words $x_i$.

\begin{proposition}\label{prop:membership}
For any ai-semiring $S$, every Zimin word is minimal for $S$ if and only if $\Finf\in\V(S)$.
\end{proposition}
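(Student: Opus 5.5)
The plan is to prove the two implications separately, using the flat structure of $\Finf$ described in Lemma~\ref{lem:flat} and the hereditary nature of minimality given by Lemma~\ref{lem:normal}. Throughout, I will treat $X$ as containing the letters $x_1,x_2,\dots$, so that words of $L_\infty$ can be read either as elements of $\Finf$ or as semiring words over $X$.

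\emph{(ii)$\Rightarrow$(i).} Assume $\Finf\in\V(S)$ and $S\models\bu\preceq Z_m$, that is, $S\models \bu+Z_m\approx Z_m$. Then this identity also holds in $\Finf$. I evaluate it under the assignment $x_i\mapsto x_i$ for $i\le m$, and send every other variable of $\bu$ to the letter $x_{m+1}$, which does not occur in $Z_m$.

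\begin{itemize}
\item The right side evaluates to $Z_m\in L_\infty$, which is nonzero.
\item By Lemma~\ref{lem:flat}, the sum $\bu+Z_m$ is nonzero only if $\bu$ takes the same value $Z_m$.
\item A nonzero product is the literal concatenation of the letter images. So the value of $\bu$ is a word of length $|\bu|$ that is obtained from $\bu$ letter by letter.
\item If $\bu$ contained a variable outside $\{x_1,\dots,x_m\}$, this value would contain $x_{m+1}$, which is impossible. Hence the value of $\bu$ is $\bu$ itself, and $\bu=Z_m$ literally.
\end{itemize}

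\emph{(i)$\Rightarrow$(ii).} By Birkhoff's theorem it suffices to show that every identity $\bs\approx\bt$ of $S$ holds in $\Finf$. By Lemma~\ref{lem:normal}, write $\bs=\sum\bu_j$ and $\bt=\sum\bv_k$ as sums of words.

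Fix an assignment $\varphi$ into $\Finf$. By symmetry it suffices to show: if $\varphi(\bs)=\bw\neq0$, then $\varphi(\bt)=\bw$.

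\begin{itemize}
\item Since $\varphi(\bs)=\bw\neq0$, every variable of $\bs$ is sent to a nonzero word. Each summand $\bu_j$ then evaluates to the literal concatenation $\bw\in L_\infty$. In particular $\bw$ is a factor of some $Z_N$.
\item Define a nonerasing substitution $\sigma$ on $X^+$ as follows. For variables $y$ with $\varphi(y)\neq0$, let $\sigma(y)=\varphi(y)$, viewed as a word over the letters $x_i$. For variables $y$ of $\bt$ with $\varphi(y)=0$, let $\sigma(y)$ be a fresh letter $x_M$ not occurring in $\bw$.
\item Then $\sigma(\bu_j)=\bw$ literally for every $j$. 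By idempotence, $S\models\sigma(\bs)\approx\bw$.
\item Applying $\sigma$ to $\bs\approx\bt$ gives $S\models\sigma(\bt)\approx\bw$. Hence $S\models\sigma(\bv_k)\preceq\bw$ for every $k$, using $\bv+\bt\approx\bt$ for each summand $\bv$ of $\bt$.
\item Since $Z_N$ is minimal, Lemma~\ref{lem:normal} shows that $\bw$ is minimal. Therefore $\sigma(\bv_k)=\bw$ literally.
\item This excludes any variable with $\varphi$-value $0$, because its fresh letter would appear in $\bw$. So $\sigma(\bv_k)$ is exactly the concatenation computing $\varphi(\bv_k)$, and $\varphi(\bv_k)=\bw$ by Lemma~\ref{lem:flat}.
\item All summands of $\bt$ therefore have value $\bw$, so $\varphi(\bt)=\bw$.
\end{itemize}

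Consequently the two sides are either both $0$ or equal, and the identity holds in $\Finf$.

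The only delicate point is this translation between assignments into $\Finf$ and word substitutions. The main obstacle is handling variables sent to $0$, which may occur on only one side since identities need not be regular. The fresh-letter trick combined with literal minimality handles this, and it requires that substitutions be nonerasing, which matches the paper's conventions.
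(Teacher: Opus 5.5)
Your proposal is correct and follows the same route as the paper. The forward direction evaluates the absorption in $\Finf$ under the letter-naming assignment. The converse turns an assignment into a nonerasing word substitution, with fresh letters for zero-valued variables, and then uses minimality of factors of Zimin words via Lemma~\ref{lem:normal}. Your two small deviations are harmless: you send all extra variables to the single letter $x_{m+1}$, and you require the fresh letter only to avoid $\bw$.
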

\begin{proof}
First, all Zimin words are minimal for $\Finf$. Indeed, if $\Finf\models \bu+Z_m\approx Z_m$, assign each variable the one-letter word bearing its name, renaming any additional variables as necessary. The value of $Z_m$ is the nonzero word $Z_m$. Flat addition forces the value of $\bu$ to be exactly this word. Nonzero evaluation is literal concatenation, so $\bu=Z_m$. It follows at once that $\Finf\in\V(S)$ implies Zimin minimality for $S$.

Conversely, suppose all Zimin words are minimal for $S$. Take any identity of $S$ and normalize it to
\[
\mathbf{P}=\bu_1+\cdots+\bu_p\approx \bv_1+\cdots+\bv_q=\mathbf{Q}.
\]
Consider an assignment $\varphi$ into $\Finf$. If both sides are zero, they agree. Otherwise interchange the sides if necessary and assume $\varphi(\mathbf{P})=w\ne0$. Then each $\bu_i$ evaluates to the literal word $w$, and every variable in $\mathbf{P}$ has nonzero value.

For each nonzero-valued variable, substitute its value as a word. For each zero-valued variable, substitute a fresh letter absent from all these nonzero values. This defines a nonerasing word substitution $\theta$, and $\theta(\bu_i)=w$ literally for every $i$. Substitution in the original identity therefore gives
\[
S\models w\approx\sum_{j=1}^q\theta(\bv_j),
\qquad S\models\theta(\bv_j)\preceq w\quad(1\leq j\leq q).
\]
Every finite factor of $Z_\infty$ is a factor of some $Z_m$, so $w$ is minimal for $S$ by Lemma~\ref{lem:normal}. Thus $\theta(\bv_j)=w$ literally. In particular, none of these words contains a fresh letter used for a zero-valued variable. Returning to $\varphi$, each $\bv_j$ has value $w$, whence $\varphi(\mathbf{Q})=w$.

We have proved $\Finf\models\Id(S)$. Birkhoff's theorem yields $\Finf\in\V(S)$.
\end{proof}

\begin{lemma}\label{lem:largest}
The letter with largest index in a nonempty finite factor of $Z_\infty$ occurs exactly once. In particular, $Z_\infty$ has no nonempty square as a factor.
\end{lemma}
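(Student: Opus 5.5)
The plan is to argue through the structure of the Zimin words themselves. The key observation is that in $Z_m$ the letter $x_m$ occurs exactly once, in the middle. More generally, the occurrences of $x_k$ in $Z_m$ (for $k\le m$) sit exactly at the positions $j\,2^{k}-2^{k-1}$ for $j\ge1$. Equivalently, the letter at position $i$ of $Z_\infty$ is $x_{\nu(i)+1}$, where $\nu(i)$ is the $2$-adic valuation of $i$. I would first establish this formula by induction on $m$ using $Z_{m+1}=Z_mx_{m+1}Z_m$, since $|Z_m|=2^m-1$. The middle position $2^m$ carries $x_{m+1}$. Each position $2^m+i$ in the second copy with $1\le i\le 2^m-1$ has $\nu(2^m+i)=\nu(i)$, so the second copy reproduces the first.

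Given this description, take a nonempty finite factor $\bw$ of $Z_\infty$ occupying the positions $a,a+1,\dots,b$. Among the integers in $[a,b]$, choose one, $i$, of maximal $2$-adic valuation $r$. The letter of largest index in $\bw$ is then $x_{r+1}$.

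It remains to show that only one integer in $[a,b]$ has valuation $r$. Suppose two distinct integers $i<i'$ in $[a,b]$ both have valuation exactly $r$. Then they are consecutive odd multiples of $2^r$ at best, so the even multiple $i+2^r$ of $2^r$ lies strictly between them. This number is divisible by $2^{r+1}$ and lies in $[a,b]$, contradicting the maximality of $r$. Hence $x_{r+1}$ occurs exactly once in $\bw$.

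The square-freeness then follows at once. If $\bu\bu$ were a nonempty factor, the letter of largest index in $\bu\bu$ would occur in $\bu$, and hence twice in $\bu\bu$, contradicting the first part.

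The main technical step is the valuation formula. If one prefers to avoid it, an alternative is a direct induction on $m$: a factor of $Z_{m+1}$ either lies inside one copy of $Z_m$, where the induction hypothesis applies, or contains the central $x_{m+1}$, which is the unique occurrence of the largest letter since $Z_m$ contains no $x_{m+1}$. This induction route is even shorter, and I would actually present it. The only care needed is to note that every finite factor of $Z_\infty$ lies in some prefix $Z_{m}$.
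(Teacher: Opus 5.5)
Your proposal is correct, and the induction route you say you would actually present is essentially the paper's proof: a factor of $Z_{m}=Z_{m-1}x_mZ_{m-1}$ either avoids $x_m$ and lies in one copy of $Z_{m-1}$, or contains the single central $x_m$; square-freeness follows because the largest-index letter of $\bu$ would occur twice in $\bu\bu$. Your $2$-adic valuation argument (the letter at position $i$ is $x_{\nu(i)+1}$, and an integer interval contains exactly one integer of maximal valuation) is also correct and gives a more explicit description of $Z_\infty$, but it is not needed.
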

\begin{proof}
The first assertion follows inductively for factors of $Z_m=Z_{m-1}x_mZ_{m-1}$. A factor containing $x_m$ contains it once, and a factor avoiding $x_m$ lies in one of the two copies of $Z_{m-1}$. Every finite factor of $Z_\infty$ lies in some $Z_m$. A square would contain its largest-index letter at least twice, proving the second assertion.
\end{proof}

\begin{proposition}\label{prop:local}
The variety $\V(\Finf)$ is locally finite. More precisely, for $n\geq1$, put
\[
M_n=\sum_{j=1}^{2^n-1}n^j.
\]
Then its free algebra on $n$ generators has at most $2^{M_n}$ elements. Moreover,
\begin{equation}\label{eq:squares}
\Finf\models x^2\approx y^2.
\end{equation}
\end{proposition}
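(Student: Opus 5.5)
The square identity \eqref{eq:squares} comes first, directly from Lemma~\ref{lem:largest}. For any assignment into $\Finf$, if $x$ takes the value $0$ then $x^2=0$. If $x$ takes a nonzero value $w$, then $ww$ is a nonempty square, so $ww\notin L_\infty$. By Lemma~\ref{lem:flat}, $x^2=0$ in this case too. Hence both sides of $x^2\approx y^2$ always evaluate to $0$.

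For local finiteness and the bound, I would realise the free algebra on $n$ generators as the subalgebra of a power of $\Finf$ generated by the projections. Equivalently, I would count term functions in $n$ variables. Two terms are identified in the free algebra exactly when they induce the same $n$-ary term function on $\Finf$. By Lemma~\ref{lem:normal}, every term is a finite nonempty sum of nonempty words over $\{x_1,\dots,x_n\}$, and sums are determined by their set of summands. So it suffices to show two things. First, every word that is not identically $0$ on $\Finf$ lies in a fixed set $W_n$ with $|W_n|\le M_n$. Second, every word outside $W_n$ induces the constant-zero function.

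Words identically zero on $\Finf$ are absorbing for $+$, since $0$ is the top element. The set of all such words induces a single function, the zero function. So a term function is determined by the subset of $W_n$ occurring among its summands, together with a flag recording whether some identically-zero summand occurs. Adding a zero summand gives the zero function, and the empty subset with the flag set also gives $0$. Thus the number of term functions is at most $2^{|W_n|}$, up to the single extra zero function. Some care is needed to get exactly $2^{M_n}$. The zero function is already produced by any nonempty subset of words whose supports cannot simultaneously be nonzero, or it is simply absorbed into the count. I expect this to be a bookkeeping adjustment and not a real difficulty.

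The main step is to bound the length of a word $\bu$ over $n$ letters that has a nonzero value under some assignment $\varphi$. Such a value $\varphi(\bu)$ is the concatenation of the nonempty words $\varphi(x_i)$, and it lies in $L_\infty$. It therefore suffices to show that any word $\bu$ in $n$ letters of length at least $2^n$ contains a factor $\bv\bv$, or more generally that its image under a nonerasing substitution contains a square. I would argue by induction on $n$, using Lemma~\ref{lem:largest}. In $\varphi(\bu)$, the largest-index letter occurring is unique. It comes from exactly one occurrence of some variable $x$ in $\bu$, so $x$ occurs once in $\bu$. Removing that occurrence splits $\bu$ into two words over at most $n-1$ letters. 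Each piece maps to a factor of $Z_\infty$, so by induction each has length at most $2^{n-1}-1$. This gives length at most $2^n-1$. The base case $n=1$ is the square-freeness from Lemma~\ref{lem:largest}: $x^2$ maps to a square.

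Hence $W_n$ consists of words of length $1,\dots,2^n-1$, and $|W_n|\le\sum_{j=1}^{2^n-1}n^j=M_n$, which gives the bound. Local finiteness of $\V(\Finf)$ follows, since finitely generated free algebras are finite.
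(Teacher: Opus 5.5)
Your proof is correct and is essentially the paper's: the same square argument, the same reduction to counting $n$-ary term functions of $\Finf$, and the same induction on the uniquely occurring largest-index letter, which bounds by $2^n-1$ the length of any word with a nonzero evaluation. Two small repairs are needed.

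First, drop the sentence claiming it suffices that every long word, or its image, contains a square. That claim is false for $n\geq3$: long square-free words over three letters exist, and a letter-to-letter assignment keeps them square-free. Your induction in fact proves the correct statement, namely that such words have no nonzero evaluation.

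Second, the final count needs no extra care. The empty sum is not a term, so nonempty subsets of $W_n$ give at most $2^{|W_n|}-1$ functions, and the zero function brings the total to $2^{|W_n|}\leq 2^{M_n}$.
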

\begin{proof}
We first show that a word in at most $n$ variables with a nonzero evaluation in $\Finf$ has length at most $2^n-1$. Regard the nonzero variable values as nonempty words. Their concatenation belongs to $L_\infty$. Its largest-index letter occurs once by Lemma~\ref{lem:largest}. A variable whose image contains this letter must therefore occur exactly once in the original word. Write the word as $\bu y\bv$, where $y$ is this variable. Each nonempty side uses at most $n-1$ variables and has nonzero evaluation. Induction, allowing an empty side to have length zero, gives
\[
|\bu y\bv|\leq(2^{n-1}-1)+1+(2^{n-1}-1)=2^n-1.
\]
The base case is included by taking the bound for zero variables to be zero.

Consequently, every word of length at least $2^n$ in $n$ variables represents the zero term function. Normalize an arbitrary term by Lemma~\ref{lem:normal}. If it has a long summand, it is identically zero because zero absorbs addition. Otherwise it is a nonempty sum drawn from the $M_n$ short words. Additive commutativity and idempotence give at most $2^{M_n}-1$ such sums, together with the zero function. Elements of the relatively free algebra are term functions modulo the identities of $\Finf$, proving the stated bound.

Finally, square-freeness implies $a^2=0$ for every $a\in\Finf$, including $a=0$. This proves the constant-free identity \eqref{eq:squares}.
\end{proof}

\paragraph{Sapir's factor languages and guarded identities.}\label{sec:sapir}

Fix $k\geq1$, put $r=6k+2$, and let
\[
A_k=\{a_{ij}:1\leq i,j\leq r\}.
\]
Define a morphism $\gamma:A_k^+\to A_k^+$ by
\begin{equation}\label{eq:gamma}
\gamma(a_{ij})=a_{i1}a_{j2}a_{i3}a_{j4}\cdots a_{i,r-1}a_{jr}.
\end{equation}
Let $L_k$ be the nonempty factors of the words $\gamma^m(a_{11})$, for $m\geq1$, and set
\[
F_k=F(L_k),\qquad V_k^0=(L_k\cup\{0\},\cdot).
\]
This is the construction in \cite[p.~947]{ADV2012}. We use the following combinatorial theorem in exactly its ordinary semigroup form.

\begin{proposition}[Sapir; Auinger--Dolinka--Volkov]\label{prop:sapir}
Let $\bs,\bt$ be nonempty words using at most $k$ variables in total. If $V_k^0$ fails $\bs\approx \bt$, then this single identity entails, in the class of semigroups, an identity $Z_{k+1}\approx \bq$ with $\bq\ne Z_{k+1}$ literally.
\end{proposition}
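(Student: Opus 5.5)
The plan is to combine Zimin's characterization of unavoidable words with the avoidance properties of the language $L_k$, which is the strategy of Sapir as adapted in \cite{ADV2012}. Suppose $V_k^0$ fails $\bs\approx\bt$ under an assignment $\varphi$. Since $0$ is absorbing, at least one side is nonzero; interchange the sides if necessary so that $\varphi(\bs)=w\ne0$ and $\varphi(\bt)\ne w$. Then $w\in L_k$, and the values of the variables of $\bs$ are nonempty words whose concatenation according to $\bs$ is a factor of some $\gamma^m(a_{11})$.

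\textbf{Step 1: $\bs$ is unavoidable.} I would show that $L_k$ avoids every avoidable word in at most $k$ variables. The intended route is a desubstitution lemma for $\gamma$. Because $r=6k+2$ is large compared with $k$, and the blocks $\gamma(a_{ij})$ alternate second indices $1,2,\dots,r$, every sufficiently long factor of $L_k$ parses uniquely into $\gamma$-blocks. Hence any nonzero image of a pattern can be pulled back to an image of the same pattern under $\gamma^{-1}$, after deleting or replacing the variables whose images are short. Iterating this pullback reproduces the Bean--Ehrenfeucht--McNulty/Zimin deletion process, so $\bs$ reduces to the empty word. By Zimin's theorem, there is then a nonerasing substitution $\theta$ with $\theta(\bs)$ a factor of $Z_k$.

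\textbf{Step 2: produce the Zimin identity.} Write $Z_{k+1}=\boldsymbol\alpha\,\theta'(\bs)\,\boldsymbol\beta$, where $\theta'$ is chosen so that $\theta'(\bs)$ sits across an occurrence of $x_{k+1}$ or inside a copy of $Z_k$. Applying $\bs\approx\bt$ in context gives $Z_{k+1}\approx\boldsymbol\alpha\,\theta'(\bt)\,\boldsymbol\beta=:\bq$, and this is entailed in the class of semigroups.

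\textbf{Main obstacle: $\bq\ne Z_{k+1}$ literally.} This requires $\theta'(\bt)\ne\theta'(\bs)$, which does not follow from $\bs\ne\bt$ alone, since $\theta'$ need not be injective. My plan is to extract the information from the failing evaluation $\varphi(\bt)\ne w$. Either $\bt$ contains a variable absent from $\bs$, or $\bt$ differs from $\bs$ in letter order or multiplicity; in the second case $\varphi(\bt)$ is zero or a different word. I would choose $\theta'$ to respect the same desubstitution data that produced $w$, mapping variables to distinct Zimin letters along the lines of the coding in Lemma~\ref{lem:code}. Then $\theta'$ distinguishes $\bs$ from $\bt$ exactly as $\varphi$ does, using the fact that factors of $Z_{k+1}$ are square-free with a unique largest letter (Lemma~\ref{lem:largest}). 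If this direct choice fails, the fallback is to follow Sapir's original argument: the pullback chain is applied to $\bt$ simultaneously with $\bs$, and it is tracked to the first level where the two sides differ.
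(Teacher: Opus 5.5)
\textbf{There is a genuine gap, at the step you yourself call the main obstacle.} The paper gives no proof of this proposition. It imports it from \cite{ADV2012} (Proposition~2.5, attributed to Sapir) and uses it only as a black box in Lemma~\ref{lem:guard}, so your outline has to carry the whole argument.

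Steps~1 and~2 give you \emph{some} substitution $\theta'$ with $\theta'(\bs)$ a factor of $Z_{k+1}$. Nothing ties that $\theta'$ to the failing assignment $\varphi$, so nothing rules out $\theta'(\bt)=\theta'(\bs)$, and then the derived identity is trivial. Your proposed repair does not work:
\begin{itemize}
\item Lemma~\ref{lem:code} builds prefix codes inside the factor language $L$. It says nothing about Zimin instances.
\item Sending the variables of $\bs$ to distinct Zimin letters is impossible in general. In $Z_m$ the letter $x_i$ occupies exactly the positions $2^{i-1}(2j+1)$, so two occurrences of the same letter in a Zimin word are always separated by an odd number of letters. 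Hence for $\bs=xyzx$ no substitution sending $x,y,z$ to single letters places $\theta'(\bs)$ inside any $Z_m$.
\item Square-freeness and the unique largest letter (Lemma~\ref{lem:largest}) only restrict which $\theta'$ can occur. They do not turn $\varphi(\bt)\ne\varphi(\bs)$ into $\theta'(\bt)\ne\theta'(\bs)$.
\end{itemize}
So the sentence ``$\theta'$ distinguishes $\bs$ from $\bt$ exactly as $\varphi$ does'' is an unproved transfer principle, and it is the substance of the proposition.

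Only the subcase $\con(\bt)\not\subseteq\con(\bs)$ is easy: send a variable of $\bt$ that is missing from $\bs$ to a letter not occurring in $Z_{k+1}$. In every other case you need a real argument that a distinction visible in $L_k$ survives as a distinction visible inside $Z_{k+1}$. Your fallback is to run the $\gamma$-desubstitution on $\bs$ and $\bt$ simultaneously, tracking both sides through each deletion of short-image variables, so that identification at the Zimin level would force $\varphi(\bt)=\varphi(\bs)$. That is where the argument would have to be carried out, and the proposal names it without doing it.

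Step~1 is also only asserted. You never show how the bound $r=6k+2$, relative to at most $k$ variables, lets variable boundaries be realigned with $\gamma$-blocks, nor why the variables with short images form a free set. But even granting Step~1 in full, the proposal does not reach $\bq\ne Z_{k+1}$.
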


This is \cite[Proposition~2.5]{ADV2012}, where it is attributed to Sapir. No semiring entailment is being asserted here.

\begin{lemma}\label{lem:return}
Every letter of $A_k$ occurs in $\gamma^3(c)$ for every $c\in A_k$. For every $\bw\in L_k$ there exists a nonempty $\bd\in L_k$ such that $\bw\bd\bw\in L_k$. The ai-semiring $F_k$ is infinite and is generated by its $r^2$ letters.
\end{lemma}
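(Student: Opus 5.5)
We prove the three assertions of Lemma~\ref{lem:return} in order, reading everything off the explicit form~\eqref{eq:gamma} of $\gamma$.

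\textbf{Every letter occurs in $\gamma^3(c)$.} Let $c=a_{ij}$. Then $\gamma(c)$ contains $a_{i1}$ and $a_{j2}$. Next, $\gamma(a_{i1})$ contains $a_{i,s}$ for every odd $s$ and $a_{1,s}$ for every even $s$. Likewise $\gamma(a_{j2})$ contains $a_{j,s}$ for odd $s$ and $a_{2,s}$ for even $s$. In particular $\gamma^2(c)$ contains $a_{p1}$ for some $p$ (for instance $p=i$) and $a_{q2}$ for every $q$, since each even-indexed letter $a_{q,2t}$ appears. A cleaner route is to note that $\gamma(a_{pq})$ contains $a_{p1}$ and $a_{q2}$, and that $\gamma^2(c)$ contains every letter $a_{u v}$ with $v$ ranging over all second indices $1,\dots,r$. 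Indeed, $\gamma(a_{i1})\gamma$-expanded contributes second coordinates $1,\dots,r$ in alternation. So $\gamma^2(c)$ contains letters $a_{u,v}$ for all $v$. Applying $\gamma$ once more, $\gamma(a_{u,v})$ contains $a_{v,2}$, $a_{v,4},\dots$ at even positions, so $\gamma^3(c)$ contains $a_{v,s}$ for all $v$ and all even $s$. Symmetrically, using a letter $a_{v,w}$ with arbitrary first index $v$ (available in $\gamma^2(c)$ by the same count applied to odd positions), we get $a_{v,s}$ for odd $s$. I will organise this index bookkeeping carefully: $\gamma(a_{pq})$ realises exactly the pairs $(p,\text{odd})$ and $(q,\text{even})$, and two applications make both coordinates free. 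This is routine but is the step where an off-by-one in the iteration count could occur.

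\textbf{Return words.} Let $\bw\in L_k$. Then $\bw$ is a factor of $\gamma^m(a_{11})$ for some $m$, and hence of $\gamma^m(e)$ for some factor $e$ of length at most $2$ of $\gamma(a_{11})$, or simply of $\gamma^{m}(a_{11})$ itself. By the first part, the letter $a_{11}$ occurs at least twice in $\gamma^3(a_{11})$, since $\gamma^3(a_{11})$ has length $r^3>r^2$ and we may check that $a_{11}$ appears in it at two positions, e.g.\ within $\gamma^2$ of two distinct letters. Write $\gamma^3(a_{11})=\boldsymbol\alpha a_{11}\boldsymbol\beta a_{11}\boldsymbol\gamma$. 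Applying $\gamma^m$ shows that $\gamma^m(a_{11})\,\gamma^m(\boldsymbol\beta)\,\gamma^m(a_{11})$ is a factor of $\gamma^{m+3}(a_{11})$, so it lies in $L_k$. Put $\bw=\bx\bw'$-wise: with $\gamma^m(a_{11})=\bp\bw\bs$, take $\bd=\bs\gamma^m(\boldsymbol\beta)\bp$ if this is nonempty, and otherwise insert one extra copy. To guarantee nonemptiness, choose the two occurrences of $a_{11}$ non-adjacent. Then $\bw\bd\bw$ is a factor of a word in $L_k$, hence lies in $L_k$, and $\bd$, as a factor, also lies in $L_k$.

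\textbf{Infinitude and generation.} By Lemma~\ref{lem:flat}, $F_k$ is an ai-semiring. Since $|\gamma^m(a_{11})|=r^m\to\infty$, the language $L_k$ is infinite, so $F_k$ is infinite. Every nonzero element is a word over $A_k$ lying in $L_k$. Its letters lie in $L_k$ because $L_k$ is factorial and, by the first part, every letter occurs. The product of these letters equals the word by Lemma~\ref{lem:flat}. Finally, $0=a_{11}+a_{12}$. Hence the $r^2$ letters generate $F_k$.
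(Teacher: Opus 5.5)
Your proof is correct. The first and third assertions follow the paper's argument in substance. In the first you track column indices through $\gamma,\gamma^2,\gamma^3$; your ``cleaner route'' is the part that actually proves it, since the draft before it asserts without reason that $\gamma^2(c)$ contains every $a_{q2}$. In the third you use factoriality, literal concatenation and $a_{11}+a_{12}=0$, as the paper does.

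For return words you take a slightly different route. The paper writes $\gamma(a_{11})=c_1\cdots c_r$ and $\gamma^{m+4}(a_{11})=\gamma^{m+3}(c_1)\cdots\gamma^{m+3}(c_r)$. By the first assertion every block contains $\gamma^m(a_{11})$, hence $\bw$. Occurrences in the first and third blocks enclose the whole second block, so nothing beyond the first assertion is needed.

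You instead push two non-adjacent occurrences of $a_{11}$ in $\gamma^3(a_{11})$ forward by $\gamma^m$. This gives an explicit $\bd=\bs\,\gamma^m(\boldsymbol\beta)\,\bp$, but it needs a fact that your cited reasons do not supply. The first assertion yields only one occurrence of $a_{11}$ in $\gamma^3(a_{11})$, and $r^3>r^2$ only shows that some letter repeats.

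The fact itself is immediate. Each $\gamma(a_{1s})=a_{11}a_{s2}a_{13}\cdots$ begins with $a_{11}$. Hence $\gamma^2(a_{11})=\gamma(a_{11})\gamma(a_{12})\cdots\gamma(a_{1r})$, which is a prefix of $\gamma^3(a_{11})$, has $a_{11}$ at positions $1$ and $r+1$. With this, your construction goes through.
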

\begin{proof}
For each $p$, the word $\gamma(c)$ contains a letter with second index $p$. Its image contains $a_{pq}$ for every even $q$. Thus $\gamma^2(c)$ contains all even-column letters, in particular every $a_{p2}$. Their images contain all odd-column letters. Also, choosing any letter $e$ in $\gamma(c)$ shows that $\gamma^3(c)$ contains the factor $\gamma^2(e)$ and hence all even-column letters as well.

Choose $m\geq1$ such that $\bw$ is a factor of $\gamma^m(a_{11})$. Since $a_{11}$ occurs in $\gamma^3(c)$ for every letter $c$, the word $\gamma^{m+3}(c)$ contains $\bw$. Write $\gamma(a_{11})=c_1\cdots c_r$. In
\[
\gamma^{m+4}(a_{11})=\gamma^{m+3}(c_1)\cdots\gamma^{m+3}(c_r),
\]
select one occurrence of $\bw$ in the first block and another in the third. The intervening word $\bd$ contains the entire second block and is nonempty. The spanning factor is $\bw\bd\bw$; factoriality also gives $\bd\in L_k$.

All letters belong to $L_k$. They generate every word by concatenation, and two distinct letters have sum zero. The words $\gamma^m(a_{11})$ have unbounded lengths $r^m$, so $F_k$ is infinite. Lemma~\ref{lem:flat} proves the ai-semiring assertion.
\end{proof}

\begin{lemma}[Guarded identity]\label{lem:guard}
Suppose all Zimin words are minimal for $S$ and
\[
S\models \bs\preceq \bt,\qquad |\con(\bs)\cup\con(\bt)|\leq k-1.
\]
If $z$ is a fresh variable, then $V_k^0\models \bt z\bt\approx \bs z\bt$.
\end{lemma}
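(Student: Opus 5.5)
The plan is to argue by contraposition, using Proposition~\ref{prop:sapir} to turn a failure in $V_k^0$ into a semigroup derivation that starts at a Zimin word. Then I will show that every step of such a derivation is forced to be trivial in the presence of $S$.

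Suppose $V_k^0$ fails $\bt z\bt\approx \bs z\bt$. Since $z$ is fresh and $|\con(\bs)\cup\con(\bt)|\le k-1$, this identity uses at most $k$ variables. Proposition~\ref{prop:sapir} therefore shows that it entails, in the class of semigroups, some identity $Z_{k+1}\approx \bq$ with $\bq\neq Z_{k+1}$ literally. Semigroup entailment is witnessed by a finite chain of words
\[
Z_{k+1}=\bw_0,\bw_1,\ldots,\bw_n=\bq .
\]
In each step one passes from $\bw_i=\boldsymbol{\alpha}\,\sigma(\bp)\,\boldsymbol{\beta}$ to $\bw_{i+1}=\boldsymbol{\alpha}\,\sigma(\bp')\,\boldsymbol{\beta}$. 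Here $\{\bp,\bp'\}=\{\bt z\bt,\bs z\bt\}$, $\sigma$ is a nonerasing substitution and $\boldsymbol{\alpha},\boldsymbol{\beta}$ are possibly empty contexts. I will show by induction on $i$ that every $\bw_i$ equals $Z_{k+1}$ literally. This contradicts $\bq\ne Z_{k+1}$.

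For the inductive step, assume $\bw_i=Z_{k+1}$ and split into two cases according to the direction of the rewrite. First, from $S\models\bs\preceq\bt$, compatibility of the order with multiplication gives
\[
S\models \sigma(\bs)\sigma(z)\sigma(\bt)\preceq\sigma(\bt)\sigma(z)\sigma(\bt)
\quad\text{and}\quad
S\models \sigma(\bs)\sigma(z)\sigma(\bs)\preceq\sigma(\bs)\sigma(z)\sigma(\bt).
\]
\emph{Case 1: the step replaces $\sigma(\bt z\bt)$ by $\sigma(\bs z\bt)$.} Multiplying by the contexts gives $S\models \bw_{i+1}\preceq \bw_i=Z_{k+1}$. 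Minimality of $Z_{k+1}$ then forces $\bw_{i+1}=Z_{k+1}$.

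\emph{Case 2: the step replaces $\sigma(\bs z\bt)$ by $\sigma(\bt z\bt)$.} This is the direction where the inequality points the wrong way, and it is exactly where the guard $z$ is used. Here $\sigma(\bs)\sigma(z)\sigma(\bt)$ is a factor of $Z_{k+1}$, so it is minimal by Lemma~\ref{lem:normal}. The second displayed inequality and this minimality give
\[
\sigma(\bs)\sigma(z)\sigma(\bs)=\sigma(\bs)\sigma(z)\sigma(\bt)
\]
literally. Cancelling the common prefix in the free monoid yields $\sigma(\bs)=\sigma(\bt)$. Hence $\sigma(\bs z\bt)=\sigma(\bt z\bt)$, so the step is trivial and again $\bw_{i+1}=Z_{k+1}$.

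The main point needing care is Case~2. Without the context $z\bt$ one could not compare $\sigma(\bs)$ with $\sigma(\bt)$ using only the inequality available in $S$. The guarded form lets us push the inequality onto a word of the shape $\sigma(\bs)\sigma(z)\sigma(\bs)$ lying below a minimal factor of a Zimin word.

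A secondary point is to confirm that the entailment in Proposition~\ref{prop:sapir} is of this elementary-derivation form, with both directions of the identity allowed. This is the standard description of equational consequence for semigroups, that is, the fully invariant congruence generated by the identity. With both cases handled, the induction closes, and the contradiction proves $V_k^0\models\bt z\bt\approx\bs z\bt$.
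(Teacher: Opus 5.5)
Your proof is correct and takes essentially the same approach as the paper: contraposition via Proposition~\ref{prop:sapir}, a chain of elementary replacements starting at a Zimin word, and minimality of factors of Zimin words forcing every step to be trivial. The paper avoids your case split by observing that in either direction the replaced factor ends with $\sigma(\bt)$, which is therefore minimal, so $S\models\sigma(\bs)\preceq\sigma(\bt)$ immediately gives $\sigma(\bs)=\sigma(\bt)$; this also shows that in this lemma the guarding is done by the trailing $\bt$, not by $z$, whose role only appears in Lemma~\ref{lem:transfer}.
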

\begin{proof}
Let $\eta$ be the semigroup identity $\bt z\bt\approx \bs z\bt$. We claim that no nontrivial Zimin identity follows from $\eta$. Semigroup equational consequence can be represented by finite sequences of elementary replacements
\[
\boldsymbol{\alpha}\theta(\bt z\bt)\boldsymbol{\beta}\longleftrightarrow\boldsymbol{\alpha}\theta(\bs z\bt)\boldsymbol{\beta},
\qquad \boldsymbol{\alpha},\boldsymbol{\beta}\in X^*,
\]
with $\theta$ nonerasing. To recall why, the relation defined by these finite sequences is a congruence on the free semigroup, and its quotient satisfies $\eta$. Thus every semigroup consequence of $\eta$ identifies words connected by such a sequence.

Consider any elementary replacement starting at a literal Zimin word $Z_m$. In either direction, the replaced factor contains the suffix $\theta(\bt)$. This nonempty word is a factor of $Z_m$, hence is minimal for $S$ by Lemma~\ref{lem:normal}. Substitution gives $S\models\theta(\bs)\preceq\theta(\bt)$, so $\theta(\bs)=\theta(\bt)$ literally. The replacement therefore changes nothing. Induction along a replacement sequence shows that $Z_m$ cannot be changed to a different word.

The identity $\eta$ has at most $k$ variables. If it failed in $V_k^0$, Proposition~\ref{prop:sapir} would contradict the claim. Hence it holds in $V_k^0$.
\end{proof}

\begin{lemma}[Transfer with zero values allowed]\label{lem:transfer}
Let $\nu$ be an assignment into $F_k$.  Under the hypotheses of
Lemma~\ref{lem:guard},
\[
\nu(\bt)\ne0\quad\Longrightarrow\quad\nu(\bs)=\nu(\bt).
\]
No restriction is imposed on the values of variables outside $\bt$.
\end{lemma}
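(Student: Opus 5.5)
The plan is to evaluate the guarded identity of Lemma~\ref{lem:guard} in $F_k$ under a suitable extension of $\nu$, and then cancel in the free semigroup. Put $w=\nu(\bt)$. By hypothesis $w\ne0$, so by Lemma~\ref{lem:flat} $w$ is a word in $L_k$. Lemma~\ref{lem:return} then gives a nonempty $\bd\in L_k$ with $w\bd w\in L_k$.

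Let $z$ be a fresh variable occurring in neither $\bs$ nor $\bt$. Extend $\nu$ to $\nu'$ by setting $\nu'(z)=\bd$ and leaving all other variables unchanged; in particular, variables of $\bs$ outside $\bt$ keep their values, possibly $0$. The multiplicative reduct of $F_k$ is exactly $V_k^0$. Lemma~\ref{lem:guard} gives $V_k^0\models \bt z\bt\approx \bs z\bt$, so this semigroup identity holds for every assignment into $F_k$, and in particular
\[
\nu'(\bs)\cdot \bd\cdot w=\nu'(\bt)\cdot\bd\cdot w=w\bd w .
\]
The right-hand side is nonzero because $w\bd w\in L_k$ and Lemma~\ref{lem:flat} applies.

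Since $0$ is absorbing, the left-hand side is nonzero only if $\nu(\bs)=\nu'(\bs)$ is a nonzero element, that is, a word. This already rules out any zero-valued variable occurring in $\bs$. By Lemma~\ref{lem:flat}, the nonzero product is then the literal concatenation, so $\nu(\bs)\bd w=w\bd w$ in $A_k^+$. Right cancellation of $\bd w$ in the free semigroup gives $\nu(\bs)=w=\nu(\bt)$.

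The only delicate point is the one the statement flags: variables of $\bs$ not occurring in $\bt$ may have value $0$. This causes no trouble because the guarded identity is applied to an arbitrary assignment, with no nonzero hypothesis, and the nonzero value of the guarded side $w\bd w$ forces every factor on the other side to be nonzero. One should also check that $z$ is genuinely fresh, so that assigning it $\bd$ does not alter $\nu(\bs)$ or $\nu(\bt)$, and that the variable bound $k-1$ makes $\eta$ a $k$-variable identity, which Lemma~\ref{lem:guard} already arranges.
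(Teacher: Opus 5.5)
Your proposal is correct and is essentially the paper's own argument. You extend $\nu$ by sending the fresh variable $z$ to the return word $\bd$ from Lemma~\ref{lem:return} and evaluate the guarded identity of Lemma~\ref{lem:guard} to obtain $\nu(\bs)\bd w=w\bd w\ne0$; this forces $\nu(\bs)$ to be a nonzero word, and right cancellation of $\bd w$ in the free monoid then gives $\nu(\bs)=\nu(\bt)$.
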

\begin{proof}
Put $w=\nu(\bt)\ne0$. Choose $d\in L_k$ with $wdw\in L_k$ by Lemma~\ref{lem:return}, and assign $z$ to $d$. Lemma~\ref{lem:guard} yields
\[
wdw=\nu(\bs)dw.
\]
The left side is nonzero, so the right side is nonzero too, and $\nu(\bs)$ is a word in $L_k$. Both products are literal concatenations. Right cancellation of the common suffix $dw$ in the free monoid gives $\nu(\bs)=w$.
\end{proof}

\paragraph{Transfer of identities with bounded variable number.}\label{sec:transfer}

\begin{theorem}\label{thm:bounded}
If all Zimin words are minimal for $S$, then
\[
F_{n+1}\models\Id_{\leq n}(S)\qquad(n\geq1).
\]
\end{theorem}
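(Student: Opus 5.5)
We take an arbitrary identity in $\Id_{\leq n}(S)$, normalize both sides by Lemma~\ref{lem:normal} to
\[
\mathbf{P}=\bu_1+\cdots+\bu_p\approx \bv_1+\cdots+\bv_q=\mathbf{Q},
\]
and fix an assignment $\nu$ into $F_{n+1}$. The argument mirrors the second half of the proof of Proposition~\ref{prop:membership}, with Lemma~\ref{lem:transfer} (for $k=n+1$, so that the variable bound $k-1=n$ holds) replacing the minimality argument inside $\Finf$.

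If both sides evaluate to $0$ there is nothing to prove. Otherwise, by symmetry, assume $\nu(\mathbf{P})=w\neq0$. By Lemma~\ref{lem:flat}, each $\nu(\bu_i)=w$. The task is to show $\nu(\bv_j)=w$ for every $j$, since then $\nu(\mathbf{Q})=w$.

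From $S\models \mathbf{P}\approx\mathbf{Q}$ we get $S\models \bv_j\preceq \mathbf{P}$. This is not yet of the form $\bs\preceq\bt$ with $\bt$ a single word, so we pass to $S\models \bv_j\preceq \bu_1$ by substituting variables so that every $\bu_i$ collapses onto one word. Here we cannot reuse the coding of Proposition~\ref{prop:membership} directly, because $F_{n+1}$ is not a free object over words.

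Instead we use Lemma~\ref{lem:code}. Let $C$ be the set of nonzero values $\nu(x)$ for the variables $x$ of $\mathbf{P}$; all of these are nonzero because $w\ne0$. Take a prefix code $D=\{\bd_1,\dots,\bd_\ell\}\subseteq L_{n+1}$ with $\ell\le|C|\le n$ and $C\subseteq D^+$. Write each $\nu(x)$ as a word $\sigma(x)\in\{y_1,\dots,y_\ell\}^+$, so that $f\circ\sigma=\nu$ on these variables. For the remaining variables, which occur only in $\mathbf{Q}$, assign fresh letters $y$. This gives a nonerasing substitution $\sigma$ into words over at most $n$ letters. Injectivity of $f$ together with $f(\sigma(\bu_i))=w$ gives $\sigma(\bu_i)=\bu^\ast$, a single word, for all $i$.

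Applying $\sigma$ to the identity yields
\[
S\models \sigma(\bv_j)\preceq \bu^\ast,
\]
an inequality whose two sides use at most $n=k-1$ variables. This needs care: the fresh letters from variables occurring only in $\mathbf{Q}$ must also be counted. We handle this by identifying all of them with one letter already present, or by first substituting them by a letter of $\mathbf{P}$, which keeps the total at most $n$. Since $\nu$ gives them arbitrary values, we instead extend the assignment $\nu'$ on the $y$-letters by $\nu'(y_i)=\bd_i$, and set $\nu'(y)=\nu(x)$ for each extra letter. This is the point where Lemma~\ref{lem:transfer}'s clause \emph{``no restriction on variables outside $\bt$''} is used.

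Lemma~\ref{lem:transfer} then gives $\nu'(\sigma(\bv_j))=\nu'(\bu^\ast)=w$, and $\nu'\circ\sigma=\nu$ on all variables, so $\nu(\bv_j)=w$.

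The main obstacle is the variable count. Each distinct variable in $\mathbf{Q}$ but not in $\mathbf{P}$ needs its own letter, while the code letters $y_i$ number at most the number of variables of $\mathbf{P}$. The total is then at most $|\con(\mathbf{P})\cup\con(\mathbf{Q})|\le n$, which is exactly what Lemma~\ref{lem:guard} requires with $k=n+1$. This bookkeeping, together with verifying that $\nu'\circ\sigma$ agrees with $\nu$ on every variable, is where the argument needs care.
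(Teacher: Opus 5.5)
Your proposal is correct and follows the paper's proof: recode the nonzero values by the prefix code of Lemma~\ref{lem:code}, use injectivity to collapse all $\bu_i$ to a single word $\bt$, and apply Lemma~\ref{lem:transfer} with $k=n+1$, using that it places no restriction on variables outside $\bt$. The only variation is bookkeeping, and it gives the same bound $\leq n$: you code just the variables of $\mathbf P$ and give every $\mathbf Q$-only variable its own fresh letter carrying its original value, whereas the paper codes all nonzero-valued variables and sends only the zero-valued ones to fresh letters valued $0$. You should delete the abandoned suggestion of identifying those $\mathbf Q$-only variables with a letter of $\mathbf P$, since that would break $\nu'\circ\sigma=\nu$.
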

\begin{proof}
Put $k=n+1$, and take an identity of $S$ in at most $n$ variables. Normalize it as
\[
\mathbf{P}=\bu_1+\cdots+\bu_p\approx \bv_1+\cdots+\bv_q=\mathbf{Q},
\qquad p,q\geq1.
\]
Let $X$ be the union of its variable sets. Fix an arbitrary assignment $\varphi:X\to F_k$. If both sides evaluate to zero there is nothing to prove. Otherwise, by symmetry, suppose $\varphi(\mathbf{P})=w\ne0$. Lemma~\ref{lem:flat} gives
\begin{equation}\label{eq:common}
\varphi(\bu_1)=\cdots=\varphi(\bu_p)=w.
\end{equation}

Partition $X$ as
\[
X_+=\{x\in X:\varphi(x)\ne0\},\qquad X_0=X\setminus X_+.
\]
All variables in the $\bu_i$ belong to $X_+$. We do not assume that this holds for the $\bv_j$. The set
\[
C=\{\varphi(x):x\in X_+\}\subseteq L_k
\]
is nonempty. By Lemma~\ref{lem:code}, choose a prefix code $D=\{\bd_1,\ldots,\bd_\ell\}\subseteq L_k$ such that $C\subseteq D^+$ and $\ell\leq|X_+|$. Let $Y=\{y_1,\ldots,y_\ell\}$, and let $f:Y^+\to A_k^+$ be the injective morphism sending $y_i$ to $\bd_i$.

Choose a nonempty codeword decomposition of each $\varphi(x)$ for $x\in X_+$, and let $\sigma(x)\in Y^+$ be the corresponding word of code symbols. For every $x\in X_0$, introduce a separate fresh variable $z_x$ and put $\sigma(x)=z_x$. This is a nonerasing substitution, with at most
\begin{equation}\label{eq:variablebound}
\ell+|X_0|\leq|X_+|+|X_0|=|X|\leq n=k-1
\end{equation}
variables in its images.

For each $i$, nonzero evaluation in \eqref{eq:common} gives $f(\sigma(\bu_i))=w$ as a literal word. Injectivity of $f$ implies
\[
\sigma(\bu_1)=\cdots=\sigma(\bu_p)=\bt
\]
literally. Put $\bs_j=\sigma(\bv_j)$. Substitution in the valid identity, followed by additive idempotence, gives
\[
S\models \bt\approx \bs_1+\cdots+\bs_q,
\quad\text{and hence}\quad S\models \bs_j\preceq \bt\quad(1\leq j\leq q).
\]
Now define an assignment $\nu$ into $F_k$ by
\[
\nu(y_i)=\bd_i,\qquad \nu(z_x)=0.
\]
Its composition with $\sigma$ agrees with $\varphi$ on every original variable. In particular, $\nu(\bt)=w\ne0$. The variable bound \eqref{eq:variablebound} allows us to apply Lemma~\ref{lem:transfer} to every $\bs_j\preceq \bt$, including those for which $\bs_j$ might initially have contained a $z_x$. We obtain
\[
\varphi(\bv_j)=\nu(\bs_j)=w\quad(1\leq j\leq q).
\]
Thus $\varphi(\mathbf{Q})=w=\varphi(\mathbf{P})$. Every assignment has been covered, proving the theorem.
\end{proof}

\begin{proof}[Proof of Theorem~\ref{thm:main}]
Take $T_n=F_{n+1}$. Lemma~\ref{lem:return} makes it infinite and generated by
\[
\bigl(6(n+1)+2\bigr)^2=(6n+8)^2
\]
letters, and Theorem~\ref{thm:bounded} proves the required identities.

Suppose a finitely based locally finite variety $\mathcal U$ of signature $\{+,\cdot\}$ contains $S$. Let $\Sigma$ be a finite basis for $\mathcal U$, and choose $n\geq1$ bounding the number of variables in each identity of $\Sigma$. Since $S\in\mathcal U$, all of $\Sigma$ belongs to $\Id_{\leq n}(S)$. Hence $T_n\models\Sigma$ and $T_n\in\mathcal U$, contradicting local finiteness. If $\V(S)$ is locally finite, this proves that $S$ is INFB. Every finite algebra generates a locally finite variety, giving the finite case.
\end{proof}

\begin{remark}
The proof does not assume in advance that the supervariety $\mathcal U$ consists of ai-semirings. It only uses the fact that its finite basis consists of identities valid in $S$, all of which transfer to the ai-semiring $T_n$.
\end{remark}

\paragraph{Completion of the characterization theorem.}\label{sec:consequences}

We first complete the proof of the characterization stated in the
introduction.

\begin{proof}[Proof of Theorem~\ref{thm:characterization}]
The equivalence follows from Proposition~\ref{prop:membership}.  By
Proposition~\ref{prop:local}, the variety $\V(\Finf)$ is locally finite.
The proof of Proposition~\ref{prop:membership} also shows that every
Zimin word is minimal for $\Finf$.  Hence Theorem~\ref{thm:main}
implies that $\V(\Finf)$ is inherently nonfinitely based.

Identity~\eqref{eq:squares} holds in $\Finf$ but fails in $\B$:
assign $x$ to the identity matrix and $y$ to the matrix unit $E_{12}$.
Their squares are respectively the identity matrix and the zero matrix.
Therefore $\B\notin\V(\Finf)$.
\end{proof}

\begin{corollary}\label{cor:least}
The variety $\V(\Finf)$ is the least ai-semiring variety in which every
Zimin word is minimal.  It is locally finite and inherently
nonfinitely based.
\end{corollary}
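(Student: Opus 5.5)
The corollary has three parts, and we take them in turn.

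First, every Zimin word is minimal for $\Finf$. This was shown at the start of the proof of Proposition~\ref{prop:membership}: one evaluates each variable at the one-letter word carrying its name, and then uses flat addition together with literal concatenation. Hence $\V(\Finf)$ is an ai-semiring variety in which every Zimin word is minimal.

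Second, leastness. Let $\mathcal W$ be any ai-semiring variety in which every Zimin word is minimal, meaning that $\mathcal W\models \bu+Z_m\approx Z_m$ forces $\bu=Z_m$. We interpret minimality for a variety through its identities, or equivalently through the relatively free algebra $S_{\mathcal W}$ on countably many generators. Then $\V(S_{\mathcal W})=\mathcal W$, and $S_{\mathcal W}\models\bu\preceq\bv$ exactly when $\mathcal W$ satisfies that identity. So every Zimin word is minimal for $S_{\mathcal W}$. Proposition~\ref{prop:membership} gives $\Finf\in\V(S_{\mathcal W})=\mathcal W$, and therefore $\V(\Finf)\subseteq\mathcal W$.

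Proposition~\ref{prop:membership} was stated for an algebra. The only point needing care is that it applies verbatim to a generating algebra of $\mathcal W$: its proof uses only the identities of $S$. Alternatively, one can rerun that proof with $\Id(\mathcal W)$ in place of $\Id(S)$.

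Third, local finiteness and inherent nonfinite basability are exactly the final assertions of Theorem~\ref{thm:characterization}, already proved via Proposition~\ref{prop:local} and Theorem~\ref{thm:main}.

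No step is a real obstacle. The only subtlety is the variety-versus-algebra formulation of minimality, which the relatively free algebra settles.
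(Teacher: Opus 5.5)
Your proof is correct and follows the paper's route. Leastness comes from Proposition~\ref{prop:membership}, which you apply to the countably generated relatively free algebra of $\mathcal W$; this is the precise form of the paper's terse ``applied to every member of the variety'' and matches how the paper argues in Theorem~\ref{thm:minimal-infb-variety}. Local finiteness and inherent nonfinite basability are quoted from Theorem~\ref{thm:characterization}, with your check that every Zimin word is minimal for $\Finf$ supplied by the proof of Proposition~\ref{prop:membership}, just as in the paper.
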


\begin{proof}
The first assertion follows from Proposition~\ref{prop:membership},
applied to every member of the variety.  The remaining assertions
follow from Theorem~\ref{thm:characterization}.
\end{proof}

\paragraph{Minimality of $\V(\Finf)$.}

Corollary~\ref{cor:least} identifies $\V(\Finf)$ as the least variety
in which every Zimin word is minimal.  We now prove the stronger fact
that it is minimal among inherently nonfinitely based varieties.  The
following identity provides the link between additive minimality and
multiplicative word identities:
\begin{equation}\label{eq:flat-bridge}
  (x+y)z(x+y)\approx xz(x+y).
\end{equation}

\begin{lemma}\label{lem:flat-bridge}
Every flat ai-semiring satisfies \eqref{eq:flat-bridge}.  Consequently,
the identity holds in every variety generated by flat ai-semirings.
\end{lemma}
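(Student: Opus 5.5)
We need to check that every flat ai-semiring satisfies $(x+y)z(x+y)\approx xz(x+y)$. Here a flat ai-semiring means an ai-semiring $S$ with a distinguished element $0$, absorbing for both operations, such that $a+b=0$ whenever $a\ne b$; the algebras $F(L)$ of Lemma~\ref{lem:flat} are examples. The plan is a direct case analysis on an arbitrary assignment $x\mapsto a$, $y\mapsto b$, $z\mapsto c$. The consequence for varieties then follows at once: identities are preserved by homomorphic images, subalgebras and products, so by Birkhoff's theorem an identity valid in every generator is valid in the variety they generate.

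\emph{Case $a=b$.} Then $a+b=a$, and both sides evaluate to $aca$, so they agree.

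\emph{Case $a\ne b$.} Then $a+b=0$. The left side is $0\cdot c\cdot 0=0$ because $0$ absorbs multiplication. The right side is $a\cdot c\cdot 0=0$ for the same reason. Hence both sides are $0$.

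The only point needing care is the absorption property of $0$. For $F(L)$ it holds by definition. For an abstract flat ai-semiring it is the defining convention, and if that convention were not assumed, absorption of multiplication would follow from distributivity. Indeed, pick $a\ne b$. Then
\[
s0=s(a+b)=sa+sb,
\]
and similarly $0s=as+bs$. Flat addition makes such a sum either $0$ or a common value. In the $F(L)$ setting, cancellation as in the proof of Lemma~\ref{lem:flat} excludes a nonzero common value. I expect no step to be a genuine obstacle: once $0$ is absorbing, the two-case check above is the whole proof.
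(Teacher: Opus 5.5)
Your two-case check ($a=b$ versus $a\ne b$, where $a+b=0$ absorbs the product) is exactly the paper's proof, which likewise makes $0$ a multiplicative zero part of the definition of a flat ai-semiring. One caution about your aside: absorption does not follow from distributivity in general. The left-zero multiplication $st=s$ on the flat semilattice $\{0,p,q\}$ is an ai-semiring with $p\cdot 0=p$, and there $x=p$, $y=q$ violates \eqref{eq:flat-bridge}, so the absorbing convention that you and the paper assume is genuinely needed rather than automatic.
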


\begin{proof}
Let $R$ be a flat ai-semiring, with multiplicative zero $0$, and take
$a,b,c\in R$.  If $a=b$, then
\[
  (a+b)c(a+b)=aca=ac(a+b).
\]
If $a\ne b$, then $a+b=0$, and both sides of
\eqref{eq:flat-bridge} evaluate to $0$.  Thus the identity holds in
$R$, and hence in every variety generated by flat ai-semirings.
\end{proof}

\begin{lemma}\label{lem:inequality-to-word-identity}
Let $S$ be an ai-semiring satisfying \eqref{eq:flat-bridge}.  Suppose
that, for some nonempty word $\mathbf{W}\ne Z_n$,
\begin{equation}\label{eq:nonminimal-zimin}
  S\models \mathbf{W}+Z_n\approx Z_n.
\end{equation}
If $z$ occurs in neither $\mathbf{W}$ nor $Z_n$, then
\begin{equation}\label{eq:derived-zimin-identity}
  S\models Z_nzZ_n\approx \mathbf{W}zZ_n.
\end{equation}
Moreover, \eqref{eq:derived-zimin-identity} is a nontrivial identity of
multiplicative words, and its left side is a renaming of $Z_{n+1}$.
\end{lemma}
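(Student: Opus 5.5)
Substitute $x\mapsto \mathbf{W}$, $y\mapsto Z_n$, $z\mapsto z$ into \eqref{eq:flat-bridge}. This gives
\[
  S\models (\mathbf{W}+Z_n)\,z\,(\mathbf{W}+Z_n)\approx \mathbf{W}\,z\,(\mathbf{W}+Z_n).
\]
We then rewrite each occurrence of $\mathbf{W}+Z_n$ as $Z_n$ using \eqref{eq:nonminimal-zimin}. Since \eqref{eq:nonminimal-zimin} is an identity of $S$, it may be applied inside any term context; this is the replacement property of the equational theory, or equivalently the observation that the two terms have equal term functions on $S$. After these replacements the left side becomes $Z_nzZ_n$ and the right side becomes $\mathbf{W}zZ_n$, which is exactly \eqref{eq:derived-zimin-identity}. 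Distributivity is not needed, since only substitution and replacement of equals are used.

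It remains to check the two syntactic claims. Nontriviality amounts to showing that the words $Z_nzZ_n$ and $\mathbf{W}zZ_n$ differ literally. Both end in the common suffix $zZ_n$, so right cancellation in the free semigroup would otherwise force $\mathbf{W}=Z_n$, contrary to hypothesis. One should not worry that $z$ might occur elsewhere: it occurs neither in $\mathbf{W}$ nor in $Z_n$, so it is a single marked letter on each side and causes no ambiguity. For the renaming claim, $Z_{n+1}=Z_nx_{n+1}Z_n$ by the recursion. Since $z$ is distinct from the variables $x_1,\dots,x_n$ of $Z_n$, the bijective renaming $x_{n+1}\mapsto z$ (fixing $x_1,\dots,x_n$) carries $Z_{n+1}$ to $Z_nzZ_n$.

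There is no real obstacle here. The only point requiring care is to justify that the identity \eqref{eq:nonminimal-zimin} may be used to replace a subterm occurring as a factor of a product. This is immediate, because equality of term functions on $S$ is a congruence with respect to both operations.
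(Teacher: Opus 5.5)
Your proposal is correct and follows essentially the same route as the paper: substitute $\mathbf{W}$ for $x$ and $Z_n$ for $y$ in \eqref{eq:flat-bridge}, replace $\mathbf{W}+Z_n$ by $Z_n$ using \eqref{eq:nonminimal-zimin}, get nontriviality by cancelling the common suffix $zZ_n$, and read off the renaming from $Z_{n+1}=Z_nx_{n+1}Z_n$. Your explicit remarks on replacement of equals inside products and on $z\notin\{x_1,\dots,x_n\}$ only spell out steps the paper leaves implicit.
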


\begin{proof}
Substitute $\mathbf{W}$ for $x$ and $Z_n$ for $y$ in
\eqref{eq:flat-bridge}.  Using \eqref{eq:nonminimal-zimin}, we obtain
\[
  Z_nzZ_n
  \approx (\mathbf{W}+Z_n)z(\mathbf{W}+Z_n)
  \approx \mathbf{W}z(\mathbf{W}+Z_n)
  \approx \mathbf{W}zZ_n.
\]
If the two sides were literally the same word, cancellation of the
common suffix $zZ_n$ in the free semigroup would give $\mathbf{W}=Z_n$, a
contradiction.  The last assertion follows directly from the recursive
definition of the Zimin words.
\end{proof}

\begin{theorem}\label{thm:minimal-infb-variety}
Let
\[
  \mathcal N=\V(\Finf).
\]
Then $\mathcal N$ is a minimal inherently nonfinitely based variety of
ai-semirings.  More precisely, every proper subvariety of $\mathcal N$
is contained in a locally finite variety axiomatized by the
ai-semiring identities together with four multiplicative identities.
\end{theorem}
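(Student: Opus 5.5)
Let $\mathcal M$ be a proper subvariety of $\mathcal N$. Since $\mathcal N$ is locally finite (Proposition~\ref{prop:local}) and inherently nonfinitely based (Theorem~\ref{thm:characterization}), it suffices to show that $\mathcal M$ lies in a finitely based, locally finite variety $\mathcal U$. By Corollary~\ref{cor:least}, $\Finf\notin\mathcal M$, since otherwise $\mathcal M=\mathcal N$. Proposition~\ref{prop:membership}, applied to a generator of $\mathcal M$ (for instance its countably generated free algebra), then gives a Zimin word $Z_n$ that is not minimal for $\mathcal M$. Thus $\mathcal M\models \mathbf W+Z_n\approx Z_n$ for some word $\mathbf W\ne Z_n$.

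Every member of $\mathcal M$ lies in $\V(\Finf)$, which is generated by a flat ai-semiring. By Lemma~\ref{lem:flat-bridge}, $\mathcal M$ satisfies \eqref{eq:flat-bridge}. Lemma~\ref{lem:inequality-to-word-identity} then gives a nontrivial multiplicative identity $Z_{n+1}\approx\mathbf W zZ_n$ in $\mathcal M$, with the left side a renaming of $Z_{n+1}$. Call this identity $\zeta$.

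Let $\mathcal U$ be defined by the ai-semiring axioms together with four multiplicative identities:
\begin{enumerate}[label=\textup{(\alph*)}]
\item $\zeta$;
\item $x^2\approx y^2$;
\item $x^2y\approx x^2$;
\item $yx^2\approx x^2$.
\end{enumerate}
Identity (b) is \eqref{eq:squares}. Identities (c) and (d) hold in $\Finf$ because both sides are always $0$, since $\Finf$ is square-free by Lemma~\ref{lem:largest}. Hence (b)--(d) hold in $\mathcal N$ and in $\mathcal M$, and so $\mathcal M\subseteq\mathcal U$. In any member of $\mathcal U$, identities (b)--(d) make $x^2$ a constant multiplicative zero $0$ that is absorbing under multiplication. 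We do not need $0$ to be additively absorbing.

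To prove that $\mathcal U$ is locally finite, we reduce to semigroups. An $m$-generated member of $\mathcal U$ is a homomorphic image of the ai-semiring of finite nonempty sets of elements of its $m$-generated multiplicative subsemigroup. So it suffices to show that the semigroup variety defined by (a)--(d) is locally finite. A semigroup in this variety has a zero $0$ and satisfies $x^2=0$. It also satisfies a nontrivial identity $Z_{n+1}\approx\bq$. By the Sapir--Zimin theory, a finitely generated semigroup satisfying such an identity and periodic (here $x^2=0$) has only finitely many words avoiding an image of $Z_{n+1}$ whose value may be nonzero.

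The main obstacle is justifying this last step cleanly, because $\bq=\mathbf W zZ_n$ need not be shorter than $Z_{n+1}$. The first approach is to use the known theorem (Sapir 1987) that a finitely generated nilpotent-like semigroup $S$ with $0$ and $S\models x^2\approx0$ is finite if it satisfies a nontrivial identity of the form $Z_k\approx\bq$.

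If a self-contained argument is needed, the backup route is the following. By Zimin's unavoidability, long words over $m$ letters contain a value $\theta(Z_{n+1})$. Rewriting with $\zeta$ replaces it by $\theta(\mathbf W)\theta(z)\theta(Z_n)$. We then argue that iterated rewriting eventually produces a square factor, hence the value $0$, because $\mathbf W\ne Z_n$ and repeated application creates two adjacent copies of some factor. If that combinatorial step is too delicate, we would instead choose the four identities more cleverly, replacing (c) and (d) by a consequence of $\zeta$ that directly forces long words to $0$.
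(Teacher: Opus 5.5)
Your proposal is correct and follows the paper's proof essentially step for step. The shared steps are: a non-minimal Zimin word $Z_n$ for the proper subvariety; the flat-bridge identity converting $\mathbf W+Z_n\approx Z_n$ into the nontrivial word identity $Z_nzZ_n\approx\mathbf WzZ_n$; the same three square identities; local finiteness via Sapir's result that finitely generated nil-semigroups satisfying a nontrivial Zimin identity are finite (the paper cites Lemma~3.3.9 of Sapir's 2014 book); and finally the observation that the generated subsemiring consists of finite sums from the finite multiplicative subsemigroup.

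Your ``backup'' rewriting argument is unjustified as written but not needed. Your passing remark about periodicity should be restricted to the nil case $x^2\approx 0$, since periodicity alone does not suffice: finitely generated infinite Burnside groups satisfy nontrivial Zimin identities.
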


\begin{proof}
Theorem~\ref{thm:characterization} shows that $\mathcal N$ is locally
finite and inherently nonfinitely based.  Let
$\mathcal U\subsetneq\mathcal N$, and let $F$ be the countably
generated free algebra of $\mathcal U$.  If every Zimin word were
minimal for $F$, Proposition~\ref{prop:membership} would give
$\Finf\in\V(F)=\mathcal U$, whence
$\mathcal N\subseteq\mathcal U$, a contradiction.  Thus there are
$n\geq1$ and a nonempty word $\mathbf{W}\ne Z_n$ such that
\begin{equation}\label{eq:proper-nonminimal}
  \mathcal U\models \mathbf{W}+Z_n\approx Z_n.
\end{equation}

Since $\Finf$ is flat, Lemma~\ref{lem:flat-bridge} implies that
$\mathcal N$, and therefore $\mathcal U$, satisfies
\eqref{eq:flat-bridge}.  Lemma~\ref{lem:inequality-to-word-identity}
now yields the nontrivial multiplicative identity
\begin{equation}\label{eq:proper-zimin-identity}
  \mathcal U\models Z_nzZ_n\approx \mathbf{W}zZ_n,
\end{equation}
where $z$ is a new variable.

Every square in $\Finf$ is the multiplicative zero.  Hence the
following constant-free identities hold in $\Finf$ and throughout
$\mathcal U$:
\begin{equation}\label{eq:proper-square-identities}
  x^2\approx y^2,
  \qquad x^2y\approx x^2,
  \qquad yx^2\approx x^2.
\end{equation}
Define
\begin{equation}\label{eq:proper-overvariety}
  \mathcal E_{\mathbf{W}}=
  \Mod\bigl(\mathsf{AI}\cup
  \{x^2\approx y^2,\ x^2y\approx x^2,\ yx^2\approx x^2,
    \ Z_nzZ_n\approx \mathbf{W}zZ_n\}\bigr),
\end{equation}
where $\mathsf{AI}$ denotes the six standard ai-semiring identities.
Equations~\eqref{eq:proper-zimin-identity} and
\eqref{eq:proper-square-identities} give
$\mathcal U\subseteq\mathcal E_{\mathbf{W}}$, and $\mathcal E_{\mathbf{W}}$ is finitely
based by construction.

It remains to prove that $\mathcal E_{\mathbf{W}}$ is locally finite.  Let
$R\in\mathcal E_{\mathbf{W}}$.  The identity $x^2\approx y^2$ implies that all
squares in $R$ have a common value $c$.  The other two identities in
\eqref{eq:proper-square-identities} give
\[
  cr=rc=c\quad(r\in R),
  \qquad r^2=c\quad(r\in R).
\]
Thus $c$ is a multiplicative zero and the multiplicative reduct of
$R$ is a nil-semigroup.  It also satisfies the nontrivial Zimin word
identity \eqref{eq:proper-zimin-identity}.  By
\cite[Lemma~3.3.9]{Sapir2014}, every finitely generated nil-semigroup
with this property is finite.

Now let $A$ be a finite subset of $R$, and let
$M=\langle A\rangle_{\cdot}$.  The preceding paragraph shows that
$M$ is finite.  By distributivity, every element of the subsemiring
generated by $A$ is a finite nonempty sum of elements of $M$.
Addition is commutative and idempotent, so there are at most
$2^{|M|}-1$ such sums.  Hence every finitely generated member of
$\mathcal E_{\mathbf{W}}$ is finite, and $\mathcal E_{\mathbf{W}}$ is locally finite.

We have therefore embedded the arbitrary proper subvariety
$\mathcal U$ in a finitely based, locally finite variety.  Thus
$\mathcal U$ is not inherently nonfinitely based, and $\mathcal N$ is
a minimal inherently nonfinitely based variety.
\end{proof}

\begin{remark}\label{rem:minimal-not-limit}
Theorem~\ref{thm:minimal-infb-variety} does not assert that every
proper subvariety of $\mathcal N$ is itself finitely based; it only
places each such subvariety inside a finitely based, locally finite
variety.  Thus the theorem does not show that $\mathcal N$ is a limit
variety.  Nor does minimality imply that $\mathcal N$ is contained in
every inherently nonfinitely based ai-semiring variety.
\end{remark}

\paragraph{Restrictions imposed by the additive order.}
For the rest of this paragraph, write
\[
  a\leq b\quad\Longleftrightarrow\quad a+b=b
\]
for the natural additive order.  Distributivity implies that
multiplication preserves this order on both sides.  We first record
the semigroup-theoretic facts used below.

\begin{lemma}\label{lem:infb-reduct}
Let $S$ be a finite ai-semiring, and let $S_\cdot$ denote its
multiplicative reduct.  If $S$ is INFB, then $S_\cdot$ is INFB.
\end{lemma}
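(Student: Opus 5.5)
The approach is to prove the contrapositive with Sapir's characterization of finite INFB semigroups, and to reduce the semigroup condition back to an additive statement about $S$. Suppose $S_\cdot$ is not INFB. By Sapir's theorem \cite{Sapir1987a}, a finite semigroup is INFB if and only if every Zimin word is an isoterm for it. So there exist $m\geq1$ and a word $\bq\ne Z_m$ with $S_\cdot\models Z_m\approx\bq$. Since $S_\cdot$ is the multiplicative reduct of $S$, this word identity also holds in $S$. In particular $S\models \bq\preceq Z_m$, so $Z_m$ is not minimal for $S$.

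Non-minimality of one Zimin word does not by itself rule out INFB for $S$, because Theorem~\ref{thm:main} gives only a sufficient condition. We therefore need the converse direction in the finite setting: a finite ai-semiring $S$ satisfying a nontrivial identity $Z_m\approx\bq$ is contained in some finitely based locally finite variety. The plan mirrors the proof of Theorem~\ref{thm:minimal-infb-variety}.

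First, since $S_\cdot$ is finite, it satisfies $x^{N}\approx x^{N+p}$ for suitable $N,p\geq1$. Second, the nontrivial Zimin identity $Z_m\approx\bq$ holds in $S_\cdot$. Let $\mathcal E$ be the variety defined by $\mathsf{AI}$ together with these two multiplicative identities. Then $S\in\mathcal E$, and $\mathcal E$ is finitely based. For local finiteness, the multiplicative reduct of any member of $\mathcal E$ is periodic and satisfies a nontrivial Zimin identity. A finitely generated periodic semigroup with such an identity is finite; this is the standard fact behind Sapir's theorem, see \cite[Chapter~3]{Sapir2014}. Given that, the distributivity and idempotence count from the proof of Theorem~\ref{thm:minimal-infb-variety} shows that finitely generated members of $\mathcal E$ are finite, with at most $2^{|M|}-1$ elements. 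Hence $S$ is not INFB, which proves the lemma.

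The main obstacle is the local finiteness of $\mathcal E$. In the nil case the paper cites \cite[Lemma~3.3.9]{Sapir2014} directly. For periodic semigroups one needs the full Sapir result: a finitely generated semigroup satisfying $x^N\approx x^{N+p}$ and a nontrivial Zimin identity is finite. If that is not available in a directly citable form, I would argue via the equivalent statement that finite semigroups with a non-isoterm Zimin word are not INFB. This yields some finitely based locally finite semigroup variety $\mathcal W\ni S_\cdot$. Its finite basis $\Sigma$, taken together with $\mathsf{AI}$, defines an ai-semiring variety containing $S$. That variety is locally finite by the same $2^{|M|}$ counting argument, since the multiplicative reduct of each member lies in $\mathcal W$. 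This alternative route avoids the periodic Burnside-type lemma entirely, and I would present it as the actual proof.
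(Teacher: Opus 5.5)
Your final route is correct and is the paper's argument. If $S_\cdot$ lies in a finitely based locally finite semigroup variety with basis $\Sigma$, then $\Mod(\mathsf{AI}\cup\Sigma)$ contains $S$. It is locally finite because the multiplicative subsemigroup generated by finitely many elements is finite, and every element is a finite sum of its members.

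There are two caveats. First, the detour through Sapir's isoterm criterion is circular: ``$S_\cdot$ is not INFB'' already means, by definition, that such a $\Sigma$ exists. Second, the ``standard fact'' behind your first route is false. An infinite free Burnside group $B(2,n)$ of large odd exponent is a finitely generated semigroup satisfying $x\approx x^{n+1}$. That single law is both a periodicity identity and the nontrivial Zimin identity $Z_1\approx x_1^{n+1}$. So discarding that route was necessary, not merely a matter of citability.
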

\begin{proof}
This is \cite[Theorem~2.11(3)]{JacksonRenZhao2022}.  For completeness,
suppose that $S_\cdot$ belongs to a finitely based locally finite
semigroup variety with finite basis $\Sigma$.  The identities in
$\Sigma$, together with the ai-semiring identities, define a finitely
based variety containing $S$.  This variety is locally finite: a
finite set of generators produces only finitely many multiplicative
words modulo $\Sigma$, and every semiring term is equivalent, by
distributivity, to a nonempty sum of such words.  Since addition is
commutative and idempotent, only finitely many sums occur.  Thus $S$
is not INFB, proving the contrapositive.
\end{proof}

We shall also use the following consequences of Sapir's criterion
\cite[Theorem~1(A),(B)]{Sapir1987a}.

\begin{lemma}\label{lem:sapir-order-test}
The following statements hold.
\begin{enumerate}[label=\textup{(\arabic*)}]
\item Every finite INFB semigroup contains an INFB subsemigroup with
an identity element.
\item Let $M$ be a finite monoid, let $d$ be the least common multiple
of the exponents of its subgroups, and let $H_f$ be the maximal
subgroup with identity $f$.  If
\begin{equation}\label{eq:order-sapir-test}
  faf=fa^{d+1}f\in H_f
\end{equation}
whenever $f$ is idempotent and $a$ divides $f$, then $M$ is not INFB.
\end{enumerate}
Here $a$ \emph{divides} $f$ if $f=uav$ for some $u,v\in M$.
\end{lemma}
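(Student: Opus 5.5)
The plan is to derive both parts from the two halves of Sapir's criterion \cite[Theorem~1]{Sapir1987a}. I recall them as follows. Part~(A) says that a finite semigroup is INFB if and only if every Zimin word is an isoterm for it. Part~(B) gives an equivalent structural test in terms of idempotents, their maximal subgroups, and elements dividing them. Neither half is reproved; the work is only in translating the criterion into the two forms stated here. If the exact wording of (B) differs from what I recall, only the translation step needs adjusting.

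\textbf{Part (2).} Let $M$ be a finite monoid satisfying \eqref{eq:order-sapir-test}, and put $d$ as in the statement.
\begin{enumerate}[label=\textup{(\alph*)}]
\item Since $M$ is finite, $a^{d}$ is idempotent-like on the relevant subgroups: $a^{d+1}$ acts as $a$ inside any subgroup containing a power of $a$.
\item So the hypothesis says that sandwiching any divisor $a$ of an idempotent $f$ between two copies of $f$ lands in the group $H_f$, and agrees there with $fa^{d+1}f$.
\item I expect this to be precisely the negation of the INFB condition in Sapir's Theorem~1(B), read for monoids. Then $M$ is not INFB.
\item If (B) is phrased instead through Zimin isoterms, I would argue directly. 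Assume $Z_m$ is an isoterm for large $m$. Evaluate $Z_m$ and use the pigeonhole principle on a finite monoid to find an idempotent $f$ and a factor $a$ dividing $f$ occurring as $f a f$ inside the evaluation. Then replace that occurrence of $a$ by $a^{d+1}$, which yields a nontrivial identity $Z_m\approx \bq$ and contradicts the isoterm property.
\end{enumerate}

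\textbf{Part (1).} Let $S$ be a finite INFB semigroup. By Theorem~1(A), every Zimin word is an isoterm for $S$. Equivalently, $S$ fails the structural test of~(B): there are an idempotent $f$ and an element $a$ dividing $f$, say $f=uav$, with $faf\notin H_f$ or $faf\neq fa^{d+1}f$. Consider the local monoid $fSf$, with identity $f$. It contains $faf$ and $fuf$, and $fvf$. One computes $f=fuf\cdot faf\cdot fvf$ provided the factorisation can be arranged with $u,v$ absorbing $f$. This is achieved by replacing $u,v$ with $fu$ and $vf$, since $f=f\cdot f=fuavf$. Hence $faf$ divides $f$ inside $fSf$, and the failure of the test is inherited by $fSf$. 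Applying the converse direction of (B) to the monoid $fSf$ then shows it is INFB. If that route is not available, I would instead show that the witnessing evaluations of the Zimin words can be conjugated by $f$.

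\textbf{Main obstacle.} The delicate point is Part~(1): the failure of the test must be witnessed \emph{inside} the subsemigroup, with its own exponent and its own subgroups. Two facts need checking.
\begin{itemize}
\item Subgroups of $fSf$ containing $f$ coincide with $H_f$, so membership in $H_f$ is unambiguous in either setting.
\item The lcm $d'$ of the exponents for $fSf$ divides $d$, so the equality $faf=fa^{d+1}f$ is unaffected by passing to $fSf$.
\end{itemize}
I would verify both from Green's relations in finite semigroups before invoking (B).
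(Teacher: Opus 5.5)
For (2) you do what the paper does. The paper gives no argument for either item and simply reads both off Sapir's Theorem~1(A),(B). Your fallback (d) is not a proof, however. The occurrence you replace by $a^{d+1}$ depends on the evaluation, so you do not obtain a single word $\bq$ with $Z_m\approx\bq$ valid for every evaluation. Producing such a uniform word is exactly the content of Sapir's theorem.

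The genuine gap is in (1), at the claim that $faf$ divides $f$ inside $fSf$, so that the failure of the test passes to the local monoid. From $f=uav$ you do get $f=(fu)a(vf)$. But $(fuf)(faf)(fvf)=fu\,f\,a\,f\,vf$ has extra factors $f$ inserted on both sides of $a$, and it need not equal $fuavf$.

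The claim is in fact false. Take $B_2^1$, which is INFB. The test fails there only at $f\in\{cd,dc\}$; for example $f=cd$, $a=c$, $u=1$, $v=d$ gives $faf=0\notin H_{cd}$. Yet $cd\,B_2^1\,cd=\{cd,0\}$ and $dc\,B_2^1\,dc=\{dc,0\}$ are two-element semilattices, which are not INFB. So localizing at the idempotent where the test fails can destroy the failure, and your argument would ``prove'' that $B_2^1$ is not INFB. Here the INFB submonoid is $B_2^1$ itself, whose identity is $1$, not the failing idempotent. Nothing in your translation locates the right idempotent.

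Your argument also rests on two further unjustified steps:
\begin{itemize}
\item The ``equivalently'' applies the monoid test to an arbitrary semigroup. This is false in one direction: $B_2$ fails the test but is not INFB. The direction you actually need would have to be routed through $S^1$.
\item You use the converse of (2), which is not part of the statement.
\end{itemize}
Item (1) is the substantive reduction to monoids in Sapir's theorem. It cannot be obtained by localizing at the failing idempotent, and the paper takes it directly from Sapir rather than deriving it from the structural test.
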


\begin{lemma}\label{lem:local-monoids}
Let $T$ be a finite semigroup.  If $eTe$ is not INFB for every
idempotent $e\in T$, then $T$ is not INFB.
\end{lemma}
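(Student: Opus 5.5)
The plan is to argue by contraposition through Lemma~\ref{lem:sapir-order-test}. Suppose $T$ is INFB. By part (1) of that lemma, $T$ contains an INFB subsemigroup $M$ that has an identity element $e$. This $e$ is an idempotent of $T$, and $M=eMe\subseteq eTe$. Moreover $eTe$ is itself a monoid with identity $e$, and it is a subsemigroup of $T$. The goal is to show that $eTe$ is INFB, which contradicts the hypothesis.

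Subsemigroups need not inherit INFB in general, so I will not try to pass directly from $M$ to $eTe$. Instead I will work with Sapir's criterion in the form of part (2). Suppose, for contradiction, that $eTe$ is not INFB. The difficulty is that part (2) is only a sufficient condition for not being INFB. So I would use the full Sapir characterization \cite[Theorem~1]{Sapir1987a}: a finite monoid is INFB precisely when it fails the test \eqref{eq:order-sapir-test} for some idempotent $f$ and some $a$ dividing $f$. Under this reading, the INFB monoid $M$ has an idempotent $f$ and an element $a=ufv$ (in the divisibility sense, $f=uav$ with $u,v\in M$) such that $faf\ne fa^{d_M+1}f$ or $faf\notin H_f$.

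The same witnesses live in the monoid $eTe$. Divisibility in $M$ implies divisibility in $eTe$, since $M\subseteq eTe$. The element $faf$ and the equality $faf=fa^{d+1}f$ are computed in $T$. The maximal subgroup $H_f$ is determined by Green's $\mathcal H$-class of $f$ inside $fTf$, which contains the corresponding class in $fMf$. The hard step is to see that failure of the test for $M$ forces failure for $eTe$.

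\begin{itemize}
\item The exponent $d$ may change, because $d_M$ divides $d_{eTe}$. The cleanest fix is to replace $d$ by any common multiple of the group exponents; Sapir's criterion is insensitive to this choice, since $fa^{d+1}f$ with $faf\in H_f$ is stable under multiplying $d$ by the exponent.
\item $H_f$ may grow. If $faf\notin H_f^{M}$ but $faf\in H_f^{eTe}$, I would use the fact that $f(faf)^{-1}$ lies in $fTf$. That alone does not put it in $M$, so the second alternative of the test needs care.
\end{itemize}

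The way through is Sapir's reformulation via Zimin isoterms: a finite semigroup is INFB if and only if every Zimin word is an isoterm for it. An identity $Z_k\approx\mathbf q$ failing in $M$ would fail in $eTe$, since $M$ is a subsemigroup. So isoterms for $M$ are isoterms for $eTe$, and $eTe$ is INFB. This route avoids the group bookkeeping entirely, so I would present it as the main argument and keep the test computation only as a remark.
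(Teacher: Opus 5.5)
Your final argument is correct, and its skeleton matches the paper's. You assume $T$ is INFB, use Lemma~\ref{lem:sapir-order-test}(1) to obtain an INFB monoid $M$ with identity $e$, note that $M=eMe\subseteq eTe$, and conclude that $eTe$ is INFB.

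The only difference is the justification of that last step, and your stated reason for the detour is mistaken. It is true that INFB need not pass \emph{down} to subsemigroups, but it always passes \emph{up}. Since $M$ is a subsemigroup of the finite semigroup $eTe$, we have $M\in\V(eTe)$. So every locally finite variety containing $eTe$ also contains $M$, and hence is not finitely based. This is the paper's one-line argument, taken straight from the definition.

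Your Zimin-isoterm argument is valid: every identity $Z_k\approx\mathbf q$ of $eTe$ holds in $M$, so isoterms for $M$ are isoterms for $eTe$. But it is the same monotonicity in disguise, and it spends both directions of Sapir's isoterm theorem to reach a conclusion that needs no theorem at all.

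You should drop the paragraph built on the test~\eqref{eq:order-sapir-test} rather than keep it as a remark. Lemma~\ref{lem:sapir-order-test}(2) is stated only as a sufficient condition, so the ``if and only if'' reading is not supported by what is quoted. The divisibility relation is also written backwards there ($a=ufv$ instead of $f=uav$). Finally, the growth of $H_f$ that you flag is never resolved.
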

\begin{proof}
Otherwise Lemma~\ref{lem:sapir-order-test}(1) gives an INFB
subsemigroup $M$ of $T$ with identity $e$.  Since $M\subseteq eTe$,
the semigroup $eTe$ would also be INFB, a contradiction.
\end{proof}

The position of a multiplicative identity in the additive order is
already severely restricted.

\begin{proposition}\label{prop:extreme-identity}
Let $S$ be a finite ai-semiring with multiplicative identity $1$.  If
$1$ is either the greatest or the least element of the additive
order, then $S$ is not INFB.  Consequently, if $S$ is INFB, then
there exist $a,b\in S$ such that
\[
  a\nleq 1\qquad\text{and}\qquad 1\nleq b.
\]
\end{proposition}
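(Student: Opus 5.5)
The plan is to pass to the multiplicative reduct and verify Sapir's sufficient condition there. By Lemma~\ref{lem:infb-reduct} it suffices to show that $S_\cdot$ is not INFB. Since $1$ is a multiplicative identity, $S_\cdot$ is a finite monoid $M$, so Lemma~\ref{lem:sapir-order-test}(2) applies directly to $M=S_\cdot$. Let $d$ be as in that lemma. It is enough to show that, for every idempotent $f$ and every $a$ dividing $f$, both $faf$ and $fa^{d+1}f$ equal $f$. This suffices because $f\in H_f$ trivially. Throughout, the only tool is that multiplication preserves the additive order on both sides.

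\emph{Case $1$ greatest.} Fix an idempotent $f$ and write $f=uav$. Since $u\le 1$ and $v\le 1$, monotonicity gives
\[
f=uav\le 1\cdot a\cdot 1=a .
\]
Multiplying by $f$ on both sides gives $f=fff\le faf$. Conversely, $a\le 1$ gives $faf\le f1f=f$. Hence $faf=f$.

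For the power, $f=f^{d+1}$ and $f\le a$ give, by repeated monotonicity, $f\le a^{d+1}$. Hence $f\le fa^{d+1}f$. On the other hand $a^{d+1}\le 1$, so $fa^{d+1}f\le f$. Thus \eqref{eq:order-sapir-test} holds with both sides equal to $f$.

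\emph{Case $1$ least.} The argument is the order dual. From $1\le u$ and $1\le v$ we get
\[
a=1a1\le uav=f .
\]
Hence $faf\le fff=f$, while $1\le a$ gives $f\le faf$. Likewise $a^{d+1}\le f^{d+1}=f$ gives $fa^{d+1}f\le f$. Also $1\le a^{d+1}$ gives $f\le fa^{d+1}f$.

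So in either case Lemma~\ref{lem:sapir-order-test}(2) shows that $S_\cdot$ is not INFB. Then Lemma~\ref{lem:infb-reduct} shows that $S$ is not INFB.

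The consequence is the contrapositive. If $S$ is INFB, then $1$ is not the greatest element, so some $a$ satisfies $a\nleq 1$. Also $1$ is not the least element, so some $b$ satisfies $1\nleq b$.

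The only point needing care is the hypothesis of Lemma~\ref{lem:sapir-order-test}(2). There $u,v$ range over $M$ and may equal $1$. This is harmless, since the bounds $u,v\le 1$ (or $\ge 1$) hold for every element of $S$.
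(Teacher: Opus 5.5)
Your proof is correct and takes essentially the same route as the paper. Like the paper, you check Sapir's condition \eqref{eq:order-sapir-test} in the monoid $S_\cdot$ by using order-compatibility of multiplication and the extremal position of $1$ to get $faf=fa^{d+1}f=f$, and then return to $S$ via Lemma~\ref{lem:infb-reduct}; the paper only combines your two computations into the single sandwich $f=f^{k+2}\le fa^kf\le f1f=f$ for all $k\ge1$, with the inequalities reversed when $1$ is least.
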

\begin{proof}
Put $M=S_\cdot$, and use the notation of
Lemma~\ref{lem:sapir-order-test}(2).  Let $f$ be idempotent and write
$f=uav$.  If $1$ is greatest, then
\[
  f=uav\leq 1a1=a\leq1,
\]
and hence, for every $k\geq1$,
\[
  f=f^{k+2}\leq fa^kf\leq f1f=f.
\]
If $1$ is least, then $1\leq a\leq f$, and the inequalities are
reversed:
\[
  f=f1f\leq fa^kf\leq f^{k+2}=f.
\]
Thus $fa^kf=f$ in either case.  In particular,
\[
  faf=fa^{d+1}f=f\in H_f.
\]
Lemma~\ref{lem:sapir-order-test}(2) shows that $M$ is not INFB, and
Lemma~\ref{lem:infb-reduct} completes the proof.
\end{proof}

Multiplicative subgroups are also visible in the additive order.

\begin{proposition}\label{prop:subgroups-antichain}
Every subgroup of the multiplicative reduct of a finite ai-semiring
is an antichain in the additive order.
\end{proposition}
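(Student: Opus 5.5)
Let $G$ be a subgroup of the multiplicative reduct of a finite ai-semiring $S$, with identity element $e$ (an idempotent of $S$, not necessarily a multiplicative identity of $S$). We must show that distinct elements $g,h\in G$ are incomparable in the additive order. The plan is to suppose $g\le h$ and derive $g=h$, using only finiteness of $G$ and the fact that multiplication preserves the order on both sides.

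First reduce to comparisons with $e$. If $g\le h$, left multiplication by $h^{-1}$ (the inverse in $G$) gives $h^{-1}g\le h^{-1}h=e$. Put $a=h^{-1}g\in G$. It suffices to show that $a\le e$ forces $a=e$, because then $g=ha=he=h$. (The same argument with $g^{-1}$ handles $e\le a$, but this case is not needed.)

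Now let $a\in G$ with $a\le e$. Multiplying the inequality by $a$ and using monotonicity gives $a^2\le ae=a$. Repeating, we get the descending chain
\[
  e\ \ge\ a\ \ge\ a^2\ \ge\ a^3\ \ge\ \cdots .
\]
Since $G$ is finite, $a^m=e$ for some $m\ge1$, the order of $a$. Transitivity then gives $e\ge a\ge a^m=e$, so $a=e$ by antisymmetry. This completes the argument.

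No step is a real obstacle. The only point requiring care is that the identity $e$ of $G$ need not be the identity of $S$. This causes no difficulty, since all products used ($h^{-1}h=e$, $ae=a$, $he=h$) are computed inside $G$, and monotonicity of multiplication holds for arbitrary elements of $S$. Finiteness of $G$ is exactly what makes the chain return to $e$. In fact any subgroup of finite order, or any torsion group, in an arbitrary ai-semiring would suffice.
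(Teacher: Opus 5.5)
Your proof is correct and is essentially the paper's argument. The paper multiplies $g\le h$ on the left by $g^{-1}$ to get $e\le r=g^{-1}h$ and runs the ascending chain $e\le r\le r^2\le\cdots\le r^m=e$. You multiply by $h^{-1}$ and run the mirror-image descending chain from $a=h^{-1}g\le e$.
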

\begin{proof}
Let $G$ be such a subgroup, with identity $e$, and suppose that
$g\leq h$ in $G$.  Multiplication on the left by $g^{-1}$ gives
$e\leq r$, where $r=g^{-1}h$.  Since $G$ is finite, $r^m=e$ for some
$m>0$, and therefore
\[
  e\leq r\leq r^2\leq\cdots\leq r^m=e.
\]
It follows that $r=e$ and hence $g=h$.
\end{proof}

\begin{corollary}\label{cor:subgroup-sums}
If $G$ is a multiplicative subgroup and $g,h\in G$ are distinct,
then $g+h\notin G$.
\end{corollary}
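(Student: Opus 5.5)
The corollary follows directly from Proposition~\ref{prop:subgroups-antichain}, by contradiction. Let $G$ be a multiplicative subgroup and let $g,h\in G$ be distinct. Suppose, contrary to the claim, that $s=g+h$ belongs to $G$.

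In the natural additive order, $g+h$ is the join of $g$ and $h$, so $g\leq s$ and $h\leq s$. Indeed, $g+s=g+g+h=g+h=s$ by additive idempotence and associativity, and similarly for $h$.

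Since $g,s\in G$ and $g\leq s$, Proposition~\ref{prop:subgroups-antichain} (the antichain property of $G$) forces $g=s$. The same argument applied to $h\leq s$ gives $h=s$. Hence $g=h$, contradicting the choice of distinct elements, and therefore $g+h\notin G$.

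There is no real obstacle. One point deserves mention: Proposition~\ref{prop:subgroups-antichain} is stated for finite ai-semirings, and the corollary inherits that finiteness hypothesis from the surrounding context. The only thing to check carefully is that the comparabilities $g\leq g+h$ and $h\leq g+h$ are stated explicitly, since they are exactly what brings the antichain property to bear.
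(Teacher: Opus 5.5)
Your proof is correct and is essentially the paper's argument: the paper also notes that $g\leq g+h$ and $h\leq g+h$ would be comparisons inside $G$, contradicting Proposition~\ref{prop:subgroups-antichain}, and you simply spell out that these comparisons force $g=g+h=h$. Your remark that the finiteness hypothesis is inherited from the proposition is apt, since the statement fails for infinite ai-semirings such as $(\mathbb{Z},\max,+)$.
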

\begin{proof}
Otherwise $g\leq g+h$ and $h\leq g+h$ are comparisons between
elements of $G$, contradicting
Proposition~\ref{prop:subgroups-antichain}.
\end{proof}

We next exclude the two smallest possible shapes of the additive
order.

\begin{proposition}\label{prop:additive-chain}
If the additive order of a finite ai-semiring $S$ is a chain, then
$S$ is not INFB.
\end{proposition}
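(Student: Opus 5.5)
The plan is to reduce to the multiplicative reduct and then apply Sapir's test through local monoids. By Lemma~\ref{lem:infb-reduct}, it suffices to show that the multiplicative reduct $S_\cdot$ is not INFB. By Lemma~\ref{lem:local-monoids}, it suffices in turn to show that $eSe$ is not INFB for every idempotent $e$.

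Each $eSe$ has two properties we will use. It is closed under addition, since $eae+ebe=e(a+b)e$, so it is a finite ai-semiring with multiplicative identity $e$. Its additive order is the restriction of a chain, hence again a chain. We may therefore assume from the outset that $S$ is a finite ai-semiring with identity $1$ whose additive order is a chain, and verify the hypothesis of Lemma~\ref{lem:sapir-order-test}(2) for $M=S_\cdot$.

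Fix an idempotent $f$ and an element $a$ dividing $f$, say $f=uav$. Since $f=f^3$, we may replace $u,v$ by $p=fu$ and $q=vf$, so that $f=paq$ with $p=fp$ and $q=qf$. The aim is to imitate the squeeze in Proposition~\ref{prop:extreme-identity}, using comparability in place of extremality of $1$. The chain gives two cases, $a\le 1$ or $1\le a$.

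\emph{Case $1\le a$.} Monotonicity gives $f\le fa^kf$ and $f\le fa^k$ for every $k\ge1$. For the reverse inequality, we apply the same comparison to $p$ and $q$: each of them is also comparable with $1$, giving four subcases. In each subcase we try to bound $fa^kf$ above by an expression such as $f\,p\,a^{k}\,q\,f$ or $(paq)^{k}$, which collapses to $f$ because $f=paq$ is idempotent.

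\emph{Case $a\le 1$.} This case is dual: all inequalities are reversed, exactly as in Proposition~\ref{prop:extreme-identity}.

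When the squeeze succeeds, we obtain $fa^kf=f$ for all $k\ge1$. In particular
\[
faf=fa^{d+1}f=f\in H_f,
\]
which is exactly \eqref{eq:order-sapir-test}.

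I expect the squeeze to be the main obstacle, because the divisibility relation $f=paq$ does not place $f$ directly around $a$. If a direct squeeze fails in some subcase, the fallback is to work in the local monoid $fSf$. This monoid is again chain-ordered and has identity $f$. The element $b=faf$ is comparable with $f$, so its powers $b^k$ form a monotone chain in $fSf$. We would then use $f=(fpf)\,b'\,(fqf)$-type factorizations to show that $b$ is a unit of $fSf$. Proposition~\ref{prop:subgroups-antichain} forces a monotone chain of powers of a group element to be constant. Hence $b=f$, and likewise $fa^{d+1}f=f$, so $b\in H_f$ as required.
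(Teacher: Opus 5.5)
There is a genuine gap. Your reductions are sound and coincide with the paper's: Lemma~\ref{lem:infb-reduct}, Lemma~\ref{lem:local-monoids}, the fact that each $eSe$ is a chain-ordered subsemiring with identity $e$, and the normalization $f=paq$ with $p=fu$, $q=vf$. The problem is the central step, which the proposal announces but never carries out: nothing proves $fa^kf=f$ when $a$ divides $f$.

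The squeeze does work in the easy subcase $1\le p,q$, since then $a\le paq=f$ and $fa^kf\le f^{k+2}=f$. In the subcase $p,q\le 1\le a$ (and dually $a\le 1\le p,q$), however, your candidate bounds go the wrong way. Inserting factors below $1$ lowers a product, so $fpa^kqf\le fa^kf$ and $(paq)^k\le pa^kq$. In fact no argument using only a compatible total order and the idempotency of $f$ can succeed there. In the totally ordered group $(\mathbb Z,+,\le)$ take $f=0$, $a=2$, $p=q=-1$. Then $f=p+a+q$ and $p,q\le 0\le a$, but $f+ka+f=2k\ne f$. Finiteness has to enter in an essential way.

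The fallback does not supply it. It needs a factorization of $f$ inside $fSf$ through $faf$. But $f=paq$ only gives $f=fpaqf$, and nothing lets you insert $f$ on both sides of $a$. In a general finite monoid this fails: in $B_2^1$ the idempotent $f=cd=c\cdot d\cdot 1$ is divisible by $d$, yet $fdf=0$. For finite chain-ordered monoids the factorization $f=(fpf)(faf)(fqf)$ is in fact true, but only as a consequence of the property $faf=f$ you are trying to establish, so invoking it is circular. Also, ``likewise $fa^{d+1}f=f$'' does not follow, because $a^{d+1}$ need not divide $f$.

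The missing ingredient is semigroup-theoretic, and the paper supplies it in three steps:
\begin{enumerate}[label=\textup{(\arabic*)}]
\item It proves $x^N=x^{N+1}$ for a common $N$.
\item It shows that idempotents are conservative, $gh,hg\in\{g,h\}$, and that idempotents $g,h$ with $gh=hg=h$ and $g\in MhM$ coincide.
\item Together with $(xy)^N=x(yx)^Ny$, this yields $(xy)^N(yx)^N(xy)^N=(xy)^N$, so $M\in\mathbf{DA}$.
\end{enumerate}
The local description $fP_ff=\{f\}$ of $\mathbf{DA}$, where $P_f$ is the submonoid generated by all divisors of $f$, then gives $faf=fa^2f=f$. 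This handles $a^2$ through $P_f$ rather than through divisibility, and it is Sapir's condition with $d=1$.
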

\begin{proof}
We first prove that every finite monoid $M$ equipped with a total
order preserved by multiplication on both sides is not INFB.  Write
$1$ for its identity.  For each $a\in M$, the sequence
$a,a^2,a^3,\ldots$ is decreasing if $a\leq1$ and increasing if
$1\leq a$.  Hence there is a common positive integer $N$ such that
\begin{equation}\label{eq:order-stabilization}
  a^N=a^{N+1}\qquad(a\in M).
\end{equation}
Every $a^N$ is idempotent, and every subgroup of $M$ is trivial.

We claim that, for all idempotents $p,q\in M$,
\begin{equation}\label{eq:order-conservative}
  pq,qp\in\{p,q\}.
\end{equation}
Assume that $p\leq q$.  If $q\leq1$, then
$p=p^2\leq pq\leq p1=p$, and similarly $qp=p$.  If $1\leq p$,
the same calculation gives $pq=qp=q$.  In the remaining case
$p\leq1\leq q$, put $r=pq$.  Then
$p\leq r\leq q$, $pr=r$, and $rq=r$.  If $r\leq1$, then
$r=pr\leq p$; if $1\leq r$, then $r=rq\geq q$.  Thus $r\in\{p,q\}$.
The same argument in the opposite monoid treats $qp$ and proves
\eqref{eq:order-conservative}.

We also need the following elementary observation.  If idempotents
$p,q$ satisfy $pq=qp=q$ and $p=uqv$, then $p=q$.  Indeed, in the
finite monoid $pMp$, whose identity is $p$,
\[
  p=(pup)q(pvp).
\]
Every factor of a product equal to the identity of a finite monoid is
a unit.  Thus $q$ is an idempotent unit of $pMp$, whence $q=p$.

For arbitrary $x,y\in M$, put $p=(xy)^N$ and $q=(yx)^N$.  Then
$p,q$ are idempotent and $p=xqy$.  By
\eqref{eq:order-conservative}, either $pq=p$ or $qp=p$, in which case
$pqp=p$, or else $pq=qp=q$.  The preceding observation then yields
$p=q$, so again $pqp=p$.  Consequently,
\begin{equation}\label{eq:order-DA-identities}
  x^{N+1}\approx x^N,\qquad
  (xy)^N(yx)^N(xy)^N\approx(xy)^N
\end{equation}
hold in $M$.

These identities place $M$ in the standard class $\mathbf{DA}$.  In
the structural characterization of this class
\cite[p.~89]{KrebsStraubing2012}, if $f$ is idempotent and $P_f$ is
the submonoid generated by all $a$ such that $f\in MaM$, then
\begin{equation}\label{eq:order-DA-local}
  fP_ff=\{f\}.
\end{equation}
If $a$ divides $f$, both $a$ and $a^2$ belong to $P_f$, so
$faf=fa^2f=f$.  All subgroups of $M$ are trivial; hence
\eqref{eq:order-sapir-test} holds with $d=1$.  By
Lemma~\ref{lem:sapir-order-test}(2), $M$ is not INFB.

Now put $T=S_\cdot$.  For every idempotent $e\in T$, the local monoid
$eTe$ inherits a total order compatible with multiplication, and so
is not INFB by the preceding argument.  Lemma~\ref{lem:local-monoids}
shows that $T$ is not INFB.  The conclusion follows from
Lemma~\ref{lem:infb-reduct}.
\end{proof}

Recall that the height of a finite partially ordered set is the
greatest number of elements in a chain.

\begin{proposition}\label{prop:additive-height-two}
If the additive order of a finite ai-semiring $S$ has height at most
two, then $S$ is not INFB.
\end{proposition}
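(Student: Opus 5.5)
The plan is to reduce to local monoids and then apply Sapir's test, Lemma~\ref{lem:sapir-order-test}(2), exactly as in the proof of Proposition~\ref{prop:additive-chain}.

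\emph{Shape of the order.} The additive order of a finite ai-semiring is a finite join-semilattice, so it has a greatest element $\top$, the sum of all elements. If the height is at most two, no chain $a<b<\top$ exists. Hence every element other than $\top$ is minimal, and $S$ is an antichain $A$ together with a top $\top$ above it. For an idempotent $e$ of $T=S_\cdot$, the local monoid $eSe$ is closed under addition, since $e(a+b)e=eae+ebe$. So $eSe$ is a finite ai-semiring with identity $e$, and its additive order is a subposet of that of $S$, hence again of height at most two. By Lemma~\ref{lem:local-monoids} and Lemma~\ref{lem:infb-reduct}, it suffices to prove the following claim.

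\emph{Claim.} A finite ai-monoid $M$ with identity $1$, whose order is an antichain plus a top, has a non-INFB multiplicative reduct.

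If $1=\top$, Proposition~\ref{prop:extreme-identity} applies. Otherwise $1$ is minimal. We then verify condition \eqref{eq:order-sapir-test} for every idempotent $f$ and every $a$ dividing $f$.

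\emph{Tool: two-valued products.} For any $x,y$ we have
\[
x\top y=\sum_{c} xcy\geq x1y=xy,
\]
so $x\top y\in\{xy,\top\}$. Moreover, if $x\top y$ is minimal, then $xcy=x\top y$ for every $c$.

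\emph{Case $f=\top$.} Here $faf\geq f1f=f=\top$, so $faf=\top$. Likewise $fa^{d+1}f=\top=f\in H_f$.

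\emph{Case $f$ minimal, $f=uav$.} This is the case I expect to be the main obstacle. We first compare $faf$ and $fa^{d+1}f$ with $f\top f\in\{f,\top\}$.

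\emph{Subcase $f\top f=f$.} Minimality of $f$ and the tool give $fcf=f$ for every $c$. In particular $faf=f=fa^{d+1}f\in H_f$.

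\emph{Subcase $f\top f=\top$.} Consider the set $J=\{x: fxf\neq\top\}$. The goal is to show that $fxf$ lies in $H_f$ and that $x\mapsto fxf$ is multiplicative on divisors of $f$. From that, $faf$ and $fa^{d+1}f$ should agree by the exponent $d$.

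\begin{itemize}
\item I would first try to show that $a$ cannot be $\top$. If $a=\top$, then $f=u\top v\geq uv$ forces every $ucv$ to equal $f$, so $f=ufv=\dots$. That gives a rigid factorization, which should make $f\top f=f$, contradicting the subcase.
\item Next, I would try to show that elements of $A$ dividing $f$ act inside $fMf$ as units of the local group. The intended route is the argument of Proposition~\ref{prop:subgroups-antichain} together with the ``factor of identity is a unit'' observation from the proof of Proposition~\ref{prop:additive-chain}.
\end{itemize}

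If this second step resists, the fallback is to refine the reduction. Pass to the local monoid $fMf$ itself, where $f$ is the identity. Since $f$ is not the top there unless $fMf=\{f\}$, and minimal elements there are divisors of $f$, one can use $f=(fuf)(faf)(fvf)$ to conclude that $faf$ is a unit of $fMf$. Then $faf\in H_f$ and $(faf)^{d}=f$. The remaining task is to compare $fa^{d+1}f$ with $faf$. The order of $faf$ in the group $H_f$ divides $d$, which should give $(faf)^{d+1}=faf$. The genuinely delicate point is relating $fa^{d+1}f$ to $(faf)^{d+1}$: the factors $f$ between the copies of $a$ are missing, and this is exactly where the height-two hypothesis must be used via the two-valued products above.
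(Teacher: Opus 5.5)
Your overall strategy matches the paper's: reduce to local monoids and check Sapir's condition, split by idempotents. Your subcase $f\top f=f$ is correct. However, there is a genuine gap exactly where you expected one. The subcase $f\top f=\top$ for minimal $f$ is never closed. Your fallback also rests on $f=(fuf)(faf)(fvf)$, which does not follow from $f=uav$, since you cannot insert copies of $f$ between $u$, $a$, $v$.

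The missing idea is structural. Let $f$ be an idempotent with $f\ne 1$ and $f\ne\top$. Then $1$ and $f$ are distinct minimal elements, so $1+f=\top$. Distributivity gives $\top f=(1+f)f=f+f^2=f$, and likewise $f\top=f$. Hence $f\top f=f$; in fact $xf\le\top f=f$ and $fx\le f$ make $f$ a multiplicative zero by minimality. So the troublesome subcase occurs only for $f=1$. There any $a$ dividing $1$ is a unit of the finite monoid, hence $a\in H_1$ and $a^{d+1}=a$. No comparison of $fa^{d+1}f$ with $(faf)^{d+1}$ is ever needed. This is how the paper handles it: the only idempotents of the local monoid are its identity, its top, and at most one multiplicative zero.

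Your case $f=\top$ also contains an invalid step. The inequality $faf\geq f1f$ needs $a\geq 1$, which fails for every minimal $a\neq 1$. The conclusion is still true, but it needs a different argument:
\begin{itemize}
\item From $\top a\geq 1a=a$ and $a\top\geq a$, both products lie in $\{a,\top\}$.
\item If $a\neq\top$ and both equal $a$, then $xa\le\top a=a$ and $ax\le a$ make $a$ a multiplicative zero by minimality. This contradicts $\top=uav$.
\item Hence $\top a=\top$ or $a\top=\top$.
\item Since $\top=1\top\le\top^2\le\top$ gives $\top^2=\top$, it follows that $\top a^k\top=\top$ for all $k\geq 1$.
\end{itemize}

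A smaller point concerns the case $1=\top$. Proposition~\ref{prop:extreme-identity} is stated for the semiring, whereas you need the multiplicative reduct to be non-INFB. You should cite its proof, which verifies Sapir's condition for the reduct directly.
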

\begin{proof}
Put $T=S_\cdot$, fix an idempotent $e\in T$, and consider the local
monoid $M=eTe$.  It is also a subsemiring of $S$.  Let
$c=\sum_{x\in M}x$, the greatest element of $M$, and let $d$ be the
least common multiple of the exponents of the subgroups of $M$.  We
verify \eqref{eq:order-sapir-test}.

Suppose first that $e=c$.  If $f$ is idempotent and $f=uav$, then
$f\leq a\leq e$, so
\[
  f=f^{k+2}\leq fa^kf\leq fef=f\qquad(k\geq1).
\]
Thus $fa^kf=f$ for every $k\geq1$.

Now suppose that $e<c$.  Since $c=ce\leq c^2\leq c$, we have
$c^2=c$.  If $f\notin\{e,c\}$ is idempotent, then $e+f=c$ and
\[
  cf=(e+f)f=f,\qquad fc=f(e+f)=f.
\]
For every $x\in M$, it follows that $xf,fx\leq f$.  The element $f$
is additively minimal, since the order has height at most two, and
hence $xf=fx=f$.  Thus $f$ is a multiplicative zero.  There is at
most one such additional idempotent; denote it by $z$ when it exists.

We claim that
\begin{equation}\label{eq:order-top-sandwich}
  ca^kc=c\qquad(k\geq1)
\end{equation}
for every $a\in M\setminus\{z\}$.  The claim is clear for $a=c$.
If $a\ne c$, then $a\leq ca,ac\leq c$; the height assumption gives
$ca,ac\in\{a,c\}$.  If both equal $a$, then, for every $x\in M$,
$xa,ax\leq a$, and minimality makes $a$ a multiplicative zero.
Hence $a=z$.  Otherwise $ca=c$ or $ac=c$, and
\eqref{eq:order-top-sandwich} follows from $c^2=c$.

It remains to consider the possible idempotents $f$.  If $f=e$ and
$a$ divides $e$, then $a$ is a unit of the finite monoid $M$, so
$a^{d+1}=a$ and
\[
  eae=ea^{d+1}e=a\in H_e.
\]
If $f=c$ and $a$ divides $c$, then $a\ne z$, since $uzv=z\ne c$;
therefore \eqref{eq:order-top-sandwich} gives
$cac=ca^{d+1}c=c\in H_c$.  Finally, if $f=z$, then
$zaz=za^{d+1}z=z\in H_z$.  Thus
\eqref{eq:order-sapir-test} holds in every case, and
Lemma~\ref{lem:sapir-order-test}(2) shows that $M=eTe$ is not INFB.
As $e$ was arbitrary, Lemmas~\ref{lem:local-monoids} and
\ref{lem:infb-reduct} show that $S$ is not INFB.
\end{proof}

Combining these results gives a convenient summary.

\begin{corollary}\label{cor:infb-order-restrictions}
Let $S$ be a finite INFB ai-semiring, and let
$\top=\sum_{s\in S}s$.  Then its additive order has width at least
two and height at least three.  If $S$ has a multiplicative identity,
that identity is neither the greatest nor the least element.  Every
multiplicative subgroup of $S$ is an antichain.
\end{corollary}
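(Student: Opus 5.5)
The corollary collects the preceding propositions; the plan is to derive each clause from them and supply only the small order-theoretic translations. The clauses concern $S$ itself and are obtained by contraposition: if the additive order of $S$ violated one of them, $S$ would not be INFB.

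\emph{Height.} If the height were at most two, Proposition~\ref{prop:additive-height-two} would show that $S$ is not INFB. Hence the height is at least three.

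\emph{Width.} The point is that width one is exactly the chain case. If every two elements of $S$ are comparable, then the additive order is a chain and Proposition~\ref{prop:additive-chain} applies. So some pair $a,b$ is incomparable, giving width at least two. Note that the element $\top=\sum_{s\in S}s$ exists because $S$ is finite and nonempty, and it is the greatest element of the order. It is not needed in the argument; it only serves to make ``the greatest element'' in the identity clause well defined.

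\emph{Multiplicative identity.} If $S$ has an identity $1$ and it were the greatest element $\top$, or the least element, Proposition~\ref{prop:extreme-identity} would contradict INFB. A least element need not exist a priori, but this does not matter: the clause only excludes $1$ from being least in the case where it is least. Equivalently, the proposition provides $a\nleq 1$ and $1\nleq b$.

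\emph{Subgroups.} The antichain property follows from Proposition~\ref{prop:subgroups-antichain}, which holds for every finite ai-semiring and needs no INFB hypothesis.

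No step is a real obstacle. The only point needing care is to state explicitly that a finite poset of width one is a chain, so that Proposition~\ref{prop:additive-chain} gives the width bound.
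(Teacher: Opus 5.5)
Your proposal is correct and follows the paper's proof. The paper likewise cites Propositions~\ref{prop:additive-chain} and~\ref{prop:additive-height-two} for the width and height bounds, and Propositions~\ref{prop:extreme-identity} and~\ref{prop:subgroups-antichain} for the identity and subgroup clauses; your remark that width one means the order is a chain is the only translation needed, and it is right.
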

\begin{proof}
Propositions~\ref{prop:additive-chain} and
\ref{prop:additive-height-two} give the assertions about width and
height.  The remaining statements follow from
Propositions~\ref{prop:extreme-identity} and
\ref{prop:subgroups-antichain}.
\end{proof}

\begin{remark}
Propositions~\ref{prop:additive-chain} and
\ref{prop:additive-height-two} prove that the semirings in question
are not INFB; they do not assert that those semirings are themselves
finitely based.
\end{remark}

\paragraph{Two six-element examples.}\label{sec:examples}

We now apply Theorem~\ref{thm:main} to two familiar six-element
ai-semirings.  The first one is the Brandt ai-semiring
\[
\B=\{0,1,c,d,cd,dc\}.
\]
Its multiplication is that of the Brandt monoid
\[
B_2^1=\langle c,d,1,0\mid cdc=c,\ dcd=d,\ c^2=d^2=0\rangle,
\]
and its addition is the join operation of the upper semilattice in
Figure~\ref{fig:B-structure}.  The element $0$ is the greatest element
of the additive order and is also the multiplicative zero.

\begin{figure}[ht]
\centering
\begin{minipage}[c]{0.38\textwidth}
\centering
\begin{tikzpicture}[
  x=0.78cm,y=0.88cm,
  element/.style={font=\normalsize,inner sep=2pt,fill=white}
]
\coordinate (top) at (0,2.5);
\coordinate (bot) at (0,0);
\coordinate (c0)  at (-2.7,1.3);
\coordinate (cd0) at (-0.9,1.3);
\coordinate (dc0) at (0.9,1.3);
\coordinate (d0)  at (2.7,1.3);
\draw (bot)--(cd0) (bot)--(dc0);
\draw (c0)--(top) (cd0)--(top) (dc0)--(top) (d0)--(top);
\node[element] at (top) {$0$};
\node[element] at (c0) {$c$};
\node[element] at (cd0) {$cd$};
\node[element] at (dc0) {$dc$};
\node[element] at (d0) {$d$};
\node[element] at (bot) {$1$};
\end{tikzpicture}
\par\smallskip
\textup{(a) Additive order}
\end{minipage}
\hfill
\begin{minipage}[c]{0.58\textwidth}
\centering
\(
\begin{array}{c|cccccc}
\cdot&0&1&c&d&cd&dc\\ \hline
0&0&0&0&0&0&0\\
1&0&1&c&d&cd&dc\\
c&0&c&0&cd&0&c\\
d&0&d&dc&0&d&0\\
cd&0&cd&c&0&cd&0\\
dc&0&dc&0&d&0&dc
\end{array}
\)
\par\smallskip
\textup{(b) Multiplication table}
\end{minipage}
\caption{The additive order and multiplication table of $\B$.}
\label{fig:B-structure}
\end{figure}

The second semiring is
\[
\A=\{0,1,a,b,ab,ba\}.
\]
Its multiplicative reduct is the monoid with zero
\begin{equation}\label{eq:A-presentation}
A_2^1=
\langle a,b,1,0\mid
a^2=a,\ b^2=0,\ aba=a,\ bab=b\rangle.
\end{equation}
Addition in $\A$ is the join operation of the upper semilattice shown
in Figure~\ref{fig:A-structure}.  Thus
\[
a<1<ab,ba<b<0,
\]
where $ab$ and $ba$ are incomparable.  This order is compatible with
multiplication; equivalently, multiplication distributes over its join.

\begin{figure}[ht]
\centering
\begin{minipage}[c]{0.38\textwidth}
\centering
\begin{tikzpicture}[
  x=1.0cm,y=0.7cm,
  element/.style={font=\normalsize,inner sep=2pt,fill=white}
]
\coordinate (z)  at (0,5);
\coordinate (b)  at (0,4);
\coordinate (ab) at (-1.25,3);
\coordinate (ba) at (1.25,3);
\coordinate (one) at (0,2);
\coordinate (a) at (0,1);
\draw (a)--(one)--(ab)--(b)--(z);
\draw (one)--(ba)--(b);
\node[element] at (z) {$0$};
\node[element] at (b) {$b$};
\node[element] at (ab) {$ab$};
\node[element] at (ba) {$ba$};
\node[element] at (one) {$1$};
\node[element] at (a) {$a$};
\end{tikzpicture}
\par\smallskip
\textup{(a) Additive order}
\end{minipage}
\hfill
\begin{minipage}[c]{0.58\textwidth}
\centering
\(
\begin{array}{c|cccccc}
\cdot&0&1&a&b&ab&ba\\ \hline
0&0&0&0&0&0&0\\
1&0&1&a&b&ab&ba\\
a&0&a&a&ab&ab&a\\
b&0&b&ba&0&b&0\\
ab&0&ab&a&0&ab&0\\
ba&0&ba&ba&b&b&ba
\end{array}
\)
\par\smallskip
\textup{(b) Multiplication table}
\end{minipage}
\caption{The additive order and multiplication table of $\A$.}
\label{fig:A-structure}
\end{figure}

\begin{corollary}\label{cor:BA-infb}
The ai-semirings $\B$ and $\A$ are inherently nonfinitely based.
\end{corollary}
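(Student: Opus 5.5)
By Theorem~\ref{thm:main}, and since both algebras are finite, it suffices to show that every Zimin word is minimal for $\B$ and for $\A$. By Lemma~\ref{lem:normal}, a nonempty factor of a minimal word is minimal, so it is enough to treat each $Z_m$ directly. The plan is a \emph{separating assignment}: given a nonempty word $\bu$ with $\bu\ne Z_m$, we look for an assignment $\varphi$ with $\varphi(\bu)\nleq\varphi(Z_m)$, which refutes $\bu\preceq Z_m$.

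For $\B$, the natural tool is the classical embedding of the Zimin factor semigroup. Map each letter of $Z_m$ to a partial injection, or matrix unit product, so that $Z_m$ evaluates to a nonzero element. We would use the standard Brandt-monoid argument showing that $B_2^1$ is INFB. In that argument, for each word $\bu\ne Z_m$, an assignment into $B_2^1$ gives $Z_m$ a nonzero value $g$ and gives $\bu$ a different value. In $\B$ the nonzero elements other than $1$ are pairwise incomparable and lie strictly below only $0$. The only other order relations are $1<cd$ and $1<dc$.

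A cleaner route is to reduce to Proposition~\ref{prop:membership}, by showing $\Finf\in\V(\B)$ through a direct check of minimality. Suppose $\B\models\bu\preceq Z_m$. Evaluate with $1$ substituted for all variables but one, say $x_i$, which gets the value $c$ or $d$. Then $Z_m$ takes a nonzero value, $\bu$ must be at most that value, and so $\bu$ is nonzero. This forces $\bu$ and $Z_m$ to have the same occurrence counts modulo the behaviour of $c^2=0$. More to the point, we use Sapir's characterization: $Z_m$ is an isoterm for $B_2^1$. Take an assignment with $\varphi(Z_m)=g\notin\{0,1\}$ that distinguishes $Z_m$ from all other words. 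Since every element below such a $g$ equals $g$ itself, $\bu\preceq Z_m$ forces $\varphi(\bu)=g$ for all such assignments, and the isoterm property gives $\bu=Z_m$. The delicate point is that the values of $Z_m$ might be $1$ or $cd,dc$, which have $1$ below them. To handle this, I would use only assignments sending letters into $\{c,d,cd,dc\}$, making the values of $Z_m$ nonzero non-identity elements that are minimal.

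For $\A$, a similar strategy applies. Its reduct $A_2^1$ is known to have all Zimin words as isoterms; this is Sapir's theorem on $A_2^1$ being INFB. The order $a<1<ab,ba<b<0$ puts elements below $ab$, $ba$ and $b$. So I would choose assignments into $\{a,b,ab,ba\}$ whose value on $Z_m$ is $a$. Since $a$ is the least element, $\bu\preceq Z_m$ then gives $\varphi(\bu)=a$ exactly. We would then need to check that Sapir's separating assignments can be chosen with value $a$, or at least with the value of $Z_m$ minimal. The main obstacle is exactly this: verifying, by induction along $Z_{m+1}=Z_mx_{m+1}Z_m$, that for each $\bu\ne Z_m$ there is a letter assignment with $\varphi(Z_m)=a$ (respectively an additively minimal value in $\B$) and $\varphi(\bu)\ne\varphi(Z_m)$. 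I would first attempt the inductive assignment $x_{m+1}\mapsto b$ or $ab$, which sandwiches the previous value. With this choice, a mismatch in the position or count of $x_{m+1}$ in $\bu$ yields $0$ or a non-$a$ value.
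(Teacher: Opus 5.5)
Your reduction to Theorem~\ref{thm:main} is the right one. The step that actually matters, however, is upgrading the isoterm property of $Z_m$ for $B_2^1$ or $A_2^1$ to \emph{minimality} in $\B$ or $\A$. That step is never carried out, and the device you propose for it cannot work. For $\B$ you restrict the letters to $\{c,d,cd,dc\}$. Then $\bu$ indeed never takes the value $1$, so $\varphi(\bu)=\varphi(Z_m)$ whenever $\varphi(Z_m)\ne0$. That is the correct reason: $cd$ and $dc$ are not minimal, since $1<cd,dc$. But such assignments take values in the five-element Brandt semigroup $B_2=\{0,c,d,cd,dc\}$, which satisfies $xyx\approx xyxyx$ (check with matrix units). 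Hence no assignment of this kind distinguishes $Z_2=x_1x_2x_1$ from $x_1x_2x_1x_2x_1$. The isoterm property of $Z_m$ for $B_2^1$ depends essentially on the adjoined identity, so it cannot be invoked once the assignments using $1$ have been discarded. Also, sending one $x_i$ to $c$ and all other letters to $1$ gives $Z_m$ a nonzero value only when $i=m$, because $c^2=0$.

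For $\A$ the plan fails outright. The element $a$ is the only minimal element, and $\varphi(x_1x_2x_1)=a$ forces $\varphi(x_1)\in\{1,a\}$ (with $\varphi(x_2)=a$ in the first case), whence $\varphi(x_1x_2x_1x_2x_1)=a$ as well. In $\A$ this pair is separated only at non-minimal values; for instance $x_1\mapsto1$, $x_2\mapsto b$ gives $b$ versus $b^2=0\nleq b$. So the ``main obstacle'' you identify is a dead end, not a technical verification.

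The missing idea is the one the paper uses. $\B$ is a homomorphic image of a subsemiring of $\A\times\A$ \cite[Proposition~3]{GusevVolkovChains}, so $\B\in\V(\A)$. Since $\bu\preceq Z_m$ is an identity, any $\A$-valid $\bu\preceq Z_m$ is $\B$-valid. Minimality for $\A$ therefore follows from minimality for $\B$, which the paper takes from \cite[Lemma~3.1]{RYZ}.

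If you want a direct argument for $\B$, keep $1$ among the letter values and restrict instead the value of $Z_m$ to the minimal elements $1,c,d$; such values force $\varphi(\bu)=\varphi(Z_m)$. Then prove by induction along $Z_m=Z_{m-1}x_mZ_{m-1}$ that agreement with $Z_m$ at all such assignments forces $\bu=Z_m$:
\begin{itemize}
\item Sending letters outside $c(Z_m)$ to $0$ and the others to $1$ gives $c(\bu)\subseteq c(Z_m)$. Sending $x_m\mapsto c$ and the rest to $1$ shows that $x_m$ occurs once, say $\bu=\bu_1x_m\bu_2$.
\item The assignment $x_1\mapsto c$, $x_i\mapsto d$ $(i\ge2)$ gives $Z_m$ the value $c$ and shows that $\bu_1,\bu_2$ are nonempty.
\item Take an assignment with $Z_{m-1}\mapsto g\in\{1,c,d\}$. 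Extend it by $x_m\mapsto c$ and/or $x_m\mapsto d$, keeping the value of $Z_m$ in $\{c,d\}$, and compare matrix units. This shows that $\bu_1$ and $\bu_2$ also take the value $g$, so both satisfy the hypothesis relative to $Z_{m-1}$.
\end{itemize}
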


\begin{proof}
Every Zimin word is minimal for $\B$ by~\cite[Lemma~3.1]{RYZ}.
Since $\B$ is finite, Theorem~\ref{thm:main} shows that $\B$ is
inherently nonfinitely based.

It is known that $\B$ is a homomorphic image of a subsemiring of
$\A\times\A$; see~\cite[Proposition~3]{GusevVolkovChains}.  Hence
$\B\in\V(\A)$.  Suppose that $\A\models \bu\preceq Z_m$.  Every identity
of $\A$ holds in every member of $\V(\A)$, and therefore
$\B\models \bu\preceq Z_m$.  The minimality of $Z_m$ for $\B$ gives the
literal equality $\bu=Z_m$.  Thus every Zimin word is minimal for $\A$.
The semiring $\A$ is finite, so another application of
Theorem~\ref{thm:main} proves that $\A$ is inherently nonfinitely based.
\end{proof}

\section[The ai-semiring A-bar is not INFB]
{The ai-semiring \texorpdfstring{$\Abar$}{A-bar} is not
inherently nonfinitely based}
\label{sec:not-infb}

Let $X$ be a countably infinite alphabet.  We write $P_f(X^+)$ for
the free ai-semiring of finite nonempty subsets of $X^+$, with union
as addition and setwise concatenation as multiplication, and identify
a word with its singleton subset.

Let $\Abar$ denote the six-element ai-semiring
$\{0,1,a,b,ab,ba\}$.  Its additive order and multiplicative Cayley
table are given in Figure~\ref{fig:Abar-structure} and
Table~\ref{tab:Abar-multiplication}, respectively.  The notation
$\Abar$ distinguishes this additive structure from the
ai-semiring $\A$ considered in Section~\ref{sec:examples}; the two
algebras have the same multiplicative reduct.

\begin{figure}[ht]
\centering
\begin{tikzpicture}[
  x=1.0cm,y=0.7cm,
  element/.style={font=\normalsize,inner sep=2pt,fill=white}
]
\coordinate (z)  at (0,5);
\coordinate (b)  at (0,4);
\coordinate (ab) at (-1.25,3);
\coordinate (ba) at (1.25,3);
\coordinate (one) at (0,2);
\coordinate (a) at (0,1);
\draw (a)--(one)--(ab)--(b)--(z);
\draw (one)--(ba)--(b);
\node[element] at (z) {$a$};
\node[element] at (b) {$1$};
\node[element] at (ab) {$ab$};
\node[element] at (ba) {$ba$};
\node[element] at (one) {$b$};
\node[element] at (a) {$0$};
\end{tikzpicture}
\caption{The additive order of $\Abar$.}
\label{fig:Abar-structure}
\end{figure}

\begin{table}[ht]
\centering
\caption{The multiplication table of $\Abar$.}
\label{tab:Abar-multiplication}
\begin{tabular}{c|cccccc}
$\cdot$&$0$&$1$&$a$&$b$&$ab$&$ba$\\ \hline
$0$&$0$&$0$&$0$&$0$&$0$&$0$\\
$1$&$0$&$1$&$a$&$b$&$ab$&$ba$\\
$a$&$0$&$a$&$a$&$ab$&$ab$&$a$\\
$b$&$0$&$b$&$ba$&$0$&$b$&$0$\\
$ab$&$0$&$ab$&$a$&$0$&$ab$&$0$\\
$ba$&$0$&$ba$&$ba$&$b$&$b$&$ba$
\end{tabular}
\end{table}

Dolinka~\cite{dol3} proposed the following problem for this semiring.

\begin{problem}\label{pro26092866}
Is the ai-semiring $\Abar$ inherently nonfinitely based?
\end{problem}

In this section, we give a negative answer to Problem~\ref{pro26092866}
by showing that $\Abar$ is not inherently nonfinitely based.  We shall
use Lemma~\ref{lem:infb-reduct}, which relates an ai-semiring to its
multiplicative reduct.

It is well-known that every semigroup of order less than six is finitely based.
By Lemma~\ref{lem:infb-reduct}, we immediately deduce the following.

\begin{corollary}\label{coro26092610}
No ai-semiring of order less than six is inherently nonfinitely based.
\end{corollary}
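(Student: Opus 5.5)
We argue by contraposition through the multiplicative reduct, using Lemma~\ref{lem:infb-reduct}. Let $S$ be an ai-semiring with $|S|<6$, and let $S_\cdot$ be its multiplicative reduct. Then $S_\cdot$ is a semigroup of the same order, so $|S_\cdot|<6$. By the well-known fact quoted just before the statement, every semigroup of order less than six is finitely based. Finite basability alone is not quite what we need, so we also have to make sure that $S_\cdot$ lies in a finitely based \emph{locally finite} semigroup variety.

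This extra condition comes for free. Since $S_\cdot$ is finite, its variety $\V(S_\cdot)$ is locally finite. Because $S_\cdot$ is finitely based, $\V(S_\cdot)$ is itself a finitely based locally finite variety containing $S_\cdot$. Hence $S_\cdot$ is not INFB.

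Now apply Lemma~\ref{lem:infb-reduct} to $S$, which is a finite ai-semiring. If $S$ were INFB, then $S_\cdot$ would be INFB, contradicting the previous paragraph. Therefore $S$ is not INFB. For a self-contained check, one can also repeat the argument given in the proof of Lemma~\ref{lem:infb-reduct} directly. A finite basis $\Sigma$ of $S_\cdot$, together with the ai-semiring axioms, defines a finitely based variety containing $S$. This variety is locally finite, because modulo $\Sigma$ a finite generating set yields only finitely many words, and every term is a nonempty sum of words.

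The only genuinely nontrivial ingredient is the cited fact that all semigroups of order at most five are finitely based, which we take from the literature as the paper does. No step requires new work. The main point of care is simply to note that a finite finitely based algebra sits in a finitely based locally finite variety, namely its own.
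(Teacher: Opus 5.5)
Your proposal is correct and is essentially the paper's argument: the paper obtains the corollary directly from the fact that every semigroup of order less than six is finitely based, combined with Lemma~\ref{lem:infb-reduct}. Your remark that a finite, finitely based semigroup lies in its own finitely based locally finite variety only spells out the step the paper leaves implicit.
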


It is natural to ask whether the converse of Lemma~\ref{lem:infb-reduct} holds.

\begin{problem}\label{problem26092601}
Let $S$ be an ai-semiring.
If the multiplicative reduct of $S$ is inherently nonfinitely based,
is $S$ itself inherently nonfinitely based?
\end{problem}

We know that the multiplicative semigroup $(\Abar, \cdot)$ is inherently nonfinitely based.
Theorem~\ref{thm26092715} below solves Problem~\ref{problem26092601} in the negative
by showing that the ai-semiring $\Abar$ is not inherently nonfinitely based.
To the best of our knowledge, this is the first such example;
moreover, by Corollary~\ref{coro26092610},
it is the smallest possible one.

\begin{remark}\label{remark26092801}
For a word $\bw$ of length at least two, let $h(\bw)$ and $t(\bw)$ denote its first and last letters, respectively.  Let $\bw$ be such a word,
and let $\varphi \colon P_f(X^+) \to \Abar$ be a homomorphism.
If $\varphi(\bw)\neq 0$, $\varphi(h(\bw))\neq 1$, and $\varphi(t(\bw))\neq 1$,
then $\varphi(\bw)$ is determined solely by
$\varphi(h(\bw))$ and $\varphi(t(\bw))$.

More precisely, $\varphi(\bw)=\varphi(h(\bw)) \star \varphi(t(\bw))$,
where the operation $\star$ is given by Table~\ref{tbstar}.
In particular, if $\varphi(h(\bw)), \varphi(t(\bw)) \in \{a, ab, ba\}$,
then $\varphi(h(\bw)) \star \varphi(t(\bw))=\varphi(h(\bw)) \cdot \varphi(t(\bw))$.

\begin{table}[ht]
\caption{The Cayley table of the operation $\star$.} \label{tbstar}
\begin{tabular}{c|cccc}
$\star$ &$a$&$b$&$ab$&$ba$\\
\hline
$a$  &$a$ &$ab$&$ab$&$a$ \\
$b$  &$ba$&$b$ &$b$ &$ba$\\
$ab$ &$a$ &$ab$&$ab$&$a$ \\
$ba$ &$ba$&$b$ &$b$ &$ba$\\
\end{tabular}
\end{table}
\end{remark}

\begin{lemma}\label{lemma26092850}
Let $\bv$ and $\bv'$ be words such that
\[
\bv=\bp_1 x \bp_2 x \bp_3 x \bp_4
\quad\text{and}\quad
\bv'=\bp_1 x \bp_2 \bp_3 x \bp_4,
\]
where $\bp_1$, $\bp_2$, $\bp_3$, and $\bp_4$ may be empty words.
Further let $\varphi \colon P_f(X^+) \to \Abar$ be a homomorphism.
If $\varphi(\bv) \neq 0$, then $\varphi(\bv')=0$ or $\varphi(\bv')=\varphi(\bv)$.
\end{lemma}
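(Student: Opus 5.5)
The plan is to split on the value $X=\varphi(x)$ and then use the Rees matrix structure of the ideal $\{0,a,b,ab,ba\}$ of the multiplicative reduct of $\Abar$. Only the multiplicative reduct matters here, since $\bv$ and $\bv'$ are single words and $\varphi$ restricted to words is a semigroup homomorphism into $(\Abar,\cdot)$. This is the monoid $A_2^1$ of \eqref{eq:A-presentation}.

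\emph{Trivial cases.} If $X=1$, then deleting an occurrence of $x$ does not change the value, so $\varphi(\bv')=\varphi(\bv)$. If $X=0$, then $\varphi(\bv)=0$, which is excluded.

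\emph{Main case: $X\in\{a,b,ab,ba\}$.} The set $A_2=\{0,a,b,ab,ba\}$ is an ideal of $A_2^1$. It is the Rees matrix semigroup over the trivial group with index sets $\{1,2\}$ and sandwich matrix $P=\left(\begin{smallmatrix}1&1\\1&0\end{smallmatrix}\right)$, as the Cayley table shows. Concretely, I would identify $a=(1,1)$, $ab=(1,2)$, $ba=(2,1)$ and $b=(2,2)$. The multiplication rule is then
\[
(i,\lambda)(j,\mu)=
\begin{cases}
(i,\mu), & p_{\lambda j}\ne 0,\\
0, & \text{otherwise},
\end{cases}
\]
and I would verify it against Table~\ref{tab:Abar-multiplication}. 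The key consequence is that any nonzero product of nonzero elements of $A_2$ equals $(\text{row of the first factor},\ \text{column of the last factor})$.

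Now put $\bu=\bp_1x\bp_2$ and $\bw=\bp_3x\bp_4$, so that
\[
\varphi(\bv)=\varphi(\bu)\,X\,\varphi(\bw),\qquad \varphi(\bv')=\varphi(\bu)\,\varphi(\bw).
\]
Each of $\bu$ and $\bw$ contains $x$, and $A_2$ is an ideal, so $\varphi(\bu),\varphi(\bw)\in A_2$. Empty $\bp_i$ cause no problem, because the factors $\bu$ and $\bw$ are nonempty. Since $\varphi(\bv)\ne0$, both $\varphi(\bu)$ and $\varphi(\bw)$ are nonzero, say $\varphi(\bu)=(i,\lambda)$ and $\varphi(\bw)=(j,\mu)$. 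The nonzero-product rule then gives $\varphi(\bv)=(i,\mu)$. Applying the same rule to $\varphi(\bv')$ shows that it is either $0$ or $(i,\mu)=\varphi(\bv)$, which is the claim.

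I expect no real obstacle. The only point needing care is confirming that the chosen coordinates reproduce Table~\ref{tab:Abar-multiplication} exactly. Alternatively, I could avoid coordinates and argue with Remark~\ref{remark26092801}: there the value of a nonzero product is determined by its extreme factors via $\star$, and the table of $\star$ shows that the row is given by the first factor and the column by the last.
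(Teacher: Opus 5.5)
Your proof is correct and rests on the same fact as the paper's: a nonzero product in the ideal $\{0,a,b,ab,ba\}$ is determined by its first and last factors. This is the paper's Remark~\ref{remark26092801}, which the paper states without proof and which your Rees-matrix coordinates actually verify. The paper applies this fact to head and tail letters after deleting all letters valued $1$, whereas your grouping $\bv=\bu x\bw$, $\bv'=\bu\bw$, with $x$ in both blocks, makes that deletion step unnecessary.
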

\begin{proof}
If $\varphi(\bv')=0$, then the result holds.
So assume that $\varphi(\bv') \neq 0$.
If $\varphi(x)=1$, then $\varphi(\bv)=\varphi(\bv')$,
since $1$ is the multiplicative identity of $\Abar$.
Now suppose that $\varphi(x)\neq 1$.
Let $\bv_1$ and $\bv'_1$ be the words obtained from $\bv$ and $\bv'$,
respectively, by deleting all variables whose values under $\varphi$ are $1$.
Then $\varphi(\bv_1)=\varphi(\bv)$ and $\varphi(\bv'_1)=\varphi(\bv')$.
Note that $\bv_1$ and $\bv'_1$ have the forms
\[
\bp'_1 x \bp'_2 x \bp'_3 x \bp'_4
\quad\text{and}\quad
\bp'_1 x \bp'_2 \bp'_3 x \bp'_4,
\]
respectively.
By Remark~\ref{remark26092801}, $\varphi(\bv_1)=\varphi(\bv'_1)$.
Hence $\varphi(\bv)=\varphi(\bv')$, as required.
\end{proof}

Let $\sigma$ denote the identity
\[
Z_5 \approx \ba_4x_5Z_4+\bb_4x_5Z_4,
\]
where $\ba_4=x_1x_2x_3Z_2x_4Z_3$ and $\bb_4=Z_2x_3x_1x_1x_4Z_3$.
Let $\mathcal{V}_\sigma$ be the ai-semiring variety defined by the identity $\sigma$.
Then $\mathcal{V}_\sigma$ is finitely based.
Note that the word $\ba_4$ is obtained
from the word $Z_4$ by deleting the second occurrence of $x_1$ in $Z_4$,
and $\bb_4$ is obtained from $Z_4$ by deleting the second occurrence of $x_2$ in its initial copy of $Z_3$.

\begin{proposition}\label{pro26092820}
The variety $\mathcal{V}_\sigma$ contains $\Abar$.
\end{proposition}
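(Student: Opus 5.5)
We verify $\sigma$ directly: for every homomorphism $\varphi\colon P_f(X^+)\to\Abar$ we show
\[
\varphi(Z_5)=\varphi(\ba_4x_5Z_4)+\varphi(\bb_4x_5Z_4).
\]
In $\Abar$ the element $0$ is the least element of the additive order (Figure~\ref{fig:Abar-structure}). So a sum is $0$ if and only if both summands are $0$, and a sum with summands in $\{0,w\}$ equals $w$ as soon as one summand is $w$.

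First we record a local description of zero products, read off from Table~\ref{tab:Abar-multiplication}. Call an element \emph{$b$-ending} if it lies in $\{b,ab\}$ and \emph{$b$-starting} if it lies in $\{b,ba\}$. Let $\bw$ be a word and delete all letters with $\varphi$-value $1$. Then $\varphi(\bw)\ne0$ if and only if no remaining letter has value $0$ and no adjacent pair $yz$ of remaining letters has $\varphi(y)$ $b$-ending and $\varphi(z)$ $b$-starting. When $\varphi(\bw)\ne0$, its value is given by the first and last remaining letters as in Remark~\ref{remark26092801}.

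\emph{Step 1: the case $\varphi(Z_5)=0$.} If $\varphi(Z_4)=0$, both summands contain $Z_4$ as a factor, so both vanish. Otherwise $\varphi(Z_4)\varphi(x_5Z_4)=0$. Let $\mathbf{c}\in\{\ba_4,\bb_4\}$. If $\varphi(\mathbf{c})=0$, the summand $\mathbf{c}x_5Z_4$ vanishes. If $\varphi(\mathbf{c})\ne0$, we compare $\mathbf{c}$ with $Z_4$:
\begin{itemize}[label=--]
\item $\mathbf{c}$ has the same content as $Z_4$ and the common suffix $x_4Z_3$;
\item the letter deleted from $Z_4$ to obtain $\mathbf{c}$ also occurs earlier in the prefix, so the first non-$1$-valued letters agree.
\end{itemize}
(Alternatively, apply Lemma~\ref{lemma26092850} in reverse.) By Remark~\ref{remark26092801} we get $\varphi(\mathbf{c})=\varphi(Z_4)$, and hence $\varphi(\mathbf{c}x_5Z_4)=\varphi(Z_4x_5Z_4)=0$. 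So both summands are $0$, as required.

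\emph{Step 2: the case $\varphi(Z_5)\ne0$.} Both $\ba_4x_5Z_4$ and $\bb_4x_5Z_4$ are obtained from $Z_5$ by the deletion in Lemma~\ref{lemma26092850}:
\begin{itemize}[label=--]
\item for $\ba_4$, delete the middle of the first three occurrences of $x_1$, with $\bp_2=x_2$ and $\bp_3=x_3$;
\item for $\bb_4$, delete the middle of the first three occurrences of $x_2$.
\end{itemize}
Hence each summand has value $0$ or $\varphi(Z_5)$, and it suffices to show that one of them is nonzero. Since $\varphi(Z_5)\ne0$, every letter of $Z_5$ has nonzero value and every reduced adjacency in $Z_5$ is good. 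A deletion creates at most one new reduced adjacency; all other adjacencies already occur in $Z_5$. We split on the values of $x_1$ and $x_2$.
\begin{itemize}[label=--]
\item If $\varphi(x_2)=1$, the reduced form of $\bb_4$ coincides with that of $Z_4$. Similarly, if $\varphi(x_1)=1$, the reduced form of $\ba_4$ coincides with that of $Z_4$. In either case one summand is nonzero.
\item If $\varphi(x_1),\varphi(x_2)\ne1$, then in $\bb_4$ the only new adjacency is $x_1x_1$. It is bad only when $\varphi(x_1)$ is both $b$-ending and $b$-starting, that is, $\varphi(x_1)=b$. So if $\varphi(x_1)\ne b$, then $\varphi(\bb_4x_5Z_4)\ne0$.
\item If $\varphi(x_1)=b$, then $\varphi(x_1x_2x_1)\ne0$ forces $\varphi(x_2)=a$. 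In $\ba_4$ the only new adjacency has $x_2$ on the left, replacing $x_1$ in front of the next non-$1$ letter after it. Since $a$ is not $b$-ending, this adjacency is good, so $\varphi(\ba_4x_5Z_4)\ne0$.
\end{itemize}

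The main obstacle is this last bookkeeping: checking that each deletion creates exactly one new reduced adjacency even when several neighbouring letters take value $1$, and that the case split is exhaustive. With both cases established, $\Abar\models\sigma$, so $\Abar\in\mathcal V_\sigma$.
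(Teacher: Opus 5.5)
Your proof is correct and follows essentially the paper's route: Lemma~\ref{lemma26092850} reduces the problem to showing that one summand is nonzero, and your case split on $\varphi(x_1)$ matches the paper's. The cases are $1$, then $\{a,ab,ba\}$, then $b$ (which forces $\varphi(x_2)=a$), with $\bb_4$ handling the idempotent non-identity case and $\ba_4$ the other two. The only difference is bookkeeping: you certify nonvanishing with the adjacency criterion for products in $\Abar$, whereas the paper substitutes equal values, $\varphi(x_1x_2x_1)=\varphi(x_1x_1)$ and $\varphi(x_2x_3)=\varphi(x_2x_1x_3)$.
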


\begin{proof}
It suffices to show that $\Abar$ satisfies the identity $\sigma$.
Let $\varphi \colon P_f(X^+) \to \Abar$ be an arbitrary homomorphism.
If $\varphi(Z_4)=0$, then $\varphi(Z_5)= 0$ and $\varphi(\ba_4x_5Z_4)=\varphi(\bb_4x_5Z_4)=0$,
so
\[
\varphi(Z_5)
= \varphi(\ba_4x_5Z_4+\bb_4x_5Z_4).
\]

Now suppose that $\varphi(Z_4)\neq 0$.
By Lemma~\ref{lemma26092850},
$\varphi(\ba_4), \varphi(\bb_4) \in \{0, \varphi(Z_4)\}$,
and hence
$\varphi(\ba_4x_5Z_4), \varphi(\bb_4x_5Z_4) \in \{0, \varphi(Z_5)\}$.
Thus it suffices to prove that at least one of
$\varphi(\ba_4x_5Z_4)$ and $\varphi(\bb_4x_5Z_4)$ equals $\varphi(Z_5)$.
For this, it is enough to show that at least one of
$\varphi(\ba_4)$ and $\varphi(\bb_4)$ equals $\varphi(Z_4)$.

If $\varphi(x_1)=1$, then $\varphi(\ba_4)=\varphi(Z_4)$, since $1$ is the multiplicative identity.
Now suppose that $\varphi(x_1) \in \{a, ab, ba\}$. Then $\varphi(x_1x_1)=\varphi(x_1)$.
Since $\varphi(Z_4)\neq 0$, it follows that $\varphi(Z_2)\neq 0$, and so $\varphi(x_1x_2x_1)\neq 0$.
By Remark~\ref{remark26092801}, $\varphi(x_1x_2x_1)=\varphi(x_1x_1)$.
Thus
\[
\varphi(\bb_4)=\varphi(Z_2x_3x_1x_1x_4Z_3)
=\varphi(Z_2x_3x_1x_2x_1x_4Z_3)=\varphi(Z_4).
\]

Finally, suppose that $\varphi(x_1)=b$.
From the multiplicative Cayley table of $\Abar$,
one obtains that $bsb=b$ if and only if $s=a$.
Since the word $Z_4$ contains the subwords $x_1x_2x_1$, $x_1x_3x_1$, and $x_1x_4x_1$,
it follows that $\varphi(x_2)=\varphi(x_3)=\varphi(x_4)=a$, and so
\[
\varphi(x_2x_3)=aa=a=aba=\varphi(x_2x_1x_3).
\]
Hence
\[
\varphi(\ba_4)=\varphi(x_1x_2x_3Z_2x_4Z_3)=\varphi(x_1x_2x_1x_3Z_2x_4Z_3)=\varphi(Z_4).
\]

We have shown that at least one of $\varphi(\ba_4)$ and $\varphi(\bb_4)$
equals $\varphi(Z_4)$.
This completes the proof.
\end{proof}
\begin{proposition}\label{pro26092821}
The variety $\mathcal{V}_\sigma$ is locally finite.
\end{proposition}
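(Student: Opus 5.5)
We will show that every finitely generated member of $\mathcal{V}_\sigma$ is finite by bounding the length of multiplicative words needed to represent its elements. The idea is that $\sigma$ is a \emph{length-reducing} rewriting rule. Zimin's unavoidability theorem then guarantees that every sufficiently long word can be rewritten.

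First, observe that both words on the right of $\sigma$ are strictly shorter than $Z_5$. Indeed, $\ba_4$ is obtained from $Z_4$ by deleting one occurrence of $x_1$, and $\bb_4$ by deleting one occurrence of $x_2$. Now let $\theta$ be any nonerasing substitution and $\boldsymbol{\alpha},\boldsymbol{\beta}\in X^*$ any contexts. Substituting into $\sigma$ and multiplying by the contexts, distributivity gives
\[
\mathcal{V}_\sigma\models
\boldsymbol{\alpha}\theta(Z_5)\boldsymbol{\beta}\approx
\boldsymbol{\alpha}\theta(\ba_4x_5Z_4)\boldsymbol{\beta}
+\boldsymbol{\alpha}\theta(\bb_4x_5Z_4)\boldsymbol{\beta}.
\]
Both summands on the right have length strictly less than $|\boldsymbol{\alpha}\theta(Z_5)\boldsymbol{\beta}|$, since they lose a nonempty factor $\theta(x_1)$ or $\theta(x_2)$.

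Next, we invoke Zimin's theorem that $Z_5$ is unavoidable. For each $n\ge1$ there is an integer $f(n)$ such that every word of length at least $f(n)$ over an $n$-letter alphabet contains a factor of the form $\theta(Z_5)$ for some nonerasing substitution $\theta$. This is the standard Bean--Ehrenfeucht--McNulty/Zimin result, which the paper would cite. Combining it with the previous step, we argue by induction on word length that every word $\bw$ in $n$ variables satisfies, in $\mathcal{V}_\sigma$, an identity $\bw\approx\bw_1+\cdots+\bw_s$ with every $|\bw_i|<f(n)$. If $|\bw|<f(n)$ there is nothing to do. Otherwise we write $\bw=\boldsymbol{\alpha}\theta(Z_5)\boldsymbol{\beta}$, apply the displayed identity, and use the induction hypothesis on the two shorter words. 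The resulting sums are collapsed using associativity, commutativity and idempotence of $+$.

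Finally, by Lemma~\ref{lem:normal} every term in $n$ variables is equivalent to a finite nonempty sum of words, and hence to a nonempty sum of words of length less than $f(n)$. There are only finitely many such words, say $N$ of them, so the $n$-generated free algebra of $\mathcal{V}_\sigma$ has at most $2^N-1$ elements. Therefore $\mathcal{V}_\sigma$ is locally finite. The only substantial ingredient is the unavoidability of $Z_5$. Everything else is bookkeeping, and the one point to check carefully is that the induction terminates, which follows because each rewriting step strictly decreases length under a nonerasing substitution.
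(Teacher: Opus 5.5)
Your proposal is correct and follows essentially the same route as the paper: the unavoidability of $Z_5$, the fact that $\sigma$ rewrites any occurrence of a value of $Z_5$ into a sum of two strictly shorter words, induction on length, and a final count of nonempty sums of short words. The only cosmetic difference is that you bound the finitely generated relatively free algebras of $\mathcal{V}_\sigma$, whereas the paper works directly with an arbitrary finitely generated member $R$; the two are equivalent.
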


\begin{proof}
Let $R=\langle X\rangle$ be an ai-semiring in $\mathcal V_\sigma$
generated by a finite set $X$.  Since $Z_5$ is unavoidable, there is
an integer $k$ such that every word over $X$ of length greater than
$k$ contains a value of $Z_5$ as a factor.

We claim that the value in $R$ of every word over $X$ is a finite sum
of values of words of length at most $k$.  It is enough to prove the
claim for a word $\bw$ of length greater than $k$.  Write
\[
  \bw=\bp\psi(Z_5)\bq,
\]
where $\bp,\bq\in X^*$ and $\psi:X^+\to X^+$ is a substitution.  The
identity $\sigma$ gives
\begin{align*}
\bw
&\approx
\bp\psi(\ba_4x_5Z_4)\bq
+\bp\psi(\bb_4x_5Z_4)\bq.
\end{align*}
Both words on the right are strictly shorter than $\bw$.  Repeating
this reduction and arguing by induction on length proves the claim.

By distributivity, every element of $R$ is a finite sum of values of
words over $X$.  The claim allows all of these words to be chosen with
length at most $k$.  There are only finitely many such words, and
addition is commutative and idempotent.  Hence only finitely many sums
of their values are possible.  Thus $R$ is finite, and
$\mathcal V_\sigma$ is locally finite.
\end{proof}

We now arrive at the main result of this section.

\begin{theorem}\label{thm26092715}
The six-element ai-semiring $\Abar$ is not inherently nonfinitely based.
\end{theorem}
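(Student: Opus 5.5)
The plan is to exhibit a single finitely based, locally finite variety of ai-semirings that contains $\Abar$. By the definition of inherent nonfinite basability, this is enough. The natural candidate is the variety $\mathcal V_\sigma$ introduced just before the statement.

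The argument proceeds in three steps, carried out in order.

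First, $\mathcal V_\sigma$ is finitely based by construction. It is defined by the six ai-semiring identities together with the single identity $\sigma$, and in particular it lives in the signature $\{+,\cdot\}$ without constants, as the definition requires.

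Second, Proposition~\ref{pro26092820} gives $\Abar\models\sigma$, and hence $\Abar\in\mathcal V_\sigma$. Since $\mathcal V_\sigma$ is a variety, it follows that $\V(\Abar)\subseteq\mathcal V_\sigma$.

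Third, Proposition~\ref{pro26092821} shows that $\mathcal V_\sigma$ is locally finite. Note that $\V(\Abar)$ is itself locally finite, because $\Abar$ is finite, so the notion of inherent nonfinite basability applies to $\Abar$. Putting the three steps together, $\V(\Abar)$ is contained in a finitely based, locally finite variety, so $\Abar$ is not inherently nonfinitely based.

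The only step with real content is the local finiteness of $\mathcal V_\sigma$, which the paper has already established; I would simply invoke it. The one point worth checking there is that the reduction in Proposition~\ref{pro26092821} terminates. It does: both words $\ba_4x_5Z_4$ and $\bb_4x_5Z_4$ are obtained from $Z_5$ by deleting one letter occurrence, so each application of $\sigma$ strictly lowers the length. Combined with the unavoidability of $Z_5$, this bounds the lengths of the words that need to be considered. No further obstacle remains, and the theorem follows as a direct corollary of Propositions~\ref{pro26092820} and~\ref{pro26092821}.
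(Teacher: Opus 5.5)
Your proposal is correct and matches the paper's proof, which also derives the theorem directly from Proposition~\ref{pro26092820} (the identity $\sigma$ holds in $\Abar$) and Proposition~\ref{pro26092821} (the finitely based variety $\mathcal V_\sigma$ is locally finite). Your termination check is also right: under a nonerasing substitution $\psi$, each application of $\sigma$ deletes one nonempty occurrence of $\psi(x_1)$ or $\psi(x_2)$, so it strictly shortens the word.
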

\begin{proof}
This follows immediately from Propositions \ref{pro26092820} and \ref{pro26092821}.
\end{proof}
\section[A finite identity basis for A-bar]
{A finite identity basis for
\texorpdfstring{$\Abar$}{A-bar}}
\label{sec:finite-basis}

We now determine the finite basis status of $\Abar$.  As usual, it
suffices to consider identities of the form
\[
  \bu\approx\bu+\bq,
  \qquad
  \bu=\bu_1+\cdots+\bu_n,
\]
where $\bu_1,\ldots,\bu_n,\bq\in X^+$.  We shall derive such an
identity by first adjoining $\bq$ together with its square and then
absorbing the square.

For a word or term $\bw$, write $c(\bw)$ for its content. If $\bu$ is
a polynomial and $\bq$ is a word, let
\[
  D_{\bq}(\bu)=
  \{\bu_i:1\leq i\leq n,\ c(\bu_i)\subseteq c(\bq)\}.
\]
When this set is nonempty, we also use $D_{\bq}(\bu)$ for the sum of
its members. We write $\bs\leq\bt$ for the identity
$\bs+\bt\approx\bt$. An absorption
$\bq\leq\bu$ is called \emph{retained} if
$c(\bu)\subseteq c(\bq)$.

\begin{lemma}\label{lem:equational-reduction}
Let $\bu\approx\bu+\bq$ be an identity of $\Abar$.  Then
$D_{\bq}(\bu)$ is nonempty,
\[
  \bigcup_{\bv\in D_{\bq}(\bu)}c(\bv)=c(\bq),
\]
and $\Abar$ satisfies
\[
  D_{\bq}(\bu)\approx D_{\bq}(\bu)+\bq.
\]
Consequently, to obtain an identity basis for $\Abar$, it is enough
to derive every valid absorption $\bq\leq\bu$ with
$c(\bu)\subseteq c(\bq)$.
\end{lemma}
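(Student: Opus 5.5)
The plan is to use the element $1$ of $\Abar$ as a "deletion" value. The key features of $\Abar$ are these. The element $1$ is the multiplicative identity. In the additive order of $\Abar$ (Figure~\ref{fig:Abar-structure}) the element $a$ is greatest, $0$ is least, and $1$ lies just below $a$. Hence $s\leq 1$ for every $s\ne a$.

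The first step is to show that $D_{\bq}(\bu)$ is nonempty and that $\bigcup_{\bv\in D_{\bq}(\bu)} c(\bv)\supseteq c(\bq)$. I will use the assignment $\varphi$ sending every variable of $c(\bq)$ to $1$ and every other variable to $b$. Then $\varphi(\bq)=1$. A summand $\bu_i$ containing a variable outside $c(\bq)$ is a word with at least one factor $b$ and all other factors equal to $1$. Its value is therefore a nonzero power of $b$, that is, $b$ or $0$, and both lie below $1$. Summands with $c(\bu_i)\subseteq c(\bq)$ evaluate to $1$. If $D_{\bq}(\bu)$ were empty, then $\varphi(\bu)\leq b<1$, while $\varphi(\bu+\bq)\geq 1$. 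This contradicts the identity.

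For the reverse inclusion of contents, fix $x\in c(\bq)$. I assign $x\mapsto b$, the other variables of $c(\bq)$ to $1$, and the variables outside $c(\bq)$ to $b$ as well. Then $\varphi(\bq)=b^{k}$, where $k$ is the number of occurrences of $x$ in $\bq$. This is a difficulty, since $b^k=0$ for $k\ge 2$. It is cleaner to send $x\mapsto a$ instead. Then $\varphi(\bq)=a$, the top element. Any summand not containing $x$ takes a value in $\{1,b,0\}$, since it is built from $1$'s and $b$'s, and all of these lie below $a$. For the identity to hold, some summand must evaluate to $a$. Such a summand must contain $x$ and must avoid the variables sent to $b$, because any product involving both $a$ and $b$ lies in $\{ab,ba,b,0\}$. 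So that summand lies in $D_{\bq}(\bu)$ and contains $x$. The same assignment, with $x\mapsto a$ and everything else in $c(\bq)$ sent to $1$, already covers the nonemptiness claim. Together with the first step, this gives $\bigcup_{\bv\in D_{\bq}(\bu)}c(\bv)=c(\bq)$.

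The second step is to show that $\Abar\models D_{\bq}(\bu)\approx D_{\bq}(\bu)+\bq$. I take an arbitrary $\psi$ into $\Abar$ and must show $\psi(\bq)\leq\psi(D_{\bq}(\bu))$. I modify $\psi$ to $\psi'$ by sending every variable outside $c(\bq)$ to $0$. Then $\psi'$ agrees with $\psi$ on $\bq$ and on $D_{\bq}(\bu)$, and every remaining summand of $\bu$ becomes $0$, the least element. So
\[
\psi(\bq)=\psi'(\bq)\leq\psi'(\bu)=\psi'(D_{\bq}(\bu))+0=\psi(D_{\bq}(\bu)).
\]
The identity is thus a retained absorption.

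The final step is the consequence. The identity $D_{\bq}(\bu)\leq\bu$ is trivial in ai-semirings, since $D_{\bq}(\bu)$ is a subsum of $\bu$. So from $\bq\leq D_{\bq}(\bu)$ we obtain $\bu+\bq\approx\bu$. Every identity of $\Abar$ reduces to absorptions $\bu\approx\bu+\bq$ by the standard splitting of $\mathbf P\approx\mathbf Q$ into summand-wise absorptions. Each of these is then derivable from a retained one. I expect the main obstacle to be choosing assignments that keep the test values comparable in the non-chain order. The decisive fact is that $0$ is additively least, which makes the "kill outside variables" step work.
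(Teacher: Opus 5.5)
Your nonemptiness argument, your proof that $\Abar\models D_{\bq}(\bu)\approx D_{\bq}(\bu)+\bq$ (sending outside variables to $0$), and the final reduction are all correct, and they match the paper. The gap is in the step showing that every $x\in c(\bq)$ occurs in some retained summand.

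In that step you send $x\mapsto a$, the rest of $c(\bq)$ to $1$, and the variables outside $c(\bq)$ to $b$. You then argue that a summand with value $a$ must avoid the $b$-valued variables, because ``any product involving both $a$ and $b$ lies in $\{ab,ba,b,0\}$''. This claim is false. In $\Abar$ one has $ab\cdot a=a$, that is, $aba=a$, which is one of the defining relations of $A_2^1$. So a non-retained summand such as $xyx$ with $y\notin c(\bq)$ takes the value $aba=a$ under your assignment. Knowing that some summand has value $a$ therefore no longer shows that some retained summand contains $x$. Your remark that this assignment ``already covers the nonemptiness claim'' fails for the same reason, although you proved nonemptiness correctly beforehand.

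The repair is the device you already use in your second step: send the variables outside $c(\bq)$ to $0$ instead of $b$. Then every non-retained summand evaluates to $0$, and every retained summand avoiding $x$ evaluates to $1$. Meanwhile $\bq$ evaluates to $a\not\le 1$, so some retained summand must contain $x$. This is exactly the paper's argument: assign $1$ on $c(\bq)$ and $0$ elsewhere, then change $x$ to $a$.

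Finally, your bookkeeping of the two inclusions is off. The inclusion $\bigcup_{\bv\in D_{\bq}(\bu)}c(\bv)\subseteq c(\bq)$ holds by the definition of $D_{\bq}(\bu)$. Your ``first step'' proves only nonemptiness, not $\supseteq$ as you announce. The $x\mapsto a$ step, once repaired, is what proves $\supseteq$.
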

\begin{proof}
Put $C=c(\bq)$.  Assign $1$ to the variables in $C$ and $0$ to all
other variables.  Then $\bq$ takes the value $1$, and every summand
outside $D_{\bq}(\bu)$ takes the value $0$.  Thus $D_{\bq}(\bu)$ is
nonempty.

Suppose that a variable $x\in C$ occurs in none of the retained
summands.  In the preceding assignment, change the value of $x$ to
$a$.  The retained sum still takes the value $1$, whereas $\bq$
takes the value $a$.  Since $a\nleq1$, this contradicts the assumed
identity.  This proves the assertion about contents.

Now take any assignment to the variables in $C$ and extend it by
assigning $0$ outside $C$.  Every discarded summand vanishes, so the
assumed identity gives
$D_{\bq}(\bu)\approx D_{\bq}(\bu)+\bq$ under that assignment.
Hence this is an identity of $\Abar$.

Finally, distributivity expresses every semiring term as a
polynomial.  An identity between two polynomials follows from the
absorption of each summand by the opposite polynomial.  Once a
retained absorption has been derived, the discarded summands can
be restored by adding them to both sides.  This proves the last
assertion.
\end{proof}

Let AI denote the six ai-semiring identities
\begin{equation}\label{eq:h:002}
x+y=y+x,\quad (x+y)+z=x+(y+z),\quad x+x=x,
\end{equation}
\begin{equation}\label{eq:h:003}
(xy)z=x(yz),\quad x(y+z)=xy+xz,\quad (x+y)z=xz+yz.
\end{equation}
We use the following five further identities, denoted by Base:
\begin{equation}\label{eq:h:004}
x^3=x^2,\quad x^2\leq x,\quad xy=x^2y+xy^2,\quad
(x+y)^2=x^2+y^2,\quad xy\leq x+y.
\end{equation}
Their validity is proved in
Lemma~\ref{lem:elementary-structure} below.

The following consequence of square decrease will be used repeatedly.
It allows letters already present in a word to be absorbed inside
a product without adding any new identities to the basis.

\begin{lemma}\label{band:content-contraction}\label{lem:content-sandwich}
AI and square decrease \(x^2\le x\) derive
\begin{equation}\label{eq:b:036}
\bu\bv\bu\le\bu
\end{equation}
whenever \(\bu\) is a nonempty word and \(\bv\) is a term on \(c(\bu)\).
\end{lemma}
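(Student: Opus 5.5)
The plan is to reduce the statement to the case where $\bv$ is a single word, and then to a combinatorial fact about words with content in $c(\bu)$. Three general consequences of AI and square decrease will be used throughout.

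\emph{Sums.} A sum lies below $\bu$ exactly when each summand does, since $\bs+\bt\le\bu$ follows from $\bs\le\bu$ and $\bt\le\bu$ by associativity and idempotence of $+$.

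\emph{Monotonicity.} Multiplication preserves $\le$ on both sides, because $\bs\le\bt$ means $\bs+\bt\approx\bt$ and distributivity gives $\boldsymbol\alpha\bs\boldsymbol\beta+\boldsymbol\alpha\bt\boldsymbol\beta\approx\boldsymbol\alpha\bt\boldsymbol\beta$.

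\emph{Substitution into $x^2\le x$.} Substituting an arbitrary term for $x$ gives $\bt^2\le\bt$ for every term $\bt$. In particular, substituting a sum $\bs+\bt$ and expanding gives $\bs\bt\le\bs+\bt$ and $\bt\bs\le\bs+\bt$.

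First I reduce to words. By Lemma~\ref{lem:normal}, $\bv$ is AI-equivalent to a sum $\bv_1+\cdots+\bv_m$ of words, each with content in $c(\bu)$. By distributivity, $\bu\bv\bu\approx\sum_i\bu\bv_i\bu$, so it suffices to treat a single word $\bv$ with $c(\bv)\subseteq c(\bu)$.

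Next, the factor-doubling step. For any factorization $\bu=\bp\bf\bq$ with $\bf$ nonempty, square decrease and monotonicity give $\bp\bf\bf\bq\le\bu$. Iterating this, any word obtained from $\bu$ by repeatedly doubling factors lies below $\bu$; for instance $\bu^2\le\bu$ and $\bu^k\le\bu$.

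The core of the proof is to show that $\bu\bv\bu$ can be reached from $\bu$ through a chain of such decreasing moves. I would argue by induction on $|\bv|$, pushing the letters of $\bv$ in one at a time. Write $\bv=\bv'x$ with $x\in c(\bu)$ and $\bu=\bp x\bq$. The aim is to exhibit $\bu\bv'x\bu$ as a factor-doubled or sum-expanded form of a word already known to be below $\bu$. The first thing I would try is to apply $\bs\bt\le\bs+\bt$ with $\bs=\bu\bv'\bp x$ and $\bt=x\bq\bp x\bq$, or with similar choices that splice in an occurrence of $x$ taken from $\bu$. Each resulting summand should then be recognizable, up to doubling of factors, as $\bu\bw\bu$ with $|\bw|<|\bv|$, or as $\bu$ with a doubled factor. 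Those summands are below $\bu$ by the inductive hypothesis and the factor-doubling step, and the sum principle then finishes.

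The main obstacle is this inductive step, already in the base case $\bv=x$. There one must show $\bp x\bq x\bp x\bq\le\bp x\bq$, which is an \emph{insertion} of $x$ rather than a doubling of an existing factor. The whole argument rests on finding the right substitution instance of $(\bs+\bt)^2\le\bs+\bt$, with $\bs,\bt$ built from $\bp,x,\bq$, whose expansion contains $\bp x\bq x\bp x\bq$ and whose remaining terms are controlled by factor doubling. I would search systematically among instances in which $\bs$ and $\bt$ are overlapping factors of $\bu\bu$ sharing the letter $x$. Once the single-letter insertion is in hand, the general word case should follow by iterating it together with monotonicity.
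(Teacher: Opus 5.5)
Your proposal has a genuine gap. The step that carries the whole lemma is the single-letter insertion $\bp x\bq x\bp x\bq\le\bp x\bq$, and you leave it as a search. The candidate you name does not work.

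\begin{itemize}
\item With $\bs=\bu\bv'\bp x$ and $\bt=x\bq\bp x\bq$, the product $\bs\bt$ is not even the target word $\bu\bv'x\bu$.
\item More fundamentally, bounding the whole word by a sum of two pieces discards the outer copies of $\bu$, and such pieces need not lie below $\bu$. Take the ai-semiring with left-zero multiplication $st=s$, which satisfies AI and $x^2\le x$. There $x\bq\bp x\bq$ takes the value of $x$, while $\bp x\bq$ takes the value of the first letter of $\bp$, so the inequality fails when these are different variables.
\end{itemize}

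The missing idea is to apply the product-to-sum bound only to the \emph{middle} segment $\bq x\bp$. The context $\bp x\,\cdot\,x\bq$, contributed by the two copies of $\bu$, is kept intact. Iterating your inequality $\bs\bt\le\bs+\bt$ gives $\bq x\bp\le\bq+x+\bp$. Then, for nonempty $\bp,\bq$,
\[
\bu x\bu=\bp x(\bq x\bp)x\bq\le\bp x(\bq+x+\bp)x\bq
=\bp(x\bq)^2+\bp x^3\bq+(\bp x)^2\bq\le\bu .
\]
Each of the three summands is exactly one of your factor-doublings of $\bu$. The endpoint cases must be handled separately, since empty words cannot be substituted:
\begin{itemize}
\item if $\bp$ is empty, $x\bq x^2\bq\le(x\bq)^2\le x\bq$;
\item if $\bq$ is empty, $\bp x^2\bp x\le(\bp x)^2\le\bp x$;
\item if both are empty, $x^3\le x$.
\end{itemize}

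Your passage from letters to words also needs repair, though the fix is already in your toolkit. Iterating single-letter sandwiches produces words like $\bu y_1\bu y_2\bu$, not $\bu y_1y_2\bu$, so ``iterating with monotonicity'' does not reach the target as stated. No induction on $|\bv|$ is needed. The same product-to-sum bound gives $\bv\le\sum_{y\in c(\bv)}y$, and monotonicity and distributivity then give $\bu\bv\bu\le\sum_{y\in c(\bv)}\bu y\bu\le\bu$.

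Your reduction from terms to words via Lemma~\ref{lem:normal} is fine. So are your observations about sums, monotonicity, substitution into $x^2\le x$, and factor-doubling; they are exactly the ingredients this argument uses.
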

\begin{proof}
First \(xy\le(x+y)^2\le x+y\), so every word is at most the sum of
its letters. It suffices to prove \eqref{eq:b:036} when \(\bv=y\) is a letter
occurring in \(\bu\): the word bound and distributivity then handle every word,
and polynomial expansion handles every term.

Select an occurrence of \(y\). If \(\bu=\bp y\bs\) with both surrounding
words \(\bp,\bs\) nonempty, then
\begin{equation}\label{eq:b:037}
\begin{aligned}
\bu y\bu&=\bp y(\bs y\bp)y\bs\\
   &\le\bp y(\bs+y+\bp)y\bs\\
   &=\bp(y\bs)^2+\bp y^3\bs+(\bp y)^2\bs\\
   &\le\bp y\bs=\bu.
\end{aligned}
\end{equation}
The final inequalities are square decrease applied to \(y\bs,y,\bp y\),
with iteration for \(y^3\le y\). If \(\bu=y\bs\) at the left endpoint,
with \(\bs\) nonempty, then
\(\bu y\bu=y\bs y^2\bs\le(y\bs)^2\le y\bs\).
If \(\bu=\bp y\) at the right endpoint, then
\(\bu y\bu=\bp y^2\bp y\le(\bp y)^2\le\bp y\).
If \(\bu=y\), use \(y^3\le y\).
This completes the proof.
\end{proof}

Let $\mathcal E$ consist of AI and Base, together with the following
finite families: the 48 lifting identities, the 17 coordinate
identities, the eleven scalar identities, the eight guarded band
identities, the ten transfer identities, and the four two-path
identities.  Their definitions and derivations are given in the
appendix.  Each abbreviation there has a fixed expansion into
$\{+,\cdot\}$, and every substitution uses a nonempty term.
Thus $\mathcal E$ contains at most
\[
  6+5+48+17+11+8+10+4=109
\]
identities, each involving at most seven variables.

\begin{theorem}\label{main:Abar-fb}
The six-element ai-semiring $\Abar$ is finitely based.
More precisely, $\mathcal E$ is a finite identity basis for $\Abar$.
\end{theorem}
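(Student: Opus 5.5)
Soundness is routine: each identity of $\mathcal E$ must hold in $\Abar$. For AI and Base this is Lemma~\ref{lem:elementary-structure}. For the appendix families I would check each identity by a finite evaluation over the six elements. This can be organized by the value pattern of the variables, using Remark~\ref{remark26092801} and Lemma~\ref{lemma26092850} to cut down the cases.

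The real content is completeness. By Lemma~\ref{lem:equational-reduction}, it suffices to derive from $\mathcal E$ every valid retained absorption $\bq\le\bu$, where $\bu=\bu_1+\cdots+\bu_n$ and $c(\bu)\subseteq c(\bq)$. Following the strategy announced in the introduction, I would split this into two derivations:
\[
  \bq\le\bu+\bq^2
  \qquad\text{and}\qquad
  \bq^2\le\bu .
\]
Together they give $\bq\le\bu$.

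For the second derivation I would use that $\bq^2$ is absorbed easily. By Lemma~\ref{band:content-contraction}, any word $\bw$ with $c(\bw)=c(\bq)$ satisfies $\bq\bw\bq\le\bq$. Combined with $x^3=x^2$ and $xy=x^2y+xy^2$, this should reduce $\bq^2$ to a normal form depending only on a few coordinates of $\bq$:
\begin{itemize}
\item its first letter;
\item its last letter;
\item its content;
\item the ``coordinate'' data relevant to evaluations into $\{a,b,ab,ba\}$, as suggested by the $\star$ table.
\end{itemize}
The content-contraction lemma would then reduce the validity of $\bq^2\le\bu$ to one of the seventeen coordinate identities, applied with variables replaced by subwords. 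Validity in $\Abar$ forces a summand $\bu_i$ to have matching coordinates; I would extract this by assignments into $\{1,a,b\}$.

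The first derivation is the main obstacle. The plan is to induct on $|\bq|$ and on the structure of the retained summands. Take an assignment making $\bq$ nonzero and not absorbed by its square, so that $\bq$ genuinely involves $b$ or the units. Validity then forces some $\bu_i$ to share the ``path'' of $\bq$, namely the sequence of first and last occurrences of letters. I would use:
\begin{itemize}
\item the lifting identities, to move from a summand $\bu_i$ to a word closer to $\bq$ modulo terms absorbed by $\bq^2$;
\item the scalar and guarded band identities, to handle variables evaluated at $1$ versus idempotents;
\item the transfer and two-path identities, for the case where two different summands together witness the two halves of $\bq$ (compare the role of $\ba_4$ and $\bb_4$ in $\sigma$).
\end{itemize}

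I expect the hardest step to be the case analysis showing that every valid absorption falls under one of these finitely many patterns. My first attempt would be a minimal-counterexample argument. Choose $\bq$ of least length with $\bq\le\bu+\bq^2$ not derivable. Locate a repeated letter $x$ in $\bq=\bp_1x\bp_2x\bp_3$. Then show, via Lemma~\ref{lemma26092850}, that deleting an inner occurrence produces a shorter word $\bq'$ satisfying a valid absorption. Finally, recover $\bq$ from $\bq'$ by a lifting identity of at most seven variables.
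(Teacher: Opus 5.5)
\textbf{Overall.} Your reduction through Lemma~\ref{lem:equational-reduction} and the split into $\bq\le\bu+\bq^2$ and $\bq^2\le\bu$ match the paper. However, neither half is actually proved, and the mechanism you propose for each would fail. You have also attached the identity families to the wrong halves. The first step needs only the lifting and coordinate identities. The scalar, guarded band, transfer and two-path identities are what the square step runs on.

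\textbf{The square step.} The data you list cannot determine $\bq^2$: first and last letter, content, and $\star$-type information, which by Remark~\ref{remark26092801} only describes nonzero values. Nor does validity force a single matching summand. For instance, $(xy)^2\le yxy+x$ holds in $\Abar$, yet $(xy)^2\le yxy$ fails at $x=a$, $y=ba$, and $(xy)^2\le x$ fails at $x=1$, $y=a$. More seriously, $(xyxzx)^2$ and $(xyzx)^2$ agree on every assignment into the proper subsemirings $S$, $B_R$, $B_L$, yet take the values $a$ and $0$ at $x=a$, $y=z=b$. These full-generation assignments are the real difficulty of the square step, and your sketch has no mechanism for them.

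The missing idea is the paper's guard. The term $B_C$ vanishes exactly on full-generation assignments. Lifting the finite basis $E_{\rm band}$ of $B_R\times B_L$ through $B_C$ by the GB identities gives $\bq^2B_C\bq^2\le\bu$ (Proposition~\ref{band:proper-selector-completeness}). Here the cover criterion uses a right and a left witness for each letter, possibly from different summands. Then the transfer identities square literal factors of $\bq^2$ one at a time, and the two-path identities push the resulting word below every scalar factor of $B_C$. This yields $\bq^2\le\bu+\bq^2B_C\bq^2$ (Lemma~\ref{termination:guard}).

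\textbf{The first step.} It breaks at the deletion move. Lemma~\ref{lemma26092850} only says that $\bq\ne0$ forces $\bq'\in\{0,\bq\}$, and deletion can turn $0$ into a nonzero value. For example, $xyxx\mapsto xyx$ at $x=b$, $y=a$ sends $0$ to $b$. So $\bq'$ need not inherit the absorption, even for the trivial one with $\bu=\bq$. Recovering $\bq$ from $\bq'$ also fails modulo $\bq^2$. Take $\bq=yxzxwxy$, $\bq'=yxzwxy$ and $x=a$, $y=z=w=b$: then $\bq=b$ but $\bq'=\bq^2=0$. Finally, the lemma needs three occurrences of a letter, so your scheme says nothing about words in which no letter occurs three times, and it has no base case.

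The paper instead evaluates in $S=\{0,b,ab,ba,1\}$. There, validity gives for each cut $\bq=\bp x\bs$ a summand $\bu'y\bv'$ with $c(\bu')\subseteq c(\bp)$ and $c(\bv')\subseteq c(\bs)$ (Lemma~\ref{lem:lifting-semantics}). Cut expansion and the 48 lifting patterns then derive $\bq\le\bu+H(\bq)$ (Lemma~\ref{lem:finite-lifting}). The error $H(\bq)=\sum_{z\in c(\bq)}\bq z\bq$ absorbs every assignment with an $a$-valued letter; in the example above, $\bq x\bq=b$. Finally, the coordinate identities derive $\bq x\bq\le\bu+\bq^2$ (Lemma~\ref{lem:coordinate-terms}). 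They use Boolean coordinates in the only problematic case, $\bq=b$ and $x=a$, together with a distributive-lattice separation argument. That argument draws on the semantic validity of $\bq\le\bu$, not on any local rewriting of $\bq$.
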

\begin{proof}
The validity results in the appendix show that every identity in
$\mathcal E$ holds in $\Abar$.  It remains to prove that every
identity of $\Abar$ follows from $\mathcal E$.

Let $\bu\approx\bu+\bq$ be such an identity.  By
Lemma~\ref{lem:equational-reduction}, we may assume that
$c(\bu)\subseteq c(\bq)$.

First suppose that $c(\bq)=\{x\}$.  If $\bq=x$, the substitution
$x=b$ shows that one of the summands of $\bu$ must be $x$.
The desired absorption then follows from additive idempotence.
If $\bq=x^m$ with $m\geq2$, choose any summand $x^r$ of $\bu$.
The power identity and square decrease give $x^m=x^2\leq x^r$,
so again $\bu\approx\bu+\bq$ is derivable.

Now suppose that $|c(\bq)|\geq2$, and put
\[
  H(\bq)=\sum_{x\in c(\bq)}\bq x\bq.
\]
The absorption $\bq\leq\bu$ holds in the five-element subsemiring
$S=\{0,b,ab,ba,1\}$.  By Lemma~\ref{lem:finite-lifting},
\[
  \mathcal E\vdash\bq\leq\bu+H(\bq).
\]
For each $x\in c(\bq)$, Lemma~\ref{lem:coordinate-terms} gives
\[
  \mathcal E\vdash\bq x\bq\leq\bu+\bq^2.
\]
Adding these inequalities yields
$\mathcal E\vdash\bq\leq\bu+\bq^2$, or equivalently
\[
  \mathcal E\vdash
  \bu+\bq^2\approx\bu+\bq^2+\bq.
\]

Since square decrease holds in $\Abar$, the absorption
$\bq^2\leq\bu$ is valid in $\Abar$.
Lemma~\ref{completeness:square} therefore gives
\[
  \mathcal E\vdash\bu\approx\bu+\bq^2.
\]
Consequently,
\begin{align*}
  \bu
  &\approx\bu+\bq^2\\
  &\approx\bu+\bq^2+\bq\\
  &\approx\bu+\bq.
\end{align*}
Thus every retained valid absorption is derivable from
$\mathcal E$.  Lemma~\ref{lem:equational-reduction} now shows that
$\mathcal E$ is an identity basis for $\Abar$.
\end{proof}

\appendix
\section{Derivations for the finite identity basis}
\label{app:finite-basis}
We give the finite families used in Section~\ref{sec:finite-basis}
and prove the reductions on which its absorption argument depends.
We use the completeness theorem for equational logic: an identity
holds in every model of an identity set if and only if it is derivable
from that set; see~\cite{BurrisSankappanavar1981}. Auxiliary words may
depend on the target identity, but the identity set $\mathcal E$ is fixed.

\begin{lemma}[Elementary structure and base identities]\label{lem:elementary-structure}
The table defines an ai-semiring. Every element except \(b\) is
multiplicatively idempotent, \(b^2=0\), and Base is valid. A product equal
to \(1\) has every factor equal to \(1\). For every nonzero step map \(d\)
and every element \(v\), \(dvd\in\{0,d\}\), and \(dad=d\).
\end{lemma}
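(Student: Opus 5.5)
Throughout, I read \emph{step map} as the four elements $a,b,ab,ba$, that is, the nonzero elements other than $1$. The statement as quoted does not define the term. This reading is what makes the last clause consistent, since $1a1=a\ne1$ shows $dad=d$ fails for $d=1$. The plan is a direct verification from Table~\ref{tab:Abar-multiplication} and Figure~\ref{fig:Abar-structure}. Wherever possible I will reduce case checks to a few structural facts.

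\emph{Ai-semiring structure.} The multiplicative table is that of $A_2^1$ from \eqref{eq:A-presentation}, so associativity is inherited from that monoid. It is also the same table already used for $\A$. The additive order $0<b<ab,ba<1<a$ is a join semilattice. Its only incomparable pair is $\{ab,ba\}$, with join $1$. Hence left distributivity $s(y+z)=sy+sz$ splits into two parts.
\begin{enumerate}[label=\textup{(\roman*)}]
\item For comparable $y\le z$, the identity is exactly monotonicity of left multiplication by $s$. I check this along the covering relations of the Hasse diagram for each of the six rows.
\item For the incomparable pair, I check $s\cdot ab+s\cdot ba=s\cdot1=s$ for each $s$. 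For example, $a\cdot ab+a\cdot ba=ab+a=a$ and $b\cdot ab+b\cdot ba=b+0=b$.
\end{enumerate}
Right distributivity is handled symmetrically using columns and $ab\cdot s+ba\cdot s=s$. I expect this bookkeeping of monotonicity to be the main, if routine, obstacle, so I will organize it by covering pairs rather than by all $36$ products.

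\emph{Idempotents, $b^2=0$, and Base.} The diagonal of the table shows that every element except $b$ is idempotent and that $b^2=0$. From this, each Base identity follows.
\begin{enumerate}[label=\textup{(\roman*)}]
\item $x^3=x^2$ is immediate.
\item $x^2\le x$ holds because $x^2\in\{x,0\}$ and $0$ is least.
\item For $(x+y)^2=x^2+y^2$, first take $x\le y$. Then $x+y=y$, and monotonicity gives $x^2\le y^2$. In the only other case, $\{x,y\}=\{ab,ba\}$, both sides equal $1$.
\item For $xy\le x+y$, monotonicity gives $xy\le(x+y)^2$. This equals $x^2+y^2$ by the previous item, and $x^2+y^2\le x+y$ by square decrease.
\item For $xy=x^2y+xy^2$, the inequality $\ge$ follows from $x^2\le x$ and $y^2\le y$. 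For $\le$: if neither $x$ nor $y$ is $b$, then $x^2y=xy$. If $x=b$ and $y\ne b$, then $xy^2=xy$. If $x=y=b$, both sides are $0$.
\end{enumerate}

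\emph{Products equal to $1$ and sandwiches.} In the finite monoid $A_2^1$ the group of units is $\{1\}$. The only row containing $1$ is the row of $1$, so $uv=1$ forces $u=1$ and then $v=1$. Induction on the number of factors gives the claim that a product equal to $1$ has every factor equal to $1$.

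For the last clause, $\{a,b,ab,ba,0\}$ is a $0$-simple Rees quotient with trivial $\mathcal H$-classes, namely the $2\times2$ Rees matrix structure visible in Table~\ref{tbstar}. Therefore $dvd$ is either $0$ or $\mathcal H$-related to $d$, hence equal to $d$. If $v=1$, then $dvd=d^2\in\{d,0\}$. Alternatively, I can confirm this directly for the four values of $d$ against the six values of $v$. Finally, $dad=d$ is a four-line check: $aaa=a$, $bab=b$, $ab\cdot a\cdot ab=ab$, and $ba\cdot a\cdot ba=ba$.
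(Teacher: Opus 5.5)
Your proposal is correct. Your reading of ``step map'' as $a,b,ab,ba$ matches the paper, which writes these elements as $d_{1,2},d_{2,1},d_{1,1},d_{2,2}$.

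Your route differs from the paper's in the structural part of the proof:
\begin{itemize}
\item The paper represents the six elements as the monotone self-maps of the chain $\{0<1<2\}$ that fix $0$, with product $(fg)(i)=g(f(i))$ and with pointwise maximum as addition. Once composition is seen to reproduce the table, associativity and both distributive laws are automatic, because monotone maps of a chain preserve binary maxima. The same model gives the product rule $d_{i,j}d_{k,l}\in\{0,d_{i,l}\}$, which yields the sandwich facts. It also gives an image-size count for products equal to $1$: only $1$ has image of size three.
\item You check distributivity directly from the table. Your reduction to monotonicity along covering pairs, plus the single incomparable pair $\{ab,ba\}$, is efficient. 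You take associativity from the identification of the table with $A_2^1$, and you use the trivial unit group for products equal to $1$. For $dvd\in\{0,d\}$ you use the completely $0$-simple structure (trivial $\mathcal H$) of $\{0,a,b,ab,ba\}$.
\end{itemize}
Your version stays close to the table. However, it relies on the standard but external fact that the displayed table really is the multiplication of the presented monoid. The paper's model is self-contained. It also sets up the coordinates $r,q_0,p$ and $\mathsf B,\mathsf R,\mathsf P,\mathsf T$ that are reused later in the appendix.

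Your Base checks essentially agree with the paper's. The only difference is that you prove square additivity before $xy\le x+y$, whereas the paper proves it after.

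One small slip: in your case list for $xy\le x^2y+xy^2$ you omit the case $x\ne b=y$. Your first case only used $x\ne b$ (so that $x^2=x$), so it already covers this.
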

\begin{proof}
Use the monotone maps of \(\{0<1<2\}\) fixing zero, with the convention
\((fg)(i)=g(f(i))\). Their pairs \((f(1),f(2))\) are
\begin{equation}\label{eq:h:005}
0=(0,0),\quad b=(0,1),\quad ab=(1,1),\quad
ba=(0,2),\quad 1=(1,2),\quad a=(2,2).
\end{equation}
Pointwise maximum gives exactly the specified order. Substitution of these
six pairs in \((fg)(i)=g(f(i))\) gives the table. Composition is associative.
Every monotone map of a chain preserves binary maximum, so composition
distributes over pointwise maximum on both sides. This proves AI.

The four maps \(a,b,ab,ba\) are step maps \(d_{i,j}\), taking value zero
below threshold \(i\in\{1,2\}\) and value \(j\in\{1,2\}\) at and above it.
Specifically \(a=d_{1,2},b=d_{2,1},ab=d_{1,1},ba=d_{2,2}\). Their product is
\(d_{i,j}d_{k,l}=0\) if \(j<k\), and \(d_{i,l}\) otherwise. This formula
proves the sandwich assertions, including when \(v=0\) or \(1\). Every
nonidentity map has image of size at most two. Composition cannot increase
image size, whereas \(1\) has image of size three; hence a product equal to
\(1\) has only identity factors.

The table gives the power law and square decrease. Both summands of
\(x^2y+xy^2\) are at most \(xy\). If either factor is idempotent, one of
these summands equals \(xy\). Otherwise both are \(b\), and every term is
zero. Thus the third base law holds. Square decrease at \(x+y\), followed
by distributivity, gives \(xy\leq x+y\).

To prove square additivity it suffices to show \(xy\leq x^2+y^2\) and use
the same inequality with the variables interchanged. If both factors are
idempotent, use the preceding product bound. If both are \(b\), the product
is zero. If one factor is \(b\) and the other is an idempotent \(z\), both
\(bz\) and \(zb\) lie below \(z\): this follows for
\(z=0,1,a,ab,ba\) from the displayed table. These cases exhaust all pairs.
Expanding \((x+y)^2\) proves the fourth law.
\end{proof}
\paragraph{Definitions of the finite identity families.}\label{sec:Abar-fb-setup}
We use the abbreviations AI and Base introduced in
Section~\ref{sec:finite-basis}.  The auxiliary terms below are all
terms in the signature \(\{+,\cdot\}\); they do not introduce new
operations or constants.

\paragraph{Conventions for the finite schemes.}
An unparenthesized product or sum is left-associated. In each finite
identity scheme, an optional context is either omitted or replaced by
a fresh variable. All scalar meets are taken in the displayed
left-associated order.  These conventions turn every scheme into a
finite list of ordinary identities in the original signature.

\paragraph{Cuts and the five-element subsemiring.}\label{sec:3}

\begin{lemma}[A uniform cut expansion]\label{lem:cut-expansion}
AI and the first three base identities derive, for
\(\bw=x_1\cdots x_n\),
\begin{equation}\label{eq:h:006}
\bw=\sum_{i=1}^{n}\bp_i^2x_i\bs_i^2,
\tag{\text{H-Cut}}
\end{equation}
where \(\bp_i=x_1\cdots x_{i-1}\), \(\bs_i=x_{i+1}\cdots x_n\), and absent
contexts are omitted. Every cut summand is separately at most \(\bw\).
\end{lemma}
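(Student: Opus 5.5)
Induction on the length $n$ of $\bw$ is the plan, using only AI together with $x^3\approx x^2$, $x^2\le x$ and $xy\approx x^2y+xy^2$. First check the side claim that each cut summand is at most $\bw$; this needs only square decrease. Indeed $\bp_i^2\le\bp_i$ and $\bs_i^2\le\bs_i$, so monotonicity of multiplication in ai-semirings gives $\bp_i^2x_i\bs_i^2\le\bp_ix_i\bs_i=\bw$. The case of an absent context is covered by dropping that factor. Adding these inequalities already gives $\sum_i\bp_i^2x_i\bs_i^2\le\bw$, so only the reverse inequality $\bw\le\sum_i\bp_i^2x_i\bs_i^2$ needs proof.

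For the reverse direction, iterate the third base law $xy\approx x^2y+xy^2$. For $n=1$ there is nothing to do: the single summand is $x_1$. For $n=2$, the law with $x=x_1$, $y=x_2$ gives exactly $x_1x_2\approx x_1^2x_2+x_1x_2^2$, which is \eqref{eq:h:006}. For general $n$, write $\bw=\bp x\bs$ by splitting off the last letter, so $\bp=\bp_n$ and $x=x_n$. Apply the law with $x\mapsto\bp_n$ and $y\mapsto x_n$ to get
\[
\bw\approx\bp_n^2x_n+\bp_nx_n^2.
\]
The first summand is already the cut summand for $i=n$. For the second, use the induction hypothesis on $\bp_n=x_1\cdots x_{n-1}$ to write $\bp_n$ as $\sum_{i<n}\bp_i^2x_i\bs_i'^2$, where $\bs_i'=x_{i+1}\cdots x_{n-1}$. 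Multiplying on the right by $x_n^2$ gives summands $\bp_i^2x_i\bs_i'^2x_n^2$, and these must be compared with the target summands $\bp_i^2x_i(\bs_i'x_n)^2$.

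The main obstacle is this mismatch between $\bs_i'^2x_n^2$ and $(\bs_i'x_n)^2$. In $\Abar$ the two agree, and syntactically we only need $\bp_i^2x_i\bs_i'^2x_n^2\approx\bp_i^2x_i(\bs_i'x_n)^2$ modulo the presence of the other summands. To avoid proving a new word identity, I would instead run the induction differently: apply the law with $x\mapsto\bp_i x_i$ and $y\mapsto\bs_i$ at every cut simultaneously, so that squares of full suffixes arise directly. Concretely, prove by induction on $k$ the statement
\[
\bw\approx\sum_{i\le k}\bp_i^2x_i\bs_i^2+\bp_{k+1}^2\,x_{k+1}\bs_{k+1}.
\]
To get from $k$ to $k+1$, apply the law to the last summand with $x\mapsto x_{k+1}$ and $y\mapsto\bs_{k+1}$, then multiply on the left by $\bp_{k+1}^2$. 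This produces $\bp_{k+1}^2x_{k+1}^2\bs_{k+1}+\bp_{k+1}^2x_{k+1}\bs_{k+1}^2$. The second term is the cut summand for $i=k+1$. In the first term, $\bp_{k+1}^2x_{k+1}^2$ is not literally $\bp_{k+2}^2=(\bp_{k+1}x_{k+1})^2$. I would handle it with the same law in reverse: $(\bp x)^2\approx\bp x\bp x$ and $\bp^2x^2$ are both below $\bp x$ by square decrease. The remaining issue is whether the resulting sum still equals $\bw$, which is exactly where the bound $\text{summand}\le\bw$ from the first step saves us. It suffices to show $\bw\le$ the sum. Since $\bp_{k+1}^2x_{k+1}^2\bs_{k+1}\ge\bp_{k+1}^2x_{k+1}^3\bs_{k+1}$, and extra summands below $\bw$ can always be added, we can freely replace terms by larger ones bounded by $\bw$. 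I expect that bookkeeping, using $x^3\approx x^2$ to normalize powers, to be the only delicate point.
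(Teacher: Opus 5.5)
Your first step is correct: each cut summand is at most $\bw$ by square decrease and monotonicity. The reverse inequality, however, is not established.

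In the step $k\to k+1$ of your invariant you are left with $\bp_{k+1}^2x_{k+1}^2\bs_{k+1}$, while the invariant needs $(\bp_{k+1}x_{k+1})^2\bs_{k+1}$. Replacing a summand by a larger term that is still at most $\bw$ is legitimate. But you must first \emph{derive} an upper bound of the old summand by cut summands, and that bound is exactly what is missing.

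The obvious candidate $\bp^2x^2\bs\leq(\bp x)^2\bs$ is not even valid in $\Abar$, so it cannot be derived. Already for $\bw=x_1x_2x_3$, take $x_1\mapsto ba$, $x_2\mapsto ab$, $x_3\mapsto 1$. Then $x_1^2x_2^2x_3=ba\cdot ab=b$, whereas $(x_1x_2)^2x_3=b^2=0$, and $b\nleq 0$. What covers $b$ here is the first cut summand $x_1(x_2x_3)^2=ba\cdot ab=b$.

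The observations you offer do not close this. The relation $\bp^2x^2\bs\geq\bp^2x^3\bs$ is just the power law, an equality. Knowing that both $\bp^2x^2$ and $(\bp x)^2$ lie below $\bp x$ gives only lower-bound information. Likewise, your claim that $\bs_i'^2x_n^2$ and $(\bs_i'x_n)^2$ agree in $\Abar$ is false: the values $a,b$ give $a\cdot 0=0$ versus $(ab)^2=ab$. They agree only modulo the remaining summands, which is the lemma itself.

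The missing ingredient is a derived inequality that merges adjacent squares, at the cost of an error term that is itself a cut summand. The paper builds it in two stages:
\begin{itemize}
\item It obtains \eqref{eq:h:007} by applying $xy=x^2y+xy^2$ to $u=x^2y^2$, $v=y^2z^2$ and reducing powers.
\item It then derives \eqref{eq:h:009}, $\bx\ba^2\bt\leq(\bx\ba)^2\bt+\bx(\ba\bt)^2$, from two further instances of the base law, \eqref{eq:h:007}, and square decrease.
\end{itemize}
The induction writes $\bw=x\bv$, expands $\bv$ by the induction hypothesis, and bounds each $x\ba^2y\bb^2$ by $(x\ba)^2y\bb^2+x\bv^2$. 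When $\ba$ is absent the bound is $x^2y\bb^2+x\bv^2$, directly from the base law. Every error term therefore lands in the first cut summand $x\bv^2$.

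Your original last-letter induction can be repaired in the same way. Use the mirror inequality $\bt\ba^2\bx\leq\bt(\ba\bx)^2+(\bt\ba)^2\bx$, derived identically since the base identities are closed under reversal. Apply it with $\bt=\bp_i^2x_i$, $\ba=\bs_i'$, $\bx=x_n^2$. This bounds $\bp_i^2x_i\bs_i'^2x_n^2$ by the $i$-th cut summand plus $\bp_n^2x_n$. When $\bs_i'$ is empty, the term is already the $(n-1)$-st cut summand.
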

\begin{proof}
Applying \(uv=u^2v+uv^2\) to \(u=x^2y^2,v=y^2z^2\), and reducing every
positive power at least two to the square, gives
\begin{equation}\label{eq:h:007}
x^2y^2z^2=(x^2y^2)^2z^2+x^2(y^2z^2)^2.
\tag{\text{H-3.1}}
\end{equation}
For nonempty terms \(\bx,\ba,\bt\), first apply that base law to \(\bx,\ba^2\bt\),
and then to \(\bx^2\ba^2,\bt\). This gives
\begin{equation}\label{eq:h:008}
\bx\ba^2\bt=\bx^2\ba^2\bt+\bx(\ba^2\bt)^2,
\qquad \bx^2\ba^2\bt=(\bx^2\ba^2)^2\bt+\bx^2\ba^2\bt^2.
\end{equation}
The first displayed second summand is at most \(\bx(\ba\bt)^2\). Apply \eqref{eq:h:007}
to \(\bx^2\ba^2\bt^2\). Its two summands are at most \((\bx\ba)^2\bt\) and
\(\bx(\ba\bt)^2\), respectively, by square decrease and monotonicity. The term
\((\bx^2\ba^2)^2\bt\) is also at most \((\bx\ba)^2\bt\). Consequently
\begin{equation}\label{eq:h:009}
\bx\ba^2\bt\leq(\bx\ba)^2\bt+\bx(\ba\bt)^2.
\tag{\text{H-3.2}}
\end{equation}

Induct on \(n\). For one letter Cut is identical. Write a longer word as
\(\bw=x\bv\) and expand \(\bv\) inductively. For a cut \(\ba^2y\bb^2\) of \(\bv\), put
\(\bt=y\bb^2\), omitting \(\bb^2\) when absent. If \(\ba\) is present, \eqref{eq:h:009} gives
\begin{equation}\label{eq:h:010}
x\ba^2y\bb^2\leq(x\ba)^2y\bb^2+x(\ba y\bb^2)^2
             \leq(x\ba)^2y\bb^2+x\bv^2.
\end{equation}
If \(\ba\) is absent, the base law gives
\(x\bt=x^2\bt+x\bt^2\leq x^2y\bb^2+x\bv^2\).
Distribution now bounds \(\bw\) by its cut sum. Every cut summand is at most
\(\bw\) by square decrease on its actual nonempty contexts, giving the
reverse inequality. All substitutions above are nonempty terms.
\end{proof}

\paragraph{The 48 lifting patterns.}
Let \(S=\{0,b,ab,ba,1\}\). For a fixed word absorption \(\mathbf L\leq\mathbf R\), its
displayed lift means the single equation
\begin{equation}\label{eq:h:011}
\mathbf L\leq\mathbf R+\sum_{z\in c(\mathbf L)}\mathbf Lz\mathbf L.
\tag{\text{H-Lift-pattern}}
\end{equation}
In this definition the sum is over the literal schematic variables of
the fixed displayed word \(\mathbf L\), including any context variables. The
operation of taking this lift is used only on the following finite lists.

For \(\mathbf P=y,ly,yr,lyr\), we shall also use
\[
  \mathbf P^2y\mathbf P^2\leq\mathbf P^2.
\]
These four inequalities follow from
Lemma~\ref{lem:content-sandwich}, applied to the word \(\mathbf P^2\), and
are not included in the identity basis. We call them the B
inequalities when using them below.

\begin{itemize}
\item I: for each such \(\mathbf P\), and for each independent choice that the
contexts \(A,B\) are absent or a fresh variable, include the lift of
\(A\mathbf P^2B\leq A\mathbf P^2y\mathbf P^2B\).
\item M: for each of the four choices of the outer contexts \(A,B\), include
the lift of
\(AP^2UP^2VP^2B\leq AP^2UVP^2B\). Here \(P,U,V\) are mandatory variables.
\item C: for \(k=1,2,3\), and all independent present/absent choices of
\(A_0,\ldots,A_k\), include the lift of
\(A_0Z_1A_1\cdots Z_kA_k\leq Z_1\cdots Z_k\), with all \(Z_i\) mandatory.

\end{itemize}
Within each pattern fresh variables are distinct unless an identification
is explicitly specified. They may of course receive equal terms under a
later substitution. These three families have \(16,4,28\) members. A pattern
\(\mathbf P\) containing \(y\) represents any actual word containing a selected
occurrence of \(y\), using its nonempty prefix and suffix as the present
contexts.

\begin{lemma}[The semantics needed for lifting]\label{lem:lifting-semantics}
The 48 lifting patterns are valid in \(\Abar\). If \(S\models\bq\leq\mathbf F\), then
for each cut \(\bq=\bp x\bs\) there are a summand \(\mathbf f=\bu y\bv\) of \(\mathbf F\) with
\(c(\mathbf f)\subseteq c(\bq)\), \(c(\bu)\subseteq c(\bp)\), and
\(c(\bv)\subseteq c(\bs)\). Empty prefix/suffix contents are allowed in this
semantic cut condition; the pivot \(y\) need not equal \(x\).
\end{lemma}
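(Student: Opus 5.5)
The lemma has two assertions, and I would prove them in the reverse order, since the semantic cut condition is the conceptual part and the validity of the patterns is a finite verification.

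\emph{The cut condition.} I would work in the model of Lemma~\ref{lem:elementary-structure}, where elements are monotone maps of $\{0<1<2\}$ fixing $0$ and composition is read left to right. A word then acts as a path of states: $(fg)(i)=g(f(i))$. Addition is pointwise maximum, so $S\models\bq\leq\mathbf F$ means that under every assignment some summand's map dominates the value of $\bq$ at each point. Fix a cut $\bq=\bp x\bs$ and define an assignment into $S$ as follows.
\begin{enumerate}[label=\textup{(\alph*)}]
\item Variables outside $c(\bq)$ go to $0$.
\item Variables in $c(\bp)\cap c(\bs)$ go to $1=(1,2)$.
\item Variables in $c(\bp)\setminus c(\bs)$ go to $ba=(0,2)$, which fixes $2$ and kills $1$.
\item Variables in $c(\bs)\setminus c(\bp)$ go to $ab=(1,1)$, which sends everything to $1$.
\item If $x\notin c(\bp)\cup c(\bs)$, then $x$ goes to $b=(0,1)$; otherwise $x$ is already covered by (b)--(d).
\end{enumerate}
Following state $2$ through $\bq$: the letters of $\bp$ keep it at $2$, $x$ sends it to $1$ or $2$, and the letters of $\bs$ keep it nonzero. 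Hence the value of $\bq$ at $2$ is at least $1$, so some summand $\mathbf f$ of $\mathbf F$ must send $2$ to a nonzero state.

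Such an $\mathbf f$ contains no $0$-valued letter, so $c(\mathbf f)\subseteq c(\bq)$. Next trace state $2$ through $\mathbf f$. It can only stay at $2$ through letters valued $1$ or $ba$, which lie in $c(\bp)$. It can only drop to $1$ through a letter valued $ab$ or $b$. Once at $1$, it survives only letters valued $1$ or $ab$, which lie in $c(\bs)$. Choosing $y$ to be the dropping letter gives the factorization $\mathbf f=\bu y\bv$ with $c(\bu)\subseteq c(\bp)$ and $c(\bv)\subseteq c(\bs)$. If the state never drops, I take $y$ to be the last letter of $\mathbf f$ and $\bv$ empty; the containments still hold. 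When $\bp$ or $\bs$ is empty the corresponding clauses are vacuous, and the same argument goes through.

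\emph{Validity of the 48 patterns.} I would fix an assignment $\varphi$ into $\Abar$ and let $\ell=\varphi(\mathbf L)$. It suffices to show that $\ell$ lies below $\varphi(\mathbf R)$ or below some $\varphi(\mathbf Lz\mathbf L)$. The plan is a split on where $\ell$ sits.
\begin{enumerate}[label=\textup{(\roman*)}]
\item If $\ell=0$, the inequality is trivial.
\item If $\ell$ is a nonzero step map $d$, Lemma~\ref{lem:elementary-structure} gives $dvd\in\{0,d\}$. So the lift absorbs $\ell$ as soon as some $z\in c(\mathbf L)$ satisfies $\ell\,\varphi(z)\,\ell=\ell$. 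In particular this holds whenever some letter of $\mathbf L$ takes the value $a$, by $dad=d$ and monotonicity.
\item If $\ell=1$, every letter of $\mathbf L$ is $1$ by Lemma~\ref{lem:elementary-structure}, and $\mathbf R$, whose letters all occur in $\mathbf L$, also evaluates to $1$.
\end{enumerate}
What remains are assignments into the five-element subsemiring $S$ in which every sandwich $\ell\,\varphi(z)\,\ell$ vanishes. For family~I the right side contains $\mathbf P^2$, and the B inequalities settle it. For families~M and~C I would use the path description above: vanishing of all sandwiches forces the levels visited along $\mathbf L$ to drop strictly, at most twice. Comparing these levels with the mandatory variables $P,U,V$ or $Z_1,\dots,Z_k$ then shows that $\mathbf R$ takes a value at least $\ell$.

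This residual case analysis for M and C is the step I expect to be the main obstacle. I would first try to organize it entirely through the state-path picture rather than by brute-force tables, falling back on a direct computer-style enumeration of the step-map cases if the path argument does not close cleanly.
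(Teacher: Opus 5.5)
Your proof of the cut condition is correct and is the paper's argument. Your assignment (\(1\), \(ba\), \(ab\), \(b\) on \(c(\bp)\cap c(\bs)\), \(c(\bp)\setminus c(\bs)\), \(c(\bs)\setminus c(\bp)\), and the lone pivot) is exactly the element with \(q_0=1\), \(r=[z\in c(\bp)]\), \(p=[z\in c(\bs)]\) that the paper uses, and tracing state \(2\) is the path reading of its disjunctive formula for \(q_0\) of a word. Your reduction of the validity claim to assignments into \(S\), via cases (i)--(iii), is also sound; it is the paper's principle that any \(S\)-valid \(\mathbf L\leq\mathbf R\) lifts to \(\Abar\). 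The gap is that you never establish what this reduction leaves, namely that the unlifted patterns actually hold in \(S\). That is the real content of the validity assertion.

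\emph{Family I.} Your justification points the wrong way. The B inequalities give \(\mathbf P^2y\mathbf P^2\leq\mathbf P^2\), hence \(\mathbf R\leq\mathbf L\), whereas the lift needs \(\mathbf L\leq\mathbf R\). What you need is the equality \(\mathbf P^2y=y\mathbf P^2=\mathbf P^2\) in \(S\) for \(y\in c(\mathbf P)\). It holds because a nonzero square of a word evaluated in \(S\) is \(ab\), \(ba\) or \(1\), with all letters in \(\{ab,1\}\), \(\{ba,1\}\) or \(\{1\}\) respectively.

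\emph{Families M and C.} You only announce a strategy and expect it to be the obstacle, but the missing ideas are short.
\begin{itemize}
\item For M, put \(e=\mathbf P^2\in\{0,ab,ba,1\}\). In \(S\), for \(e=ab\), \(e\,t\,e\) equals \(ab\) exactly when \(t\in\{ab,1\}\) and is \(0\) otherwise. For \(e=ba\), it equals \(ba\) exactly when \(t\in\{ba,1\}\) and is \(0\) otherwise. Each membership condition holds for the value of \(UV\) if and only if it holds for the values of both \(U\) and \(V\), since the bits \(p\) and \(r\) are multiplicative. Hence \(eUeVe=eUVe\). For \(e=0\) both sides vanish, and \(e=1\) forces every letter of \(\mathbf P\) to be \(1\).
\item For C, \(1\) is the greatest element of \(S\), so deleting the optional factors can only increase the value, by monotonicity.
\end{itemize}
Until these three verifications are supplied, the first sentence of the lemma remains unproved.
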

\begin{proof}
The multiplication table and the specified joins show that \(S\) is a
subsemiring. In \(S\), define bits
\begin{equation}\label{eq:h:012}
r(f)=[f(2)=2],\quad q_0(f)=[f(2)\geq1],\quad p(f)=[f(1)=1].
\end{equation}
Their tuples \((r,q_0,p)\) for \(0,b,ab,ba,1\) are respectively
\(000,010,011,110,111\). Addition is coordinatewise OR and
\begin{equation}\label{eq:h:013}
r(fg)=r(f)r(g),\quad p(fg)=p(f)p(g),\quad
q_0(fg)=r(f)q_0(g)\vee q_0(f)p(g).
\tag{\text{H-4.1}}
\end{equation}
These follow by evaluating \(g(f(2))\) and \(g(f(1))\). In particular, the
square of a word depends only on its content: its \(r,p\) coordinates are
the conjunctions of the corresponding letter coordinates and its middle
coordinate is their disjunction. If \(y\) occurs in a word \(\mathbf P\), this gives
\(\mathbf P^2y=y\mathbf P^2=\mathbf P^2\) in \(S\). Equivalently, a nonzero square \(ab\) requires
every letter in \(\{ab,1\}\), a square \(ba\) requires every letter in
\(\{ba,1\}\), and a square \(1\) requires every letter equal to \(1\).

The unlifted I patterns are therefore equalities in \(S\). For M put
\(e=P^2\). For \(e=0,1\) its two inner words are respectively zero or
\(UV\). For \(e=ab\), \(ete\) is \(ab\) exactly when \(p(t)=1\), and zero
otherwise; \eqref{eq:h:013} proves \(eUeVe=eUVe\). For \(e=ba\) use \(r\) instead.
Thus M is an equality in \(S\). Every element of \(S\) is at most its
identity \(1\), so deleting the optional factors proves every unlifted C
inequality in \(S\).

For any fixed \(S\)-valid \(\mathbf L\leq\mathbf R\), its lift is \(\Abar\)-valid. If all
variables of \(\mathbf L\) are assigned in \(S\), first set any outside variables
of \(\mathbf R\) to zero and apply \(S\)-validity; restoring their original values
can only increase \(\mathbf R\). Otherwise some \(z\in c(\mathbf L)\) is assigned \(a\).
If \(\mathbf L=0\) the result is immediate. If \(\mathbf L\ne0\), it is a step map rather
than \(1\), so Lemma~\ref{lem:elementary-structure} gives \(\mathbf Lz\mathbf L=\mathbf L\). This proves all the lifted finite
patterns. For B, put \(e=\mathbf P^2\). If \(e=0\) there is nothing to prove; if
\(e\) is a step map its sandwich is at most \(e\); if \(e=1\), all letters
of \(\mathbf P\), including \(y\), equal \(1\). Thus B is also valid.

Induction from \eqref{eq:h:013} gives, for \(\bw=z_1\cdots z_n\),
\begin{equation}\label{eq:h:014}
q_0(\bw)=\bigvee_i
 \left(\bigwedge_{j<i}r(z_j)\right)q_0(z_i)
 \left(\bigwedge_{j>i}p(z_j)\right).
\tag{\text{H-4.2}}
\end{equation}
Fix a cut with prefix/suffix contents \(L_0,R_0\). Assign variables outside
\(c(\bq)\) to zero; to each \(z\in c(\bq)\) assign the unique element of
\(S\) with \(q_0=1,r=[z\in L_0],p=[z\in R_0]\). The cut in \eqref{eq:h:014} makes
\(\bq\) nonzero. A summand \(\mathbf f\) of \(\mathbf F\) is therefore nonzero, uses only
\(c(\bq)\), and has a cut whose preceding letters have \(r=1\) and following
letters have \(p=1\). This is the required witness. Empty Boolean
conjunctions in this evaluation are true; they are not semiring terms.
\end{proof}

\begin{lemma}[Finite lifting from \(S\)]\label{lem:finite-lifting}
For a retained \(S\)-valid absorption \(\bq\leq\mathbf F\), AI, Base and the 48
lifting patterns derive
\begin{equation}\label{eq:h:015}
\bq\leq\mathbf F+H(\bq),\qquad H(\bq)=\sum_{z\in c(\bq)}\bq z\bq.
\tag{\text{H-SL}}
\end{equation}
\end{lemma}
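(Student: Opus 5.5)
The plan is to build a derivation of $\bq\le\mathbf F+H(\bq)$ from the semantic witness supplied by Lemma~\ref{lem:lifting-semantics}, combined with the cut expansion of Lemma~\ref{lem:cut-expansion}. Write $\bq=x_1\cdots x_n$. By \eqref{eq:h:006},
\[
\bq=\sum_{i=1}^n\bp_i^2x_i\bs_i^2 ,
\]
so it suffices to prove, for each cut $i$, that
\[
\bp_i^2x_i\bs_i^2\le\mathbf F+H(\bq).
\]

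Fix a cut $\bq=\bp x\bs$. Lemma~\ref{lem:lifting-semantics} gives a summand $\mathbf f=\bu y\bv$ of $\mathbf F$ satisfying three containments:
\[
c(\mathbf f)\subseteq c(\bq),\qquad c(\bu)\subseteq c(\bp),\qquad c(\bv)\subseteq c(\bs).
\]
The aim is a chain of lifted inequalities whose last term is $\mathbf f$, modulo summands of the form $\bw z\bw$ that lie below $H(\bq)$. I would proceed in three moves, each the lift of one of the 48 patterns with contexts substituted by actual subwords.

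\textbf{First move (type C).} Contract the cut summand $\bp^2x\bs^2$ down to a product of ``Zimin-like'' blocks. The prefix $\bp^2$ has content containing $c(\bu)$, and the suffix $\bs^2$ has content containing $c(\bv)$. Since $\bp^2x\bs^2\le\bq$, every sandwich $(\cdot)z(\cdot)$ produced by a lift is bounded by a sandwich of $\bq$. This bounding uses monotonicity and Lemma~\ref{lem:content-sandwich}, since the extra terms have content inside $c(\bq)$; hence every lift remainder is absorbed into $H(\bq)$.

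\textbf{Second move (type I).} Insert the letters of $\bu$ into the $\bp^2$ block, and the letters of $\bv$ into the $\bs^2$ block, at the required positions. Each insertion is the lift of $A\mathbf P^2B\le A\mathbf P^2y\mathbf P^2B$, with $\mathbf P$ chosen among $y,ly,yr,lyr$ according to whether the inserted letter has neighbours. The B inequalities let me re-square blocks after an insertion. Because $c(\bu)\subseteq c(\bp)$, every inserted letter already occurs in the block, which is exactly the hypothesis under which the I patterns apply.

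\textbf{Third move (types M and C).} Use M, the lift of $AP^2UP^2VP^2B\le AP^2UVP^2B$, to remove intermediate copies of a square block between consecutive inserted letters. This produces the linear words $\bu$ and $\bv$ flanked by squares. Then use C, for $k\le3$, to delete the optional flanking factors $A_0,\dots,A_k$ around the mandatory pieces $Z_1=\bu$, $Z_2=y$, $Z_3=\bv$ (with fewer pieces when $\bu$ or $\bv$ is empty), which yields $\mathbf f$. The pivot $y$ lies in $c(\bq)$ but need not equal $x$. To handle this, the central block must contain $y$ before the final C step; I would get it there by first inserting $y$ next to $x$ via I, since $y\in c(\bq)$ and an insertion of $y$ costs only a remainder $\bq y\bq$, which is a summand of $H(\bq)$.

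\textbf{Main obstacle.} I expect the bookkeeping of the lift remainders to be the hardest part. Each lifted pattern adds $\sum_z\mathbf Lz\mathbf L$ for the \emph{actual} left side $\mathbf L$, and $\mathbf L$ is an intermediate word rather than $\bq$. I must show $\mathbf Lz\mathbf L\le\bq z'\bq$ for some $z'$, or $\mathbf Lz\mathbf L\le\mathbf F+H(\bq)$, uniformly. My first attempt would be an induction showing that every intermediate word $\mathbf L$ satisfies $\mathbf L\le\bq$ and $c(\mathbf L)=c(\bq)$. With that invariant, Lemma~\ref{lem:content-sandwich} gives
\[
\mathbf Lz\mathbf L\le\bq z\bq ,
\]
which I would obtain by writing $\mathbf L\le\bq$, multiplying, and using that $z\in c(\bq)$. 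The difficulty is preserving the invariant $\mathbf L\le\bq$ through the I insertions, which increase words. For this I would order the steps so that I-lifts are applied only to words already shown to be at most $\bq$ via B, with the reverse inequality coming from the content-sandwich lemma.
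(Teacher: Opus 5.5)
Your plan is correct and is essentially the paper's own proof. It uses cut expansion and the per-cut witness $\mathbf f=\bu y\bv$ from Lemma~\ref{lem:lifting-semantics}. It builds $\bu$ and $\bv$ inside the $\bp^2$ and $\bs^2$ blocks by alternating I and M steps, as in the paper's clean-prefix and clean-suffix insertions. It places the pivot by one more I step and finishes with a single C lift onto $\mathbf f$. Every remainder $\mathbf L\bw\mathbf L$ is absorbed into $H(\bq)$ because each intermediate word is separately at most $\bq$: I-steps only lower a word in the additive order by the B inequalities, and M-steps land below the clean word by the content-sandwich bound. This is exactly the paper's Corner and Error.

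Two small repairs are needed. Your ``first move'' is vacuous, since C is used only at the very end, and applying it earlier would destroy the square blocks that I and M need. The pivot must go into a block whose content contains it, not merely somewhere justified by $y\in c(\bq)$: the last $\bp^2$ before $x$ if $y\in c(\bp)$, the first $\bs^2$ after $x$ if $y\in c(\bs)$, and no insertion if $y=x$.
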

\begin{proof}
The product-to-join bound gives
\(\bu\leq\sum_{z\in c(\bu)}z\) for every nonempty word. Consequently, when
\(c(\bu)\subseteq c(\bp)\), the B patterns give
\begin{equation}\label{eq:h:016}
\bp^2\bu\bp^2\leq\sum_{z\in c(\bu)}\bp^2z\bp^2\leq\bp^2.
\tag{\text{H-Corner}}
\end{equation}
If \(\bt\leq\bq\) has already been derived, and \(\bw\) is a nonempty word on
\(c(\bq)\), then
\begin{equation}\label{eq:h:017}
\bt\bw\bt\leq\bq\bw\bq\leq H(\bq).
\tag{\text{H-Error}}
\end{equation}
Hence an instance of a lifted I, M or C pattern gives
\(\bt\leq\mathbf R'+H(\bq)\) whenever its lower word is separately known to be at
most \(\bq\), and every substituted word uses only \(c(\bq)\). This observation
never multiplies an old error term by a new context.

We first show clean-prefix insertion. Suppose \(\bp,\bv\) are nonempty words
on \(c(\bq)\), \(\bp^2\bv\leq\bq\) is derived, and \(\bu\) is nonempty with
\(c(\bu)\subseteq c(\bp)\). Then
\begin{equation}\label{eq:h:018}
\bp^2\bv\leq\bp^2\bu\bp^2\bv+H(\bq),\qquad
\bp^2\bu\bp^2\bv\leq\bp^2\bv.
\tag{\text{H-Left}}
\end{equation}
The second inequality is Corner. Build \(\bu\) from its last letter by
prepending letters. For one letter \(z\), I gives the first inequality,
with final context \(\bv\), and Error bounds its error. If \(\mathbf U\) has been
built, the current word \(\bp^2\mathbf U\bp^2\bv\) is separately at most \(\bp^2\bv\) by
Corner. Apply I at its first square to insert \(z\), giving
\(\bp^2z\bp^2\mathbf U\bp^2\bv\) up to \(H(\bq)\). This intermediate is also at most \(\bq\),
by B and Corner. Apply M with \(P=\bp,U=z,V=\mathbf U\) and final context \(\bv\),
to obtain \(\bp^2z\mathbf U\bp^2\bv\) up to the same error. Corner bounds this new clean
word independently by \(\bp^2\bv\). Additive transitivity and idempotence
combine the inequalities, proving Left after finitely many letters.

For clean-suffix insertion, with \(\bv,\bs\) nonempty and \(\bv\bs^2\leq\bq\), build
a desired \(\bu\) by appending letters. From \(\bv\bs^2\mathbf U\bs^2\), I at the last
square gives \(\bv\bs^2\mathbf U\bs^2z\bs^2\) up to \(H(\bq)\); B and Corner keep this
intermediate at most \(\bq\). M with initial context \(\bv\) and variables
\(P=\bs,U=\mathbf U,V=z\) gives \(\bv\bs^2\mathbf U z\bs^2\) up to \(H(\bq)\). The one-letter case
is I alone. Thus
\begin{equation}\label{eq:h:019}
\bv\bs^2\leq\bv\bs^2\bu\bs^2+H(\bq),\qquad
\bv\bs^2\bu\bs^2\leq\bv\bs^2
\tag{\text{H-Right}}
\end{equation}
whenever \(c(\bu)\subseteq c(\bs)\). These arguments use only words on \(c(\bq)\).

Fix a cut \(\bt=\bp^2x\bs^2\) of \(\bq\), with absent contexts omitted. Lemma~\ref{lem:lifting-semantics}
provides \(\mathbf f=\bu y\bv\) in \(\mathbf F\) with \(c(\bu)\subseteq c(\bp)\) and
\(c(\bv)\subseteq c(\bs)\). If \(\bu\) is nonempty, apply Left with final context
\(x\bs^2\) or \(x\); if \(\bv\) is nonempty, apply Right with the current
initial context, which contains \(x\). Skip the corresponding insertion
when \(\bu\) or \(\bv\) is absent. An absent \(\bp\) forces \(\bu\) absent, and an
absent \(\bs\) forces \(\bv\) absent. The resulting word \(\mathbf T=\mathbf Lx\mathbf R\) satisfies
\begin{equation}\label{eq:h:020}
\bt\leq\mathbf T+H(\bq),\qquad\mathbf T\leq\bt\leq\bq,
\end{equation}
where \(\mathbf L=\bp^2\bu\bp^2\) if \(\bu\) is present,
\(\mathbf L=\bp^2\) if only \(\bp\) is
present, and is absent otherwise; \(\mathbf R\) is the corresponding suffix.

If \(y=x\), put \(\mathbf T'=\mathbf T\). If \(y\) occurs in \(\bp\), apply I to the last
\(\bp^2\) before \(x\), inserting \(y\) between two such squares. The actual
outer contexts are either nonempty words or absent variants of the finite
pattern. B proves \(\mathbf T'\leq\mathbf T\), and Error proves \(\mathbf T\leq\mathbf T'+H(\bq)\).
The clean block \(\bu\) occurs before the inserted \(y\), and \(\bv\) after it.
If \(y\) occurs in \(\bs\), use I at the first \(\bs^2\) after \(x\); the same
facts hold. One case applies since \(y\in c(\bp)\cup\{x\}\cup c(\bs)\).

Retain the nonempty blocks among \(\bu,y,\bv\) in \(\mathbf T'\), in that order.
There are one, two or three. Treat every nonempty intervening or outer gap
as a context variable and select an absent-context version for every empty
gap. The resulting single C pattern has lower word exactly \(\mathbf T'\) and
unlifted upper word \(\mathbf f\). Because \(\mathbf T'\leq\bq\) was separately proved,
Error gives \(\mathbf T'\leq\mathbf f+H(\bq)\). Thus \(\bt\leq\mathbf F+H(\bq)\). Sum over all cuts
and use Lemma~\ref{lem:cut-expansion} to obtain SL. This also covers a one-letter \(\bq\), when
both cut contexts are absent and C still has its mandatory pivot.
\end{proof}

\paragraph{A finite coordinate calculus.}\label{sec:4}

The next identities deal with the terms arising when a word takes the
value $b$. All their variables are ordinary variables; no word-pattern
enumeration is needed. Put
\begin{equation}\label{eq:h:022}
g=QxQ,\qquad e=Q^2+g^2,\qquad s\otimes t=sxt,
\end{equation}
and abbreviate
\begin{equation}\label{eq:h:023}
\mathsf B(t)=gxtxg,\quad \mathsf R(t)=gxtg,\quad
\mathsf P(t)=gtxg,\quad \mathsf T(t)=gtg.
\end{equation}
For a term $\bd$, write $\mathbf A={}_{\bd}\mathbf B$ for
$\mathbf A+\bd=\mathbf B+\bd$ and
$\mathbf A\leq_{\bd}\mathbf B$ for
$\mathbf A+\mathbf B+\bd=\mathbf B+\bd$.
Let $h_t=gtg$. Each expression in this notation has a fixed expansion
in the original signature.

Let $E_{\rm coord}$ consist of the following seventeen identities.
\begin{enumerate}
\item The three identities
\begin{equation}\label{eq:h:024}
g\otimes g={}_e g,\qquad gx^2g={}_e g,\qquad
\mathsf B(Q)={}_e g.
\end{equation}
\item The four identities
\begin{equation}\label{eq:h:026}
h_t\leq g+e+h_t^2,\qquad
g\otimes h_t={}_e h_t,\qquad
h_t\otimes g={}_e h_t,\qquad
h_t\otimes h_t={}_e h_t.
\end{equation}
\item The four coordinate inequalities
\begin{equation}\label{eq:h:027}
\mathsf T(y)\leq_e\mathsf R(y),\quad
\mathsf T(y)\leq_e\mathsf P(y),\quad
\mathsf R(y)\leq_e\mathsf B(y),\quad
\mathsf P(y)\leq_e\mathsf B(y).
\end{equation}
\item The identity
\begin{equation}\label{eq:h:coordinate-commute}
h_s\otimes h_t={}_{e+h_s^2+h_t^2}h_t\otimes h_s.
\end{equation}
\item The four product formulas
\begin{equation}\label{eq:h:028}
\mathsf B(uv)={}_e
\mathsf R(u)\otimes\mathsf B(v)
+\mathsf B(u)\otimes\mathsf P(v),
\end{equation}
\begin{equation}\label{eq:h:029}
\mathsf R(uv)={}_e
\mathsf R(u)\otimes\mathsf R(v)
+\mathsf B(u)\otimes\mathsf T(v),
\end{equation}
\begin{equation}\label{eq:h:030}
\mathsf P(uv)={}_e
\mathsf T(u)\otimes\mathsf B(v)
+\mathsf P(u)\otimes\mathsf P(v),
\end{equation}
\begin{equation}\label{eq:h:031}
\mathsf T(uv)={}_e
\mathsf T(u)\otimes\mathsf R(v)
+\mathsf P(u)\otimes\mathsf T(v).
\end{equation}
\item The identity $\mathsf B(t)\leq t+e$.
\end{enumerate}

\begin{lemma}\label{lem:coordinate-validity}
Every identity in $E_{\rm coord}$ holds in $\Abar$.
\end{lemma}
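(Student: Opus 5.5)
We verify the seventeen identities semantically, using the step-map representation from Lemma~\ref{lem:elementary-structure}. Fix an assignment. Put $g=QxQ$ and $e=Q^2+g^2$, and note that every identity has the form $\mathbf A={}_{e'}\mathbf B$ or $\mathbf A\le_{e'}\mathbf B$, where the guard $e'$ is a sum containing $e$. The guard does the main work. By the order $0<b<ab,ba<1<a$, adding $e$ collapses many cases: if $e$ is $1$ or $a$, then both sides plus $e$ lie above $1$ and need only be compared in $\{1,a\}$. So the plan splits according to the value of $e$.

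\emph{Case 1: $e\in\{1,a\}$.} Here $e\ge 1$ forces $Q^2\ge1$ or $g^2\ge1$. A product equal to $1$ has all factors $1$, and squares of step maps are step maps or $0$. Hence either $Q^2,g^2\in\{0,ab,ba,b\}\cup\{1\}$ with the value $1$ forcing $Q=1$, or a value $a$ appears. If $e=a$, every summand is $\le a$ and each side plus $e$ equals $a$. If $e=1$, each side plus $1$ is $1$ or $a$. We check that $a$ occurs on one side iff it occurs on the other. By multiplicativity of the coordinates $r,p$, an $a$-valued word needs some letter equal to $a$, so the check reduces to the content of the terms in $x,Q$, and $a$ is visible on both sides.

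\emph{Case 2: $e\le b$, so $e\in\{0,b\}$.} Then $Q^2,g^2\in\{0,b\}$, which forces $g$ to be $0$ or a non-idempotent element. The only non-idempotent is $b$, so $g\in\{0,b\}$. If $g=0$, every listed term contains $g$ as a factor except the right-hand sides $g+e+h_t^2$, $t+e$ and the bare $h_t$. Those sides are either $0$, or the identity is trivially satisfied because its left side is $0$. For $g=b$ we use $bsb=b$ iff $s=a$, and $bsb=0$ otherwise (the last claim of Lemma~\ref{lem:elementary-structure}). Then $h_t=gtg\in\{0,b\}$, with $h_t=b$ iff $t=a$. Similarly $\mathsf B(t)=b$ iff $xtx=a$, $\mathsf R(t)=b$ iff $xt=a$, $\mathsf P(t)=b$ iff $tx=a$, and $\mathsf T(t)=b$ iff $t=a$. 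Since $b\cdot s\cdot b$ depends only on whether $s=a$, and since $a$ is a left and right zero except that $ab$ and $ba$ act nontrivially, each identity becomes a Boolean statement about the events ``$t=a$'', ``$x=a$'' and so on. For example, $g=QxQ=b$ forces the value of $x$ (from $Q$ and the table). With $x=a$ one gets $\mathsf B(t)=\mathsf R(t)=\mathsf P(t)=\mathsf T(t)$, and the product formulas reduce to $[uv=a]=[u=a][v=a]\vee\ldots$, which we read off the table using the step-map product $d_{i,j}d_{k,l}$. The coordinate inequalities become implications such as $t=a\Rightarrow xt=a$, valid because $x=a$ or, in the remaining subcases, $\otimes$ by $x$ preserves $a$. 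The identity $h_t\le g+e+h_t^2$ is immediate since $h_t\le b=g$.

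\emph{Case 3: $e\in\{ab,ba\}$.} By Lemma~\ref{lem:lifting-semantics} (the coordinates $r,q_0,p$), all letters of $Q$ and $x$ then lie in $\{ab,1\}$ or in $\{ba,1\}$, together with possible $b$'s that do not occur, since a $b$ letter would kill $p$ or $r$. So all values lie in the commutative idempotent submonoid $\{0,ab,1\}$ or $\{0,ba,1\}$. That submonoid is a chain with multiplication equal to meet, so both sides of every identity compute meets over contents, and the identities hold by content comparison. Commutativity (item 4) is automatic there.

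The main obstacle is the bookkeeping in Case 2, where $g=b$ and the product formulas (5) must be checked. I would handle it by encoding each of $\mathsf B,\mathsf R,\mathsf P,\mathsf T$ of $t$ as the pair of bits $([f(1)\ge\cdot],[f(2)\cdots])$ of the step map $t$ relative to $x$. Each formula then becomes the chain rule for $q_0$ in \eqref{eq:h:013}, which is exactly the matrix-product pattern displayed in \eqref{eq:h:028}--\eqref{eq:h:031}.
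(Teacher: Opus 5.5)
The proposal has genuine gaps. Checking each identity semantically is the right kind of argument, but two of your three cases rest on false claims. The missing ingredient is the sandwich property $dvd\in\{0,d\}$ for a step map $d$ (Lemma~\ref{lem:elementary-structure}).

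\emph{Case~3 and the $e=1$ subcase.} In Case~3, $e\in\{ab,ba\}$ only forces $Q=e$. The values of $x$ and of $t,u,v,s,y$ are unconstrained: $Q=ab$, $x=a$ gives $g=ab\cdot a\cdot ab=ab$ and $e=ab$, yet $x\notin\{ab,1\}$. So your claim that all values lie in the chain $\{0,ab,1\}$ or $\{0,ba,1\}$ is false. What makes the case work is different. Apart from the bare $t$ on the right of $\mathsf B(t)\le t+e$, every term in these identities is a sum of products beginning and ending with $Q$, since $g=QxQ$ and everything else starts and ends with $g$. When $Q\in\{a,ab,ba\}$, all such products lie in $\{0,Q\}$, and $e=Q$ absorbs them. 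Your ``content'' argument for $e=1$ does not supply this either. First, $e=1$ forces $Q=1$ and $x\ne a$, which you never establish. Second, a letter valued $a$ does not make a term equal to $a$: $Q=1$, $x=ab$, $t=a$ gives $h_t=ab$. The correct reasoning is in two parts. For $x\in\{0,b,ab,ba\}$, every guarded term is a sandwich by $x$, hence lies below $1=e$. For $x=1$, the coordinates are the identity, and you need $t\le1+t^2$, $t+1=t^2+1$ and $st+ts\le s^2+t^2$.

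\emph{Case~2.} With $g=b$ (which forces $Q=b$ and $x=a$), your central simplification $\mathsf B(t)=\mathsf R(t)=\mathsf P(t)=\mathsf T(t)$ is false. These are the four distinct predicates $[t\ne0]$, $[t\in\{ba,1,a\}]$, $[t\in\{ab,1,a\}]$ and $[t=a]$; for instance $\mathsf B(b)=babab=b$ while $\mathsf T(b)=bbb=0$. If they did coincide, the product formulas would fail, e.g.\ $u=a$, $v=1$ in \eqref{eq:h:031}.

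Your last paragraph points toward the right fix, but \eqref{eq:h:013} is valid only on $S$, since it uses $p(f)=[f(1)=1]$. It breaks once $a$ appears: $f=b$, $g=a$ gives $q_0(ba)=1$ but $r(b)q_0(a)\vee q_0(b)p(a)=0$. What you need is the following:
\begin{itemize}
\item the four bits $[t(i)\geq j]$, $i,j\in\{1,2\}$, form a Boolean $2\times2$ matrix that is multiplicative under composition;
\item $s\otimes t=sat$ is conjunction on $\{0,b\}$;
\item $b\le t$ for every $t\ne0$, which gives $\mathsf B(t)\le t+e$.
\end{itemize}

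Splitting on the value of $Q$ rather than of $e$, as the paper does, makes this transparent. $Q=0$ and $Q\in\{a,ab,ba\}$ are absorbed. $Q=b$ is trivial unless $x=a$, where it reduces to the Boolean calculus above. $Q=1$ splits into the two subcases just described.
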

\begin{proof}
Fix an assignment, and write $p,r$ for the values of $Q,x$.
If $p=0$, all the guarded expressions vanish.
If $p\in\{a,ab,ba\}$, then $e=p$, while $g$, each coordinate,
each sandwich $h_t$, and their local products belong to $\{0,p\}$.
All the displayed identities therefore hold.
If $p=b$ and $r\ne a$, then $e=g=0$, and again every guarded
expression vanishes.

Suppose that $p=b$ and $r=a$. Then $e=0$ and $g=b$.
Identify $\{0,b\}$ with the two-element Boolean algebra.
Local multiplication $s\otimes t=sat$ is conjunction and addition is
disjunction. Moreover,
\begin{equation}\label{eq:h:032}
\mathsf B(t)=[t(2)\geq1],\quad
\mathsf R(t)=[t(2)=2],\quad
\mathsf P(t)=[t(1)\geq1],\quad
\mathsf T(t)=[t(1)=2].
\end{equation}
The coordinate inequalities follow from monotonicity.
The product formulas are the entries of Boolean multiplication of
\[
\begin{pmatrix}\mathsf R&\mathsf B\\
\mathsf T&\mathsf P\end{pmatrix}.
\]
All the coordinates of $a$ are one, while those of $b$ are
$(1,0,0,0)$ in the order
$(\mathsf B,\mathsf R,\mathsf P,\mathsf T)$.
Also, $\mathsf B(t)\leq t$.
Every $h_t$ is zero or $b$, so $h_t^2=0$.
These observations prove the identities in this case.

It remains to consider $p=1$.
If $r\ne1$, the guarded expressions are absorbed by
$e=1+r^2$: for $r\in\{0,b,ab,ba\}$ they are zero or $r\leq1$,
and for $r=a$ the element $e=a$ is greatest.
If $r=1$, then $e=g=1$, $h_t=t$, $C(t)=t$ for every coordinate,
and $\otimes$ is the original multiplication.
The first group, the unit laws, the coordinate inequalities, the
product formulas, and the last identity are immediate.
The other identities follow from
\[
t\leq1+t^2,\qquad t+1=t^2+1,\qquad
st+ts\leq s^2+t^2.
\]
The first two relations hold because every element other than $b$
is idempotent and $b\leq1$. The last follows from square additivity.
This completes the proof.
\end{proof}

\begin{lemma}\label{lem:coordinate-instantiation}
Let $\mathbf Q$ be a nonempty word and let $x\in c(\mathbf Q)$.
In the coordinate calculus on variables from $c(\mathbf Q)$, AI, Base, and
$E_{\rm coord}$ derive the following properties, with $e_0=\mathbf Q^2$.
\begin{enumerate}
\item $e=e_0$, and $e_0\otimes e_0$, $e_0\otimes g$,
and $g\otimes e_0$ are at most $e_0$.
\item $C(x)={}_{e_0}g$ for every coordinate $C$,
$\mathsf B(\mathbf Q)={}_{e_0}g$, and
$\mathsf R(\mathbf Q),\mathsf P(\mathbf Q),\mathsf T(\mathbf Q)\leq e_0$.
\item For every $y\in c(\mathbf Q)$, the coordinate $C(y)$ is at most
$g$ modulo $e_0$, has $g$ as a local unit modulo $e_0$, and is
locally idempotent modulo $e_0$.
Also, $e_0\otimes C(y),C(y)\otimes e_0\leq e_0$.
\item Any two primitive coordinates commute locally modulo $e_0$.
The four coordinate inequalities, the product formulas, and
$\mathsf B(t)\leq t+e_0$ remain valid with this same error term.
\end{enumerate}
\end{lemma}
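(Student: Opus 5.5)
We work in the substitution instance $Q\mapsto\mathbf Q$, with $x$ a fixed letter of $c(\mathbf Q)$. Thus $g=\mathbf Q x\mathbf Q$ and $e=\mathbf Q^2+g^2$, and every identity of $E_{\rm coord}$ is instantiated with terms on $c(\mathbf Q)$. The proof is a sequence of derivations, each reduced to Lemma~\ref{lem:content-sandwich} and the base laws. Item (1) is the key that converts the error term $e$ into $e_0$; the later items are then instances of $E_{\rm coord}$ with this replacement.

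\emph{Item (1).} Since $x\in c(\mathbf Q)$, Lemma~\ref{lem:content-sandwich} gives $g=\mathbf Q x\mathbf Q\le\mathbf Q$. Monotonicity and square additivity give $g^2\le g\mathbf Q+\mathbf Q g+\dots$; more directly, $g^2=\mathbf Q(x\mathbf Q^2x)\mathbf Q\le\mathbf Q$. For the bound by $e_0$ we use $\mathbf Q^2$ itself as the sandwich word. Write $g^2=\mathbf Q x\mathbf Q\,\mathbf Q x\mathbf Q=\mathbf Q\,(x\mathbf Q^2 x)\,\mathbf Q$. Regrouping, this is $\mathbf Q^2\bv\mathbf Q^2$ with $\bv$ on $c(\mathbf Q)$ whenever a $\mathbf Q^2$ factor can be exhibited on each side. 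If it cannot, use the cut expansion (Lemma~\ref{lem:cut-expansion}) to write $\mathbf Q$ as a sum of $\bp^2 y\bs^2$ and absorb. A cleaner route, which I would try first, is H-3.2 together with the square-decrease chain $(\mathbf Q x\mathbf Q)^2\le\mathbf Q^2$, proved via $xy\le x+y$ applied at the level of squares: $g^2=(\mathbf Q x\mathbf Q)^2$, and $(\mathbf u\mathbf v)^2\le\mathbf u^2+\mathbf v^2$ follows from square additivity applied to $(\mathbf u+\mathbf v)^2$ together with $\mathbf u\mathbf v\le(\mathbf u+\mathbf v)^2$. Iterating this gives $g^2\le\mathbf Q^2+x^2$, but we need $\le\mathbf Q^2$ alone. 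The letter $x^2$ must therefore be absorbed using $\mathbf Q^2 x\mathbf Q^2\le\mathbf Q^2$ (the B inequality). Hence $e=e_0$. The bounds $e_0\otimes e_0=\mathbf Q^2x\mathbf Q^2\le e_0$ and $e_0\otimes g$, $g\otimes e_0\le e_0$ are direct instances of Lemma~\ref{lem:content-sandwich} with $\bu=\mathbf Q^2$.

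\emph{Item (2).} The claims $C(x)={}_{e_0}g$ follow from the first group \eqref{eq:h:024}. For example, $\mathsf B(x)=gx^2g={}_e g$, $\mathsf R(x)=gx^2g$, and $\mathsf T(x)=gxg=g\otimes g$, each using $e=e_0$. The claim $\mathsf B(\mathbf Q)={}_{e_0}g$ is the third identity of \eqref{eq:h:024}. For $\mathsf R(\mathbf Q)$, $\mathsf P(\mathbf Q)$ and $\mathsf T(\mathbf Q)$, each is a word beginning and ending with $\mathbf Q$, containing a factor $\mathbf Q\mathbf Q$, with all letters in $c(\mathbf Q)$. Sandwich contraction around $\mathbf Q^2$ bounds each by $e_0$.

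\emph{Items (3) and (4).} For a letter $y\in c(\mathbf Q)$, we obtain the following from the listed identities.
\begin{itemize}
\item $C(y)\le_{e_0}g$: from the coordinate inequalities, giving $C(y)\le_{e_0}\mathsf B(y)$, followed by $\mathsf B(y)\le y+e_0$. Here we still need $\le g$ rather than $\le y$. I would instead bound $\mathsf B(y)=gxyxg$ directly by $g$ via Lemma~\ref{lem:content-sandwich} with $\bu=g$, since $c(xyx)\subseteq c(g)$; the same bound works for all four coordinates.
\item Local unit and idempotence: each coordinate is an $h_t$ with $t\in\{xyx,xy,yx,y\}$, so the second group \eqref{eq:h:026} gives $g\otimes h_t={}_e h_t$, $h_t\otimes g={}_e h_t$ and $h_t\otimes h_t={}_e h_t$.
\item Products with $e_0$: $e_0\otimes C(y)$ and $C(y)\otimes e_0$ are sandwiches around $\mathbf Q^2$ and are bounded by Lemma~\ref{lem:content-sandwich}.
\item Commutation: \eqref{eq:h:coordinate-commute} has error $e+h_s^2+h_t^2$. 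We must show $h_s^2,h_t^2\le e_0$, which holds by the same square bound as in item (1), since $h_t^2$ contains $\mathbf Q\mathbf Q$ with content in $c(\mathbf Q)$.
\item The remaining identities of item (4) are direct substitution instances once $e=e_0$.
\end{itemize}

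The main obstacle is the absorption $g^2\le\mathbf Q^2$ (and likewise $h_t^2\le\mathbf Q^2$) in item (1). It requires exhibiting $\mathbf Q^2$ as a genuine outer factor. I expect to handle it by writing $g^2=\mathbf Q(x\mathbf Q^2x)\mathbf Q$, then using square decrease $\mathbf Q\le$ ... in reverse via the cut expansion applied to the outer $\mathbf Q$ factors, so that each cut summand is of the form $\mathbf Q^2\bv\mathbf Q^2$ up to the H-Corner inequality. If this fails, the fallback is the identity $xy=x^2y+xy^2$ applied with $x=\mathbf Q$ and $y=x\mathbf Q^2x\mathbf Q$, and symmetrically, which produces the needed $\mathbf Q^2$ factors on both sides.
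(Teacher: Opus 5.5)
\textbf{Verdict: genuine gap in item (1), easily repaired.} For items (2)--(4) you follow essentially the paper's route: substitution instances of \eqref{eq:h:024}--\eqref{eq:h:031} after replacing $e$ by $e_0$. Your direct bound $C(y)=g\,\bt_C(y)\,g\le g$, from Lemma~\ref{lem:content-sandwich} with $\bu=g$, is valid and slightly shorter than the paper's use of $h_t\le g+e+h_t^2$.

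The gap is in item (1), on which every later item depends. You never derive $g^2\le\mathbf Q^2$, so your ``Hence $e=e_0$'' is unsupported. The one concrete argument you give fails. From $g^2\le\mathbf Q^2+x^2$ you cannot ``absorb $x^2$ using $\mathbf Q^2x\mathbf Q^2\le\mathbf Q^2$'', for two reasons. First, that inequality bounds a sandwich, not the bare square of a letter. Second, $x^2\le\mathbf Q^2$ is not valid in $\Abar$, so no derivation of it can exist: for $\mathbf Q=xy$ with $x=a$ and $y=0$, $x^2=a$ is the top and $\mathbf Q^2=0$ is the bottom. Your cut-expansion and $xy\approx x^2y+xy^2$ plans are left as intentions. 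They also stem from the mistaken belief that $\mathbf Q^2$ must appear as an outer factor on both sides.

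The missing observation is that multiplication preserves the additive order. Since you already have $g\le\mathbf Q$, you get $g^2=g\cdot g\le\mathbf Q\cdot g\le\mathbf Q\cdot\mathbf Q$. This is exactly the paper's one-line proof of $e=e_0$. Your own item (2) reasoning already gives it, because $g^2=(\mathbf Qx\mathbf Q)(\mathbf Qx\mathbf Q)$ is a product of two words, each at most $\mathbf Q$. The same monotonicity gives $h_t^2\le\mathbf Q^2$ from $h_t\le g\le\mathbf Q$, which is what the commutation step needs.

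Two smaller corrections:
\begin{enumerate}
\item $e_0\otimes g=\mathbf Q^2x\mathbf Qx\mathbf Q$ and $e_0\otimes C(y)$ are not literal instances of Lemma~\ref{lem:content-sandwich} with $\bu=\mathbf Q^2$, because they do not end in $\mathbf Q^2$. Write $g$ or $C(y)$ as $\mathbf Q\bv\mathbf Q$, regroup the product as $\mathbf Q\cdot(\mathbf Qx\mathbf Q\bv\mathbf Q)$, and apply the lemma with $\bu=\mathbf Q$, as the paper does. Treat the right-hand products symmetrically.
\item $\mathsf B(x)=gx^3g$, so you need the power law before \eqref{eq:h:024} applies.
\end{enumerate}
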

\begin{proof}
Lemma~\ref{lem:content-sandwich} gives $g=\mathbf Qx\mathbf Q\leq\mathbf Q$.
Hence $g^2\leq\mathbf Q^2$ and $e=\mathbf Q^2+g^2=\mathbf Q^2=e_0$.
Every primitive coordinate $C(y)$ has the form $\mathbf Q\bv\mathbf Q$, where $\bv$
is a nonempty word on $c(\mathbf Q)$. Thus $C(y)\leq\mathbf Q$ and
$C(y)^2\leq\mathbf Q^2$.

For any such word $\bv$, the same content-contraction lemma gives
\begin{align*}
e_0\otimes(\mathbf Q\bv\mathbf Q)
&=\mathbf Q(\mathbf Qx\mathbf Q)\bv\mathbf Q
\leq\mathbf Q^2\bv\mathbf Q=\mathbf Q(\mathbf Q\bv\mathbf Q)
\leq\mathbf Q^2,\\
(\mathbf Q\bv\mathbf Q)\otimes e_0
&=\mathbf Q\bv(\mathbf Qx\mathbf Q)\mathbf Q
\leq\mathbf Q\bv\mathbf Q^2=(\mathbf Q\bv\mathbf Q)\mathbf Q
\leq\mathbf Q^2.
\end{align*}
Taking $\bv=x$ proves the bounds involving $g$; taking the appropriate
$\bv$ proves the bounds involving each primitive coordinate.
The bound $e_0\otimes e_0=\mathbf Q^2x\mathbf Q^2\leq\mathbf Q^2$ follows by applying
the lemma to the word $\mathbf Q^2$.

The coordinates of $x$ satisfy
\[
\mathsf T(x)=gxg=g\otimes g,\qquad
\mathsf R(x)=\mathsf P(x)=gx^2g,\qquad
\mathsf B(x)=gx^3g=gx^2g.
\]
Their required values follow from \eqref{eq:h:024} and the power law.
The identity for $\mathsf B(\mathbf Q)$ is already in \eqref{eq:h:024}.
For the other three coordinates, calculate
\begin{align*}
\mathsf R(\mathbf Q)&=\mathbf Qxg^2
\leq\mathbf Qx\mathbf Q^2=g\mathbf Q\leq\mathbf Q^2,\\
\mathsf P(\mathbf Q)&=g^2x\mathbf Q
\leq\mathbf Q^2x\mathbf Q=\mathbf Qg\leq\mathbf Q^2,\\
\mathsf T(\mathbf Q)&=\mathbf Qx\mathbf Q^3x\mathbf Q
\leq\mathbf Qx\mathbf Q^2x\mathbf Q=g^2\leq\mathbf Q^2.
\end{align*}

Write each primitive coordinate as $C(y)=g\bt_C(y)g$, where
\[
\bt_{\mathsf B}(y)=xyx,\quad\bt_{\mathsf R}(y)=xy,\quad
\bt_{\mathsf P}(y)=yx,\quad\bt_{\mathsf T}(y)=y.
\]
Substitution in \eqref{eq:h:026} gives the upper bound, the two
unit laws, and local idempotence. The square term in the upper bound
is absorbed by $e_0$ because $C(y)^2\leq\mathbf Q^2$.
For two primitive coordinates, substitute their corresponding
words $\bt_C(y),\bt_D(z)$ in \eqref{eq:h:coordinate-commute}.
Both added squares are at most $e_0$, so the identity becomes
commutativity modulo $e_0$.
Finally, \eqref{eq:h:027}--\eqref{eq:h:031} and the last identity
already have the required form after $e$ is replaced by $e_0$.
All substitutions are nonempty words or terms.
\end{proof}

\begin{lemma}[Guarded coordinates of arbitrary terms]\label{lem:coordinate-terms}
Fix a nonempty word \(\mathbf Q\), \(x\in c(\mathbf Q)\), and any terms \(\mathbf L,\mathbf F\) using only
\(c(\mathbf Q)\). If \(\Abar\models\mathbf L\leq\mathbf F\), AI, Base, and \(E_{\rm coord}\)
derive
\begin{equation}\label{eq:h:033}
C(\mathbf L)\leq C(\mathbf F)+\mathbf Q^2
\qquad(C=\mathsf B,\mathsf R,\mathsf P,\mathsf T).
\tag{\text{H-Coord}}
\end{equation}
In particular, if \(\Abar\models\bq\leq\mathbf F\) is retained, that theory derives
\begin{equation}\label{eq:h:034}
\bq x\bq\leq\mathbf F+\bq^2\qquad(x\in c(\bq)).
\tag{\text{H-GC}}
\end{equation}
\end{lemma}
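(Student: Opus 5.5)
We treat each coordinate $C$ as a map from terms on $c(\mathbf Q)$ into the \emph{local algebra} of guarded elements. Addition is the ordinary sum, multiplication is $\otimes$, and all equalities are taken modulo the error $e_0=\mathbf Q^2$. By Lemma~\ref{lem:coordinate-instantiation}, this local algebra behaves like a ring of $2\times2$ Boolean matrices with entries $(\mathsf R,\mathsf B;\mathsf T,\mathsf P)$. The entries commute pairwise, are idempotent, have $g$ as unit, and are bounded by $g$. The plan is to reduce $C(\mathbf L)$ and $C(\mathbf F)$, through derivations, to Boolean polynomials in the primitive coordinates $C(y)$, $y\in c(\mathbf Q)$. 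We then compare these polynomials using the semantics of $\Abar$.

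\emph{Step 1 (normal forms).} First, distributivity makes each coordinate additive: $C(\bs+\bt)=C(\bs)+C(\bt)$ literally, since $C(t)=g\,\bt_C(t)\,g$ is linear in $t$. Hence it suffices to treat a single word. For a word $\bw=y_1\cdots y_m$, we iterate the product formulas \eqref{eq:h:028}--\eqref{eq:h:031} on $\bw=y_1(y_2\cdots y_m)$. This expresses $C(\bw)$, modulo $e_0$, as the matrix-product entry, namely a sum of $\otimes$-products of primitive coordinates. Here we need that $\otimes$ is associative and monotone and that errors propagate: $e_0\otimes X$ and $X\otimes e_0$ lie below $e_0$ by item~(1) and~(3) of Lemma~\ref{lem:coordinate-instantiation}. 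Using local commutativity, idempotence, the unit law and $g\otimes g={}_{e_0}g$, each such product reduces to a monomial $\bigotimes_{y\in Y}D_y(y)$ indexed by a set of (coordinate, variable) pairs. The coordinate inequalities \eqref{eq:h:027} then give the order $\mathsf T\le\mathsf R,\mathsf P\le\mathsf B$ among the coordinates of one variable. So each monomial is determined by a choice of a coordinate level for each variable.

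\emph{Step 2 (semantic comparison).} After Step~1, $C(\mathbf L)$ and $C(\mathbf F)$ are sums of monomials modulo $e_0$. We must show that each monomial of $C(\mathbf L)$ is dominated by a monomial of $C(\mathbf F)$ in the free ``distributive lattice with the order $\mathsf T\le\mathsf R,\mathsf P\le\mathsf B$''. The relevant semantics is the case $p=b$, $r=a$ in the proof of Lemma~\ref{lem:coordinate-validity}. There, every assignment of variables in $c(\mathbf Q)$ other than $x$ gives a 4-bit vector $(t(2)\ge1,\,t(2)=2,\,t(1)\ge1,\,t(1)=2)$. Every monotone vector is realized by some element of $\{0,b,ab,ba,1,a\}$. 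Applying this to $\Abar\models\mathbf L\le\mathbf F$ with $\mathbf Q$ forced to $b$ and $x$ to $a$ yields a Boolean implication between the normal forms. A monomial that is a minimal true point, evaluated at its characteristic assignment, must make some monomial of $\mathbf F$ true, and that gives the domination. Dominated monomials are absorbed using idempotence and the unit law ($M\otimes N\le M$ when $N\ge$ the relevant factors, via $C(y)\le g$).

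\emph{Main obstacle.} The hard part is realizability. Forcing $\mathbf Q=b$ and $x=a$ requires the other letters of $\mathbf Q$ to take values compatible with $\mathbf Q$ evaluating to $b$. Yet we want to assign them arbitrary 4-bit patterns. I would first try to decouple these, by noting that the reduction in Step~1 only ever uses the guarded context $g$ symbolically. It could then be performed in a free Boolean setting, with validity checked by the substitution $Q\mapsto$ a fresh variable. This is legitimate because $E_{\rm coord}$ has $Q$ as an ordinary variable. If that fails, I would instead use the derived identity $\mathsf B(t)\le t+e$ and induct on $|c(\mathbf Q)|$.

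For \eqref{eq:h:034}, we take $\mathbf Q=\bq$ and $C=\mathsf T$, so that $\mathsf T(\bq)$ relates to $g\bq g$. More simply, we use $C=\mathsf B$ with $\mathsf B(\bq)={}_{e_0}g=\bq x\bq$ and the final identity $\mathsf B(\mathbf F)\le\mathbf F+e_0$. Chaining these gives $\bq x\bq\le\mathsf B(\mathbf F)+\bq^2\le\mathbf F+\bq^2$.
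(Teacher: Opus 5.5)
\textbf{Verdict: there is a gap in Step~2.} Your architecture is the paper's. You work modulo $e_0=\mathbf Q^2$ in the local algebra under $\otimes$, expand each guarded coordinate via \eqref{eq:h:028}--\eqref{eq:h:031} into a Boolean lattice polynomial in the primitive coordinates, and decide the comparison by realizing Boolean points as elements of $\Abar$. Your derivation of \eqref{eq:h:034} through $\mathsf B(\bq)={}_{e_0}g$ and $\mathsf B(\mathbf F)\le\mathbf F+e_0$ is exactly the paper's.

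\textbf{Why Step~2 fails as formulated.} You want the Boolean implication from evaluating in $\Abar$ ``with $\mathbf Q$ forced to $b$ and $x$ to $a$''. But the characteristic point of a monomial sends every letter not occurring in it to $0$, and other letters to elements such as $1$ or $ab$. So in general $\mathbf Q$ does not evaluate to $b$ there, and you leave this open.

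\textbf{Why your repairs do not help.} Replacing $Q$ by a fresh variable removes the local laws Step~1 relies on: error absorption, $C(y)\le g$, and $C(y)^2\le e_0$. Lemma~\ref{lem:coordinate-instantiation} obtains these by content contraction from $x,y\in c(\mathbf Q)$. At $Q=x=1$, $y=a$ one has $e=g=1$ but $e\otimes\mathsf T(y)=\mathsf T(y)=a\nleq 1$, so these laws are not even valid. Also, validity is not what needs checking: $C(\mathbf L)\le C(\mathbf F)$ holds in $\Abar$ trivially by monotonicity of multiplication. The content of the lemma is derivability. Your second repair, induction on $|c(\mathbf Q)|$ via $\mathsf B(t)\le t+e$, is not worked out.

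\textbf{The missing observation: no constraint on $\mathbf Q$ is needed.} Consider the map $\kappa(t)=([t(2)\ge1],[t(2)=2],[t(1)\ge1],[t(1)=2])$, in the order $(\mathsf B,\mathsf R,\mathsf P,\mathsf T)$.
\begin{itemize}
\item It is defined on all of $\Abar$ and is monotone.
\item It turns $+$ into disjunction and products into the Boolean matrix product recorded in \eqref{eq:h:028}--\eqref{eq:h:031}, for every assignment, not only when $Q=b$ and $x=a$.
\item It is a bijection onto the six quadruples with $\mathsf T\le\mathsf R,\mathsf P\le\mathsf B$.
\end{itemize}
Your Step~1 uses only the product formulas and Boolean lattice laws. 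If you also use $C(x)={}_{e_0}g$, that only means giving $x$ the value $a$. Hence $p_C(\mathbf L)$ computes $\kappa_C$ of the value of $\mathbf L$ at every point. Assign each letter the element carrying its characteristic quadruple. Then $\Abar\models\mathbf L\le\mathbf F$ and monotonicity of $\kappa_C$ give the monomial domination you need, with no compatibility condition.

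\textbf{How the paper does it.} It works in an arbitrary model and shows the local quotient modulo $e$ is a finite distributive lattice. It separates $C(\mathbf L)$ from $C(\mathbf F)$ by a prime filter, reads an $\Abar$-assignment off the separating homomorphism, and finishes by equational completeness. That homomorphism is taken on the actual lattice, so it automatically respects $C(x)={}_{e_0}g$, $\mathsf B(\mathbf Q)={}_{e_0}g$ and $\mathsf R(\mathbf Q),\mathsf P(\mathbf Q),\mathsf T(\mathbf Q)\le e_0$. These force $x=a$ and $\mathbf Q=b$, so your realizability worry never arises. With the fix above, your explicit normal-form-and-domination derivation is a legitimate syntactic alternative to the paper's completeness argument.
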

\begin{proof}
Work in an arbitrary model of AI, Base, and
\(E_{\rm coord}\), and fix an assignment.  By
Lemma~\ref{lem:coordinate-instantiation}, all the coordinate laws needed
below hold at the word \(\mathbf Q\) modulo \(\mathbf Q^2\), since their designated
letters belong to \(c(\mathbf Q)\). Put \(e=\mathbf Q^2,g=\mathbf Qx\mathbf Q\). Let \(A_0\) be the local algebra generated by
\(e,g,C(y)\) for all \(y\in c(\mathbf Q)\) under addition and
\(s\otimes t=sxt\). AI makes \(\otimes\) associative and distributive.

Lemma~\ref{lem:coordinate-instantiation} gives \(e\otimes a\leq e\) and \(a\otimes e\leq e\) for
each generator. Induction extends these inequalities to all elements of
\(A_0\): for a product, for example,
\(e\otimes(a\otimes b)=(e\otimes a)\otimes b\leq e\otimes b\leq e\).
It follows by expansion that
\(a\sim b\iff a+e=b+e\) is a congruence on this local algebra. For example
\((a+e)\otimes(c+e)+e=(a\otimes c)+e\). This does not assert a congruence
on the entire ambient semiring.

In the quotient \(A_0/{\sim}\), the class of \(e\) is zero, the class of
\(g\) is a greatest element and a multiplicative unit, and the primitive
coordinates commute and are idempotent under \(\otimes\). Unit and upper
bounds extend from generators by induction. Commutation and idempotence
extend to products. A product is below each factor because \(g\) is
greatest, so cross products in the square of a sum are absorbed; hence
every sum is idempotent as well. Thus \(+\) and \(\otimes\) are the join
and meet of a bounded distributive lattice. To check the greatest-lower-
bound property explicitly, if \(d\leq a,b\), then
\(d=d\otimes d\leq a\otimes b\), whereas \(a\otimes b\leq a,b\).
The lattice is finite: distribution, commutation and idempotence express
each element as a join of products of subsets of its finite generator set.

For every term \(\bt\) on \(c(\mathbf Q)\), simultaneously prove that there is an
expression \(p_C(\bt)\) in the local generators satisfying
\begin{equation}\label{eq:h:035}
C(\bt)+e=p_C(\bt)+e,\qquad
e\otimes C(\bt)\leq e,\quad C(\bt)\otimes e\leq e.
\tag{\text{H-Expand}}
\end{equation}
For letters this is Lemma~\ref{lem:coordinate-instantiation}. Addition follows by distributivity. At a
product, \eqref{eq:h:028}--\eqref{eq:h:031} express its coordinate modulo \(e\) using the
coordinates of the two factors. The inductive absorption assertions make
it legitimate to replace each such factor by its polynomial modulo
\(e\): expanding \((a+e)\otimes(b+e)\) introduces only absorbed terms.
This gives the required polynomial for the product. The resulting bound
\(C(\bt)\leq p_C(\bt)+e\), multiplied by \(e\) on either local side, proves
the two new absorption assertions. No unproved global quotient step occurs.
The images of the expanded coordinates therefore satisfy the Boolean
matrix formulas for every term, not merely words.

We recall a separation argument for a finite distributive lattice. If
\(u\not\leq v\), choose an ideal maximal among those containing \(v\)
and excluding \(u\). Its complement is a filter: if \(a,b\) are outside,
maximality gives \(u\leq i\vee a\) and \(u\leq j\vee b\) for some
\(i,j\) in the ideal. If \(a\wedge b\) were inside, distributing the meet
of these bounds would put \(u\) inside, a contradiction. The characteristic
function of the complement consequently preserves join and meet, and
sends \(u\) to one and \(v\) to zero.

If the image of \(C(\mathbf L)\) were not below that of \(C(\mathbf F)\), apply this
separation to their expanded polynomials. The four primitive images at
each letter obey \(\mathsf T\leq\mathsf R\leq\mathsf B\) and
\(\mathsf T\leq\mathsf P\leq\mathsf B\).
There are exactly six Boolean quadruples satisfying these inequalities,
namely the coordinate quadruples of the six maps in Lemma~\ref{lem:elementary-structure}. Assign each
letter that actual element of \(\Abar\). The product and addition formulas
show that its actual term coordinates equal the separated images.
Lemma~\ref{lem:coordinate-instantiation} also forces \(x=a,\mathbf Q=b\) in this assignment. Validity of
\(\mathbf L\leq\mathbf F\) and monotonicity of the coordinate functions contradict the
separated values one and zero. Therefore the desired lattice inequality
holds. By Expand this says \(C(\mathbf L)\leq C(\mathbf F)+e\) in the original model.
The model and assignment were arbitrary, so equational completeness
gives the required derivation. This proves Coord.

For \eqref{eq:h:034} take \(\mathbf Q=\bq,\mathbf L=\bq,C=\mathsf B\).
Lemma~\ref{lem:coordinate-instantiation} gives
\(\mathsf B(\bq)+e=g+e\), while the last coordinate identity, with the term
\(\mathbf F\) substituted for its auxiliary variable, says
\(\mathsf B(\mathbf F)\leq\mathbf F+e\). Equation~\eqref{eq:h:033} gives \(g\leq\mathbf F+e\), as required.
\end{proof}

\begin{lemma}[Word absorption up to its square]\label{lem:absorption-up-to-square}
For every retained \(\Abar\)-valid \(\bq\leq\mathbf F\), the fixed finite identities
defined so far derive \(\bq\leq\mathbf F+\bq^2\).
\end{lemma}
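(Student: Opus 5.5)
This lemma is essentially the combination step already used inside the proof of Theorem~\ref{main:Abar-fb}, so the plan is to assemble it from Lemmas~\ref{lem:finite-lifting} and~\ref{lem:coordinate-terms}. Let \(\bq\leq\mathbf F\) be retained and \(\Abar\)-valid, so \(c(\mathbf F)\subseteq c(\bq)\).

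\emph{Case \(c(\bq)=\{x\}\).} This is handled directly.
\begin{itemize}
\item If \(\bq=x\), evaluate at \(x=b\). The only words with value at least \(b\) there are \(x\) itself, since \(x^m\mapsto0\) for \(m\ge2\) and \(b<0\) fails in the order of Figure~\ref{fig:Abar-structure}; the precise check is via the table. So \(x\) is a summand of \(\mathbf F\), and idempotence gives \(\bq\leq\mathbf F\) outright.
\item If \(\bq=x^m\) with \(m\ge2\), then \(\bq=x^2\) by the power law, and \(x^2=\bq^2\). The inequality \(\bq\leq\mathbf F+\bq^2\) is then trivial.
\end{itemize}

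\emph{Case \(|c(\bq)|\ge2\).}
\begin{enumerate}
\item The subsemiring \(S=\{0,b,ab,ba,1\}\) is a subalgebra of \(\Abar\), so \(\bq\leq\mathbf F\) is \(S\)-valid.
\item Lemma~\ref{lem:finite-lifting} then yields \(\bq\leq\mathbf F+H(\bq)\) from AI, Base and the 48 lifting patterns, with \(H(\bq)=\sum_{x\in c(\bq)}\bq x\bq\).
\item For each \(x\in c(\bq)\), Lemma~\ref{lem:coordinate-terms} in its form \eqref{eq:h:034} gives \(\bq x\bq\leq\mathbf F+\bq^2\). Its hypotheses hold: take \(\mathbf Q=\bq\); the term \(\mathbf F\) uses only \(c(\bq)\) because the absorption is retained; and the identity is \(\Abar\)-valid.
\item Summing these finitely many inequalities and using additive idempotence gives \(H(\bq)\leq\mathbf F+\bq^2\).
\item Substituting this into step 2 by transitivity of \(\leq\) (monotonicity of \(+\)) gives \(\bq\leq\mathbf F+\bq^2\).
\end{enumerate}

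All identities used are in the fixed finite list: AI, Base, the lifting patterns and \(E_{\rm coord}\). The Lemma~\ref{lem:content-sandwich} consequences are derived, not added. Every substitution is a nonempty term.

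Expected obstacles:
\begin{itemize}
\item The one-letter case \(\bq=x\). I must confirm from the table that a word \(x^m\) with \(m\ge2\) cannot dominate \(x\) at \(x=b\). Indeed \(b^2=0\) is the least element, so \(x\) must occur among the summands.
\item Checking that the error terms combine with no hidden contexts. Lemma~\ref{lem:finite-lifting} already isolates \(H(\bq)\) as a plain additive summand, so only additive reasoning is needed and no congruence issues arise.
\end{itemize}
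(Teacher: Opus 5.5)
Your proposal is correct. For $|c(\bq)|\ge 2$ it is exactly the paper's argument: Lemma~\ref{lem:finite-lifting} gives $\bq\le\mathbf F+H(\bq)$, each $\bq x\bq$ is absorbed into $\mathbf F+\bq^2$ by \eqref{eq:h:034}, and the finitely many inequalities are joined. Your separate treatment of one-letter content, which mirrors the case split in the proof of Theorem~\ref{main:Abar-fb}, is valid but unnecessary, since the paper applies the same two lemmas uniformly with no restriction on $|c(\bq)|$.
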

\begin{proof}
The absorption also holds in \(S\), so Lemma~\ref{lem:finite-lifting} gives \(\bq\leq\mathbf F+H(\bq)\).
Lemma~\ref{lem:coordinate-terms} gives \(\bq x\bq\leq\mathbf F+\bq^2\) for every \(x\in c(\bq)\). Join these
finitely many consequences and use additive transitivity to obtain the
claim. Only the number of steps depends on \(\bq\).
\end{proof}

\paragraph{A scalar guard for the content.}\label{sec:5}

\paragraph{The scalar identities.}
For the auxiliary band calculus, write \(c=ab,d=ba\) for
the two indicated element labels of the already defined algebra.
These abbreviations do not introduce nullary symbols.

Use the purely typographical abbreviations
\begin{equation}\label{eq:b:001}
h_T(\bu)=T\bu^2T,\qquad \bu\star \bv=(\bu\bv)^2.
\end{equation}
The symbol \(\star\) is not a primitive operation or a globally asserted
meet. Its lattice laws below apply only under a specified common bound.

Let \(E_{\rm BG}\) be AI together with the following fifteen identities.
All inequalities are expanded as addition absorption. Include
\begin{equation}\label{eq:b:002}
x^3=x^2,\qquad x^2\le x,\qquad
(x+y)^2=x^2+y^2,\qquad xy\le x+y.
\tag{\text{B-B0}}
\end{equation}
For the next four identities put \(T=x+z\) and \(H=h_T(x)\):
\begin{equation}\label{eq:b:003}
H\le T,\qquad H^2=H,\qquad TH=H,\qquad HT=H.
\tag{\text{B-N}}
\end{equation}
For each of the next six identities separately, let \(m\) be its largest
displayed subscript, put \(T=x_1+\cdots+x_m+z\), and \(H_i=h_T(x_i)\):
\begin{equation}\label{eq:b:004}
H_1\star H_2=H_2\star H_1,
\tag{\text{B-L1}}
\end{equation}
\begin{equation}\label{eq:b:005}
(H_1\star H_2)\star H_3=H_1\star(H_2\star H_3),
\tag{\text{B-L2}}
\end{equation}
\begin{equation}\label{eq:b:006}
H_1\star H_1=H_1,
\tag{\text{B-L3}}
\end{equation}
\begin{equation}\label{eq:b:007}
H_1+(H_1\star H_2)=H_1,
\tag{\text{B-L4}}
\end{equation}
\begin{equation}\label{eq:b:008}
H_1\star(H_1+H_2)=H_1,
\tag{\text{B-L5}}
\end{equation}
\begin{equation}\label{eq:b:009}
H_1\star(H_2+H_3)=(H_1\star H_2)+(H_1\star H_3).
\tag{\text{B-L6}}
\end{equation}
For the last identity put \(T=x+y+z+t\). Let \(A_T(x,y,z)\) be the
left-associated \(\star\)-product in precisely the following order:
\begin{equation}\label{eq:b:010}
h_T(x),h_T(y),h_T(z),h_T(xy),h_T(xz),h_T(yx),
h_T(yz),h_T(zx),h_T(zy).
\end{equation}
Include the single identity
\begin{equation}\label{eq:b:011}
A_T(x,y,z)\le h_T(xyz).
\tag{\text{B-C}}
\end{equation}
Every abbreviation expands into an ordinary term. These are fixed identities
in at most four variables, not schemes indexed by words. Including AI,
\(E_{\rm BG}\) has 21 displayed identities.

\begin{lemma}[Validity of the scalar identities]\label{band:basic-validity}
Every identity in \(E_{\rm BG}\) is valid in \(\Abar\).
\end{lemma}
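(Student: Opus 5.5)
The plan is to verify each of the fifteen non-AI identities of \(E_{\rm BG}\) directly, using the step-map model of Lemma~\ref{lem:elementary-structure}. The four identities \eqref{eq:b:002} are already Base laws, or consequences of them, proved in that lemma. Everything else concerns the sandwich \(h_T(\bu)=T\bu^2T\), so the key step is to describe its possible values.

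Fix an assignment and put \(t=T\), \(v=\bu^2\). Every square in \(\Abar\) is idempotent and lies in \(\{0,1,a,ab,ba\}\), since \(b^2=0\). We classify by the value of \(t\):
\begin{itemize}
\item If \(t=0\), then \(h=0\).
\item If \(t=1\), then \(h=v\) is an idempotent.
\item If \(t\) is a step map \(d\), then \(dvd\in\{0,d\}\) by the sandwich assertion of Lemma~\ref{lem:elementary-structure}.
\end{itemize}
Because \(T\) contains the extra summand \(z\) (or \(t\) in B-C), every letter substituted into \(\bu\) is at most \(T\).

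\emph{The identities B-N.} For \(t=1\) all letters of \(\bu\) are below \(1\), so \(v\in\{0,1,ab,ba\}\). Hence \(H=v\le 1=T\), \(H^2=H\), and \(TH=HT=H\). For \(t=d\) a step map, \(H\in\{0,d\}\). Then \(H\le T\) holds, \(d^2\) equals \(d\) or \(0\) (the latter only for \(d=b\)), and \(TH=dH\). The delicate point is \(t=b\). Here \(H=bvb\) is nonzero only when \(v=a\). But every letter of \(\bu\) is below \(b\), so it lies in \(\{0,b\}\), and then \(v=0\). Thus \(H=0\) whenever \(t=b\), which settles B-N. In the remaining cases, where \(t\) is an idempotent step map, we have \(dH=Hd=H\) immediately.

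\emph{The lattice identities B-L1--B-L6.} By the preceding paragraph, all the \(H_i\) lie in a finite set \(\Lambda_t\), and on \(\Lambda_t\) the operation \(\star\) is \(X\star Y=(XY)^2\). The sets are as follows:
\begin{itemize}
\item \(\Lambda_t=\{0\}\) when \(t\in\{0,b\}\);
\item \(\Lambda_t=\{0,d\}\) when \(t=d\) is an idempotent step map;
\item \(\Lambda_t\subseteq\{0,ab,ba,1\}\) when \(t=1\).
\end{itemize}
In the two-element case, \(\star\) is multiplication in \(\{0,d\}\), which is the Boolean meet. When \(t=1\), the elements \(0,ab,ba,1\) are idempotents below \(1\). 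Here \(ab\cdot ba=0\) and \(ba\cdot ab=b\), so \((ba\cdot ab)^2=0\). Consequently \(\star\) is the meet of the four-element Boolean lattice \(0<ab,ba<1\), and addition is its join. One must check that sums stay inside \(\Lambda_t\), and they do: \(ab+ba=1\). Commutativity, associativity, idempotence, absorption and distributivity are then lattice facts. This requires a short finite check that \((XY)^2\) equals the meet on these elements, which can be read off the table.

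\emph{The identity B-C.} This is the main obstacle. For \(t\in\{0,b\}\) both sides vanish. For \(t=d\) an idempotent step map, \(A_T\) equals \(d\) exactly when each listed sandwich equals \(d\). We must show this forces \(d(xyz)^2d=d\). Using the product formula \(d_{i,j}d_{k,l}\) for step maps, \(dvd=d\) amounts to a threshold condition on \(v\). The pairwise conditions for \(h_T(xy),h_T(yz)\) and so on should then propagate to the triple. This is a finite case analysis over \(x,y,z\in\{0,1,a,b,ab,ba\}\), and I would organise it through the bits \((r,q_0,p)\) of \eqref{eq:h:012}. For \(t=1\), all letters lie in \(S\), and by \eqref{eq:h:013} the square of a word depends only on its content. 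Hence \(h_T(xyz)=(xyz)^2\) is already determined by \(h_T(x),h_T(y),h_T(z)\), and the inequality follows. I expect the \(t=a\) subcase, where \(z\) or \(t\) takes the value \(a\) and the letters \(x,y,z\) may include \(b\), to require the most careful table checking.
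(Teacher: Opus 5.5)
\textbf{Gap in the B-C case where $T$ is an idempotent step map.} Your treatment of B-B0, B-N and B-L1--B-L6 is correct, and it is essentially the paper's scalar table organised by the value of $T$. The cases $T\in\{0,b\}$ of B-C are also fine.

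For $T$ an idempotent step map, you correctly call B-C the main obstacle but give no argument. The sentence ``the pairwise conditions should propagate to the triple'' is the entire content of this case. The tool you propose cannot supply it. The bits $(r,q_0,p)$ obey the multiplicative rules \eqref{eq:h:013} only on $S=\{0,b,ab,ba,1\}$ and fail once $a$ is a factor. For example, $ab\cdot a=a$ gives $r(ab\cdot a)=1$, whereas $r(ab)r(a)=0$. The case $T=a$ is exactly where $x,y,z$ are unconstrained, since any of $x,y,z,t$ may take the value $a$. So the bit calculus does not reach the hard case.

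\textbf{The missing idea for $T=a$.} Write $T=a=d_{1,2}$. Then $ava=a$ if and only if $v(2)\ge1$. For a square $v=\bu^2\in\{0,1,a,ab,ba\}$, this condition just says $v\neq0$. Hence $h_a(\bu)=a$ exactly when $\bu$ takes a nonzero idempotent value.

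Now suppose $A_T=a$. The three letter scalars exclude the values $0$ and $b$ for $x,y,z$. The ordered-pair scalars exclude $ab$ and $ba$ occurring together, because $ab\cdot ba=0$ and $ba\cdot ab=b$ both have square $0$. So $x,y,z$ lie in one of the nonzero bands $\{1,a,ab\}$ or $\{1,a,ba\}$. Therefore $xyz$ is a nonzero idempotent, and $h_a(xyz)=a=A_T$.

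\textbf{The cases $T=ab$ and $T=ba$.} The letters lie below $T$, that is, in $\{0,b,ab\}$ or $\{0,b,ba\}$ respectively. Nonzero letter scalars then force $x=y=z=T$, which gives equality.

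\textbf{A smaller point for $T=1$.} Saying that $(xyz)^2$ is ``determined by'' $h_T(x),h_T(y),h_T(z)$ does not yet give the inequality. You need the equality $(xyz)^2=(x^2\star y^2)\star z^2$ in the Boolean lattice $\{0,ab,ba,1\}$, which is the content of \eqref{eq:b:012}. With that in hand, the inequality follows, because the remaining six meet factors in $A_T$ can only lower the left side.
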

\begin{proof}
The square map fixes \(0,1,a,c,d\) and sends \(b\) to zero. This proves
cube stability and square decrease. It preserves order along both displayed
chains, so it preserves joins of comparable elements. The only incomparable
pair is \(c,d\), for which \(c^2+d^2=1=(c+d)^2\). Thus square additivity
holds. Finally \(xy\le(x+y)^2\le x+y\), by monotonicity and square decrease.

For elements \(u\le T\), direct multiplication gives the scalar table:

\begin{center}
\small
\begin{tabular}{p{.12\linewidth}p{.27\linewidth}p{.48\linewidth}}
\hline
\(T\) & possible \(h_T(u)\) & addition and \(\star\) on these values \\
\hline
\(a\) & \(\{0,a\}\) & the two-element lattice \\
\(1\) & \(\{0,c,d,1\}\) & the four-element Boolean lattice with atoms \(c,d\) \\
\(c\) & \(\{0,c\}\) & the two-element lattice \\
\(d\) & \(\{0,d\}\) & the two-element lattice \\
\(b\) or \(0\) & \(\{0\}\) & the one-element lattice \\
\hline
\end{tabular}
\end{center}

For \(T=a\), the inputs \(0,b\) give zero and the other four give \(a\).
For \(T=1\), the scalar is \(u^2\); on the four displayed values the
only nontrivial off-diagonal products are \(cd=0,dc=b\), both of square
zero. This is exactly Boolean meet after squaring. For \(T=c\), the
possible inputs \(0,b,c\) yield \(0,0,c\); the \(d\) case is opposite.
For \(T=b\) or zero, all permissible inputs have square zero. The same
calculations give \(H\le T,H^2=H,TH=H=HT\). Since the displayed bounds
in \eqref{eq:b:003} and each lattice identity in \eqref{eq:b:004}--\eqref{eq:b:009} contain their inputs as join summands, these calculations
prove all normalization and lattice templates.

For \eqref{eq:b:011}, first record a square-content calculation on
\(S=\{0,1,b,c,d\}\). The map \(\rho(s)=s^2\) takes values in
\(\{0,c,d,1\}\) and satisfies
\begin{equation}\label{eq:b:012}
\rho(st)=\rho(s)\star\rho(t),
\tag{\text{B-1}}
\end{equation}
where the right side is Boolean meet. The elements zero and \(1\)
supply the zero and identity cases. If a factor is \(b\), the product
is zero or \(b\), so both sides are zero. The remaining cases are
\(cc=c,dd=d,cd=0,dc=b\), proving \eqref{eq:b:012}. Induction shows that the square
of any word evaluated in \(S\) is the Boolean meet of its squared letters.

All block words in \eqref{eq:b:011} are at most \(T\), using \(uv\le u+v\).
If \(T=a\), the nine scalars belong to \(\{0,a\}\). A zero factor makes
the desired inequality immediate. Otherwise \(x,y,z\) are nonzero
idempotents, and every ordered pair of distinct displayed positions has
nonzero square. Thus no value is \(b\), and \(c,d\) cannot both occur.
The values lie in \(\{1,a,c\}\) or \(\{1,a,d\}\), each a multiplicatively
closed band of nonzero elements. Hence \(xyz\) is a nonzero idempotent
and \(h_a(xyz)=a=A_T\).

If \(T=1\), all values lie in \(S\); by \eqref{eq:b:012} the meet of the first three
factors already equals \(h_1(xyz)\), and the remaining meet factors can
only decrease it. If \(T=c\), a nonzero meet forces \(x=y=z=c\), giving
equality; the \(T=d\) case is the same. For \(T=b\) or zero, the meet
is zero. This proves \eqref{eq:b:011}.
\end{proof}

\begin{lemma}[Scalar calculus under a common bound]\label{band:scalar-calculus}
Fix a term \(T\). For terms \(\bu\) with a derived inequality \(\bu\le T\),
every expression generated from \(h_T(\bu)\) by addition and \(\star\)
is derivably representable as \(h_T(\bv)\) with \(\bv\le T\). These
expressions form a distributive lattice in the equational calculus,
with addition as join and \(\star\) as meet. Every such expression \(H\)
satisfies
\begin{equation}\label{eq:b:013}
H\le T,\qquad H^2=H,\qquad TH=H=HT.
\tag{\text{B-2}}
\end{equation}
\end{lemma}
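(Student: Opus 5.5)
The plan is an induction on the way an expression is generated from the basic scalars $h_T(\bu)$, with $\bu\le T$ derived, by addition and $\star$. The inductive invariant is that the expression is derivably equal to $h_T(\bv)$ for some term $\bv$ with a derived inequality $\bv\le T$. \eqref{eq:b:013} and the lattice laws are then obtained by instantiating the fixed identities \eqref{eq:b:003}--\eqref{eq:b:009}. The variables $x,x_i$ there are replaced by the relevant terms $\bv_i$, and $z$ is replaced by $T$ itself. Since $\bv_i\le T$, we get $x_1+\cdots+x_m+z=T$ after substitution. So each template specializes exactly to the corresponding statement about $h_T(\bv_i)$ with the given common bound $T$.

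\emph{Base and \eqref{eq:b:013}.} For a single $h_T(\bu)$ with $\bu\le T$, substitute $x=\bu$, $z=T$ in \eqref{eq:b:003}. The bound $T$ there becomes $\bu+T=T$, which yields $H\le T$, $H^2=H$ and $TH=H=HT$.

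\emph{Closure under addition.} For $h_T(\bv)+h_T(\bw)$, take the representative $\bv+\bw$. It satisfies $\bv+\bw\le T$ by joining the two derived inequalities. We also need
\[
T(\bv+\bw)^2T=T\bv^2T+T\bw^2T .
\]
This follows from square additivity in \eqref{eq:b:002} together with distributivity.

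\emph{Closure under $\star$.} This is the step that needs care. Put $H_1=h_T(\bv)$ and $H_2=h_T(\bw)$; then
\[
H_1\star H_2=(H_1H_2)^2=\bigl(T\bv^2T\,T\bw^2T\bigr)^2 .
\]
I would take the representative $H_1H_2$ itself, a term. It satisfies $H_1H_2\le T^2\le T$ by $xy\le x+y$ and square decrease, or more simply by $H_1H_2\le H_1+H_2\le T$. It remains to show
\[
h_T(H_1H_2)=T(H_1H_2)^2T=(H_1H_2)^2 .
\]
I expect to obtain this from the absorption laws $TH_i=H_i=H_iT$, which give $TH_1H_2=H_1H_2=H_1H_2T$. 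This settles closure, and the new expression again satisfies \eqref{eq:b:013} by the base case applied to its representative.

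\emph{Lattice laws.} Commutativity, associativity, idempotence, absorption and distributivity follow directly from \eqref{eq:b:004}--\eqref{eq:b:009} under the substitution $x_i\mapsto\bv_i$, $z\mapsto T$, using the representatives just constructed. Associativity, commutativity and idempotence of $+$ come from AI. Join is compatible with the order $\le$ because $\le$ is additive absorption. Meet is the lattice meet because of the absorption laws \eqref{eq:b:007} and \eqref{eq:b:008}. Together these show that the expressions form a distributive lattice with $+$ as join and $\star$ as meet.

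The main obstacle is bookkeeping in the $\star$ case: the representative must be expressed in the form $h_T(\cdot)$ with a derived bound, and all uses of the templates must occur with the same $T$. Choosing $H_1H_2$ as the representative and using $TH=H=HT$ handles this.
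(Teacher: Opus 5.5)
Your proposal is correct and follows essentially the same route as the paper. Both use structural induction, with representative $\bu+\bv$ for sums (via square additivity) and representative $\bw=HK$ for $\star$, where $T\bw=\bw=\bw T$ gives $T\bw^2T=\bw^2$, and then substitute $x_i\mapsto\bv_i$, $z\mapsto T$ into \eqref{eq:b:003} and \eqref{eq:b:004}--\eqref{eq:b:009} to obtain normalization and the lattice laws. The only difference is cosmetic: you read off the meet property abstractly from the absorption laws, whereas the paper also spells out the greatest-lower-bound argument $J=J\star J\le H\star K$.
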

\begin{proof}
A derived bound is implemented by substitution, not a new inference rule.
Given \(\bu\le T\), substitute \(x=\bu,z=T\) in \eqref{eq:b:003}; the displayed bound
\(\bu+T\) is derivably \(T\). For several bounds \(\bu_i\le T\), substitute
\(x_i=\bu_i,z=T\) in the lattice list \eqref{eq:b:004}--\eqref{eq:b:009}. In \eqref{eq:b:011}, substitute its three block variables by
bounded terms and its fourth variable by \(T\). Again its displayed sum
equals \(T\). All substitutions use nonempty terms.

For addition, square additivity gives
\begin{equation}\label{eq:b:014}
h_T(\bu)+h_T(\bv)=T(\bu^2+\bv^2)T=h_T(\bu+\bv),
\end{equation}
and \(\bu+\bv\le T\). For \(\star\), let \(H=h_T(\bu),K=h_T(\bv)\), and put
\(\bw=HK\). Normalization gives \(\bw\le H+K\le T\) and \(T\bw=\bw=\bw T\).
Consequently
\begin{equation}\label{eq:b:015}
h_T(\bw)=T\bw^2T=(T\bw)(\bw T)=\bw^2=H\star K.
\end{equation}
Structural induction proves representability. Each expression can now be
inserted into the lattice list \eqref{eq:b:004}--\eqref{eq:b:009} after this representation, so all six lattice laws apply
under the common bound. Normalization gives \eqref{eq:b:013}.

Explicitly, absorption and commutativity give \(H\star K\le H,K\).
If another scalar \(J\) satisfies \(J\le H,K\), monotonicity gives
\(J=J\star J\le H\star K\). Induction gives the greatest-lower-bound
property for finite meets. These claims apply only to scalar expressions
with this common \(T\).
\end{proof}

\paragraph{Literal content guard.}
Order a finite nonempty alphabet \(C=\{x_1,\ldots,x_m\}\), and put
\(T_C=x_1+\cdots+x_m\). List first \(x_1,\ldots,x_m\), then every
\(x_ix_j\) in lexicographic order of index pairs, including \(i=j\).
Call the list \(R_C\). Define \(B_C\) to be the left-associated
\(\star\)-product of \(h_{T_C}(\br)\), with \(\br\) in that order.
This is a literal nonempty term with content exactly \(C\). The scalar
calculus proves independence of ordering and parenthesization, although
the definition fixes both.

\begin{lemma}[Support of the content guard]\label{band:guard-support}
For every finite nonempty \(C\), the fixed list \(E_{\rm BG}\) derives
\begin{equation}\label{eq:b:016}
B_C^2=B_C,\quad B_C\le T_C,\quad T_CB_C=B_C=B_CT_C,
\tag{\text{B-3}}
\end{equation}
and, for every constant-free term \(\bw\) on \(C\),
\begin{equation}\label{eq:b:017}
B_C\bw B_C=B_C.
\tag{\text{B-4}}
\end{equation}
\end{lemma}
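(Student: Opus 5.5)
The plan is to work inside the scalar calculus of Lemma~\ref{band:scalar-calculus} with the fixed bound $T=T_C$. Every word $\br\in R_C$ satisfies $\br\le T_C$: for a letter this is additive idempotence, and for $x_ix_j$ it follows from $xy\le x+y$. Hence each $h_{T_C}(\br)$ is a scalar under the common bound $T_C$. By Lemma~\ref{band:scalar-calculus}, the left-associated $\star$-product $B_C$ is again such a scalar, representable as $h_{T_C}(\bv)$ with $\bv\le T_C$. Identity \eqref{eq:b:016} is then exactly the normalization \eqref{eq:b:013} applied to $H=B_C$. This step is routine.

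For \eqref{eq:b:017}, I would first reduce to the case where $\bw$ is a single letter or a word. The term $\bw$ is a sum of words on $C$. If $B_C\bu B_C=B_C$ holds for each word $\bu$, distributivity and additive idempotence give the identity for $\bw$. For a word, I would use \eqref{eq:b:016} to write $B_C=B_C^2$. I would then induct on the length of the word, writing $\bu=\bu'\bu''$:
\[
B_C\bu'\bu''B_C=B_C\bu'(B_C\,\bu''B_C)\quad\text{after replacing the middle } \bu''B_C \text{ appropriately}.
\]
It is cleaner to prove the two one-letter facts $B_Cx_iB_C=B_C$ and $B_C=B_C^2$. Then $B_Cx_iB_Cx_jB_C=B_C$, and the issue is passing from $B_Cx_iB_Cx_jB_C$ to $B_Cx_ix_jB_C$. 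For this reason I intend to prove the statement directly for all words by induction, using the two-letter list entries. The guard $B_C$ contains the meet factors $h(x_ix_j)$, which is why the pairs were included.

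The key step is the letter case, together with a splitting lemma. The letter case is the inequality $B_C\le B_Cx_iB_C$; the reverse inequality $B_Cx_iB_C\le B_CT_CB_C=B_C^2=B_C$ comes from $x_i\le T_C$, monotonicity and \eqref{eq:b:016}. The splitting lemma is $B_C\bu\bu'B_C=(B_C\bu B_C)(B_C\bu'B_C)$ modulo the guard. For the letter case I would use the meet factor $h_{T}(x_i)=Tx_i^2T$. Since $B_C\le h_T(x_i)$ in the lattice, $B_C=B_C\star h_T(x_i)=(B_CTx_i^2T)^2$, and by normalization $B_CT=B_C$ this is $(B_Cx_i^2T)^2$. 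Expanding and using $TB_C=B_C$ should exhibit $B_C$ as a product containing $B_Cx_i^2B_C$. Square decrease then converts $x_i^2$ to $x_i$ upward, giving $B_C\le B_Cx_iB_C$.

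For longer words I expect to need the chain identity \eqref{eq:b:011}. It gives $h_T(xyz)\ge$ the meet of the one- and two-letter scalars, so by induction $B_C\le h_T(\bu)$ for every word $\bu$ on $C$. Obtaining words of arbitrary length from \eqref{eq:b:011} requires iterating with blocks substituted for $x,y,z$, and the two-letter meets of blocks must themselves be bounded below by $B_C$. Then the letter argument, repeated with $\bu$ in place of $x_i$, gives $B_C\le B_C\bu^2B_C$. The remaining step is to replace $\bu^2$ by $\bu$, which square decrease does upward. I expect this iteration of \eqref{eq:b:011} to be the main obstacle; I would try induction on $|\bu|$, splitting $\bu=\bx\ba\bt$ into three nonempty blocks.
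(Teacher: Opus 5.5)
Your proposal is correct and follows essentially the paper's proof: you get \eqref{eq:b:016} from scalar normalization, then $B_C\le h_{T_C}(\bw)$ for every word on $C$ by induction on length via \eqref{eq:b:011} with $t\mapsto T_C$, then you sandwich by $B_C$, apply square decrease, and use $B_C\bw B_C\le B_CT_CB_C=B_C$. The step you flagged as the main obstacle closes for any split into three nonempty blocks, because each of the nine block words omits a nonempty block and is therefore a shorter word on $C$; the paper chooses $\bw=x\bv z$ with endpoint letters. Your detour through $B_C=B_C\star h_{T_C}(\bu)$ is unnecessary, since multiplying $B_C\le h_{T_C}(\bu)$ on both sides by $B_C$ gives $B_C=B_C^3\le B_C\bu^2B_C$ directly.
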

\begin{proof}
The inequality \(uv\le u+v\) implies by induction that every word is
at most the sum of its letters. Thus every word on \(C\) is at most
\(T_C\), and scalar normalization proves \eqref{eq:b:016}.

Write \(T=T_C,B=B_C\). We first derive
\begin{equation}\label{eq:b:018}
B\le h_T(\bw)
\tag{\text{B-5}}
\end{equation}
for every nonempty word \(\bw\) on \(C\). For lengths one and two this
is a defining meet factor. For length \(n\ge3\), write \(\bw=x\bv z\)
with endpoint letters \(x,z\) and nonempty middle word \(\bv\).
All nine words
\begin{equation}\label{eq:b:019}
x,\ \bv,\ z,\ x\bv,\ xz,\ \bv x,\ \bv z,\ zx,\ z\bv
\end{equation}
have length less than \(n\). Induction bounds \(B\) by their nine
scalars and hence by their meet. Apply \eqref{eq:b:011} with block substitutions
\(x\mapsto x,y\mapsto \bv,z\mapsto z,t\mapsto T\); its bound
\(x+\bv+z+T\) equals \(T\), proving \eqref{eq:b:018}. Repeated endpoint letters cause
no exception because substitutions may identify variables.

Multiply \eqref{eq:b:018} by \(B\) on both sides and use \eqref{eq:b:016} and square decrease:
\begin{equation}\label{eq:b:020}
B=B^3\le B(T\bw^2T)B=B\bw^2B\le B\bw B.
\end{equation}
Conversely \(\bw\le T\) gives \(B\bw B\le BTB=B\). This proves \eqref{eq:b:017}
for words. Expand a term into a nonempty sum of words and distribute;
each sandwich is \(B\), so their sum is \(B\). In particular
\(B_CB_DB_C=B_C\) whenever \(\varnothing\ne D\subseteq C\), because
\(B_D\) is a term on \(C\).
\end{proof}

\begin{lemma}[Semantic interpretation of the selector]\label{band:semantic-selector}
Under an \(\Abar\) assignment, \(B_C\ne0\) exactly when the values assigned to
\(C\) are nonzero and generate a multiplicative band (a semigroup of
idempotents). Such values lie in one of
\begin{equation}\label{eq:b:021}
B_R=\{0,c,1,a\},\qquad B_L=\{0,d,1,a\}.
\end{equation}
Call an assignment proper-generation if the subsemiring generated by its
values on \(C\), without adjoining constants, is proper in \(\Abar\). For a word
\(\bq\) with \(c(\bq)=C\) and \(e=\bq^2\),
\begin{equation}\label{eq:b:022}
eB_Ce=
\begin{cases}
e,&\text{on proper-generation assignments},\\
0,&\text{on assignments generating all of }\Abar.
\end{cases}
\tag{\text{B-6}}
\end{equation}
\end{lemma}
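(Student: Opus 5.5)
We work directly with the concrete semantics of the scalar calculus rather than with derivations, since the statement is purely about evaluations in $\Abar$.

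\emph{Step 1: evaluating $B_C$.} Fix an assignment and put $T=T_C$. By the scalar table in Lemma~\ref{band:basic-validity}, each factor $h_T(\br)$, $\br\in R_C$, lies in the finite lattice attached to the value of $T$, and $\star$ is meet there. Hence $B_C$ is the meet of these scalars, and $B_C\ne0$ if and only if every $h_T(\br)$ is nonzero. We then split according to the value of $T$.
\begin{itemize}
\item If $T\in\{0,b\}$, then $B_C=0$, and some letter is $0$ or $b$, neither of which is a nonzero idempotent.
\item If $T=c$ or $T=d$, the nonzero condition forces every letter to equal $T$; the generated band is $\{c\}$ or $\{d\}$.
\item If $T=1$, every letter lies in $S$, and the scalar of $\br$ is $\rho(\br)$. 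By \eqref{eq:b:012} this is the Boolean meet of the squared letters. The meet is nonzero exactly when no letter is $0$ or $b$ and $c,d$ do not both occur, i.e.\ the letters lie in $B_R\setminus\{0,a\}$ or $B_L\setminus\{0,a\}$.
\item If $T=a$, the scalar $h_a(\br)$ is nonzero exactly when $\br^2\notin\{0\}$, i.e.\ $\br\notin\{0,b\}$. Nonzero letters and nonzero squares of all pairs $x_ix_j$ rule out $b$ (since $b^2=0$) and the pair $c,d$ (since $cd=0$, $dc=b$).
\end{itemize}
In each case the letter values lie in $B_R$ or $B_L$. These are bands of nonzero elements apart from $0$, and one checks closure, e.g.\ $ca=a$, $ac=c$. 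Conversely, values generating a band of nonzero elements avoid $b$ and cannot contain both $c$ and $d$, so all listed scalars are nonzero. This proves the first two assertions.

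\emph{Step 2: formula \eqref{eq:b:022}.} Put $e=\bq^2$, and note that $B_C\le T_C$ and $B_C^2=B_C$ hold semantically.
\begin{itemize}
\item If $e=0$, both sides are $0$. Such an assignment must be proper-generation, since the values generate a subsemiring with nonzero square content only when $\bq^2\ne0$.
\item If $e\ne0$, the evaluation of $\bq^2$ depends only on content (as computed via \eqref{eq:h:013} and the table). A nonzero square forces all letters to be nonzero and to avoid certain mixtures.
\end{itemize}
Case A: the values generate a band. Then $B_C\ne0$, $e$ is an idempotent in the same band, and $eB_Ce=e$ by direct multiplication. In $B_R$ and $B_L$ the product $ete$ equals $e$ whenever $t$ is nonzero and $e$ lies in the ideal generated by the letters; $e=\bq^2$ lies in that ideal. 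Such values cannot generate all of $\Abar$, since $b$ is missing.

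Case B: $b$ occurs among the values, or both $c$ and $d$ occur. Then $B_C=0$, so $eB_Ce=0$, and we must show that these are exactly the assignments generating all of $\Abar$ whenever $e\ne0$. Here $e\ne0$ restricts the possibilities. If $b$ occurs, the nonzero $e$ must have the form $\bq^2=ab$ or $a$-type values, which forces letters from $\{a,ab,ba,1\}$ together with $b$; from $a$ and $b$ one generates $ab$, $ba$, and $ab+ba=1$, and obtains $0$ as $b^2$. If $c,d$ both occur, then $dc=b$ and $c+d=1$, and $a$ must be present because otherwise $e$ would be $0$ by the $S$-computation.

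\emph{Main obstacle.} The delicate part is the last equivalence: showing that nonzero $e$ together with non-band values forces generation of all six elements, and conversely that band values are proper. I would settle this by a short exhaustive check on the possible sets of letter values, using the $(r,q_0,p)$ description of squares on $S$ and the fact that $e$ is $a$ once some letter is $a$ and none is $0$ or $b$ in a disallowed position. I expect the case $T=a$, with the letter $b$ present, to need the most care.
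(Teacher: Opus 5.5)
Your route is viable, but three steps need repair. The step you flag as the main obstacle can be closed with a tool you already cite.

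\emph{First}, in Step~2 you assert that $e=0$ forces proper generation. This is false. For $\bq=xy^2$ with $x\mapsto a$, $y\mapsto b$ one gets $e=0$, yet $\{a,b\}$ generates all of $\Abar$. The claim is also unnecessary: $e=0$ gives $eB_Ce=0=e$, so both branches of \eqref{eq:b:022} hold. Delete it.

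\emph{Second}, in Case~B the reason you give when $b$ occurs does not show that $a$ is among the values, and that is exactly what is needed. Use your $S$-computation for both subcases. If $a$ is not a value, all values lie in $S$, and \eqref{eq:b:012} makes $e$ the Boolean meet of the squared letter values. That meet is $0$ as soon as $b$ occurs or $c$ and $d$ both occur. Hence non-band values with $e\ne0$ force $a$ to be a value. Then $\{a,b\}$ or $\{a,c,d\}$ generates $\Abar$, via $c=ab$, $d=ba$, $0=b^2$, $1=c+d$ (respectively $b=dc$). No exhaustive check is needed. The case $T=a$ is actually the easiest, since $T=a$ just means that $a$ is a value (every other element lies below $1$).

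\emph{Third}, in Case~A the criterion ``$ete=e$ whenever $t\ne0$ and $e$ lies in the ideal generated by the letters'' is false as stated: take $e=1$, $t=a$. What is true and suffices is this.
If $a$ is a value, then $B_C=a$ and $e\in\{a,c\}$ (resp.\ $\{a,d\}$), because in $B_R$ (resp.\ $B_L$) a word containing a letter valued $a$ cannot evaluate to $1$. Since $a,c$ are right zeros (resp.\ $a,d$ left zeros) among the nonzero elements, $eae=e$.
If $a$ is not a value, your Step~1 values give $e=B_C\in\{c,d,1\}$.

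Two smaller points in Step~1. ``A meet is nonzero iff all factors are'' is not a lattice fact in $\{0,c,d,1\}$, where $c\wedge d=0$. It holds here only because $R_C$ contains the pair words, and your separate $T=1$ computation is what actually proves it. In the converse, passing from ``band'' to ``band of nonzero elements'' uses that $dc=b$ is not idempotent; $cd=0$ alone would not contradict being a band.

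With these repairs, your proof differs from the paper's mainly in organization. The paper first shows that every proper generating set lies in $S$, $B_R$ or $B_L$, and computes $eB_Ce=e$ inside that subsemiring when $e\ne0$. It treats full generation without reference to $e$: the values contain $\{a,b\}$ or $\{a,c,d\}$, so a letter scalar (for $b$) or an ordered-pair scalar (for $cd=0$) vanishes, and $B_C=0$. Your band/non-band split for $e\ne0$ reuses the Step~1 characterization directly. However, it must additionally prove that non-band values with $e\ne0$ generate everything, which is where the $S$-computation enters. The paper's arrangement is slightly more economical and gives $B_C=0$ on every full-generation assignment.
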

\begin{proof}
The table and order show that \(S=\{0,1,b,c,d\}\), \(B_R\), and \(B_L\)
are subsemirings. If a generator set has no \(a\), it lies in \(S\).
If it contains \(a,b\), it generates \(c=ab,d=ba,0=b^2,1=c+d\),
hence the whole algebra. If it contains \(a,c,d\), it generates
\(b=dc,0=cd,1=c+d\), again the whole algebra. In the remaining case
with \(a\) present, \(b\) is absent and at least one of \(c,d\) is
absent, so the generator set lies in \(B_R\) or \(B_L\). Thus every
proper generator set lies in one of the three displayed proper subsemirings.

A nonzero scalar meet \(B_C\) forces every letter and ordered-pair
scalar to be nonzero. A value zero or \(b\) makes its letter scalar
zero. Simultaneous values \(c,d\) make the scalar for the ordered
product \(cd=0\) zero. The remaining values lie in \(\{1,a,c\}\) or
\(\{1,a,d\}\), both nonzero bands. Conversely, if the nonzero
generators generate a band, none is \(b\), and \(c,d\) cannot both
occur because \(dc=b\) is not idempotent. If \(a\) occurs, all scalar
factors are \(a\), and \(B_C=a\). Otherwise the generators lie in
\(\{1,c\}\), \(\{1,d\}\), or \(\{1\}\). The scalar meet is \(c\)
if \(c\) occurs, \(d\) if \(d\) occurs, and \(1\) otherwise.
This proves the characterization. Notice that \(cd=0\) alone would
not contradict being a band; the nonidempotent product \(dc=b\) is
the needed observation in this converse.

For \eqref{eq:b:022}, the case \(e=0\) is immediate. Suppose \(e\ne0\) and generation
is proper. Every variable of \(\bq\) occurs, so no assigned value is zero.
In the \(S\) case, \eqref{eq:b:012} expresses \(e\) as the Boolean meet of squared
generator values. A nonzero meet excludes \(b\) and the simultaneous
pair \(c,d\); the values lie in \(\{1,c\}\), \(\{1,d\}\), or
\(\{1\}\). Then \(\bq=e=B_C\) equals respectively \(c,d,1\), so
\(eB_Ce=e\). In a band case, with no \(a\), the same argument applies.
If \(a\) occurs, then \(B_C=a\). The word \(\bq=e\) has value in
\(\{a,c\}\) for \(B_R\) or \(\{a,d\}\) for \(B_L\): in these
bands a word containing \(a\) cannot have value \(1\). The table gives
\(aaa=a,cac=c,dad=d\), so \(eB_Ce=e\).
For full generation, the classification forces \(a,b\), or \(a,c,d\),
among the generators. A letter scalar in the first case, or an ordered-pair
scalar in the second, is zero. Hence \(B_C=0\).
\end{proof}

\paragraph{The auxiliary band calculus.}\label{sec:6}

\paragraph{Band basis and occurrence profiles.}
Let \(E_{\rm band}\) be AI together with
\begin{equation}\label{eq:b:023}
x^2=x,
\tag{\text{B-I}}
\end{equation}
\begin{equation}\label{eq:b:024}
xyxzx=xyzx,
\tag{\text{B-RB}}
\end{equation}
\begin{equation}\label{eq:b:025}
x+y+xyz=x+y+yxz,
\tag{\text{B-ER}}
\end{equation}
\begin{equation}\label{eq:b:026}
x+y+zxy=x+y+zyx.
\tag{\text{B-EL}}
\end{equation}
For a word \(\bw\), let \(\operatorname{first}(\bw)\) retain the first
occurrence of every letter and \(\operatorname{last}(\bw)\) retain the
last occurrence of every letter, preserving their orders. Both are nonempty.
For \(x\in c(\bq)\), put
\begin{equation}\label{eq:b:027}
R_\bq(x)=\{y:\text{the last }y\text{ in }\bq
                  \text{ occurs after its last }x\},
\end{equation}
\begin{equation}\label{eq:b:028}
L_\bq(x)=\{y:\text{the first }y\text{ in }\bq
                  \text{ occurs before its first }x\}.
\end{equation}

\begin{lemma}[Band validity and word normal form]\label{band:band-validity-and-normal-form}
The identities in \(E_{\rm band}\) hold in \(B_R\times B_L\).
From \eqref{eq:b:023} and \eqref{eq:b:024} one can derive, for every nonempty word \(\bw\),
\begin{equation}\label{eq:b:029}
\bw=\operatorname{first}(\bw)\operatorname{last}(\bw).
\tag{\text{B-7}}
\end{equation}
An identity between words holds in \(B_R\times B_L\) exactly when their
first lists and their last lists both agree. In particular, for nonempty
words \(\bp,\bq\) with \(c(\bp)\subseteq c(\bq)\),
\begin{equation}\label{eq:b:030}
\bq\bp\bq=\bq
\tag{\text{B-8}}
\end{equation}
is derivable from \eqref{eq:b:023} and \eqref{eq:b:024}.
\end{lemma}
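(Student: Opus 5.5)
The plan has four parts: validity, a derivation of the normal form by induction, the semantic characterization, and the content consequence \eqref{eq:b:030}.

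\emph{Validity.} I would first compute in $B_R=\{0,c,1,a\}$. Here $c=ab$, $ca=a$, $ac=c$, and the elements $0,c,a$ together with $1$ form a band. The only nontrivial part is $\{c,a\}$, whose products are $xy=y$. So $B_R$ is $1$ and $0$ adjoined to a right zero band, and $B_L$ is dually a left zero band with $1$ and $0$. In $B_R$, a nonzero word is determined by the set of its letters equal to $1$ together with its last letter not equal to $1$. Under any assignment this depends only on $\operatorname{last}(\bw)$. Dually, in $B_L$ the value of a word depends only on $\operatorname{first}(\bw)$. This makes \eqref{eq:b:023} and \eqref{eq:b:024} immediate, since both sides have the same first and last lists. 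For \eqref{eq:b:025}, when $x+y$ is added the order of the two letters inside $xyz$ only matters when both are nonzero non-identity elements and neither is overridden. In $B_R$ the letter $z$ overrides them unless $z=1$; in $B_L$ I would check case by case that the values differ only in ways absorbed by $x+y$. Table checking over the finitely many cases suffices, and \eqref{eq:b:026} is dual.

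\emph{Normal form.} I would derive \eqref{eq:b:029} by induction on the length of $\bw$, using only idempotence and \eqref{eq:b:024}. Two derived tools are needed. The first is $xyx\bw x$-type deletion: from \eqref{eq:b:024} with suitable substitutions, an occurrence of a letter strictly between an earlier and a later occurrence of the same letter can be deleted. The second is the regular band law $\bu\bv\bu=\bu\bv\bu\bv\bu$, used to insert the letters of $\bw$ in the required order. The standard route is to prove $\bw=\bw\bw$ trivially and then $\bw=\operatorname{first}(\bw)\,\bw\,\operatorname{last}(\bw)$. Since the occurrences in $\operatorname{first}(\bw)\,\bw$ duplicate, deleting middle occurrences via \eqref{eq:b:024} reduces $\operatorname{first}(\bw)\,\bw\,\operatorname{last}(\bw)$ to $\operatorname{first}(\bw)\operatorname{last}(\bw)$. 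I expect this step to be the main obstacle: \eqref{eq:b:024} applies only when the deleted occurrence lies between two occurrences of the same letter $x$, with the outer $x$'s in the right positions. The careful step is to obtain $\bw=\operatorname{first}(\bw)\bw$, which is essentially the left-regularity consequence $x\bu=x\bu x\bu$ combined with deletions. I would prove the lemma $\bu=\bp\bu$ for $c(\bp)\subseteq c(\bu)$ where $\bp$ is a prefix pattern, by induction on $|c(\bu)|$, following the classical proof for regular bands.

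\emph{Semantic characterization.} Next I would show that a word identity holds in $B_R\times B_L$ exactly when the first and last lists agree. Sufficiency follows from the value descriptions above. For necessity, assigning $0$ to letters outside a chosen set detects contents. Assigning $a$ to one letter and $1$ to the rest would not separate lists, so instead I would assign $c$ to the letter $y$ and $a$ to $x$, and $1$ to the remaining letters, in the $B_R$ factor. The value is then the later of $\{x,y\}$ in last order, which recovers the relative order of last occurrences; dually the $B_L$ factor recovers first order. Finally \eqref{eq:b:030} follows: since $c(\bp)\subseteq c(\bq)$, the word $\bq\bp\bq$ has the same first list as $\bq$ and the same last list as $\bq$. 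Applying \eqref{eq:b:029} to both words gives the same normal form.
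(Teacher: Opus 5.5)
Your proposal is correct. For the validity check, the separating assignments (zero to detect content, $a,c$ in $B_R$ and $a,d$ in $B_L$ for a reversed pair) and the deduction of \eqref{eq:b:030}, it is the paper's argument.

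The only divergence is in deriving \eqref{eq:b:029}, and there the step you flag as the main obstacle is not actually an obstacle. The paper substitutes $x=\bw$ in \eqref{eq:b:023} to get $\bw=\bw\bw$. It then marks the first occurrence of each letter in the first copy and the last occurrence in the second copy. Every unmarked occurrence is flanked by marked occurrences of the same letter, so repeated flanked deletion gives $\operatorname{first}(\bw)\operatorname{last}(\bw)$ in one pass. Applied to $\bw\bw\bw$, the same deletion gives your intermediate $\operatorname{first}(\bw)\,\bw\,\operatorname{last}(\bw)$ at once.

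The following pieces of your plan are therefore unnecessary:
\begin{itemize}
\item the induction on $|c(\bu)|$;
\item the insertion law $\bu\bv\bu=\bu\bv\bu\bv\bu$, which is just idempotence of $\bu\bv$, not the regular band law;
\item the lemma $\bu=\bp\bu$ for an undefined ``prefix pattern'' $\bp$.
\end{itemize}
That last lemma is also false for arbitrary $\bp$ with $c(\bp)\subseteq c(\bu)$. For example, take $\bu=xy$ and $\bp=y$: the words $yxy$ and $xy$ have different first lists, so the identity fails in $B_L$.

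One detail to add to your deletion tool: substitutions are nonerasing. When the deleted occurrence is adjacent to a flanking occurrence, you must apply \eqref{eq:b:023} to the adjacent pair rather than substituting into \eqref{eq:b:024}.
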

\begin{proof}
Addition in \(B_R\) is maximum for \(0<c<1<a\). Zero absorbs
multiplication, \(1\) is an identity, and \(c,a\) are right zeros
among the nonzero elements. Thus a word with a zero factor has value
zero; otherwise it has the value of its last factor unequal to \(1\),
or \(1\) if every factor is \(1\). The algebra \(B_L\) is its
multiplicative opposite under \(c\mapsto d\), so its rule uses the
first factor unequal to \(1\). These rules follow from the table.

Idempotence \eqref{eq:b:023} follows. Both sides of \eqref{eq:b:024} have the same first list
\(xyz\) and last list \(yzx\), so both evaluation rules give equality.
For \eqref{eq:b:025}, any zero among \(x,y,z\) makes both products zero.
Suppose they are nonzero. In \(B_R\), if \(z=c\) or \(a\), both
products equal \(z\). If \(z=1\), each of \(xy,yx\) is at most
\(x+y\), since \(xy\le(x+y)^2=x+y\). Thus the two sums agree.
In \(B_L\), if \(x=y=1\), both products equal \(z\). Otherwise
each product is one of the nonidentity values among \(x,y\), so each
is at most \(x+y\). This proves \eqref{eq:b:025}. Opposites give \eqref{eq:b:026} in both
factors.

An internal occurrence of a letter can be deleted if the same letter
has a retained occurrence on both sides. In a factor \(x\bu x\bv x\),
if both intervening words \(\bu,\bv\) are nonempty, substitute them for
the other two variables in \eqref{eq:b:024}. If \(\bu\) is absent, apply \eqref{eq:b:023} to
the first adjacent \(xx\); if \(\bv\) is absent, use the last adjacent
pair. If both are absent, \(xxx=xx\) deletes the middle occurrence.
In the adjacent-pair cases retain the flanking occurrence. These are
separate applications, never empty substitutions.

Substitute \(x=\bw\) in \eqref{eq:b:023}, obtaining \(\bw=\bw\bw\). Mark the first
occurrence of each letter in the first copy and the last occurrence
in the second copy. Every unmarked occurrence has its marked occurrences
on either side. Delete all unmarked occurrences with the preceding rule,
preserving the marks. The result is precisely \eqref{eq:b:029}. Thus equality of both
lists implies a derivable equality.

Conversely, different contents are separated by assigning a variable
present on only one side to zero and all others to \(1\). If contents
agree and first lists differ, the two permutations of the content contain
a pair in reversed relative order. Assign the pair values \(a,d\) in
\(B_L\), all other letters \(1\); the first-nonidentity rule separates
the words. A reversed pair in the last lists is likewise separated by
values \(a,c\) in \(B_R\). This proves the word criterion. Finally
\(\bq\bp\bq\) and \(\bq\) have the same first and last lists when
\(c(\bp)\subseteq c(\bq)\), so \eqref{eq:b:029} derives \eqref{eq:b:030}.
\end{proof}

\begin{lemma}[The band cover criterion]\label{band:polynomial-cover-criterion}
Let \(\bq\) be a nonempty word and \(\mathbf{P}\) a finite nonempty polynomial
whose list of word summands is \(F\). Then \(\bq\le \mathbf{P}\) holds in
\(B_R\times B_L\) if and only if, for every \(x\in c(\bq)\), both
of the following witnesses exist:

\begin{enumerate}
\item A summand \(\mathbf{f}_x\in F\), with \(c(\mathbf{f}_x)\subseteq c(\bq)\) and
\(x\in c(\mathbf{f}_x)\), such that every letter after its last \(x\)
belongs to \(R_\bq(x)\).
\item A summand \(\mathbf{g}_x\in F\), with \(c(\mathbf{g}_x)\subseteq c(\bq)\) and
\(x\in c(\mathbf{g}_x)\), such that every letter before its first \(x\)
belongs to \(L_\bq(x)\).
\end{enumerate}
\end{lemma}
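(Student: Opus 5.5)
We argue semantically in the two factors separately, using the evaluation rules from the proof of Lemma~\ref{band:band-validity-and-normal-form}. In \(B_R\), a word with no zero letter has the value of its last letter different from \(1\), or \(1\) if there is none; any zero letter makes the value zero. The factor \(B_L\) is the mirror image, using the first letter different from \(1\). Addition is maximum along \(0<c<1<a\) in \(B_R\) and \(0<d<1<a\) in \(B_L\). Since \(\bq\le\mathbf P\) holds in the product exactly when it holds in both factors, it suffices to prove that validity in \(B_R\) is equivalent to condition (1) for all \(x\), and validity in \(B_L\) to condition (2). The second equivalence follows from the first by reversing words and passing to the opposite algebra, which exchanges \(R_\bq\) with \(L_\bq\) and last occurrences with first ones. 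So we treat only \(B_R\).

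For necessity in \(B_R\), fix \(x\in c(\bq)\) and choose the assignment \(x\mapsto a\), \(y\mapsto c\) for \(y\in R_\bq(x)\), all other letters of \(c(\bq)\) mapped to \(1\), and letters outside \(c(\bq)\) mapped to \(0\). The last non-\(1\) letter of \(\bq\) is then either \(x\) or a letter of \(R_\bq(x)\). Since \(c<1<a\), we need \(\bq=a\) when \(R_\bq(x)=\varnothing\). In general \(\bq\) evaluates to \(c\), so we should instead use \(x\mapsto a\) and every other letter of \(c(\bq)\setminus R_\bq(x)\) mapped to \(1\), while the letters of \(R_\bq(x)\) go to \(1\) too. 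This does not separate, so the correct choice is: letters of \(R_\bq(x)\) go to \(1\), \(x\) goes to \(a\), and all remaining letters of \(c(\bq)\) go to \(c\). The last non-\(1\) letter of \(\bq\) is then \(x\), because every letter occurring after the last \(x\) lies in \(R_\bq(x)\). Hence \(\bq=a\), the top element, and some summand \(\mathbf f\) must evaluate to \(a\). That summand cannot contain a letter outside \(c(\bq)\), since such a letter gives zero. Its last non-\(1\) letter is mapped to \(a\), so it is \(x\); thus \(x\in c(\mathbf f)\), and every letter after the last \(x\) in \(\mathbf f\) is mapped to \(1\), hence lies in \(R_\bq(x)\). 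This is witness (1).

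For sufficiency, take any assignment. If \(\bq\) evaluates to \(0\) there is nothing to prove. Otherwise every letter of \(c(\bq)\) is nonzero. If \(\bq=1\), all its letters are \(1\), and any summand with content in \(c(\bq)\) also evaluates to \(1\); witnesses have such content. Otherwise let \(x\) be the last letter of \(\bq\) with value different from \(1\), so \(\bq\) evaluates to the value of \(x\). Every letter whose last occurrence follows the last \(x\) is mapped to \(1\), and these are exactly the letters of \(R_\bq(x)\). In the witness \(\mathbf f_x\), every letter after the last \(x\) lies in \(R_\bq(x)\) and so has value \(1\), while all letters of \(\mathbf f_x\) are nonzero because \(c(\mathbf f_x)\subseteq c(\bq)\). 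Hence \(\mathbf f_x\) takes the value of \(x\), which equals the value of \(\bq\), giving \(\bq\le\mathbf P\).

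The main obstacle is choosing the separating assignment in the necessity direction. It must make \(\bq\) attain the top value \(a\) through \(x\) while keeping every letter after the last \(x\) transparent.
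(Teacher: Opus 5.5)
Your proof is correct and is essentially the paper's argument. You work in \(B_R\) with the last-non-identity-letter rule, use the same separating assignment (\(x\mapsto a\), letters of \(R_\bq(x)\mapsto 1\), the rest of \(c(\bq)\mapsto c\), outside letters \(\mapsto 0\)), prove sufficiency via the last position of \(\bq\) not valued \(1\), which merges the paper's separate cases \(\bq=c\) and \(\bq=a\), and obtain the \(B_L\) half by reversal. Delete the two abandoned assignments in the necessity paragraph: only the final one is needed, and the false starts make the argument look uncertain.
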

\begin{proof}
First work in \(B_R\). Fix \(x\in c(\bq)\), assign \(x\) the value
\(a\), the letters whose last occurrences in \(\bq\) are earlier than
its last \(x\) the value \(c\), the letters of \(R_\bq(x)\) the
value \(1\), and all variables outside \(c(\bq)\) zero. Then \(\bq=a\).
If \(\bq\le \mathbf{P}\), at least one summand is \(a\), because addition is
maximum in a chain. Such a summand contains no outside variable,
contains \(x\) (the only variable valued \(a\)), and has only
\(1\)-valued variables after its last \(x\). This is exactly a
right witness.

Conversely suppose right witnesses exist. Under any \(B_R\) assignment,
if \(\bq=0\) there is nothing to prove. Otherwise its letters, and therefore
all selected witnesses, are nonzero. If \(\bq=c\), any witness is at
least \(c\). If \(\bq=1\), all letters of \(\bq\) and all witnesses have
value \(1\). If \(\bq=a\), let \(x\) be the letter at the last position
in \(\bq\) whose value differs from \(1\). That value is \(a\).
Every letter in \(R_\bq(x)\) has value \(1\), so \(\mathbf{f}_x=a\).
Thus \(\bq\le \mathbf{P}\) holds in every case. This proves the right criterion.
Reverse words and use opposite multiplication to obtain the left
criterion for \(B_L\).

Evaluation in the product is coordinatewise, so the conjunction of the
two criteria gives the result. This also deals with polynomials having
variables outside \(c(\bq)\), since necessity set those variables to zero.
\end{proof}

\begin{lemma}[Completeness of the band calculus]\label{band:band-polynomial-completeness}
Every polynomial absorption \(\bq\le \mathbf{P}\) valid in \(B_R\times B_L\)
is derivable from \(E_{\rm band}\). Consequently \(E_{\rm band}\)
is a finite identity basis of \(B_R\times B_L\).
\end{lemma}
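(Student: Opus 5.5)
We must show that every absorption $\bq\le\mathbf P$ valid in $B_R\times B_L$ is derivable from $E_{\rm band}$. The argument builds the absorption up from the witnesses provided by Lemma~\ref{band:polynomial-cover-criterion}, using the normal form \eqref{eq:b:029} to control words. The basis statement then follows as in Lemma~\ref{lem:equational-reduction}. Distributivity turns every identity into a family of word absorptions $\bq\le\mathbf P$, and an identity between polynomials follows from absorbing each summand by the opposite side.

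First, we may discard summands of $\mathbf P$ whose content is not contained in $c(\bq)$. The witnesses in Lemma~\ref{band:polynomial-cover-criterion} have content inside $c(\bq)$, so the restricted polynomial is still an absorbing cover, and summands can be added back afterwards. Let $\mathbf W=\sum_{x\in c(\bq)}(\mathbf f_x+\mathbf g_x)$, a subsum of $\mathbf P$ made of witnesses. It suffices to derive $\bq\le\mathbf W$.

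The key local step is to convert each witness into a word with the same profile as $\bq$ relative to $x$. Since $c(\mathbf f_x)\subseteq c(\bq)$, \eqref{eq:b:030} gives $\bq\mathbf f_x\bq=\bq$. I would aim to prove, by induction on $|c(\bq)|$ or on the number of letters, that
\[
\bq\le \sum_{x\in c(\bq)}\bigl(\mathbf f_x+\mathbf g_x\bigr)
\]
by successively deleting prefixes and suffixes using \eqref{eq:b:025} and \eqref{eq:b:026}. Identity \eqref{eq:b:025} says that in the presence of the summands $\bs$ and $\bt$ we may swap the order of the first two blocks of $\bs\bt\bu$. In the band normal form only first lists matter for the left coordinate and last lists for the right one.

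Concretely, write $\bq=\operatorname{first}(\bq)\operatorname{last}(\bq)$. Let $x$ be the letter whose last occurrence comes first in $\operatorname{last}(\bq)$. The witness $\mathbf f_x$ ends with $x$ followed only by letters of $R_\bq(x)$. Using $\mathbf f_x$ and summands covering the remaining letters as the guards $x+y$ in \eqref{eq:b:026}, I would rearrange the last list of $\bq$ modulo $\mathbf W$ until its tail agrees with that of $\mathbf f_x$. Symmetrically, \eqref{eq:b:025} rearranges the first list toward $\mathbf g_y$ for the leftmost letter $y$.

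The guards must be sums of words already absorbed by $\mathbf W$. For this I expect to need an inductive hypothesis for words with smaller content: the profiles $R_\bq$ and $L_\bq$ of a subword restrict correctly, so witnesses for $\bq$ yield witnesses for factors of $\bq$ with smaller content. Once both lists of a rearranged $\bq$ agree with those of some combination of witnesses (for instance $\mathbf g_y\mathbf f_x$, suitably reduced), \eqref{eq:b:029} and \eqref{eq:b:030} finish the step.

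The main obstacle is this rearrangement step: making the guarded exchanges \eqref{eq:b:025} and \eqref{eq:b:026} act on arbitrary positions inside the first and last lists while the guards remain absorbed by $\mathbf W$. My first attempt would be a double induction, on $|c(\bq)|$ and on the number of inversions between $\operatorname{last}(\bq)$ and the target order. Each exchange would be justified with guards $x+y$ obtained from the inductive hypothesis applied to the two-letter content $\{x,y\}$ extended by the prefix content.
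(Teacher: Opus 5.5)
Your opening reductions are fine: expanding identities into word absorptions, and discarding summands whose content is not inside $c(\bq)$. There is, however, a genuine gap exactly at the step you call the main obstacle, and the induction you propose to close it cannot work.

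In \eqref{eq:b:025} and \eqref{eq:b:026} the guards are precisely the two blocks being exchanged. After multiplication by a context $\mathbf A$, the guards become $\mathbf A\bs$ and $\mathbf A\bt$. So the only blocks you can exchange modulo $\mathbf W$ are ones already known to lie below $\mathbf W$, such as witnesses and products of witnesses, sitting in a context that is itself such a product. The word $\bq$ need not contain any such block.

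Your proposed source of bounds for pieces of $\bq$ does not exist. A witness for a factor $\bs$ must have content inside $c(\bs)$, and the witnesses for $\bq$ need not. Worse, proper factors of $\bq$ are typically not below $\mathbf P$ at all. For example, $xyz\le xz+y$ is valid in $B_R\times B_L$ by the cover criterion. Yet among the proper factors of $xyz$ only $y$ lies below $xz+y$: the only summand containing $x$ or $z$ is $xz$, and no proper factor contains both letters. So neither your inductive hypothesis nor the two-letter guards $x+y$ of your last paragraph are available. Nothing in the plan reaches a word whose first and last lists match a combination of witnesses.

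The missing idea is to insert the witnesses into the word and permute them, never the letters of $\bq$.

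First, let $\mathbf U$ be a product of one right witness per letter of $\bq$. Every product of witnesses is below $\mathbf P$, since \eqref{eq:b:023} with $x=\mathbf P$ gives $\mathbf P^k=\mathbf P$.

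Second, apply \eqref{eq:b:025} with two adjacent witnesses substituted for $x,y$, and the remaining factors followed by a nonempty suffix for $z$. Multiply on the left by the preceding witnesses. This exchanges adjacent witnesses modulo $\mathbf P$, and the error terms are again products of witnesses.

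Third, derive $\mathbf U q_i\cdots q_n\le\mathbf P$ by descending induction on $i$. Put $x=q_i$ and $\mathbf T=q_{i+1}\cdots q_n$. Move $\mathbf f_x$ to the end of the witness block. Delete $q_i$ using $\mathbf f_x x\mathbf T=\mathbf f_x\mathbf T$, which follows from \eqref{eq:b:029} because the letters after the last $x$ in $\mathbf f_x$ lie in $R_\bq(x)\subseteq c(\mathbf T)$. For $i=n$ the witness ends in $q_n$, and $\mathbf f_x x=\mathbf f_x$ suffices. Then move the witness back.

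Dually, $\bq\mathbf V\le\mathbf P$ for a product $\mathbf V$ of left witnesses. Finally \eqref{eq:b:030} gives $\bq=(\bq\mathbf V)(\mathbf U\bq)\le\mathbf P^2=\mathbf P$. In the example above, one works with words such as $xz\,y\,xz\,xyz$, where the guarded exchanges are genuinely available.
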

\begin{proof}
Choose one right witness \(\mathbf{f}_x\) and left witness \(\mathbf{g}_x\) for every
letter of \(\bq\). Let \(\mathbf{U}\) be a product of the selected right witnesses
in a fixed order. Every nonempty product of witnesses is at most a
positive power of \(\mathbf{P}\), and \eqref{eq:b:023} with the term substitution \(x=\mathbf{P}\)
reduces that power to \(\mathbf{P}\).

For witnesses \(\bu,\bv\) and a nonempty suffix \(\br\), \eqref{eq:b:025} and the
bounds \(\bu,\bv\le \mathbf{P}\) give
\begin{equation}\label{eq:b:031}
\mathbf{P}+\bu\bv\br=\mathbf{P}+\bv\bu\br.
\tag{\text{B-9}}
\end{equation}
If a nonempty product \(\mathbf{H}\) of preceding witnesses is present, multiply
the \eqref{eq:b:025} instance on the left by \(\mathbf{H}\). Its added terms \(\mathbf{H}\bu,\mathbf{H}\bv\)
are products of witnesses and hence at most \(\mathbf{P}\), giving
\begin{equation}\label{eq:b:032}
\mathbf{P}+\mathbf{H}\bu\bv\br=\mathbf{P}+\mathbf{H}\bv\bu\br.
\tag{\text{B-10}}
\end{equation}
If \(\mathbf{H}\) is absent use \eqref{eq:b:031}. In an adjacent exchange the substituted
\(\br\) consists of the remaining factors followed by the specified
nonempty suffix, so it is always a legal nonempty word. Adjacent
transpositions consequently permit any permutation before such a suffix.

Write the original unreduced word \(\bq=q_1\cdots q_n\). We derive by
descending induction
\begin{equation}\label{eq:b:033}
\mathbf{U}q_i\cdots q_n\le \mathbf{P}.
\tag{\text{B-11}}
\end{equation}
For \(i=n\), put \(x=q_n\). Since \(R_\bq(x)\) is empty, the witness
\(\mathbf{f}_x\) ends in \(x\). Move it to the last witness position modulo
\(\mathbf{P}\), with suffix \(x\), and use \(\mathbf{f}_xx=\mathbf{f}_x\), a consequence of
\eqref{eq:b:023}. The resulting product of witnesses is at most \(\mathbf{P}\).

For \(i<n\), put \(x=q_i\), \(\mathbf{T}=q_{i+1}\cdots q_n\), which is
nonempty. The words \(\mathbf{f}_xx\mathbf{T},\mathbf{f}_x\mathbf{T}\) have the same first list because
\(x\) already occurs in \(\mathbf{f}_x\). Their last lists also agree.
If \(x\in c(\mathbf{T})\), its inserted occurrence is not final. Otherwise
every letter after the last \(x\) in \(\mathbf{f}_x\) lies in
\(R_\bq(x)\subseteq c(\mathbf{T})\). The final occurrences of all those letters
are in \(\mathbf{T}\). Thus, in this latter case, both last lists consist of
the last occurrences from \(\mathbf{f}_x\) outside \(c(\mathbf{T})\), ending in \(x\),
followed by \(\operatorname{last}(\mathbf{T})\). Equation \eqref{eq:b:029} therefore derives
\begin{equation}\label{eq:b:034}
\mathbf{f}_xx\mathbf{T}=\mathbf{f}_x\mathbf{T}.
\tag{\text{B-12}}
\end{equation}
Move \(\mathbf{f}_x\) to the last witness position modulo \(\mathbf{P}\), apply \eqref{eq:b:034},
then restore the witness order using the nonempty suffix \(\mathbf{T}\).
This gives \(\mathbf{P}+\mathbf{U}x\mathbf{T}=\mathbf{P}+\mathbf{U}\mathbf{T}=\mathbf{P}\), by induction. Thus \(\mathbf{U}\bq\le \mathbf{P}\).

Reversal preserves the axiom set: \eqref{eq:b:024} reverses to its instance with
the other two variables interchanged, and \eqref{eq:b:025},\eqref{eq:b:026} exchange.
Apply the preceding derivation to the reversed word and reversed left
witnesses. Reversing the result gives \(\bq\mathbf{V}\le \mathbf{P}\), where \(\mathbf{V}\) is
a product of one selected left witness per letter, in a suitable order.
Since \(c(\mathbf{U}),c(\mathbf{V})\subseteq c(\bq)\), equation \eqref{eq:b:030} gives
\begin{equation}\label{eq:b:035}
\bq=\bq\mathbf{V}\mathbf{U}\bq=(\bq\mathbf{V})(\mathbf{U}\bq)\le \mathbf{P}^2=\mathbf{P}.
\end{equation}
This is the required derivation.

For an arbitrary valid identity expand both sides into polynomials.
Every summand of either side is semantically at most the opposite
polynomial. The result just proved derives each absorption; adding them
gives both inequalities between the polynomials and hence their equality.
Thus the finite list is complete.
\end{proof}

\paragraph{Localization to the proper modes.}\label{sec:7}

\begin{lemma}[Full-content reflection]\label{band:full-content-reflection}
Fix \(C\ne\varnothing\), and put \(B=B_C\). If \(\bu\) is a word with
\(c(\bu)=C\), and \(\bv\) is any term on \(C\), then \(E_{\rm BG}\) derives
\begin{equation}\label{eq:b:038}
\bu\bv B=\bu B,\qquad B\bv\bu=B\bu.
\tag{\text{B-14}}
\end{equation}
Consequently, for nonempty words \(\mathbf{U},\mathbf{V}\) on \(C\), and \(c(\bq)=C\),
\begin{equation}\label{eq:b:039}
\bq^2B\bq^2=\bq B\bq=\bq\mathbf{V}^2B\mathbf{U}^2\bq.
\tag{\text{B-15}}
\end{equation}
If \(c(\mathbf{f})=c(\mathbf{g})=C\), then
\begin{equation}\label{eq:b:040}
\mathbf{f}B\mathbf{g}\le \mathbf{f}\mathbf{g}.
\tag{\text{B-16}}
\end{equation}
If every summand of polynomials \(\mathbf{P}_*,\mathbf{Q}_*\) has content exactly \(C\),
then
\begin{equation}\label{eq:b:041}
\mathbf{Q}_*B\mathbf{P}_*\le \mathbf{Q}_*\mathbf{P}_*.
\tag{\text{B-17}}
\end{equation}
\end{lemma}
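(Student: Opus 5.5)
The whole lemma rests on \eqref{eq:b:038}, derived from Lemma~\ref{band:guard-support} and the normalization \eqref{eq:b:016}; the later assertions then follow by short manipulations. I would show $\bu\bv B\leq\bu B$ and $\bu B\leq\bu\bv B$ separately, treating the right-hand identity first and obtaining the left one by the mirror argument.

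For \eqref{eq:b:038}, recall $B=B_C$, and note that \eqref{eq:b:017} gives $B\bw B=B$ for every term $\bw$ on $C$. Since $c(\bu)=C$, the word $\bu$ contains every letter of $C$. The first idea would be to insert a copy of $B$ between $\bu$ and $\bv$, using $B\le T_C$ and $T_CB=B$, but that moves the wrong way. Instead I would use \eqref{eq:b:017} with $\bw=\bv$, which gives
\[
\bu\bv B=\bu\bv(B\bw' B)
\]
for a suitable $\bw'$, and then look for a way to absorb $\bv$ into $\bu$. The cleaner route is to use $B=B\bu B$ from \eqref{eq:b:017}, together with $B\bv\bu B=B$ and $B\bu B=B$, and to show that $\bu\bv B$ and $\bu B$ agree after left multiplication by something that can be cancelled. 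I do not see how to make that cancellation legitimate, so instead I would use order arguments.

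\emph{Upper bound.} Square decrease and content contraction (Lemma~\ref{lem:content-sandwich}) give $\bv\le T_C$. Hence $\bu\bv B\le\bu T_CB=\bu B$ by \eqref{eq:b:016}.

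\emph{Lower bound.} Write $\bu B=\bu BB=\bu B\bu' B$, where $\bu'$ is any word on $C$. I would choose $\bu'=\bv\bu$, or a square of it, and use $B\le h_{T}(\bw)$ from \eqref{eq:b:018} for a suitable word $\bw$. This gives
\[
\bu B\le\bu T\bw^2TB=\bu T\bw^2B.
\]
Then I would try to rewrite $\bu T$ as a sum that contains $\bu\bv$ multiplied by something whose content is $C$. This step is the main obstacle. What I would try first is to pick $\bw=\bv\bu$ and use $T\bw^2\le\bw$, so that the right side is bounded by $\bu(\bv\bu)B$. Then $\bu\bv\bu B\le\bu\bv B$, because $\bu B\le T_CB=B$ in the appropriate position; more precisely $\bu B=\bu T_C B\ge\bu\bu B$ cannot be used, so what is actually needed is $\bu B\le B$. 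If this fails, I would fall back on a semantic check in $\Abar$ via Lemma~\ref{band:semantic-selector}. There, $B\ne0$ forces band values in $B_R$ or $B_L$ with content $C$, and there $\bu\bv B=\bu B$ holds by the first/last-list criterion of Lemma~\ref{band:band-validity-and-normal-form}. However, this semantic check would only confirm validity, not derivability from $E_{\rm BG}$, so an equational derivation would still be needed.

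For \eqref{eq:b:039}, I would apply \eqref{eq:b:038} twice: remove $\bq$ from $\bq^2B$ on the left side and from $B\bq^2$ on the right side, which gives $\bq B\bq$. The same two applications insert $\mathbf V^2$ and $\mathbf U^2$, giving $\bq\mathbf V^2B\mathbf U^2\bq=\bq B\bq$. For \eqref{eq:b:040}, I would combine $\mathbf fB=\mathbf f\mathbf gB$ from \eqref{eq:b:038} with $B\le T_C$, $T_C=T_C^2$, and $\mathbf f\mathbf g\,T_C\mathbf g\le\mathbf f\mathbf g$ from content contraction, which yields $\mathbf fB\mathbf g\le\mathbf f\mathbf gT_C\mathbf g\le\mathbf f\mathbf g$. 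Finally, \eqref{eq:b:041} follows from \eqref{eq:b:040} by distributing over the summands.
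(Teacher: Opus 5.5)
Your upper bound $\bu\bv B\le\bu T_CB=\bu B$ is correct. It needs only $\bv\le T_C$, from the product-to-join bound, and $T_CB=B$ from \eqref{eq:b:016}, and it differs slightly from the paper's argument.

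The gap is the reverse inequality $\bu B\le\bu\bv B$, which you never establish. The step you propose, $T_C\bw^2\le\bw$ for $\bw=\bv\bu$, is false in $\Abar$, so it cannot be derived from $E_{\rm BG}$. Take $C=\{x,y\}$, $\bu=x$, $\bv=y$, and assign $x\mapsto a$, $y\mapsto b$. Then $\bw=\bw^2=ba$, while $T_C\bw^2=a\cdot ba+b\cdot ba=a\nleq ba$. Your fallback to Lemma~\ref{band:semantic-selector} only shows validity in $\Abar$, as you concede, not derivability.

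This gap propagates. Both equalities in \eqref{eq:b:039} need both directions of \eqref{eq:b:038}. Your route to \eqref{eq:b:040} begins with $\mathbf fB\le\mathbf f\mathbf gB$, which is exactly the missing direction, and \eqref{eq:b:041} rests on \eqref{eq:b:040}. Separately, $T_C=T_C^2$ is false in $\Abar$: for $C=\{x\}$ with $x\mapsto b$ we get $b^2=0$. Fortunately your chain for \eqref{eq:b:040} does not actually use it.

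The missing idea is to apply content contraction, Lemma~\ref{lem:content-sandwich}, to a sandwich $\bu(\cdot)\bu$ whose middle is a term on $c(\bu)=C$, in particular $B$ itself. Support \eqref{eq:b:017} gives $B=B\bu\bv B$ and $B=B\bv\bu B$, hence
\[
\bu B=(\bu B\bu)\bv B\le\bu\bv B,\qquad B\bu=B\bv(\bu B\bu)\le B\bv\bu .
\]
This is the paper's argument. It handles the second identity by direct calculation, which is safer than a formal mirror argument, since $E_{\rm BG}$ and $B_C$ are not literally reversal-invariant.

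Your own chain can also be repaired at the failing point, at least for a word $\bv$. In $\bu T_C\bv\bu\bv\bu B$, contract $\bu(T_C\bv)\bu\le\bu$, and then use $\bu B\le T_CB=B$.

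Once \eqref{eq:b:038} is available, your derivations of \eqref{eq:b:039}--\eqref{eq:b:041} go through. Alternatively, as in the paper, \eqref{eq:b:040} follows without \eqref{eq:b:038} from $\mathbf fB\mathbf g=\mathbf f(B\mathbf f\mathbf gB)\mathbf g=(\mathbf fB\mathbf f)(\mathbf gB\mathbf g)\le\mathbf f\mathbf g$.
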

\begin{proof}
Support and \eqref{eq:b:036} give both directions
\begin{equation}\label{eq:b:042}
\bu\bv B=\bu\bv B\bu B=\bu(\bv B)\bu B\le \bu B,\qquad
\bu B=\bu B\bu\bv B=(\bu B\bu)\bv B\le \bu\bv B.
\end{equation}
The support instances inserted here are \(B\bu B=B\) and \(B\bu\bv B=B\).
For the other endpoint, without any assumption that \(E_{\rm BG}\)
is invariant under reversal, calculate directly
\begin{equation}\label{eq:b:043}
B\bv\bu=B\bu B\bv\bu=B\bu(B\bv)\bu\le B\bu,\qquad
B\bu=B\bv\bu B\bu=B\bv(\bu B\bu)\le B\bv\bu.
\end{equation}
This proves \eqref{eq:b:038}. Apply it with \(\bv=\bq\) to erase the duplicated
endpoint copies in \(\bq^2B\bq^2\), obtaining \(\bq B\bq\). With \(\bv=\mathbf{V}^2\)
at the left and \(\bv=\mathbf{U}^2\) at the right it gives the other equality
in \eqref{eq:b:039}.

For \eqref{eq:b:040}, support for \(\mathbf{f}\mathbf{g}\) gives
\begin{equation}\label{eq:b:044}
\mathbf{f}B\mathbf{g}=\mathbf{f}(B\mathbf{f}\mathbf{g}B)\mathbf{g}=(\mathbf{f}B\mathbf{f})(\mathbf{g}B\mathbf{g})\le \mathbf{f}\mathbf{g}.
\end{equation}
Each use of \eqref{eq:b:036} is legal because \(B\) is a term on the full
content of that endpoint word. Distributively expand \(\mathbf{Q}_*B\mathbf{P}_*\)
and apply \eqref{eq:b:040} to each pair of summands, obtaining \eqref{eq:b:041}.
Every-summand full content is an essential hypothesis here; no
inequality \(\mathbf{P}B\mathbf{P}\le \mathbf{P}\) for arbitrary polynomials is being used.
\end{proof}

\paragraph{Eight explicit guarded identities.}
Use the ordered alphabets \(\{x\}\) and \(\{x,y,z\}\) in the literal
guard definition, and abbreviate
\begin{equation}\label{eq:b:045}
G_1=B_{\{x\}},\qquad G_3=B_{\{x,y,z\}}.
\end{equation}
The scalar factors of \(G_1\) are indexed by \(x,xx\).
Those of \(G_3\) are indexed, in order, by
\begin{equation}\label{eq:b:046}
x,y,z,xx,xy,xz,yx,yy,yz,zx,zy,zz.
\end{equation}
Let \(t\) be distinct from \(x,y,z\). The set GB consists of exactly
the following eight identities:
\begin{equation}\label{eq:b:047}
G_1tx^2=G_1tx,\qquad x^2tG_1=xtG_1,
\tag{\text{B-GB-I}}
\end{equation}
\begin{equation}\label{eq:b:048}
G_3t(xyxzx)=G_3t(xyzx),\qquad
(xyxzx)tG_3=(xyzx)tG_3,
\tag{\text{B-GB-RB}}
\end{equation}
\begin{equation}\label{eq:b:049}
G_3t(x+y+xyz)=G_3t(x+y+yxz),
\tag{\text{B-GB-ER-L}}
\end{equation}
\begin{equation}\label{eq:b:050}
(x+y+xyz)tG_3=(x+y+yxz)tG_3,
\tag{\text{B-GB-ER-R}}
\end{equation}
\begin{equation}\label{eq:b:051}
G_3t(x+y+zxy)=G_3t(x+y+zyx),
\tag{\text{B-GB-EL-L}}
\end{equation}
\begin{equation}\label{eq:b:051R}
(x+y+zxy)tG_3=(x+y+zyx)tG_3.
\tag{\text{B-GB-EL-R}}
\end{equation}
Put \(E_{\rm prop}=E_{\rm BG}\cup{\rm GB}\). After expansion of the
fixed abbreviations, this is a literal list of 29 identities including
the six AI identities. The eight new identities have two or four
variables, including \(t\). The band-axiom variable set alone has
size one or three. GB is not a scheme over arbitrary band identities.

\begin{lemma}[Guarded lifting through arbitrary contexts]\label{band:guarded-lifting}
Every identity in GB is valid in \(\Abar\). If \(\mathbf{h},\mathbf{k}\) are terms on a finite
nonempty alphabet \(C\), and \(E_{\rm band}\) derives \(\mathbf{h}=\mathbf{k}\), then
\(E_{\rm p\br op}\) derives
\begin{equation}\label{eq:b:053}
B_C\mathbf{h}=B_C\mathbf{k},\qquad \mathbf{h}B_C=\mathbf{k}B_C.
\tag{\text{B-18}}
\end{equation}
The analogous guarded conclusions hold for derived inequalities.
\end{lemma}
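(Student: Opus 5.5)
The proof has two parts: semantic validity of GB, and a syntactic induction that lifts band derivations through the guard $B_C$.

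\emph{Validity.} By Lemma~\ref{band:semantic-selector}, under any $\Abar$ assignment $G_1$ or $G_3$ is nonzero only when the values of the guarded variables are nonzero and generate a band. Such values lie in $B_R$ or in $B_L$. If the guard is $0$, both sides of every GB identity vanish.

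Otherwise the values of $x,y,z$ lie in $B_R$ or in $B_L$, and Lemma~\ref{band:band-validity-and-normal-form} shows that the underlying band identities (B-I), (B-RB), (B-ER), (B-EL) hold there. The two sides of each GB identity are then literally equal after evaluation. The only subtlety is that a term such as $x+y+xyz$ is evaluated inside $B_R\cup B_L\subseteq\Abar$, which is closed under $+$ and $\cdot$, so there is nothing further to check.

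\emph{Lifting: the induction.} Fix $C$ and a derivation of $\mathbf h=\mathbf k$ from $E_{\rm band}$. I would induct on the length of the derivation, in the standard form of equational logic: a chain of replacements $\boldsymbol\alpha\,\theta(\mathbf l)\,\boldsymbol\beta\to\boldsymbol\alpha\,\theta(\mathbf r)\,\boldsymbol\beta$, placed in arbitrary polynomial contexts with $\mathbf l\approx\mathbf r$ an axiom of $E_{\rm band}$. Transitivity and symmetry are immediate, and AI steps are already in $E_{\rm prop}$. So the task is to show
\[
B_C\,\mathbf p\,\theta(\mathbf l)\,\mathbf s = B_C\,\mathbf p\,\theta(\mathbf r)\,\mathbf s
\]
for contexts $\mathbf p,\mathbf s$ (possibly empty) and for summands added alongside. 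Summands added alongside are harmless by distributivity. The intermediate terms of the chain may not be on $C$ alone, but restricting to derivations whose terms all lie on $C$ is harmless, since renaming the extra variables into $C$ gives a derivation on $C$.

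\emph{Lifting: moving the guard.} The key step is to bring the guard next to the redex. Let $D=c(\theta(\mathbf l))$, which contains $c(\theta(\mathbf r))$ up to the usual content-preservation of these axioms. Since $D\subseteq C$, Lemma~\ref{band:guard-support} gives $B_CB_DB_C=B_C$. Writing $B_C=B_C\,\mathbf p\,B_C$ by (B-4) when $\mathbf p$ is nonempty, I would rewrite
\[
B_C\,\mathbf p\,\theta(\mathbf l)=B_C\,\mathbf p\,B_C B_D\,\theta(\mathbf l).
\]
The problem is thus reduced to proving $B_D\,\theta(\mathbf l)=B_D\,\theta(\mathbf r)$ without context, and similarly on the right.

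\emph{Lifting: the unguarded redex.} To prove $B_D\,\theta(\mathbf l)=B_D\,\theta(\mathbf r)$, I would instantiate the appropriate GB identity with $x,y,z\mapsto\theta(x),\theta(y),\theta(z)$ and $t\mapsto$ a suitable term. The guard $G_3$ becomes $\theta(B_{\{x,y,z\}})$, and this must be compared with $B_D$. I expect this comparison to be the main obstacle: showing that $B_D\,t\,\theta(G_3)$ can be absorbed into $B_D$. The tool is full-content reflection, Lemma~\ref{band:full-content-reflection}, together with (B-5) through the scalar calculus of Lemma~\ref{band:scalar-calculus}: $B_D\le h_{T_D}(\theta(w))$ for every word $w$ on $\{x,y,z\}$, so $B_D\le$ the meet $\theta(G_3)$ relative to the common bound $T_D$.

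Two further points remain. The GB identities carry the mandatory variable $t$, so contexts must be nonempty; when $\mathbf p$ is empty I would take $t:=B_D$ and use $B_D^2=B_D$. Inequalities follow the same way, since $\le$ is an equation.
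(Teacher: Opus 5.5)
The validity half and the bookkeeping (renaming auxiliary variables into $C$, splitting a one-hole context into summands $\mathbf p[\,\cdot\,]\mathbf s$) agree with the paper. The lifting step, however, has a genuine gap.

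\emph{The guard cannot be moved next to the redex.} Your rewriting $B_C\,\mathbf p\,\theta(\mathbf l)=B_C\,\mathbf p\,B_CB_D\,\theta(\mathbf l)$ does not follow from $B_C=B_C\mathbf pB_C$ and $B_C=B_CB_DB_C$. Those identities only let you insert material where a $B_C$ already stands, that is, in front of $\mathbf p$. In fact the equation is false in $\Abar$. Take $C=\{u,v,w\}$ with $u\mapsto a$, $v\mapsto ab$, $w\mapsto 1$, let $\mathbf p=v$, and use the (B-I) instance $\theta(x)=w$. Then $D=\{w\}$, $B_C=a$ and $B_D=1$. The left side is $a\cdot ab\cdot 1=ab$, while the right side is $a\cdot ab\cdot a\cdot 1\cdot 1=a$. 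So a guard cannot be pushed through an intervening context, and the reduction to the context-free equation $B_D\theta(\mathbf l)=B_D\theta(\mathbf r)$ is unavailable. That equation could not be put back into the context anyway.

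\emph{The proposed fix for the ``main obstacle'' also fails.} The scalar factors of $\theta(G_3)$ have common bound $\theta(x)+\theta(y)+\theta(z)$, not $T_D$. The inequality $B_D\le\theta(G_3)$ is false in general. Take $D=\{u,v\}$ with $u\mapsto a$, $v\mapsto ab$, and $\theta(x)=uv$, $\theta(y)=\theta(z)=v$; then $B_D=a$ but $\theta(G_3)=ab$. Even a true order relation would not let you insert $\theta(G_3)$ as a factor.

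\emph{The missing idea.} The fresh variable $t$ in GB exists to carry the surrounding context, not merely to keep it nonempty. The relation you need between guards is the sandwich identity (B-4), not an inequality. Put $G=\theta(G_3)$ (or $\theta(G_1)$). It is a nonempty term on $C$, so support gives $B_C=B_C\,G\,B_C$ directly, with no intermediate $B_D$. The left-guard step then runs as follows:
\begin{itemize}
\item Rewrite $B_C\,\mathbf p\,\theta(\mathbf l)\,\mathbf s=B_C\,G\,(B_C\mathbf p)\,\theta(\mathbf l)\,\mathbf s$.
\item Apply the left GB identity with $t\mapsto B_C\mathbf p$, or $t\mapsto B_C$ when $\mathbf p$ is absent, to replace $\theta(\mathbf l)$ by $\theta(\mathbf r)$.
\item Remove $G$ again by support.
\end{itemize}
The right guard is symmetric. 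Write $\mathbf p\,\theta(\mathbf l)\,\mathbf s\,B_C=\mathbf p\,\theta(\mathbf l)\,(\mathbf s B_C)\,G\,B_C$ and apply the right GB identity with $t\mapsto \mathbf s B_C$. This is the paper's argument.
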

\begin{proof}
For a band axiom \(\boldsymbol{\ell}=\br\), its guard is \(B_D\), where
\(D=\{x\}\) for \eqref{eq:b:023} and \(D=\{x,y,z\}\) for the other three.
Under an \(\Abar\) assignment, either \(B_D=0\), making both sides of
each associated GB identity zero, or its variables have nonzero
values contained in a band \(B_\mathbf{R}\) or \(B_\mathbf{L}\). Band-axiom validity
then gives \(\boldsymbol{\ell}=\br\) on those values. The fresh \(t\) is unrestricted,
so both guarded identities hold.

For lifting, choose \(x_0\in C\). If a finite \(E_{\rm band}\)
derivation introduces auxiliary variables, replace every variable
outside \(C\) throughout the derivation by \(x_0\), fixing \(C\).
This substitution fixes the endpoints \(\mathbf{h},\mathbf{k}\) and preserves all
inferences. Thus every term and every axiom-instance substitution in
the derivation may be assumed to use only \(C\). The substitution
uses a one-letter term, never a constant or an empty word.

Put \(B=B_C\). AI steps are already valid globally. For an instance
\(\boldsymbol{\ell}\sigma=\br\sigma\) of a non-AI band axiom, put \(G=B_D\sigma\).
It is a nonempty term on \(C\), so support gives \(BGB=B\).
In a multiplicative context with left and right parts \(\mathbf{L},\mathbf{R}\),
each independently permitted to be absent, calculate
\begin{equation}\label{eq:b:054}
B\mathbf{L}(\boldsymbol{\ell}\sigma)\mathbf{R}
 =BGB\mathbf{L}(\boldsymbol{\ell}\sigma)\mathbf{R}
 =BGB\mathbf{L}(\br\sigma)\mathbf{R}
 =B\mathbf{L}(\br\sigma)\mathbf{R}.
\tag{\text{B-19}}
\end{equation}
The middle equality uses the left GB identity with \(t=B\mathbf{L}\) when
\(\mathbf{L}\) is present and \(t=B\) otherwise, carrying the external
left \(B\) and right context along. For a right guard use
\begin{equation}\label{eq:b:055}
\mathbf{L}(\boldsymbol{\ell}\sigma)\mathbf{R}B
 =\mathbf{L}(\boldsymbol{\ell}\sigma)\mathbf{R}BGB
 =\mathbf{L}(\br\sigma)\mathbf{R}BGB
 =\mathbf{L}(\br\sigma)\mathbf{R}B,
\tag{\text{B-20}}
\end{equation}
where \(t=\mathbf{R}B\) when \(\mathbf{R}\) is present and \(t=B\) otherwise,
carrying the final external \(B\). Thus no missing context is
substituted for any variable.

An arbitrary one-hole semiring context expands distributively as a
sum of unaffected terms and terms \(\mathbf{L}[{\rm \mathbf{h}ole}]\mathbf{R}\), with each
external part absent or nonempty. This follows by induction on the
context: addition adjoins summands, while multiplying by a fixed term
distributes over its polynomial expansion and appends a nonempty word
to the appropriate side of each affected term. Multiplying the whole
sum by the guard and distributing lets \eqref{eq:b:054} or \eqref{eq:b:055} apply to each
affected summand. Addition congruence restores the context. Thus every
axiom-instance-and-context step lifts, and transitivity proves \eqref{eq:b:053}.
For inequalities apply this to the identity \(\mathbf{h}+\mathbf{k}=\mathbf{k}\), then distribute.
\end{proof}

\begin{lemma}[Full-content side covers]\label{band:full-content-side-covers}
Suppose \(\bq\le \mathbf{P}\) is valid in \(B_\mathbf{R}\times B_L\), with \(\bq\) a
nonempty word. Put \(C=c(\bq)\), \(m=|C|\). Choose the witnesses
\(\mathbf{f}_x,\mathbf{g}_x\) from the polynomial-cover criterion, indexed once by
each \(x\in C\), even when two selected word terms coincide.
Let \(\mathbf{U}\) be the product of the right witnesses in any fixed order.
Choose an order of reversed left witnesses for the reversed
right-side construction, and let \(\mathbf{V}\) be the reversal of that
product, so \(\mathbf{V}\) is a product of the original left witnesses.

Let \(\mathbf{P}_*\) be the sum of every product of \(j\) indexed right
witnesses, for \(m\le j\le2m\), whose index sequence contains each
of the \(m\) indices at least once. Define \(\mathbf{Q}_*\) in the same way
from left witnesses. Both are finite nonempty polynomials and every
summand has content exactly \(C\). Moreover
\begin{equation}\label{eq:b:056}
E_{\rm BG}\vdash \mathbf{P}_*\le \mathbf{P},\quad \mathbf{Q}_*\le \mathbf{P},
\tag{\text{B-21}}
\end{equation}
\begin{equation}\label{eq:b:057}
E_{\rm band}\vdash \mathbf{U}^2\bq\le \mathbf{P}_*,\quad \bq\mathbf{V}^2\le \mathbf{Q}_*.
\tag{\text{B-22}}
\end{equation}
The derivations in \eqref{eq:b:057} use only variables of \(C\).
\end{lemma}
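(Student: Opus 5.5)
The statement has three parts: finiteness and content of the two polynomials, the bound \eqref{eq:b:056}, and the band derivations \eqref{eq:b:057}. I will prove them in that order.

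\emph{Finiteness and content.} There are only finitely many index sequences of length between $m$ and $2m$ over $m$ indices. The sequence listing each index exactly once has length $m$, so $\mathbf{P}_*$ and $\mathbf{Q}_*$ are nonempty. Every witness satisfies $x\in c(\mathbf{f}_x)\subseteq C$. Hence a product whose index sequence uses every index has content exactly $C$.

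\emph{The bound \eqref{eq:b:056}.} Every summand of $\mathbf{P}_*$ is a product of $j\ge1$ words $\mathbf{f}_x$, each a summand of $\mathbf{P}$. The inequality $\mathbf{f}_x\le\mathbf{P}$ follows from AI alone, by additive idempotence. Monotonicity then gives that the summand is at most $\mathbf{P}^j$. I need $\mathbf{P}^j\le\mathbf{P}$ in $E_{\rm BG}$, but only $x^2\le x$ and $x^3=x^2$ are available, not idempotence. Substituting $x=\mathbf{P}$ gives $\mathbf{P}^j=\mathbf{P}^2\le\mathbf{P}$ for $j\ge2$, and the case $j=1$ (only when $m=1$) is immediate. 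Summing over the summands gives $\mathbf{P}_*\le\mathbf{P}$, and the same argument gives $\mathbf{Q}_*\le\mathbf{P}$.

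\emph{The derivation \eqref{eq:b:057}.} Here I redo the proof of Lemma~\ref{band:band-polynomial-completeness} with $\mathbf{P}_*$ in place of $\mathbf{P}$. That proof shows $\mathbf{U}\bq\le\mathbf{P}$ using only three ingredients:
\begin{enumerate}[label=\textup{(\roman*)}]
\item adjacent witness exchanges \eqref{eq:b:031}--\eqref{eq:b:032} modulo $\mathbf{P}$;
\item the absorption $\mathbf{f}_xx\mathbf{T}=\mathbf{f}_x\mathbf{T}$ from \eqref{eq:b:034};
\item the fact that every product of witnesses lies below $\mathbf{P}$.
\end{enumerate}
Ingredient (ii) is unchanged.

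The delicate step is (i) and (iii), because a product of witnesses lies below $\mathbf{P}_*$ only if it uses every index and has length at most $2m$. Working with $\mathbf{U}^2\bq$ instead of $\mathbf{U}\bq$ fixes the coverage problem. The prefix $\mathbf{U}^2$ contains each index twice, so any product formed from it by permuting or deleting a single copy of one witness still covers all indices. Its length is at most $2m$. This is why the range $m\le j\le2m$ is used.

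I then run the descending induction \eqref{eq:b:033} in the form
\[
\mathbf{U}^2q_i\cdots q_n\le\mathbf{P}_*.
\]
I move the needed witness $\mathbf{f}_x$ taken from the second copy of $\mathbf{U}$. An application of \eqref{eq:b:025} with the prefix $\mathbf{H}$ taken to contain the first copy of $\mathbf{U}$ adds the terms $\mathbf{H}\bu$ and $\mathbf{H}\bv$. These are full-coverage products of length at most $2m$, hence summands of $\mathbf{P}_*$. When $\mathbf{H}$ does not contain the whole first copy of $\mathbf{U}$, I must instead check that the terms $\bu$ and $\bv$ themselves are absorbed. If this fails, the fallback is to derive a bound $\bu\le\mathbf{P}_*$-type statement through \eqref{eq:b:030}: multiply by a full product and use $\mathbf{w}\mathbf{f}\mathbf{w}=\mathbf{w}$. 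Keeping every exchange strictly inside the $2m$-length window is the main obstacle, and I would track lengths at each step.

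The base case and the use of \eqref{eq:b:034} proceed exactly as in Lemma~\ref{band:band-polynomial-completeness}. The statement $\bq\mathbf{V}^2\le\mathbf{Q}_*$ follows by reversal, since $E_{\rm band}$ is closed under reversal as noted there.

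The derivations use only variables of $C$. Any auxiliary variables can be replaced by a letter $x_0\in C$, as in the proof of Lemma~\ref{band:guarded-lifting}, and the substituted words all have content in $C$.
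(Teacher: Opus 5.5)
Your proposal is essentially the paper's proof. It has the same content and finiteness check and the bound \eqref{eq:b:056} through $\mathbf P^j\le\mathbf P$; you use $x^3=x^2$ where the paper iterates $x^2\le x$, and both lie in $E_{\rm BG}$. For \eqref{eq:b:057} you use the same doubling $\mathbf U^2\bq$, so that the error terms of \eqref{eq:b:025} acquire full content, followed by the descending induction with \eqref{eq:b:034} and reversal for $\mathbf Q_*$.

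The one loose end is the case you call the main obstacle, and it never occurs. As in the paper, leave the first copy of $\mathbf U$ untouched and carry out every adjacent exchange inside the second copy. You move $\mathbf f_x$ to the end of the second copy, apply \eqref{eq:b:034} (or $\mathbf f_xx=\mathbf f_x$ in the base case), and then move it back; no step ever requires moving a factor of the first copy. Suppose the exchanged pair $\mathbf f_i,\mathbf f_j$ is preceded by $h$ factors $\mathbf H$ of the second copy. Then $0\le h\le m-2$, so the error terms $\mathbf U\mathbf H\mathbf f_i$ and $\mathbf U\mathbf H\mathbf f_j$ have $m+h+1\in[m+1,2m-1]$ factors. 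They contain every index through the first copy alone, and the base-case product has exactly $2m$ factors. So the length window needs no further tracking. The right invariant is ``the prefix contains the untouched first copy'', not ``a single copy of one witness is deleted''.

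Discard the fallback. Any bound placing below $\mathbf P_*$ a word whose content is a proper subset of $C$ is false in $B_R\times B_L$; a single witness $\bu$ with $c(\bu)\ne C$ is such a word. To see this, assign $0$ (in both coordinates) to a letter of $C$ missing from the word and $1$ to all other letters. Every summand of $\mathbf P_*$ then vanishes, while the word takes the value $1$. Hence such a bound cannot be derived from $E_{\rm band}$.
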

\begin{proof}
Every witness uses only \(C\) and contains its indexed letter.
A product containing every index therefore has content exactly \(C\).
The sums are finite because lengths are at most \(2m\), and nonempty
because listing every index once is allowed. Each witness is at most
\(\mathbf{P}\), so a product of \(j\) witnesses is at most \(\mathbf{P}^j\).
Square decrease gives \(\mathbf{P}^j\le \mathbf{P}\) for every \(j\ge1\):
for \(j=2\) use it directly; for \(j>2\), multiply \(\mathbf{P}^2\le \mathbf{P}\)
by the nonempty outer power \(\mathbf{P}^{j-2}\), and iterate. Adding the
resulting bounds proves \eqref{eq:b:056}. This \(\Abar\)-valid argument does not use
multiplicative idempotence of \(\mathbf{P}\).

For \eqref{eq:b:057}, retain the first copy of \(\mathbf{U}\) unchanged in \(\mathbf{U}\mathbf{U} \bt\),
where \(\bt\) is a nonempty suffix, and permute only the factors of
the second copy. For an adjacent pair \(\mathbf{f}_i,\mathbf{f}_j\), let \(\mathbf{H}\)
be the preceding factors in that copy and let \(\mathbf{R}\) be all remaining
factors followed by \(\bt\), so \(\mathbf{R}\) is nonempty. Substitute
\(x=\mathbf{f}_i,y=\mathbf{f}_j,z=\mathbf{R}\) in \eqref{eq:b:025} and multiply on the left by \(\mathbf{U}\mathbf{H}\),
or by \(\mathbf{U}\) if \(\mathbf{H}\) is absent. This exchanges
\(\mathbf{U}\mathbf{H}\mathbf{f}_i\mathbf{f}_j\mathbf{R}\) and \(\mathbf{U}\mathbf{H}\mathbf{f}_j\mathbf{f}_i\mathbf{R}\) with added terms \(\mathbf{U}\mathbf{H}\mathbf{f}_i,\mathbf{U}\mathbf{H}\mathbf{f}_j\).
Their initial \(\mathbf{U}\) contains every index. If \(\mathbf{H}\) has \(h\)
factors, then \(0\le h\le m-2\), so each added term has
\(m+h+1\) factors, between \(m+1\) and \(2m-1\).
Hence it is a summand of \(\mathbf{P}_*\). Adding \(\mathbf{P}_*\) absorbs both
error terms. Adjacent exchanges thus permute the second copy modulo
\(\mathbf{P}_*\) before any nonempty suffix. When \(m=1\), no exchange is
needed and the empty interval of error lengths is irrelevant.

Write \(\bq=q_1\cdots q_n\), retaining repetitions. For \(x=q_n\),
the witness \(\mathbf{f}_x\) ends in \(x\). Move its indexed factor to the
end of the second copy modulo \(\mathbf{P}_*\), with suffix \(x\), and use
\(\mathbf{f}_xx=\mathbf{f}_x\) from \eqref{eq:b:023}. The resulting word is a product of exactly
\(2m\) witnesses containing every index, hence is a summand of
\(\mathbf{P}_*\). This proves \(\mathbf{U}^2q_n\le \mathbf{P}_*\).

For an earlier position set \(x=q_i\) and
\(\mathbf{T}=q_{i+1}\cdots q_n\), a nonempty word. The proof of \eqref{eq:b:034}
uses the original unreduced \(\bq\) and gives \(\mathbf{f}_xx\mathbf{T}=\mathbf{f}_x\mathbf{T}\).
Move the chosen indexed factor to the end of the second copy, apply
this equality, and restore the factor order with suffix \(\mathbf{T}\).
The established exchanges give
\begin{equation}\label{eq:b:058}
\mathbf{P}_*+\mathbf{U}^2x\mathbf{T}=\mathbf{P}_*+\mathbf{U}^2\mathbf{T}.
\end{equation}
Descending induction proves \(\mathbf{U}^2\bq\le \mathbf{P}_*\).

For the left statement reverse \(\bq\) and all \(\mathbf{g}_x\) and apply
this right-side argument. Reversal takes the polynomial of all
allowed index sequences to \(\mathbf{Q}_*\), since reversing a sequence
preserves its length and the occurrence of every index. It takes
the square of the chosen reversed witness product to \(\mathbf{V}^2\).
Reversal invariance of \(E_{\rm band}\) therefore gives
\(\bq\mathbf{V}^2\le \mathbf{Q}_*\). Every term in these two derivations is a term
on \(C\).
\end{proof}

\begin{proposition}[Absorption of the proper selector]\label{band:proper-selector-completeness}
The explicit finite \(\Abar\)-valid list
\(E_{\rm prop}=E_{\rm BG}\cup{\rm GB}\) has this uniform property:
for every nonempty word \(\bq\), with \(C=c(\bq)\), and every finite
nonempty polynomial \(\mathbf{P}\), if \(\bq\le \mathbf{P}\) holds in
\(B_R\times B_L\), then
\begin{equation}\label{eq:b:059}
E_{\rm prop}\vdash \bq^2B_C\bq^2\le \mathbf{P}.
\tag{\text{B-23}}
\end{equation}
In particular, \eqref{eq:b:059} holds whenever \(\bq^2\le \mathbf{P}\) is valid in \(\Abar\).
The left side of \eqref{eq:b:059} equals \(\bq^2\) on proper-generation assignments
and zero on full-\(\Abar\)-generation assignments.
\end{proposition}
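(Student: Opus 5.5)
The plan is to assemble the pieces prepared in the preceding lemmas. Put $B=B_C$. Since $\bq\le\mathbf P$ holds in $B_R\times B_L$, Lemma~\ref{band:polynomial-cover-criterion} supplies the witnesses $\mathbf f_x,\mathbf g_x$, and Lemma~\ref{band:full-content-side-covers} supplies the products $\mathbf U,\mathbf V$ and the full-content polynomials $\mathbf P_*,\mathbf Q_*$. These satisfy
\[
E_{\rm band}\vdash\mathbf U^2\bq\le\mathbf P_*,\qquad E_{\rm band}\vdash\bq\mathbf V^2\le\mathbf Q_*,\qquad E_{\rm BG}\vdash\mathbf P_*,\mathbf Q_*\le\mathbf P.
\]
The band derivations use only variables of $C$. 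Both sides of each band inequality are therefore terms on $C$, and Lemma~\ref{band:guarded-lifting} applies to them in its inequality form. This gives, in $E_{\rm prop}$,
\[
B\,\mathbf U^2\bq\le B\,\mathbf P_*,\qquad \bq\mathbf V^2\,B\le\mathbf Q_*\,B.
\]

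Next, use Lemma~\ref{band:full-content-reflection}. Equation \eqref{eq:b:039} with these $\mathbf U,\mathbf V$ gives $\bq^2B\bq^2=\bq\mathbf V^2B\mathbf U^2\bq$. Since $B^2=B$ by \eqref{eq:b:016}, we can rewrite this as $(\bq\mathbf V^2B)(B\mathbf U^2\bq)$. By monotonicity of multiplication and the two guarded inequalities, this is at most $\mathbf Q_*B\,B\mathbf P_*=\mathbf Q_*B\mathbf P_*$. Every summand of $\mathbf P_*$ and $\mathbf Q_*$ has content exactly $C$, so \eqref{eq:b:041} bounds this by $\mathbf Q_*\mathbf P_*$. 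The bounds $\mathbf P_*,\mathbf Q_*\le\mathbf P$ then give $\mathbf Q_*\mathbf P_*\le\mathbf P^2$, and square decrease (part of $E_{\rm BG}$) gives $\mathbf P^2\le\mathbf P$. Chaining these yields \eqref{eq:b:059}. The only care needed is that each substitution uses nonempty terms, which holds because all guards and witness products are nonempty.

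For the ``in particular'' clause, assume $\Abar\models\bq^2\le\mathbf P$. We need $\bq\le\mathbf P$ in $B_R\times B_L$. Both $B_R$ and $B_L$ are subsemirings of $\Abar$ consisting of multiplicative idempotents, so $\bq^2=\bq$ in each factor. Restricting the $\Abar$-validity to assignments in $B_R$ and in $B_L$ then gives the required absorption coordinatewise. The final sentence is \eqref{eq:b:022} of Lemma~\ref{band:semantic-selector}, because $\bq^2B_C\bq^2=eB_Ce$ with $e=\bq^2$.

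I expect the main obstacle to be the guarded lifting step: the derivations from Lemma~\ref{band:full-content-side-covers} must be genuinely confined to $C$, and inequality lifting must be applied on the correct side ($B$ on the left for the $\mathbf U$-inequality, on the right for the $\mathbf V$-inequality). The inequality form of Lemma~\ref{band:guarded-lifting} handles this. It is obtained from $\mathbf h+\mathbf k=\mathbf k$ and gives $B\mathbf h+B\mathbf k=B\mathbf k$ after distributing. Everything else is bookkeeping with established identities.
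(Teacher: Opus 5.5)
Your proposal is correct and follows the paper's proof essentially step for step. You apply guarded lifting to \eqref{eq:b:057}, then use \eqref{eq:b:039}, full-content reflection \eqref{eq:b:041}, the bounds \eqref{eq:b:056} and square decrease, and you handle the ``in particular'' clause and the final sentence exactly as the paper does via $\bq^2=\bq$ in $B_R,B_L$ and \eqref{eq:b:022}; the only cosmetic difference is that you split $B=B^2$ to apply both lifted inequalities at once, whereas the paper applies them one after the other.
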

\begin{proof}
All 29 identities have been proved \(\Abar\)-valid. Choose
\(\mathbf{U},\mathbf{V},\mathbf{P}_*,\mathbf{Q}_*\) from the preceding lemma and put \(B=B_C\).
Guarded lifting of \eqref{eq:b:057} gives
\begin{equation}\label{eq:b:060}
B\mathbf{U}^2\bq\le B\mathbf{P}_*,\qquad \bq\mathbf{V}^2B\le \mathbf{Q}_*B.
\end{equation}
Using \eqref{eq:b:039}, these inequalities, full-content reflection \eqref{eq:b:041}, the
bounds \eqref{eq:b:056}, and square decrease, respectively, yields
\begin{equation}\label{eq:b:061}
\begin{aligned}
\bq^2B\bq^2
 &=\bq\mathbf{V}^2B\mathbf{U}^2\bq\\
 &\le \mathbf{Q}_*B\mathbf{U}^2\bq\\
 &\le \mathbf{Q}_*B\mathbf{P}_*\\
 &\le \mathbf{Q}_*\mathbf{P}_*\\
 &\le \mathbf{P}^2\\
 &\le \mathbf{P}.
\end{aligned}
\end{equation}
All steps are equational consequences of the fixed finite list.
The varying alphabets, lengths, and witnesses enter only the
constructed terms and finite derivations, not the axiom list.

If \(\bq^2\le \mathbf{P}\) holds in \(\Abar\), it holds in both subsemirings
\(B_R,B_L\), where \eqref{eq:b:023} gives \(\bq^2=\bq\). Thus the cover
hypothesis applies. The assignment-value assertion is \eqref{eq:b:022}.

This proposition supplies only the proper-generation component.
For example, if \(\bq=xyx\) and \(x=a,y=b\), then \(\bq^2=a\) while
\(B_{\{x,y\}}=0\). Removing the guard on full-generation assignments
is therefore not justified by this result. This component alone is
not a finite-basis theorem for \(\Abar\).
\end{proof}

\paragraph{Finite transfer across a word factor.}\label{sec:8}

\paragraph{Ten fixed transfer identities.}
Use distinct ordinary variables \(r,y,s,z\) and the following genuine
nonempty terms:
\begin{equation}\label{eq:t:001}
t=sr,\qquad e=(rys)^2,\qquad \sigma=(ry^2s)^2,
\end{equation}
\begin{equation}\label{eq:t:002}
U=ytytyty,\qquad g=UtU,\qquad J=y+ty+yt.
\end{equation}
The first four transfer identities are
\begin{equation}\label{eq:t:003}
e=eJe+\sigma,
\tag{\text{T-N}}
\end{equation}
\begin{equation}\label{eq:t:004}
eye\leq egege+\sigma,\qquad
egzge\leq z+\sigma,\qquad eU^2e\leq\sigma.
\tag{\text{T-A, R, E}}
\end{equation}
For the remaining six put
\begin{equation}\label{eq:t:005}
A=eye,\quad H=etye,\quad K=eyte,\quad \bv=et,\quad \bu=te,
\end{equation}
\begin{equation}\label{eq:t:006}
E_L=(\bv y\bv)^2,\quad S_L=(\bv y^2\bv)^2,\qquad
E_R=(\bu y\bu)^2,\quad S_R=(\bu y^2\bu)^2.
\end{equation}
Include
\begin{equation}\label{eq:t:007}
H\leq HE_LyE_LH+\sigma,\quad HS_LH\leq\sigma,
\quad HztH\leq z+A+\sigma,
\tag{\text{T-L1--L3}}
\end{equation}
\begin{equation}\label{eq:t:008}
K\leq KE_RyE_RK+\sigma,\quad KS_RK\leq\sigma,
\quad KtzK\leq z+A+\sigma.
\tag{\text{T-R1--R3}}
\end{equation}
All abbreviations are expanded before these are counted as identities.
In particular their list and number of variables are independent of any
word to which the forthcoming transfer theorem will be applied.

\begin{lemma}[Validity of the ten transfer identities]\label{transfer:validity}
All ten identities above are valid in the specified \(\Abar\).
\end{lemma}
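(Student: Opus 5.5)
The proof is a finite case analysis on an arbitrary assignment of $r,y,s,z$ in $\Abar$. We evaluate $t,e,\sigma,U,g,J$ and then each of the ten identities. The basic tool is the step-map model of Lemma~\ref{lem:elementary-structure}: nonzero non-identity elements are step maps $d_{i,j}$ with $d_{i,j}d_{k,l}=0$ if $j<k$ and $d_{i,l}$ otherwise, and every step map $d$ satisfies $dvd\in\{0,d\}$ and $dad=d$. We split according to the value of $e=(rys)^2$. Squares lie in $\{0,1,a,ab,ba\}$, and $e$ is idempotent.

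\emph{Case $e=0$.} Every left side begins and ends with $e$, or with $H,K,A$, which contain $e$ as an outer factor. All left sides therefore vanish, except the left side of (T-N), where $e=0$ makes both sides $0+\sigma$. For this case we must show $\sigma=0$. Here we use $(ry^2s)^2\le(rys)^2$ in the appropriate sense: $y^2\le y$ by square decrease, so $ry^2s\le rys$, and monotonicity of squaring gives $\sigma\le e$. Hence $\sigma=0$. The inequality $\sigma\le e$ is used in every case.

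\emph{Case $e$ a step map ($a,ab,ba$).} Then $eXe\in\{0,e\}$ for every $X$, and $\sigma\in\{0,e\}$ (with $\sigma\le e$). If $\sigma=e$, every right side contains $\sigma$ and dominates the left side, which is $\le e$ (for $H,K,A$ use $H=e(ty)e\in\{0,e\}$). So assume $\sigma=0$. By $ry^2s\le rys$, this means $y$ is ``not idempotent inside the sandwich'', and forces $y=b$ together with the specific pattern of $r,s$ making $rbs$ a step map whose square survives while $rb^2s=0$. With $y=b$ the computations become concrete: $U=btbtbtb$, $g$, $J=b+tb+bt$, and the various sandwiches reduce via $bvb=b$ iff $v=a$, else $0$. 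We verify (T-N) by showing $eJe=e$; the main candidate is $ebe=e$ or $etbe=e$, and we use that $e$ is a square of a word containing $b$ and checking thresholds. We verify (T-A), (T-R), (T-E) and L1--L3, R1--R3 by computing whether each sandwich equals $e$ or $0$. When $HztH$ or $KtzK$ is nonzero, it equals $H$, and we show $H\le A$ or absorb $H$ through $z$, using $HztH=H$ only when $zt$ takes the value forced by the threshold pattern.

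\emph{Case $e=1$.} By Lemma~\ref{lem:elementary-structure} all of $r,y,s$ are $1$, since a product equal to $1$ has only identity factors, and $(rys)^2=1$ forces $rys=1$ because $b^2=0$. Then $t=1$, $\sigma=1$, $U=g=J=1$, $A=H=K=1$, and every identity reduces to a trivial statement in $z$ together with $z\cdot1\cdot1\le z+1$, which holds.

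The main obstacle is the subcase $e\in\{a,ab,ba\}$ with $\sigma=0$, especially (T-A) and the L/R triples, because the values of $r$ and $s$ there are only partially constrained. First we would tabulate, using the step-map thresholds, the few pairs $(r,s)$ for which $(rbs)^2\neq0$. Then we check each identity on that short list, possibly by a mechanical verification of the $6^4$ assignments, which the paper could cite as routine.
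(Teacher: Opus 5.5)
Your organization by the value of $e$ is sound. So is the observation $\sigma\le e$, which follows from $y^2\le y$ and monotonicity and gives $\sigma=0$ when $e=0$. Your treatment of $e=0$, of $e=1$, and of the subcase $\sigma=e$ is correct. One small justification is missing. The claim $\sigma\in\{0,e\}$ does not follow from $\sigma\le e$ when $e=a$, since every square lies below $a$. The correct reason is that $y^2\in\{y,0\}$ in $\Abar$. Hence $\sigma=e$ when $y\ne b$ and $\sigma=0$ when $y=b$, which is exactly the paper's case split on $y$.

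The real gap is that the only nontrivial subcase, $y=b$ with $e\in\{a,ab,ba\}$, is never verified. You describe what you would check and then fall back on a $6^4$ brute-force search. The missing step uses your own fact that $bvb\ne0$ iff $v=a$:
\[
e=(rbs)^2=rb(sr)bs\ne0 \quad\text{forces}\quad b(sr)b\ne0,
\]
hence $t=sr=a$. It follows that $U=g=b$ (because $bab=b$), $J=b+ab+ba=1$, and $\sigma=0$. The ten identities involve $r,s$ only through $e$ and $t$, so your worry that $r,s$ are ``only partially constrained'' is unfounded: everything depends on $e$ (and on $z$) alone. With this, the subcase closes as follows.
\begin{enumerate}
\item (T-N) is $e=e\cdot1\cdot e$.
\item (T-A) follows from $ebe=ebebe$, since $ebe$ is $a$ for $e=a$ and $0$ otherwise.
\item (T-R) follows because $bzb=0$ unless $z=a$, and $a$ is the top.
\item (T-E) follows from $U^2=b^2=0$.
\item The six endpoint laws follow from the three-row table of $A,H,K,E_L,E_R$ over $e\in\{a,ab,ba\}$, together with $S_L=S_R=0$ and the bounds $ab\,z\,ab\le z$ and $ba\,z\,ba\le z$.
\end{enumerate}
Also note that your proposed witnesses for (T-N), $ebe$ or $etbe$, both vanish when $e=ba$. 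There the surviving summand is $ebte=ba$, which is why using $J=1$ directly is cleaner.
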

\begin{proof}
Every square is in \(\{0,1,a,ab,ba\}\). If \(e\) is a step map, each
\(ewe\) is zero or \(e\); if \(e=0\), all such sandwiches are zero. If
\(e=1\), the product \(rys\) is \(1\), so \(r=y=s=1\). In that case
\(t=U=g=J=1\), and all four initial equations reduce to reflexive
identities or \(z\leq z+1\).

First suppose \(y\) is idempotent, so \(\sigma=e\). For \(e\ne1\), every
sandwich just mentioned is at most \(e\), proving the four initial laws.
The words \(H,K\) are either zero or \(e\), hence idempotent. Sandwiching
by them again yields values at most \(e\), which proves all six endpoint
laws. If \(e=1\), every fixed term in those laws is \(1\), leaving only
\(1\leq1\) or \(z\leq z+1\).

It remains that \(y=b\), so \(\sigma=0\). If \(e=0\), all left sides
vanish. If \(e\ne0\), expanding \((rbs)^2=rb(sr)bs\) and using the table
shows that \(b(sr)b\ne0\), which forces \(t=sr=a\). Therefore
\(U=g=b\), \(J=b+ab+ba=1\), and \(e\in\{a,ab,ba\}\).
The splitting identity \eqref{eq:t:003} is now \(e=e1e\). Also \(ebe\) is \(a\) for \(e=a\)
and zero for \(e=ab,ba\), so \(ebe=ebebe\), proving the first inequality in \eqref{eq:t:004}. For the second inequality in \eqref{eq:t:004}, \(bzb\)
is zero unless \(z=a\), and in that case \(ebe\leq a=z\). Finally
\(U^2=b^2=0\), proving the third inequality in \eqref{eq:t:004}.

For the endpoint laws direct multiplication gives

\begin{center}
\small
\begin{tabular}{cccccc}
\hline
\(e\) & \(A\) & \(H\) & \(K\) & \(E_L\) & \(E_R\) \\
\hline
\(a\) & \(a\) & \(a\) & \(a\) & \(a\) & \(a\) \\
\(ab\) & \(0\) & \(ab\) & \(0\) & \(a\) & \(0\) \\
\(ba\) & \(0\) & \(0\) & \(ba\) & \(0\) & \(a\) \\
\hline
\end{tabular}
\end{center}

Both \(S_L,S_R\) are zero because they contain \(y^2=b^2\). The table
proves the first two inequalities in each of \eqref{eq:t:007} and \eqref{eq:t:008}. For the third inequality in \eqref{eq:t:007}, if \(H=a\), then \(A=a\) absorbs its
left side; if \(H=0\), that side is zero. For \(H=ab\),
\begin{equation}\label{eq:t:009}
HztH=ab\,z\,a\,ab=ab\,z\,ab\leq z.
\end{equation}
Indeed this sandwich is \(ab\) for \(z=ab,1,a\), and zero otherwise.
For the third inequality in \eqref{eq:t:008} the remaining case \(K=ba\) gives
\(KtzK=ba\,a\,z\,ba=ba\,z\,ba\leq z\); its sandwich is \(ba\) for
\(z=ba,1,a\), and zero otherwise. This proves all assignments.
\end{proof}

\begin{lemma}[Transfer of the top-valued component]\label{transfer:top}
Let \(\br,\by,\bs\) now be any three nonempty words and \(\bq=\br\by\bs\). Let \(\mathbf{F}\) be
any term on \(c(\bq)\), and suppose \(\Abar\models \bq^2\leq \mathbf{F}\). The fixed
coordinate identities and the three inequalities in \eqref{eq:t:004} derive
\begin{equation}\label{eq:t:010}
\bq^2\by\bq^2\leq \mathbf{F}+(\br\by^2\bs)^2.
\tag{\text{T-Top}}
\end{equation}
\end{lemma}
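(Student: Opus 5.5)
The plan is to substitute the words $\br,\by,\bs$ for the variables $r,y,s$ in the three inequalities (T-A), (T-R), (T-E), so that $e$ becomes $\bq^2$ and $\sigma$ becomes $(\br\by^2\bs)^2$. With $t=\bs\br$, the term $U=\by t\by t\by t\by$ and $g=UtU$ become words on $c(\bq)$. Write $\sigma_0=(\br\by^2\bs)^2$. Then (T-A) gives
\[
\bq^2\by\bq^2\leq \bq^2 g\bq^2 g\bq^2+\sigma_0 .
\]
Hence it suffices to derive $\bq^2g\bq^2g\bq^2\leq \mathbf F+\sigma_0$.

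The key step is to bound the middle block $\bq^2 g$-sandwich using the coordinate calculus. Take $\mathbf Q=U$ and designated letter $t$; this needs $t$ to be a letter, so I would first substitute a fresh variable $t'$ for $\bs\br$ and work with the word $U$ on $c(\by)\cup\{t'\}$. Lemma~\ref{lem:coordinate-terms} needs $\Abar$-valid inequalities between terms on $c(\mathbf Q)$. It will be applied to the valid inequality $\bq^2\leq\mathbf F$ itself: after the substitution $\bq^2=(\br\by\bs)^2$ is a term in $\br,\by,\bs$, and $\mathbf F$ lies on $c(\bq)$. Coordinate $\mathsf T$ with $g=\mathbf Q t\mathbf Q$ gives
\[
g\bq^2 g\leq g\mathbf F g+U^2 .
\]
The contexts do not literally match $\mathbf Q$'s content, so I expect this to be arranged via (H-Coord) with terms in the letters of $U$ after also treating $\br,\bs$ through $t$. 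I regard this as the delicate point.

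Once this inequality is in hand, sandwich it by $\bq^2$ on both sides. The term $\bq^2 g\mathbf F g\bq^2$ is bounded by $\mathbf F+\sigma_0$ via (T-R) with $z=\mathbf F$. The term $\bq^2U^2\bq^2$ is bounded by $\sigma_0$ via (T-E). Finally, monotonicity and additive transitivity give (T-Top).

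The main obstacle is justifying the coordinate step. The coordinate lemma is stated for terms on $c(\mathbf Q)$ with $x\in c(\mathbf Q)$, whereas $\bq^2$ and $\mathbf F$ involve letters of $\br,\bs$ not present in $U$ unless $t=\bs\br$ is kept as a word. I would first try to choose $\mathbf Q=U$ as an actual word, whose content then equals $c(\bq)$ because $t=\bs\br$ contributes all letters of $\br,\bs$. The designated letter would be a letter of $\by$, with $g$ re-expressed accordingly.

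If that mismatch cannot be handled, the fallback is the following. Since the statement cites only the coordinate identities and (T-A, R, E), I would instead apply the coordinate identity $\mathsf B(t)\leq t+e$ and the product formulas directly at the pattern level, with $Q=U$ and $x$ a letter. This avoids the general lemma.
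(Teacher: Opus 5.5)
Your outer skeleton is the same as the paper's. (T-A) reduces the goal to $e g e g e\le\mathbf F+\sigma_0$ with $e=\bq^2$. Then sandwiching $g e g\le g\mathbf F g+U^2$ by $e$, and applying (T-R) with $z=\mathbf F$ and (T-E), finishes. The gap is the one nontrivial step, deriving $g\bq^2 g\le g\mathbf F g+U^2$. You leave it open, and none of your three suggested routes works.

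\begin{itemize}
\item If you replace every occurrence of $\bs\br$ in $U$ by a fresh $t'$, the frame word has content $c(\by)\cup\{t'\}$. In general $\bq^2$ and $\mathbf F$ use letters of $\br,\bs$ outside this set, so Lemma~\ref{lem:coordinate-terms} does not apply. Its content hypothesis is genuinely used in the derivation, through the bounds of Lemma~\ref{lem:coordinate-instantiation} for letters $y\in c(\mathbf Q)$. Semantically the inequality is trivial, so only derivability is at stake, and that is what the hypothesis controls.
\item If you keep $U$ as an actual word and designate a letter $x$ of $\by$, Coord yields a sandwich by $UxU$, not by the fixed term $g=UtU$ that occurs in (T-A) and (T-R). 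Under nonerasing substitutions $t=sr$ never becomes a single letter, so $g$ cannot be ``re-expressed accordingly''.
\item The fallback of applying individual coordinate identities ``at the pattern level'' cannot give the inequality for an arbitrary valid $\bq^2\le\mathbf F$ with an arbitrary term $\mathbf F$. That is exactly what the local-lattice and separation argument inside Coord is for.
\end{itemize}

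The missing idea is to put a fresh letter only at the designated middle position and keep the other two copies of $\bs\br$ as actual words. Take the frame word $\mathbf U_X=\by(\bs\br)\by X\by(\bs\br)\by$. Its content is $c(\bq)\cup\{X\}$, so $\bq^2\le\mathbf F$ is a valid absorption between terms on $c(\mathbf U_X)$. Coord with designated letter $X$ and coordinate $\mathsf T$ then gives $G_X\bq^2G_X\le G_X\mathbf F G_X+\mathbf U_X^2$, where $G_X=\mathbf U_XX\mathbf U_X$. The letter $X$ occurs in neither $\bq$ nor $\mathbf F$, and derived identities are closed under substitution. So you may substitute the nonempty word $\bs\br$ for $X$. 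This turns $\mathbf U_X$ into $U$ and $G_X$ into $UtU=g$, giving exactly $g\bq^2g\le g\mathbf F g+U^2$. From there your remaining argument goes through unchanged.
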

\begin{proof}
Retain the transfer abbreviations, now evaluated as words, and introduce a
fresh variable \(X\). The nonempty word
\begin{equation}\label{eq:t:011}
\mathbf{U}_X=\by(\bs\br)\by X\by(\bs\br)\by
\end{equation}
contains \(X\) and every letter of \(\bq\). Apply Lemma~\ref{lem:coordinate-terms}, Coord, to the
valid absorption \(\bq^2\leq \mathbf{F}\) in the frame word \(\mathbf{U}_X\), with designated
letter \(X\) and coordinate \(\mathsf T\). With \(G_X=\mathbf{U}_XX\mathbf{U}_X\), it gives
\begin{equation}\label{eq:t:012}
G_X\bq^2G_X\leq G_X\mathbf{F}G_X+\mathbf{U}_X^2.
\end{equation}
Substitute the nonempty word \(\bs\br\) for \(X\); this gives
\(geg\leq g\mathbf{F}g+U^2\). Multiplying on both sides by \(e\), distributing,
and applying the third inequality in \eqref{eq:t:004} gives
\begin{equation}\label{eq:t:013}
egege\leq eg\mathbf{F}ge+eU^2e\leq eg\mathbf{F}ge+\sigma.
\end{equation}
Apply the first and then the second inequalities in \eqref{eq:t:004}, with its auxiliary variable \(z\) replaced by the
actual term \(\mathbf{F}\), to conclude
\(e\by e\leq \mathbf{F}+\sigma\). Every substitution is a nonempty term. In
particular the chosen factor \(\by\) is allowed to be an arbitrary word.
\end{proof}

\begin{lemma}[Full factor transfer]\label{transfer:full}
For three nonempty words \(\br,\by,\bs\), \(\bq=\br\by\bs\), and any term \(\mathbf{F}\) with
\(c(\mathbf{F})\subseteq c(\bq)\), if \(\Abar\models \bq^2\leq \mathbf{F}\), the fixed finite
theory derives
\begin{equation}\label{eq:t:014}
\bq^2\leq \mathbf{F}+(\br\by^2\bs)^2.
\tag{\text{T-FT}}
\end{equation}
\end{lemma}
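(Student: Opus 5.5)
The plan is to instantiate the ten transfer identities at the words $\br,\by,\bs$ and to combine the splitting identity \eqref{eq:t:003} with the top-valued transfer of Lemma~\ref{transfer:top} and the endpoint laws \eqref{eq:t:007}--\eqref{eq:t:008}. Substitute $r\mapsto\br$, $y\mapsto\by$, $s\mapsto\bs$ throughout. Then $e=\bq^2$, $\sigma=(\br\by^2\bs)^2$, $t=\bs\br$, and $J=\by+t\by+\by t$. By \eqref{eq:t:003} and distributivity,
\[
\bq^2=e\by e+et\by e+e\by te+\sigma=A+H+K+\sigma .
\]
It therefore suffices to bound each of $A$, $H$ and $K$ by $\mathbf F+\sigma$. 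The summand $A=e\by e=\bq^2\by\bq^2$ is exactly the left side of \eqref{eq:t:010}, so Lemma~\ref{transfer:top} gives $A\le\mathbf F+\sigma$ directly.

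For $H=et\by e$, I would apply the first inequality of \eqref{eq:t:007}: $H\le HE_L\by E_LH+\sigma$, where $E_L=(\bv\by\bv)^2$ and $\bv=et$. The idea is to apply Lemma~\ref{transfer:top} a second time, now to the factorization $\bq'=\bv\by\bv$ with outer words $\bv$ and middle word $\by$. This needs the hypothesis $\Abar\models(\bv\by\bv)^2\le\mathbf F$. It holds semantically: $\bv\by\bv=\bq^2\,t\by\,\bq^2 t$, and the content-sandwich (Lemma~\ref{lem:content-sandwich}) is valid in $\Abar$, so this word is at most $\bq$ up to squaring. Concretely, $(\bv\by\bv)^2\le\bq^2\le\mathbf F$ holds in $\Abar$ by square decrease and \eqref{eq:b:036}. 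Lemma~\ref{transfer:top} then yields
\[
E_L\by E_L\le\mathbf F+S_L .
\]
Multiplying by $H$ on both sides gives $HE_L\by E_LH\le H\mathbf F H+HS_LH$. The second inequality of \eqref{eq:t:007} removes $HS_LH$ into $\sigma$. For $H\mathbf F H$, write $H=H\cdot(\text{something ending in }t)$. Since $H$ ends with $e$, I instead use $HztH\le z+A+\sigma$ with $z\mapsto\mathbf F$. For this I need $H\mathbf F H\le H\mathbf F tH$ modulo admissible terms. The cleaner route is to choose the fresh-variable frame so that $t$ appears: substitute $z\mapsto E_L\by E_L$ directly into the third inequality of \eqref{eq:t:007}, after noting that $E_L$ ends with $\bv=et$. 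Then $HE_L\by E_LH$ has the shape $Hz'tH$ up to the content-contraction absorption, which gives $H\le E_L\by E_L+A+\sigma\le\mathbf F+S_L+A+\sigma$. Finally, $S_L\le\sigma$: again by the content sandwich, $\bv\by^2\bv$ contains $\br\by^2\bs$ in a contractible way, or one sandwiches by $H$ once more and uses $HS_LH\le\sigma$. The bound already obtained for $A$ then finishes this case.

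The treatment of $K$ is the mirror image, using \eqref{eq:t:008} with $\bu=te$. Adding the three bounds gives \eqref{eq:t:014}.

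I expect the main obstacle to be this bookkeeping in the $H$ and $K$ cases. There the terms $S_L$ and $H\mathbf FH$ have to be absorbed using only the fixed identities, so one must decide exactly which of \eqref{eq:t:007}'s three laws absorbs which error term, and that their substitution patterns (the $t$ adjacent to $z$) actually match. If the direct matching fails, I would first try sandwiching the whole inequality $H\le HE_L\by E_LH+\sigma$ once more by $H$, using $H^2=H$ semantically via the coordinate identities.
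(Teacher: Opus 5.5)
Your decomposition $\bq^2=A+H+K+\sigma$ from \eqref{eq:t:003} and your bound $A\le\mathbf F+\sigma$ from Lemma~\ref{transfer:top} are correct. The $H$ case, and therefore its mirror $K$, has a genuine gap.

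Your second application of Top uses the upper term $\mathbf F$. Its hypothesis $\Abar\models(\bv\by\bv)^2\le\mathbf F$ is false in general. The content sandwich only gives $\bv\by\bv=(et\by e)t\le et$, and the trailing $t$ cannot be absorbed into $e$. For a concrete failure, take $\br,\by,\bs$ to be letters with values $a,1,ab$. Then $\bq=e=\sigma=ab$ and $t=\bv=a$, so $(\bv\by\bv)^2=S_L=a\not\le ab$, because $a$ is the greatest element of $\Abar$. With $\mathbf F=\bq^2$ the lemma's hypothesis holds but yours fails. The same assignment refutes your claim $S_L\le\sigma$.

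Your fallback does not repair this. Write $E_L\by E_L=\mathbf z't$ and apply the third law of \eqref{eq:t:007}. This gives only $H\le\mathbf z'+A+\sigma$, where $\mathbf z'$ is $E_L\by E_L$ with its final $t$ deleted, not $E_L\by E_L$. Since $\mathbf z'$ begins and ends with $e$, bounding it by $\mathbf F$ is a problem of the same kind you started with. A bare $S_L$ also remains that no fixed law absorbs. Finally, the fact that $H^2=H$ holds in $\Abar$ does not give a derivation from the fixed list.

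The missing idea is to apply Top with upper term $\mathbf Ft$. Syntactically, $\bv\by\bv=(et\by e)t=Ht$. In $\Abar$ one has $H\le e$, by three cases:
\begin{itemize}
\item if $e$ is a nonzero step map, then $ewe\in\{0,e\}$;
\item if $e=0$, then $H=0$;
\item if $e=1$, every letter equals $1$.
\end{itemize}
Hence $E_L=(Ht)^2\le(\mathbf Ft)^2\le\mathbf Ft$ is $\Abar$-valid. The content hypothesis of Top holds because $\bv$ contains all of $c(\bq)$. Top then gives $E_L\by E_L\le\mathbf Ft+S_L$. Sandwiching by $H$ yields
\[
H\le HE_L\by E_LH+\sigma\le H\mathbf FtH+HS_LH+\sigma\le\mathbf F+A+\sigma\le\mathbf F+\sigma .
\]
These steps use, in order:
\begin{itemize}
\item the first law of \eqref{eq:t:007};
\item the second law of \eqref{eq:t:007}, to absorb $HS_LH$ into $\sigma$;
\item the third law of \eqref{eq:t:007} with $z=\mathbf F$;
\item your bound on $A$.
\end{itemize}
The extra $t$ in the upper term is exactly what produces the pattern $HztH$ you were looking for. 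For $K$, use $E_R=(tK)^2\le t\mathbf F$ together with \eqref{eq:t:008}.
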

\begin{proof}
Use the transfer abbreviations. Top gives \(A\leq \mathbf{F}+\sigma\). Semantically
in \(\Abar\), \(H\leq e\): for step-map \(e\) this is the sandwich property;
for \(e=0\) it is immediate; for \(e=1\), every factor of \(\br\by\bs\) is one,
so \(H=1\). Therefore \(H\leq \mathbf{F}\) is \(\Abar\)-valid, and so is
\begin{equation}\label{eq:t:015}
E_L=(Ht)^2\leq(\mathbf{F}t)^2\leq \mathbf{F}t.
\tag{\text{T-9.1}}
\end{equation}
The equality follows syntactically from \(\bv\by \bv=(et)\by(et)=Ht\); the last
inequality is square decrease. This semantic check selects another input
to the proved Top theorem; it does not assert that \(e\leq \mathbf{F}\) has already
been derived.

The word \(\bv\by \bv\) has the three nonempty factors \(\bv,\by,\bv\), and \(\bv=et\)
contains every letter of \(\bq\). Thus the content hypothesis of Top holds
for its upper term \(\mathbf{F}t\). Apply Top to \eqref{eq:t:015} to get
\(E_L\by E_L\leq \mathbf{F}t+S_L\). Sandwich by \(H\) and use the fixed laws:
\begin{equation}\label{eq:t:016}
\begin{aligned}
H&\leq HE_L\by E_LH+\sigma\leq H\mathbf{F}tH+HS_LH+\sigma\\
 &\leq H\mathbf{F}tH+\sigma\leq \mathbf{F}+A+\sigma\leq \mathbf{F}+\sigma.
\end{aligned}
\tag{\text{T-9.2}}
\end{equation}
Here the third inequality in \eqref{eq:t:007} takes the polynomial substitution \(z=\mathbf{F}\).

Similarly \(K\leq e\) holds in \(\Abar\), so
\(E_R=(tK)^2\leq(t\mathbf{F})^2\leq t\mathbf{F}\) is valid. The word \(\bu\by \bu=tK\) has the
three nonempty factors \(\bu,\by,\bu\) and content \(c(\bq)\). Top yields
\(E_R\by E_R\leq t\mathbf{F}+S_R\). Sandwich by \(K\), and use \eqref{eq:t:008} and the bound
for \(A\), to obtain \(K\leq \mathbf{F}+\sigma\).

Finally expand \eqref{eq:t:003} as \(e=A+H+K+\sigma\). The three proved bounds give
\(e\leq \mathbf{F}+\sigma\), proving FT. All instances use the same ten fixed
identities and the same coordinate theory, regardless of the chosen words.
\end{proof}

\paragraph{Termination and square completeness.}\label{sec:9}

\paragraph{Four two-path identities.}
Recall the scalar notation \(h_T(\bw)=T\bw^2T\), \(\bu\star \bv=(\bu\bv)^2\) and
the alphabet guard \(B_C\) from the scalar section. Use distinct schematic
variables \(x,y,z\); choose each of \(U,V\) independently to be absent or
a fresh context variable. Put \(T=x+y+z\), with every present context
variable also added to this sum. For each of the four choices include
\begin{equation}\label{eq:t:017}
\bigl((h_T(x)\star h_T(y))\star h_T(xUy)\bigr)
       \star h_T(yVx)\leq h_T(xy).
\tag{\text{T-Two-path}}
\end{equation}
Absent contexts are omitted literally. This fixes the parenthesization
and specifies four equations in the original signature after expansion.

\begin{lemma}[Two-path validity and legal localization]\label{termination:two-path}
All four Two-path identities hold in \(\Abar\). Whenever their nonempty
substituted words use only a fixed nonempty alphabet \(C\), they may be
used with their common bound equal to \(T_C=\sum_{x\in C}x\).
\end{lemma}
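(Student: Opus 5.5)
The plan has two parts, mirroring the statement: a semantic check in \(\Abar\) and a syntactic localization argument.

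\emph{Validity.} Fix an \(\Abar\) assignment and put \(T\) for the value of \(x+y+z\) plus any present context values. Every displayed word (\(x,y,xUy,yVx,xy\)) is at most \(T\), by the bound \(uv\le u+v\) used in Lemma~\ref{band:guard-support}. So all five scalars \(h_T(\cdot)\) fall under the scalar table of Lemma~\ref{band:basic-validity}, and \(\star\) acts there as lattice meet. I split on the value of \(T\).

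If \(T\in\{0,b\}\), every scalar is zero and there is nothing to prove. If \(T\in\{c,d\}\), a nonzero left side forces \(h_T(x)=h_T(y)=T\). Then \(x,y\) are both equal to \(T\) (the only input with nonzero scalar), so \(xy=T\) and the right side is \(T\).

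If \(T=1\), all values lie in \(S=\{0,1,b,c,d\}\), and the scalar is just the square. By \eqref{eq:b:012}, \(h_1(xy)=\rho(x)\star\rho(y)\), which is already the first meet factor; the remaining factors only decrease the left side.

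The main case is \(T=a\). Here the scalars lie in \(\{0,a\}\), and a nonzero left side means:
\begin{itemize}
\item \(x^2\) and \(y^2\) are nonzero, so \(x,y\ne0,b\);
\item \((xUy)^2\ne0\) and \((yVx)^2\ne0\).
\end{itemize}
We must show \((xy)^2\ne0\). The only danger is \(xy\in\{0,b\}\) with \(x,y\) idempotent and nonzero. Checking the table, \(xy=0\) occurs only for the pair \((ab,ba)\) in that order, and \(xy=b\) only for \((ba,ab)\), among others of the same kind. I will then use the two-path hypothesis: the step-map composition rule \(d_{i,j}d_{k,l}=0\) iff \(j<k\) from Lemma~\ref{lem:elementary-structure}, together with monotonicity in the middle factor. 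Concretely, \(xUy\ne0\) forces the threshold passage from \(x\) to \(y\) through \(U\), and \(yVx\ne0\) forces the passage back. Combined with the squares being nonzero, this should exclude the bad pairs. When \(x\) or \(y\) equals \(1\) or \(a\) the conclusion is immediate. This finite check, for instance \(x=ab\), \(y=ba\): is \((ab\,U\,ba)^2\ne0\) compatible with \((ba\,V\,ab)^2\ne0\)?, is the step I expect to be the main obstacle. I would carry it out using the pairs \((f(1),f(2))\) from \eqref{eq:h:005}, verifying that \((yVx)^2\ne0\) rules out \(y=ba,x=ab\) and symmetrically.

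\emph{Localization.} This copies the argument of Lemma~\ref{band:scalar-calculus}. Suppose the substituted nonempty words use only \(C\). Substitute them for \(x,y,z\) (and for \(U,V\) when present). Then the displayed common bound \(T\) becomes a sum of words on \(C\), each at most \(T_C\). Using the general principle that substituting terms bounded by \(T_C\) for the variables and \(T_C\) for a spare summand is legitimate, I would replace the substituted bound by \(T_C\). I expect this to go through by choosing \(z\mapsto T_C\), or by absorbing \(T_C\) through idempotence of addition, since every element of \(C\) is a letter summand.

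Two points need care. First, if \(z\) is needed as an actual word, I would add \(T_C\) by observing that the identity with \(z\) substituted by \(z+T_C\) (a nonempty term) has bound \(T+T_C=T_C\), while \(h_T\) terms depend only on the bound. Second, all substitutions must be nonempty terms, and this holds throughout.
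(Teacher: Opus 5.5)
Your proposal is correct and matches the paper's proof essentially step for step. It uses the same case split on the value of $T$, the same reduction of the case $T=a$ to excluding the pairs $(ab,ba)$ and $(ba,ab)$ via the path word that starts with $ba$ and ends with $ab$, the same square-content rule when $T=1$, and the same localization by putting $z=T_C$.

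The check you left open closes in one line, as in the paper. Since $a$ is the top element, $ba\,w\,ab\le ba\,a\,ab=b$ for every $w$, and $ba\,ab=b$ when the context is absent. So that path word has square $0$ even though its value may be $b\neq0$. This is why the nonzero-square hypothesis excludes the bad pairs, not your heuristic ``$xUy\neq0$''.
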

\begin{proof}
Every displayed argument of \(h_T\) is at most \(T\), by the base
product-to-join inequality. The scalar cases proved in the scalar section
therefore apply. If \(T=a\), each scalar is zero or \(a\). A nonzero left
side forces \(x,y\) to be nonzero idempotents. If their values are the pair
\(\{ab,ba\}\), one of the two path words begins with \(ba\) and ends with
\(ab\). For every possible middle value \(w\), the table gives
\(ba\,w\,ab\in\{0,b\}\); the product \(ba\,ab=b\) in the absent-context
case has the same property. Its square is zero, a contradiction. Otherwise
\(x,y\) lie in one of the nonzero band modes, so \(xy\) is a nonzero
idempotent, and \(h_T(xy)=a\), proving the inequality.

If \(T=1\), all values lie in \(S\), and the square-content calculation
in Lemma~\ref{lem:lifting-semantics} gives
\(h_T(x)\star h_T(y)=(x^2y^2)^2=(xy)^2=h_T(xy)\).
The other two meet factors can only decrease the left side. If \(T=ab\),
nonzero \(h_T(x),h_T(y)\) force \(x=y=ab\); if \(T=ba\), they force
\(x=y=ba\). If \(T=b\) or zero, all scalars vanish. These cases prove
validity.

For localization, every nonempty word on \(C\) is derivably at most
\(T_C\). Substitute the actual nonempty context words where present and
put \(z=T_C\). The displayed join defining \(T\) then equals \(T_C\),
using these already derived bounds. Thus localization is ordinary
substitution followed by equalities, with no new conditional axiom.
\end{proof}

\begin{lemma}[Elimination of all factor errors into the guard]\label{termination:guard}
For every nonempty word \(\bq\), \(C=c(\bq)\), and every term \(\mathbf{F}\) on \(C\)
with \(\Abar\models \bq^2\leq \mathbf{F}\), the fixed finite theory derives
\begin{equation}\label{eq:t:018}
\bq^2\leq \mathbf{F}+\bq^2B_C\bq^2.
\tag{\text{T-Termination}}
\end{equation}
\end{lemma}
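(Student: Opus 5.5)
We will prove Termination by iterating Full factor transfer (Lemma~\ref{transfer:full}) until every letter of $\bq$ has been squared, and then collect the accumulated error terms into the guard $\bq^2B_C\bq^2$ using the scalar calculus and the Two-path identities.

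\emph{Step 1: transfer.} Write $\bq=q_1\cdots q_n$. At each occurrence, with the current word written as $\br\by\bs$ and $\by$ a single letter, Lemma~\ref{transfer:full} gives $\bq^2\leq\mathbf F+(\br\by^2\bs)^2$. The word $\br\by^2\bs$ has the same content $C$. Its square is again $\Abar$-below $\bq^2$ semantically: squaring a letter either changes nothing or kills the value $b$. So the new absorption $(\br\by^2\bs)^2\leq\mathbf F+\cdots$ is a legitimate input for the next transfer. If an endpoint letter sits where $\br$ or $\bs$ would be empty, we first rewrite $\bq^2$ as $\bq\bq$ and take the letter inside, so that all three factors are nonempty.

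Iterating over all positions gives
\[
\bq^2\leq\mathbf F+\sum\mathbf E_i,
\]
a finite sum of squares of words of the form $\mathbf w=\bp_1^{\epsilon}x_1^2\cdots$ in which letters have been doubled. After all letters are doubled, the final error term is $(q_1^2\cdots q_n^2)^2$. Termination holds because each step strictly increases the number of doubled positions.

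\emph{Step 2: compare errors with the guard.} The remaining task is to show
\[
(q_1^2\cdots q_n^2)^2\leq\bq^2B_C\bq^2+\mathbf F.
\]
Semantically, this fully squared word is nonzero exactly when no letter takes the value $b$ and the word avoids the $ab$--$ba$ clash. That is precisely the band condition describing $B_C\ne0$ in Lemma~\ref{band:semantic-selector}.

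Syntactically, we express the fully squared word as a scalar $h_{T_C}(\cdot)$ and bound it above by $B_C$ using the scalar calculus (Lemma~\ref{band:scalar-calculus}):
\begin{itemize}
\item The letter factors and pair factors $h_T(x_ix_j)$ of $B_C$ must be produced from the word. This is the role of the Two-path identities: any adjacent pair $xy$ of $\bq$, together with a return path $yVx$ coming from the second copy in $\bq^2$, yields $h_T(xy)$ from the two letter scalars and the two path scalars.
\item Once the letter and pair factors are available, Lemma~\ref{band:guard-support} handles every longer block, since $B_C\leq h_T(\mathbf w)$ for every word $\mathbf w$ on $C$.
\end{itemize}
We then sandwich by $\bq^2$ and use full-content reflection \eqref{eq:b:039} to move the guard into the form $\bq^2B_C\bq^2$.

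\emph{Main obstacle.} The hard part is the reverse direction of Step 2. We need the error word to be bounded by $B_C$, so every pair scalar $h_T(x_ix_j)$ appearing in $B_C$ must be derivable from the error. This includes pairs that are not adjacent in $\bq$. Our first attempt is to use the two paths from $x_i$ to $x_j$ and back inside $\bq^2$, both present because $\bq^2$ is cyclic in content, and apply the Two-path identity with $U,V$ equal to those intervening words. Combined with idempotence of squared letters, via the scalar normalization \eqref{eq:b:013}, this should give $\text{error}\leq h_T(x_ix_j)$. Taking the meet over all pairs then yields $\text{error}\leq B_C$, after which sandwiching finishes the argument.
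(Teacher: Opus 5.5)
\textbf{There is a genuine gap, in Step~2.} Your Step~1 squares only single letters. The terminal error
$E=(q_1^2\cdots q_n^2)^2$ therefore contains squares of letters, but not of the path words $x\mathbf Uy$ and $y\mathbf Vx$. Product-to-join bounds give you $E\le h_{T_C}(x)$ for each letter. Nothing gives you $E\le h_{T_C}(x\mathbf Uy)$ or $E\le h_{T_C}(y\mathbf Vx)$, and those are exactly the inputs the Two-path identity needs.

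In fact $E\le B_C$ is false in $\Abar$, so no derivation of it can exist. Take $\bq=zyxz$ with $z\mapsto a$, $y\mapsto ba$, $x\mapsto ab$. All letters are idempotent and $a\cdot ba\cdot ab\cdot a=a$, so $E=a\neq0$. On the other hand $xy\mapsto ab\cdot ba=0$ kills the factor $h_{T_C}(xy)$, hence $B_C=0$. Likewise $yx\mapsto b$ gives $h_{T_C}(yx)=0$.

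So your semantic claim that $E\neq0$ exactly under the band condition is wrong. $E$ survives full-generation assignments whenever no letter is $0$ or $b$ and the whole product avoids $0$ and $b$. Falling back to the target $E\le\mathbf F+\bq^2B_C\bq^2$ does not help: that is a squared-word absorption of the same kind as the lemma itself. Your appeal to Lemma~\ref{band:guard-support} also goes the wrong way. It gives $B_C\le h_T(\bw)$, whereas you need $E$ below the letter and pair factors of $B_C$.

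\textbf{The missing idea} is that Full factor transfer (Lemma~\ref{transfer:full}) allows the middle factor $\by$ to be an arbitrary nonempty word, so you should square the path words themselves. The paper proceeds as follows.
\begin{enumerate}
\item It selects one occurrence of each letter and, for each pair $\{x,y\}$, the two forward cyclic paths $x\mathbf Uy$ and $y\mathbf Vx$. These are factors of $\bq^2$, not necessarily of $\bq$. It lists all of these as $\bv_1,\ldots,\bv_m$.
\item It works in $\mathbf W_0=\bq^2(\bq^2)^m\bq^2$, whose square is $\bq^2$ by the power law. In the $i$th dedicated middle block it replaces one occurrence of $\bv_i$ by $\bv_i^2$. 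The untouched padding copies guarantee nonempty contexts, which also replaces your ad hoc endpoint fix. FT then gives $\mathbf W_{i-1}^2\le\mathbf F+\mathbf W_i^2$ at each step.
\item Now $E=\mathbf W_m^2=\mathbf A_i\bv_i^2\mathbf D_i\le h_{T_C}(\bv_i)$ for every $i$. Since $E=E\star E$, these bounds combine under $\star$.
\item Two-path then yields $E\le h_{T_C}(xy)$ and $E\le h_{T_C}(yx)$, and the power law handles $h_{T_C}(x^2)$. Hence $E\le B_C$.
\item Finally $E=E^3\le\bq^2B_C\bq^2$, with no need for full-content reflection.
\end{enumerate}
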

\begin{proof}
Choose the following finite list of literal factors of \(\bq^2\). Include
one occurrence of each letter \(x\in C\). For each unordered pair of
distinct letters \(\{x,y\}\), select one occurrence of each in \(\bq\), and
include both forward cyclic paths between these selected occurrences.
They are words \(x\mathbf{U}y,y\mathbf{V}x\), each of length at most \(|\bq|\), and each is
a literal factor of \(\bq^2\). The middle contexts can be absent. Denote
the resulting nonempty list by \(\bv_1,\ldots,\bv_m\); repetitions are harmless.
For a one-letter content it has just its single letter factor.

Start with
\begin{equation}\label{eq:t:019}
\mathbf{W}_0=\bq^2(\bq^2)^m \bq^2=\bq^{2m+4}.
\end{equation}
The \(m\) middle copies of \(\bq^2\) are dedicated blocks. In block \(i\),
replace one literal occurrence of \(\bv_i\) by \(\bv_i^2\), and do this only
once per block, obtaining \(\mathbf{W}_1,\ldots,\mathbf{W}_m\). Each replacement has the
form
\begin{equation}\label{eq:t:020}
\mathbf{W}_{i-1}=\mathbf{R}_i \bv_i \mathbf{S}_i,\qquad \mathbf{W}_i=\mathbf{R}_i \bv_i^2\mathbf{S}_i,
\end{equation}
with \(\mathbf{R}_i,\mathbf{S}_i\) nonempty because the initial and final padding copies
\(\bq^2\) are never touched. At its turn each dedicated block is intact.
Every \(\mathbf{W}_i\) has exactly content \(C\), and \(\mathbf{W}_m\) still contains every
chosen \(\bv_i^2\) as a literal factor, with nonempty contexts.

Square decrease gives \(\mathbf{W}_i\leq \mathbf{W}_{i-1}\leq \mathbf{W}_0\). With \(e=\bq^2\), the
power law gives \(\mathbf{W}_0^2=e\), hence \(\mathbf{W}_i^2\leq e\). Consequently
\(\mathbf{W}_{i-1}^2\leq \mathbf{F}\) is valid in \(\Abar\) for every \(i\). Apply FT to that
valid absorption and to its displayed three-factor decomposition. All
content restrictions hold, and we derive
\begin{equation}\label{eq:t:021}
\mathbf{W}_{i-1}^2\leq \mathbf{F}+\mathbf{W}_i^2.
\end{equation}
Additive transitivity over these finitely many inequalities gives
\begin{equation}\label{eq:t:022}
e\leq \mathbf{F}+E,\qquad E=\mathbf{W}_m^2.
\tag{\text{T-10.1}}
\end{equation}

For each selected factor choose its surviving occurrence in \(\mathbf{W}_m^2\),
writing \(E=\mathbf{A}_i \bv_i^2\mathbf{D}_i\), where \(\mathbf{A}_i,\mathbf{D}_i\) are genuine nonempty words
on \(C\). Their product-to-join bounds yield
\begin{equation}\label{eq:t:023}
E\leq T_C\bv_i^2T_C=h_{T_C}(\bv_i).
\tag{\text{T-10.2}}
\end{equation}
The power law gives \(E^2=E\), and thus \(E\star E=E\). In particular, if
\(E\leq a,b\), monotonicity gives \(E=E\star E\leq a\star b\). This step
does not require \(E\) itself to belong to the scalar lattice.

For distinct \(x,y\), take the four bounds \eqref{eq:t:023} associated to
\(x,y,x\mathbf{U}y,y\mathbf{V}x\), combine them by the parenthesization of Two-path, and use
the localized Two-path law to derive \(E\leq h_{T_C}(xy)\). Interchanging
\(x,y\) and their paths gives the analogous bound for \(yx\). For repeated
pairs, \(h_{T_C}(x^2)=h_{T_C}(x)\) by the power law. Thus \(E\) is below
every scalar factor defining \(B_C\), namely every \(h_{T_C}(x)\) and
every \(h_{T_C}(xy)\). Repeatedly use \(E\star E=E\) to derive
\(E\leq B_C\). Already \(E\leq e\), so
\begin{equation}\label{eq:t:024}
E=E^3\leq eB_Ce.
\tag{\text{T-10.3}}
\end{equation}
Combining \eqref{eq:t:022} and \eqref{eq:t:024} proves Termination. There are finitely many
blocks and derivation steps for each target; the identity list is fixed.
\end{proof}

\begin{lemma}[Squared-word completeness]\label{completeness:square}
For every retained \(\Abar\)-valid absorption \(\bq^2\leq \mathbf{F}\), the fixed
finite theory \(\mathcal E\) derives \(\bq^2\leq \mathbf{F}\).
\end{lemma}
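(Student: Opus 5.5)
The plan is to combine the Termination lemma with the proper-selector proposition. Let \(\bq^2\leq\mathbf F\) be a retained \(\Abar\)-valid absorption, so that \(c(\mathbf F)\subseteq c(\bq)=C\). In particular \(\mathbf F\) is a term on \(C\).

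\begin{enumerate}
\item Apply Lemma~\ref{termination:guard}. Its hypotheses are exactly ours, so it derives
\[
\bq^2\leq\mathbf F+\bq^2B_C\bq^2 .
\]
\item Bound the guard term \(\bq^2B_C\bq^2\) by \(\mathbf F\). Proposition~\ref{band:proper-selector-completeness} gives \(E_{\rm prop}\vdash\bq^2B_C\bq^2\leq\mathbf P\) for any polynomial \(\mathbf P\) such that \(\bq\leq\mathbf P\) holds in \(B_R\times B_L\). Its ``in particular'' clause covers the case where \(\bq^2\leq\mathbf P\) is valid in \(\Abar\).
\item Take \(\mathbf P\) to be the polynomial normal form of \(\mathbf F\) from Lemma~\ref{lem:normal}. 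The AI identities derive \(\mathbf F=\mathbf P\), and the absorption \(\bq^2\leq\mathbf P\) remains \(\Abar\)-valid. The cover hypothesis holds because \(B_R\) and \(B_L\) are subsemirings of \(\Abar\) satisfying \(x^2=x\). Hence \(\bq^2\leq\mathbf P\) restricts to \(\bq\leq\mathbf P\) in each factor, and therefore in the product \(B_R\times B_L\).
\item Combine the two derived inequalities by additive transitivity:
\[
\bq^2\leq\mathbf F+\bq^2B_C\bq^2\leq\mathbf F+\mathbf F=\mathbf F .
\]
\end{enumerate}

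Every identity used lies in \(\mathcal E\): the lifting, coordinate, scalar, guarded band, transfer and two-path families, together with AI and Base. The number of derivation steps depends on \(\bq\) and \(\mathbf F\), but the axiom set is fixed.

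The main point to check is that Lemma~\ref{termination:guard} legitimately invokes Lemma~\ref{transfer:full} at every step. This requires each intermediate \(\mathbf W_{i-1}^2\leq\mathbf F\) to be \(\Abar\)-valid with \(c(\mathbf F)\subseteq c(\mathbf W_{i-1})\). Both were established there: \(\mathbf W_{i-1}\) has content exactly \(C\), and the validity follows from \(\mathbf W_{i-1}^2\leq\bq^2\).

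One bookkeeping point remains. The cover criterion requires summands using only \(c(\bq)\). This is automatic, because the absorption is retained.

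No genuinely new argument is needed beyond this assembly.
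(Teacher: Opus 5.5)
Your proposal is correct and follows the paper's proof: Termination gives \(\bq^2\leq\mathbf F+\bq^2B_C\bq^2\), and the proper-selector proposition bounds the guard term \(\bq^2B_C\bq^2\) by \(\mathbf F\), its band-mode hypothesis holding because \(\bq^2=\bq\) in the subsemirings \(B_R\) and \(B_L\) of \(\Abar\); additive transitivity then finishes. Your extra step of replacing \(\mathbf F\) by its polynomial normal form only makes explicit what the paper leaves implicit.
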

\begin{proof}
The proper-selector theorem proved above gives \(\bq^2B_C\bq^2\leq \mathbf{F}\): in
each band mode \(\bq^2=\bq\), so the hypothesis needed there follows from
the present \(\Abar\)-validity. Termination gives
\(\bq^2\leq \mathbf{F}+\bq^2B_C\bq^2\). Additive transitivity yields the conclusion.
No selector is cancelled, and no error term is transported multiplicatively.
\end{proof}

\end{document}